\documentclass[graybox,envcountchap]{svmono} %,sectrefs
\usepackage[english]{babel}

\usepackage{type1cm}
\usepackage{amsthm,amssymb}

\usepackage{makeidx}         % allows index generation
\usepackage{subcaption}
\usepackage{graphicx}        % standard LaTeX graphics tool
\usepackage{multicol}        % used for the two-column index
\usepackage[bottom]{footmisc}% places footnotes at page bottom

\usepackage{newtxtext}       % 
\usepackage[varvw]{newtxmath}       % selects Times Roman as basic font

\usepackage{dsfont}
\usepackage{hyperref}
\usepackage{enumerate}
\usepackage{cleveref}
\usepackage{pdfpages}

\usepackage{thmtools} % added because on some versions cleveref fails to pick the correct labels

\allowdisplaybreaks

\newtheorem{thm}{Theorem}[chapter]
\newtheorem{basrepthm}[thm]{Basic Representation Theorem}
\newtheorem{bernthm}[thm]{Bernstein's Theorem}

\newtheorem{havithm}[thm]{Haviland's Theorem}
\newtheorem{hausthm}[thm]{Hausdorff's Theorem}

\newtheorem{hamthm}[thm]{Hamburger's Theorem}

\newtheorem{lumathm}[thm]{Luk\'acs--Markov Theorem}

\newtheorem{stielthm}[thm]{Stieltjes' Theorem}

\newtheorem{svethm}[thm]{{\v{S}}venco's Theorem}
\newtheorem{richthm}[thm]{Richter's Theorem}
\newtheorem{lem}[thm]{Lemma}

\newtheorem{prop}[thm]{Proposition}
\newtheorem{cor}[thm]{Corollary}

\newtheorem{open}[thm]{Open Problem}

\theoremstyle{definition}
\newtheorem{dfn}[thm]{Definition}
\newtheorem{exm}[thm]{Example}
\newtheorem{exms}[thm]{Examples}

\theoremstyle{remark}
\newtheorem{rem}[thm]{Remark}

\newcommand{\exmsymbol}{\hfill$\circ$}

\newcommand{\cset}{\mathds{C}}
\newcommand{\kset}{\mathds{K}}
\newcommand{\nset}{\mathds{N}}

\newcommand{\qset}{\mathds{Q}}
\newcommand{\rset}{\mathds{R}}

\newcommand{\tset}{\mathds{T}}

\newcommand{\Ad}{\mathrm{Ad}}

\newcommand{\cone}{\mathrm{cone}\,}
\newcommand{\conv}{\mathrm{conv}\,}

\newcommand{\diff}{\mathrm{d}}

\newcommand{\dom}{\mathrm{dom}\,}

\newcommand{\pos}{\mathrm{Pos}}

\newcommand{\sign}{\mathrm{sgn}}

\newcommand{\supp}{\mathrm{supp}\,}

\newcommand{\inter}{\mathrm{int}\,}
\newcommand{\id}{\mathrm{id}}
\newcommand{\one}{\mathds{1}}

\newcommand{\Gl}{\mathrm{Gl}}
\newcommand{\Sl}{\mathrm{Sl}}
\newcommand{\gl}{\mathfrak{gl}}
\newcommand{\tr}{\mathrm{tr}}
\newcommand{\fsl}{\mathfrak{sl}}

\newcommand{\cA}{\mathcal{A}}
\newcommand{\cB}{\mathcal{B}}
\newcommand{\cC}{\mathcal{C}}
\newcommand{\cat}{\mathcal{C}}

\newcommand{\cH}{\mathcal{H}}

\newcommand{\cL}{\mathcal{L}}

\newcommand{\cS}{\mathcal{S}}
\newcommand{\cT}{\mathcal{T}}
\newcommand{\cX}{\mathcal{X}}

\newcommand{\cV}{\mathcal{V}}
\newcommand{\cW}{\mathcal{W}}
\newcommand{\cY}{\mathcal{Y}}
\newcommand{\cZ}{\mathcal{Z}}

\newcommand{\fB}{\mathfrak{B}}

\newcommand{\fD}{\mathfrak{D}}
\newcommand{\fd}{\mathfrak{d}}

\newcommand{\fg}{\mathfrak{g}}

\newcommand{\fo}{\mathfrak{o}}

\newcommand{\so}{\mathfrak{so}}

\ExplSyntaxOn
\NewDocumentCommand{\advanced}{mO{#3}m}
 {% #1 is a sectioning command
  \cs_set_eq:Nc \__advanced_save: { the \cs_to_str:N #1 }
  \cs_set:cpn { the \cs_to_str:N #1 } { \__advanced_save: \advancedmarker }
  \bool_set_true:N \__advanced_killwidth_bool
  #1[#2]{#3}
  \bool_set_false:N \__advanced_killwidth_bool
  \cs_set_eq:cN { the \cs_to_str:N #1 } \__advanced_save:
 }
\NewDocumentCommand{\advancedmarker}{}
 {
  \bool_if:NTF \__advanced_killwidth_bool { \makebox[5pt][l]{*} } { * }
 }

\bool_new:N \__advanced_killwidth_bool

\AddToHook{cmd/@starttoc/begin}{\bool_set_true:N \__advanced_killwidth_bool}
\ExplSyntaxOff

\makeindex             % used for the subject index
\date{\today}

\begin{document}

\author{Philipp J.\ di\,Dio}
\title{Linear Operators on Polynomials and $K$-Positivity Preserver}
\subtitle{A journey from analysis to algebra and back}
\maketitle

\frontmatter%%%%%%%%%%%%%%%%%%%%%%%%%%%%%%%%%%%%%%%%%%%%%%%%%%%%%%

%\include{dedication}

%\begin{dedication}
%Samuel Karlin\index{Karlin, S.}\quad {\small(June 8, 1924 -- December 18, 2007)}\\ \medskip
%He solved almost unnoticed an\\ important algebraic question.
%\end{dedication}

%\foreword
%\addcontentsline{toc}{chapter}{Foreword}

%\include{preface}
\preface
\addcontentsline{toc}{chapter}{Preface}

These are the extended lecture notes of my lecture about
\begin{center}\itshape
Linear Operators on Polynomials,\\ $K$-Positivity Preserver, and their Generators.
\end{center}
The lecture was given at the University of Konstanz in the winter semester 2025/26.
The content is mainly taken from \cite{didio24posPresConst}, \cite{didio25KPosPresGen}, \cite{didio25hadamardLanger}, and \cite{didio25PosToSoS}.

Let $n\in\nset$ and let $K\subseteq\rset^n$ be closed.
Real algebraic geometry studies the cone
\[\pos(K) := \big\{f\in\rset[x_1,\dots,x_n] \,\big|\, f(x)\geq 0\ \text{for all}\ x\in K\big\}\label{posK}\]
of polynomials which are non-negative on $K$.
There is an enormous amount of literature on this still active field.

It is therefore very surprising that linear operators 
\[T:\rset[x_1,\dots,x_n]\to\rset[x_1,\dots,x_n]\]
are hardly studied.
In fact, very little was know so far about these operators, especially when they have to preserve $K$-positivity, i.e.,
\[T\pos(K) \subseteq\pos(K).\]
The aim of this lecture is to present the recent developments in this field, i.e., the progress made in the works \cite{didio24posPresConst}, \cite{didio25KPosPresGen}, \cite{didio25hadamardLanger}, and \cite{didio25PosToSoS}.
The material presented here is for one semester of two weekly 1.5 hours lectures including a weekly exercise session of approximately 45 minutes.

The course is for master students in mathematics.
Besides a bachelor in mathematics (i.e., especially basic knowledge in analysis and linear algebra), it is recommended that the students resp.\ the reader has a basic knowledge in (real) algebraic geometry (i.e., non-negative polynomials), (partial) differential equations, Lie groups and algebras, and operator theory.

\vspace{\baselineskip}
\begin{flushright}\noindent
Konstanz, February 2026\hfill {\it Philipp J.\ di\,Dio}\\
\end{flushright}

%\include{acknowledgement}
%\extrachap{Acknowledgements}
%\addcontentsline{toc}{chapter}{Acknowledgements}

\tableofcontents

\mainmatter%%%%%%%%%%%%%%%%%%%%%%%%%%%%%%%%%%%%%%%%%%%%%%%%%%%%%%%
%\begin{partbacktext}
%\part{Part Title}
%\noindent Use the template \emph{part.tex} together with the document class SVMono (monograph-type books) or SVMult (edited books) to style your part title page and, if desired, a short introductory text (maximum one page) on its verso page.

%\end{partbacktext}

%\motto{A word concerning an incident in the last chapter.\\ \medskip
%\ \hspace{1cm} \normalfont{Herman Melville: Moby-Dick (1851, Chapter LIX)}\index{Melville, H.}}

%\motto{Statt des törichten Ignorabimus heisse im Gegenteil unsere Losung:\\ \ \; Wir m\"{u}ssen wissen,\\ \ \; wir werden wissen.\footnote{Instead of foolish ignorance, our motto should be the opposite: We must know, we will know.}\\ \medskip
%\ \hspace{1cm} \normalfont{David Hilbert: Radio Address, 8th Sept.\ 1930 \cite{hilbert30radio}}\index{Hilbert, D.}}
% https://old.maa.org/press/periodicals/convergence/david-hilberts-radio-address

%%%%%%%%%%%%%%%%%%%%%%%%%%%%%%%%%
%%%%%%%%%%%%%%%%%%%%%%%%%%%%%%%%%
\part{Introduction to Moments}%%%
%%%%%%%%%%%%%%%%%%%%%%%%%%%%%%%%%
%%%%%%%%%%%%%%%%%%%%%%%%%%%%%%%%%
\label{part:momentsIntro}

%\motto{Pure mathematics is, in its way, the poetry of logical ideas.\\ \medskip
%\ \hspace{1cm} \normalfont{Albert Einstein \cite{einstein35noether}}\index{Einstein, A.}}

%%%%%%%%%%%%%%%%%%%%%%%%%%%%%%%%%%%%%%%%%%%%
\chapter{Moments and Moment Functionals}%%%%
\label{ch:mom}%%%%%%%%%%%%%%%%%%%%%%%%%%%%%%
%%%%%%%%%%%%%%%%%%%%%%%%%%%%%%%%%%%%%%%%%%%%

Our main aim is to study linear operators
\[T:\rset[x_1,\dots,x_n]\to\rset[x_1,\dots,x_n],\]
especially with
\[T\pos(K)\subseteq\pos(K)\]
for some given closed $K\subseteq\rset^n$ and $n\in\nset$.
While this formulation seems to be a purely algebraic questions, it turns out that actually functional analytic methods are required.
These are the dual objects of $\pos(K)$, i.e., moments.

\section{Moments and Moment Functionals}

\begin{dfn}
Let $(\cX,\cA)$ be a measurable space, let $\mu$ be a measure on $(\cX,\cA)$, and let $f:\cX\to\rset$ be a $\mu$-integrable function.
The number
\[s_f := \int_\cX f(x)~\diff\mu(x)\]
is called the \emph{moment of $\mu$ with respect to $f$.}\index{moment}
\end{dfn}

\begin{exms}\label{exm:moments}
\begin{enumerate}[\bfseries (a)]
\item Let $(\cX,\cA)$ be a measurable space, $f:\cX\to\rset$ be measurable, and $\delta_y$\label{deltay} the point measure at some point $y\in\cX$, i.e.,
\[\int_\cX f(x)~\diff\delta_y(x) = f(y).\]
Then $f(y)$ is the moment of $\delta_y$ with respect to $f$.

\item Let $(\rset,\fB(\rset))$ be $\rset$ with the Borel-$\sigma$-algebra $\fB(\rset)$,\index{Borel-$\sigma$-algebra}\label{fBX} let $f(x) = x^k$ for some $k\in\nset_0$, and let $\mu$ be given by $\diff\mu(x) = \chi_{[0,1]}~\diff x$ with the characteristic function $\chi_{[0,1]}$ of the unit interval $[0,1]$.
Then
\begin{align*}
s_k &:= \int_\rset x^k~\diff\mu(x)
= \int_0^1 x^k~\diff x
= \left[\frac{1}{k+1}\cdot x^{k+1} \right]_{x=0}^1
= \frac{1}{k+1}
\end{align*}
for all $k\in\nset_0$ is the moment of $\mu$ with respect to the function $x^k$, or short the \emph{$k$-th moment of $\mu$}.

\item Let $n\in\nset$ and $\mu$ be a measure on $(\rset^n,\fB(\rset^n))$ such that $x^\alpha := x_1^{\alpha_1}\cdots x_n^{\alpha_n}$ are $\mu$-integrable for all $\alpha = (\alpha_1,\dots,\alpha_n)\in\nset_0^n$.
Then
\[s_\alpha := \int_{\rset^n} x^\alpha~\diff\mu(x)\]
is the $\alpha$-th moment of $\mu$.
The $s_\alpha$ are the \emph{classical}\index{moment!classical} or \emph{polynomial moments}\index{moment!polynomial} of $\mu$.
Here the name \emph{moment} comes from.
Given a body in $\rset^3$ with density distribution $\rho(x,y,z)$, then
\[\int_{\rset^3} (x^2 + y^2)\cdot\rho(x,y,z)~\diff x~\diff y~\diff z\]
is the \emph{moment of inertia}\index{moment!inertia} of the rotation of the body with density $\rho$ around the $z$-axis.
\exmsymbol
\end{enumerate}
\end{exms}

\begin{dfn}
Let $(\cX,\cA)$ be a measurable space, let $\cV$ be a (finite or infinite dimensional) real vector space of measurable functions $f:\cX\to\rset$, and let
\[L:\cV\to\rset\]
be a linear functional on $\cV$.
Then $L$ is called a \emph{moment functional}\index{moment!functional}\index{functional!moment}, if there exists a measure $\mu$ on $(\cX,\cA)$ with
\begin{equation}\label{eq:momFunctional}
L(f) = \int_\cX f(x)~\diff\mu(x)
\end{equation}
for all $f\in\cV$.
The measure $\mu$ in (\ref{eq:momFunctional}) is called a \emph{representing measure}\index{measure!representing} of $L$.
If $\mu$ in (\ref{eq:momFunctional}) is unique, then $L$ is called \emph{determinate}.\index{determinate!moment!functional}\index{functional!moment!determinate}\index{moment!functional!determinate}
Otherwise, $L$ is called \emph{indeterminate}.\index{indeterminate!moment!functional}\index{functional!moment!indeterminate}\index{moment!functional!indeterminate}
If $\cV$ is finite dimensional, then $L$ is called a \emph{truncated moment functional}.\index{moment!functional!truncated}\index{functional!moment!truncated}
\end{dfn}

\begin{exms}\label{exm:momFunctionals}
\begin{enumerate}[\bfseries (a)]
\item Let $(\cX,\cA)$ be a measurable space, let $y\in\cX$, and let $\cV$ be a (finite or infinite dimensional) real vector space of measurable functions $f:\cX\to\rset$.
Then
\[l_y:\cV\to\rset, \quad f\mapsto l_y(f) := f(y)\label{dfn:ly}\]
is a moment functional with representing measure $\delta_y$.

\item Let
\[L:\rset[x]\to\rset\]
be defined by
\[L(x^k) := \frac{1}{k+1}\]
for all $k\in\nset_0$ and linearly extended to all $\rset[x]$.
Then $L$ is a moment functional with representing measure $\mu$ with $\diff\mu(x) = \chi_{[0,1]}~\diff x$.
The representing measure $\mu$ is unique and $L$ is determinate, as the next result shows.
\exmsymbol
\end{enumerate}
\end{exms}

\begin{prop}\label{prop:momFuncDeterminate}
Let $n\in\nset$, let $(\cX,\cA) = (\rset^n,\fB(\rset^n))$, and let
\[L:\rset[x_1,\dots,x_n]\to\rset\]
be a moment functional which has at least one representing measure $\mu$ such that $\supp\mu$ is compact.
Then $\mu$ is unique and $L$ is determinate.
\end{prop}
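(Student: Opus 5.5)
Let $\nu$ be any other representing measure of $L$, and suppose $\supp\mu\subseteq[-R,R]^n$ for some $R>0$. The plan has three steps: first show that $\supp\nu$ is compact as well, then use Stone--Weierstra{\ss} on a common compact set to conclude that $\mu$ and $\nu$ integrate every continuous function to the same value, and finally apply the Riesz uniqueness for finite Radon measures on $\rset^n$ to get $\mu=\nu$. Both measures are finite, since $\mu(\rset^n)=L(1)=\nu(\rset^n)<\infty$, and finite Borel measures on $\rset^n$ are Radon, so the last step causes no trouble.

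\emph{Step 1: $\supp\nu\subseteq[-R,R]^n$.} Fix $i\in\{1,\dots,n\}$ and $k\in\nset$. Then
\[\int_{\rset^n} x_i^{2k}~\diff\nu(x) = L(x_i^{2k}) = \int_{\rset^n} x_i^{2k}~\diff\mu(x) \leq R^{2k}\cdot L(1).\]
Suppose $\nu(\{x : |x_i|\geq R+\varepsilon\})=c>0$ for some $\varepsilon>0$. Then the left side is at least $c\,(R+\varepsilon)^{2k}$, so
\[c\,(R+\varepsilon)^{2k}\leq R^{2k}L(1)\quad\text{for all } k\in\nset.\]
Since $\big((R+\varepsilon)/R\big)^{2k}\to\infty$, this fails for large $k$. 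Hence $\nu(\{|x_i|>R\})=0$ for every $i$, and $\supp\nu\subseteq[-R,R]^n=:Q$. This step is where the compactness hypothesis really enters, and I expect it to be the main point of the argument. Without it, the polynomials would not be dense in any function space that controls both measures at once, which is exactly what happens in the indeterminate case.

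\emph{Step 2: density.} Let $f\in C(Q)$. By Stone--Weierstra{\ss}, the polynomials are dense in $C(Q)$ in the sup-norm, so choose polynomials $p_j$ with $\|f-p_j\|_{\infty,Q}\to 0$. Both measures are concentrated on $Q$ and have total mass $L(1)$, so
\[\Big|\int_Q f~\diff\mu-\int_Q f~\diff\nu\Big| \leq 2L(1)\,\|f-p_j\|_{\infty,Q} + |L(p_j)-L(p_j)| \to 0.\]
Thus $\int f\,\diff\mu=\int f\,\diff\nu$ for all $f\in C(Q)$. In particular this holds for every compactly supported continuous function on $\rset^n$, after restricting it to $Q$.

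\emph{Step 3: uniqueness.} By the uniqueness part of the Riesz representation theorem for finite Borel (hence Radon) measures on $\rset^n$, we get $\mu=\nu$. One can also see this directly: approximate the indicator of a closed set from above by continuous functions and use dominated convergence, so $\mu$ and $\nu$ agree on closed sets, and then on all Borel sets by the $\pi$-$\lambda$ theorem. Hence the representing measure is unique and $L$ is determinate.
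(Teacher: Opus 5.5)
Your proof is correct and follows the route the paper indicates: the paper leaves this proposition as an exercise with the hint to use the Stone--Weierstrass Theorem, which is your Steps 2 and 3. Your Step 1 shows that every other representing measure is also concentrated on $[-R,R]^n$, using the growth of $\int x_i^{2k}\,\diff\nu$. The hint does not mention this step, but it is exactly what makes the Stone--Weierstrass argument apply to both measures at once.
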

\begin{proof}
See Problem \ref{prob:momFuncDeterminate}.
\end{proof}

\begin{prop}\label{prop:momFuncIndeterminate}
Let $(\cX,\cA)$ be a measurable space, let $\cV$ be a real vector space of measurable functions $f:\cX\to\rset$, and let
\[L:\cV\to\rset\]
be an indeterminate moment functional.
Then $L$ has infinitely many representing measures.
\end{prop}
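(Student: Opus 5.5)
Indeterminacy gives at least two distinct representing measures $\mu_0\neq\mu_1$ of $L$. The plan is to show that the set of representing measures of $L$ is convex, so the whole segment between $\mu_0$ and $\mu_1$ consists of representing measures, and then to check that this segment contains infinitely many distinct measures.

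For $t\in[0,1]$ I would define
\[\mu_t := (1-t)\cdot\mu_0 + t\cdot\mu_1.\]

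\emph{Step 1: each $\mu_t$ is a measure on $(\cX,\cA)$.} This holds because non-negative combinations of measures are again measures: non-negativity and $\sigma$-additivity pass through the sum.

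\emph{Step 2: each $\mu_t$ represents $L$.} Let $f\in\cV$. Since $f$ is $\mu_0$- and $\mu_1$-integrable, we have $\int|f|\,\diff\mu_t=(1-t)\int|f|\,\diff\mu_0+t\int|f|\,\diff\mu_1<\infty$. This identity holds first for simple functions and then for non-negative measurable functions by monotone convergence. Splitting $f=f^+-f^-$ then gives
\[\int_\cX f~\diff\mu_t=(1-t)\int_\cX f~\diff\mu_0+t\int_\cX f~\diff\mu_1=(1-t)L(f)+tL(f)=L(f).\]

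\emph{Step 3: the $\mu_t$ are pairwise distinct.} Since $\mu_0\neq\mu_1$, there is some $A\in\cA$ with $\mu_0(A)\neq\mu_1(A)$. The main obstacle is that these values may be infinite, so the map $t\mapsto\mu_t(A)$ need not be injective. If both values are finite, $t\mapsto\mu_t(A)=\mu_0(A)+t(\mu_1(A)-\mu_0(A))$ is affine and non-constant, hence injective.

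If, say, $\mu_0(A)=\infty>\mu_1(A)$, then $\mu_t(A)=\infty$ for all $t<1$, and this set does not separate the $\mu_t$. To fix this, I would compare $\mu_s$ and $\mu_t$ with $s\neq t$ directly. Suppose $\mu_s=\mu_t$ with, say, $s<t$. Then $(t-s)\mu_0+\mu_s=(t-s)\mu_1+\mu_t$, i.e.,
\[(1-s)\mu_0+s\mu_1=(1-t)\mu_0+t\mu_1.\]
Subtraction is not allowed for infinite measures, so I would instead restrict both sides to the family of sets $B$ on which $\mu_0(B)+\mu_1(B)<\infty$. On such $B$ all values are finite, so we can cancel and obtain $\mu_0(B)=\mu_1(B)$.

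If $\mu_0+\mu_1$ is $\sigma$-finite, these sets exhaust $\cX$, and we conclude $\mu_0=\mu_1$, a contradiction. If $\sigma$-finiteness fails, I would simply use a smaller infinite family. For example, if $\mu_0(A)=\infty>\mu_1(A)$, take $\mu_1$ together with the measures $\mu_1+\tfrac1k(\mu_0-\mu_1)$. Alternatively, observe that $\mu_t(A)$ for $t$ near $1$ already distinguishes $\mu_1$ from all the others, and then iterate with $\mu_{1/2}$ replacing $\mu_0$.

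Either way we obtain infinitely many distinct representing measures of $L$. In the setting of interest (finite, or at least $\sigma$-finite, measures) the finite case of Step 3 already suffices.
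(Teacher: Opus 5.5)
Your Steps 1 and 2 are correct. Step 3 is also correct whenever $\mu_0$ and $\mu_1$ differ on a set on which both are finite. In particular your argument is complete whenever $\mu_0+\mu_1$ is $\sigma$-finite, e.g.\ whenever $\cV$ contains a strictly positive function such as the constant $1$. The paper gives no proof and defers the statement to an exercise, and this convex-combination argument is the natural solution of it.

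The gap is the remaining case. The statement does not exclude it, because arbitrary measures on an arbitrary measurable space are allowed. Your own cancellation argument shows that if $\mu_s=\mu_t$ for some $s\neq t$, then $\mu_0$ and $\mu_1$ agree on every set on which both are finite. In that case $\mu_t$ is one and the same measure for all $t\in(0,1)$. Neither of your fixes leaves this collapsed family:
\begin{itemize}
\item $\mu_1+\frac{1}{k}(\mu_0-\mu_1)$ is, wherever it makes sense, just $\mu_{1-1/k}$.
\item The segment from $\mu_{1/2}$ to $\mu_1$ consists of the measures $\mu_{(1+s)/2}$, which collapse again.
\end{itemize}
So you never obtain more than $\mu_0,\mu_{1/2},\mu_1$.

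Concretely, suppose all $f\in\cV$ vanish at a point $x\in\cX$, $L=0$, $\mu_0=\infty\cdot\delta_x$ and $\mu_1=\delta_x$. Then $\mu_t=\mu_0$ for all $t<1$, and your construction yields only two measures, although $c\cdot\delta_x$, $c\geq 0$, are infinitely many representing measures of $L$. So the sentence ``either way we obtain infinitely many distinct representing measures'' is not justified. Also, $(t-s)\mu_0+\mu_s=(t-s)\mu_1+\mu_t$ does not follow from $\mu_s=\mu_t$; only the equation after ``i.e.'' is what you use.

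What is missing is a second source of representing measures in this degenerate case. Choose $A\in\cA$ with, say, $\mu_0(A)=\infty>\mu_1(A)$. Let $\mathcal{N}$ consist of all $B\in\cA$ with $B\subseteq A$ and $\mu_0(B)<\infty$. In this case $\mu_0=\mu_1$ on $\mathcal{N}$ and $\mu_1(A)<\infty$, so:
\begin{itemize}
\item $\mathcal{N}$ is closed under countable unions and measurable subsets, and $A\notin\mathcal{N}$;
\item $\mu_0$-integrability gives $\{f\neq 0\}\cap A=\bigcup_{k\in\nset}\{|f|>1/k\}\cap A\in\mathcal{N}$ for every $f\in\cV$.
\end{itemize}
Hence $\mu_1+\lambda$ represents $L$ for every measure $\lambda$ that is concentrated on $A$ and vanishes on $\mathcal{N}$. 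Different such $\lambda$ give different measures $\mu_1+\lambda$, because $\mu_1$ is finite on all subsets of $A$.

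There are infinitely many such $\lambda$. Put $\nu(C):=\infty$ if $C\cap A\notin\mathcal{N}$ and $\nu(C):=0$ otherwise. The restrictions $\nu|_B$, $B\subseteq A$, are pairwise different for sets $B$ that differ modulo $\mathcal{N}$. If there are only finitely many classes modulo $\mathcal{N}$, there is an atom $B_0\notin\mathcal{N}$: every measurable $C\subseteq B_0$ satisfies $C\in\mathcal{N}$ or $B_0\setminus C\in\mathcal{N}$. Then $\lambda(C):=1$ for $C\cap B_0\notin\mathcal{N}$ and $\lambda(C):=0$ otherwise is a finite such measure, so all $c\cdot\lambda$, $c\geq 0$, work.

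Both alternatives really occur. $\infty\cdot\delta_x$ has only two restrictions. The Borel measure on $\rset$ that is $0$ on meager and $\infty$ on non-meager sets admits no finite measure $\lambda\neq 0$ vanishing on its null sets.
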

\begin{proof}
See Problem \ref{prob:momFuncIndeterminate}.
\end{proof}

\section{Restrictions of the Support of Representing Measures}

For a function $f:\cX\to\rset$ on any set $\cX$, we define by
\begin{equation}\label{eq:zeroSet}
\cZ(f) := \{x\in\cX \,|\, f(x) = 0\} = f^{-1}(\{0\})
\end{equation}
the \emph{zero set of $f$}.\index{zero!set}\index{set!zero}

\begin{thm}\label{thm:supportRestriction}
Let $(\cX,\cA)$ be a measurable space, let $\cV$ be a real vector space of measurable functions $f:\cX\to\rset$, and let
\[L:\cV\to\rset\]
be a moment functional.
If there exists a function $g\in\cV$ with
\[g\geq 0 \quad\text{and}\quad L(g) = 0,\]
then 
\[\mu\big(\cX\setminus\cZ(g)\big) = 0\]
for every representing measure $\mu$ of $L$, i.e., every representing measure $\mu$ of $L$ is supported on $\cZ(g)$.
\end{thm}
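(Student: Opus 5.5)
The plan is to use the elementary measure-theoretic fact that a non-negative measurable function with vanishing integral vanishes almost everywhere. We apply this to the given $g$ and an arbitrary representing measure $\mu$ of $L$.

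Fix a representing measure $\mu$ of $L$. Since $g\in\cV$ and $L$ is represented by $\mu$, equation (\ref{eq:momFunctional}) gives
\[0 = L(g) = \int_\cX g(x)~\diff\mu(x).\]
Because $g\geq 0$, the set $\cX\setminus\cZ(g) = \{x\in\cX \,|\, g(x)>0\}$ is measurable; it is the preimage of $(0,\infty)$ under the measurable function $g$. We decompose it as the increasing countable union
\[\cX\setminus\cZ(g) = \bigcup_{k\in\nset} A_k, \qquad A_k := \{x\in\cX \,|\, g(x)\geq 1/k\}.\]

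For each $k$ we have $g\geq \frac{1}{k}\chi_{A_k}$ pointwise, since $g\geq 0$ everywhere. Monotonicity of the integral for non-negative functions (a Markov/Chebyshev-type inequality) then yields
\[\frac{1}{k}\,\mu(A_k) \leq \int_\cX g~\diff\mu = 0,\]
so $\mu(A_k)=0$ for all $k$. By continuity from below, or simply by $\sigma$-subadditivity, it follows that $\mu(\cX\setminus\cZ(g)) = \lim_k \mu(A_k) = 0$. Since $\mu$ was an arbitrary representing measure, this proves the claim.

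There is no real obstacle here. The only points needing care are that $\cX\setminus\cZ(g)$ and the $A_k$ are measurable, and that the integral of the non-negative function $g$ is well-defined, which holds because $g$ is $\mu$-integrable, being in $\cV$ and represented by $\mu$. The phrase "supported on $\cZ(g)$" should be read as "$\mu$ is concentrated on $\cZ(g)$", which is exactly the statement proved.
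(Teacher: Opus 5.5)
Your proof is correct and takes essentially the paper's approach: a Markov-type bound on the level sets of $g$, followed by a countable union. The only difference is that the paper uses the disjoint slices $g^{-1}([(n+1)^{-1},n^{-1}))$ together with countable additivity, whereas you use the nested sets $\{g\geq 1/k\}$ together with continuity from below; this difference is inessential.
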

\begin{proof}
Define
\[A_0 := \{x\in\cX \,|\, g(x)\geq 1\} = g^{-1}([1,\infty))\]
and
\[A_n := \left\{x\in\cX \,\middle|\, n^{-1} > g(x) \geq (n+1)^{-1}\right\} = g^{-1}([(n+1)^{-1},n^{-1}))\]
for all $n\in\nset$.
Since $g$ is measurable and $[1,\infty)$ and $[(n+1)^{-1},n^{-1})$ are Borel sets for all $n\in\nset$, $A_n\in\cA$ for all $n\in\nset_0$ .
Then
\[0\leq \mu(A_n) = \int_\cX \chi_{A_n}(x)~\diff\mu(x) \leq (n+1)\int_{A_n} g(x)~\diff\mu(x) \leq (n+1)\cdot L(g) = 0\]
for all $n\in\nset_0$, i.e., $\mu(A_n) = 0$ for all $n\in\nset_0$.
Hence,
\[\mu\big(\cX\setminus\cZ(g)\big) = \mu\left(\bigcup_{n\in\nset_0} A_n \right) = \sum_{n\in\nset_0} \mu(A_n) = 0.\qedhere\]
\end{proof}

\section{The Moment Cone and the Riesz Functional}

\begin{dfn}\label{dfn:momCone}
Let $(\cX,\cA)$ be a measurable space and let $\cV$ be a real vector space of measurable functions $f:\cX\to\rset$.
We define
\[\cL(\cV) := \{L:\cV\to\rset\ \text{linear} \,|\, L\ \text{is a moment functional}\}\]
to be the \emph{cone of moment functionals} on $\cV$.\index{cone!moment functional}\index{moment!functional!cone}\index{functional!moment!cone}
If $\cV$ is finite dimensional, then $\cL(\cV)$ is called \emph{truncated}\index{cone!moment functional!truncated}\index{moment!functional!cone!truncated}\index{truncated!moment!functional} moment cone.
\end{dfn}

\begin{lem}\label{lem:convexLV}
Let $(\cX,\cA)$ be a measurable space and let $\cV$ be a real vector space of measurable functions $f:\cX\to\rset$.
Then $\cL(\cV)$ is a convex cone.
\end{lem}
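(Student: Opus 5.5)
To prove that $\cL(\cV)$ is a convex cone, I will show that it is closed under multiplication by non-negative scalars and under addition. Together these give closedness under all combinations $\lambda L_1 + \nu L_2$ with $\lambda,\nu\geq 0$, and in particular under convex combinations. The whole argument works at the level of representing measures, using only that the integral is linear in the measure.

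\emph{Scaling.} Let $L\in\cL(\cV)$ with representing measure $\mu$, and let $\lambda\geq 0$. Then $\lambda\mu$ is again a measure on $(\cX,\cA)$. Here $\lambda\mu$ is the zero measure if $\lambda=0$, and we use the convention $0\cdot\infty = 0$. Every $f\in\cV$ is $\mu$-integrable, so it is also $\lambda\mu$-integrable, and
\[\int_\cX f~\diff(\lambda\mu) = \lambda\int_\cX f~\diff\mu = \lambda L(f).\]
For $\lambda=0$ this gives the zero functional, which is represented by the zero measure. Hence $\lambda L\in\cL(\cV)$.

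\emph{Addition.} Let $L_1,L_2\in\cL(\cV)$ with representing measures $\mu_1,\mu_2$. Then $\mu := \mu_1+\mu_2$ is a measure, since countable additivity is inherited term by term. For $f\in\cV$, I will verify that $\int f~\diff\mu = \int f~\diff\mu_1 + \int f~\diff\mu_2$ in the standard way:
\begin{enumerate}[(i)]
\item first for indicator functions of sets in $\cA$, where it is the definition of $\mu$;
\item then for simple functions, by linearity;
\item then for non-negative measurable functions, by monotone convergence;
\item finally for $f = f^+ - f^-$.
\end{enumerate}
The non-negative case gives $\int |f|~\diff\mu = \int|f|~\diff\mu_1 + \int|f|~\diff\mu_2<\infty$, so each $f\in\cV$ is $\mu$-integrable. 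Therefore $(L_1+L_2)(f) = \int f~\diff\mu$, and $L_1+L_2\in\cL(\cV)$.

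\emph{Main step.} The only step with any content is checking that integration against a sum of measures splits into the sum of the integrals, including integrability of $f$ with respect to $\mu_1+\mu_2$. This is the routine measure-theoretic induction described in the addition step. Everything else is immediate from the definition of a moment functional.
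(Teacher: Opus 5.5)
Your proof is correct. The paper gives no proof of its own and defers to Problem \ref{prob:convexLV}; your argument, which closes $\cL(\cV)$ under nonnegative scalings and sums by scaling and adding representing measures (including the routine check that $\int f\,\diff(\mu_1+\mu_2)=\int f\,\diff\mu_1+\int f\,\diff\mu_2$ with integrability), is exactly the direct argument that exercise calls for.
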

\begin{proof}
See Problem \ref{prob:convexLV}.
\end{proof}

\begin{dfn}\label{dfn:RieszFunctional}
Let $(\cX,\cA)$ be a measurable space and let $\cV$ be a real vector space of measurable functions $f:\cX\to\rset$.
If $v=(v_i)_{i\in I}$ is a basis of $\cV$ with some index set $I$ and if $s = (s_i)_{i\in I}\in\rset^I$ is a real sequence indexed by $I$, then the \emph{Riesz functional}\index{functional!Riesz}\index{Riesz!functional}
\[L_s:\cV\to\rset\index{dfn:Ls}\]
of $s$ is defined by
\[L_s(v_i) := s_i\]
for all $i\in I$ and linearly extended to $\cV$.
If $L_s$ is a moment functional, then $s$ is called a \emph{moment sequence}.\index{moment!sequence}\index{sequence!moment}
If $I$ is finite and hence $\cV$ is finite dimensional, then a moment sequence $s$ is called \emph{truncated}.\index{truncated!moment!sequence}\index{moment!sequence!truncated}\index{sequence!moment!truncated}
We call the set
\[\cS(\cV,v) := \big\{s\in\rset^I \,\big|\, s\ \text{is a momemt sequence}\}\label{dfn:SV}\]
of all moment sequences the \emph{moment cone}.\index{moment!cone}\index{cone!moment}
\end{dfn}

\begin{rem}
The moment cone $\cS(\cV,v)$ depends on the basis $v=(v_i)_{i\in I}$ of $\cV$.
The cone $\cL(\cV)$ is independent on the specific choice of basis $v=(v_i)_{i\in I}$ of $\cV$.
Since a basis $v$ of $\cV$ describes $\cV$,
\[\cL(\cV) \;\cong\; \cS(\cV,v) \;\cong\; \cS(\cV,\tilde{v})\]
for any basis $\tilde{v}$ of $\cV$.
\exmsymbol
\end{rem}

\begin{exms}
Let $(\cX,\cA) = (\rset,\fB(\rset))$ and let $\cV=\rset[x]$ be with basis $v=(v_i)_{i\in\nset_0}$ and $v_i(x) := x^i$ for all $i\in\nset_0$.
\begin{enumerate}[\bfseries (a)]
\item Let $y\in\rset$ and set
\[s := \big(y^i\big)_{i\in\nset_0}.\]
Then, by Example \ref{exm:momFunctionals} (a), $s$ is a moment sequence.

\item Let
\[s := \left(\frac{1}{1+i}\right)_{i\in\nset_0}.\]
Then, by Example \ref{exm:momFunctionals} (b), $s$ is a moment sequence.
\exmsymbol
\end{enumerate}
\end{exms}

\section{Richter's Theorem}

Moment functionals and measures can be quite difficult to describe.
However, truncated moment functionals always have very simple representing measures as the next result shows.

\begin{richthm}[{\cite[Satz 4]{richte57}}]\label{thm:richter}\index{Richter's Theorem}\index{Theorem!Richter}
Let $(\cX,\cA)$ be a measurable space, let $\cV$ be a finite-dimensional real vector space of measurable functions $f:\cX\to\rset$, and let
\[L:\cV\to\rset\]
be a moment functional.
Then $L$ has a finitely atomic representing measure
\[\mu = \sum_{i=0}^{k} c_i\cdot\delta_{x_i}\]
with at most $k\leq\dim\cV$ atoms, i.e., there exist constants $c_1,\dots,c_k>0$ and pairwise different points $x_1,\dots,x_k\in\cX$ such that
\[L(f) = \sum_{i=1}^{k} c_i\cdot f(x_i)\]
for all $f\in\cV$.
\end{richthm}
\begin{proof}
We proceed via induction over $n:=\dim\cV$.

\underline{$n=1$:}
If $L(f)=0$ for all $f\in\cV$, then $\mu = 0$ is a representing measure of $L$.
Hence, assume without loss of generality that $L(f)\neq 0$ for a $f\in\cV\setminus\{0\}$.
Fix $f\in\cV\setminus\{0\}$.
Since $L$ is a moment functional, there exists a point $x\in\cX$ such that
\[f(x)\neq 0 \quad\text{and}\quad \sign L(f) = \sign f(x).\]
Then
\[L(g) = g(x)\cdot \frac{L(f)}{f(x)} = \int_\cX g(y)~\diff\mu(y) \qquad\text{with}\qquad \mu = \frac{L(f)}{f(x)}\cdot\delta_x\]
for any $g\in\cV$, since $\cV$ is $1$-dimensional and hence $g = c\cdot f$ with $c = \frac{g(x)}{f(x)}$.

\underline{$n\to n+1$:}
Assume the theorem holds for all $\dim\cV=1,\dots,n$.
We need to prove it for $\dim\cV=n+1$.
Set
\[\tilde{\cL}:= \conv\cone\{l_x \,|\, x\in\cX\}.\]
Then
\[\tilde{\cL}\subseteq\cL.\]
Since $\cV$ is finite-dimensional, $\tilde{\cL}$ and $\cL$ are finite-dimensional and have non-empty interior with
\[\inter\tilde{\cL}\subseteq\inter\cL.\]
Assume
\[\inter\tilde{\cL}\neq\inter\cL\]
and let
\[L\in\inter(\inter\cL\setminus\tilde{\cL}) = \inter\cL\setminus\overline{\tilde{\cL}}.\]
Then there exists a $f\in\cV \cong \cV^{**}$ such that
\[f(x) = l_x(f)>0 \qquad\text{for all}\ x\in\cX\]
and
\begin{equation}\label{eq:Lfsmaller0}
L(f) < 0.
\end{equation}
Since $L\in\inter\cL\subseteq\cL$ is a moment functional and $f\in(\cV_+)$,
\[L(f)\geq 0.\]
This is a contradiction to (\ref{eq:Lfsmaller0}), i.e., we proved $\inter\tilde{\cL}=\inter\cL$.

Now let
\[L\in\partial\cL\cap\cL\]
be a moment functional on the boundary of the truncated moment cone.
Then there exists a
\[f\in\cV_+\subseteq\cV\cong\cV^{**}\]
such that
\[L(f)=0.\]
By \Cref{thm:supportRestriction}, every representing measure $\mu$ of $L$ is supported on $\cY:=\cZ(f)$.
Let $\cW$ be a $n$-dimensional subspace of $\cV$ such that
\[\cV = \cW+\rset\cdot f,\]
i.e., $L$ lives only on $\cW|_\cY$ since $L(f)=0$.
Hence, $\dim\cW|_\cY = n$ and the theorem holds by the induction hypothesis.
\end{proof}

The previous proof is the original proof by Richter and only the mathematical language is updated.

\begin{rem}[see \cite{didioCone22}]
Replacing integration by finitely many point evaluations was already used and investigated by C.\ F.\ Gau{\ss} \cite{gauss15}.\index{Gauss@Gau\ss, C.\ F.}
The $k$-atomic representing measures from \Cref{thm:richter} are therefore also called (\emph{Gaussian}) \emph{cubature formulas}.\index{cubature formula!Gaussian}

The history of \Cref{thm:richter} is confusing and the literature is often misleading.
We therefore list in chronological order previous versions or versions which appeared almost at the same time.
The conditions of these versions (including Richter) are the following:
\begin{enumerate}[\bfseries\;\;(A)]
\item A.\ Wald 1939\footnote{Received: February 25, 1939. Published: September 1939.} \cite[Prop.\ 13]{wald39}:\index{Wald, A.}\index{Theorem!Wald} $\cX = \rset$ and $f_i(x) = |x-x_0|^{d_i}$ with $d_i\in\nset_0$, $0\leq d_1 < d_2 < \dots < d_n$, and $x_0\in\cX$.\medskip

\item P.\ C.\ Rosenbloom 1952 \cite[Cor.\ 38e]{rosenb52}:\index{Rosenbloom, P.\ C.}\index{Theorem!Rosenbloom} $(\cX,\cA)$ a measurable space and $f_i$ bounded measurable functions.\medskip

\item H.\ Richter 1957\footnote{Received: December 27, 1956. Published: April, 1957.} \cite[Satz 4]{richte57}: $(\cX,\cA)$ a measurable space and $f_i$ measurable functions.\medskip

\item M.\ V.\ Tchakaloff 1957\footnote{Published: July-September, 1957} \cite[Thm.\ II]{tchaka57}: $\cX\subset\rset^n$ compact and $f_i$ monomials of degree at most $d$.\label{item:tchakaloff}\index{Tchakaloff, M.\ V.}\index{Theorem!Tchakaloff}\medskip

\item W.\ W.\ Rogosinski 1958\footnote{Received: August 22, 1957. Published: May 6, 1958.} \cite[Thm.\ 1]{rogosi58}: $(\cX,\cA)$ measurable space and $f_i$ measurable functions.\label{item:rosenbloom}\index{Rogosinski, W.\ W.}\index{Theorem!Rogosinski}
\end{enumerate}
From this list we see that Tchakaloff's result (\ref{item:tchakaloff}) from 1957 is a special case of Rosenbloom's result (\ref{item:rosenbloom}) from 1952 and that the general case was proved by Richter and Rogosinski almost about at the same time, see the exact dates in the footnotes.
If one reads  Richter's paper, one might think at first glance that he treats only the one-dimensional case, but a closer look reveals that his Proposition (Satz) 4 covers actually the  general case of measurable functions.
Rogosinski treats the one-dimensional case, but states at the end of the introduction of \cite{rogosi58}:
\begin{quote}
Lastly, the restrictions in this paper to moment problems of dimension one is hardly essential.
Much of our geometrical arguments carries through, with obvious modifications, to any finite number of dimensions, and even to certain more general measure spaces.
\end{quote}
The above proof of \Cref{thm:richter}, and likewise the one in \cite[Theorem 1.24]{schmudMomentBook}, are nothing but modern formulations of the proofs of Richter and Rogosinski without additional arguments.
Note that Rogosinki's paper \cite{rogosi58} was submitted about a half year after the appearance of Richter's \cite{richte57}.

It might be of interest that the general results of Richter and Rogosinski from 1957/58 can be derived from Rosenbloom's Theorem from 1952, see Problem \ref{prob:richterFromRosen}.
With that wider historical perspective in mind it might be justified to call \Cref{thm:richter} also the \emph{Richter--Rogosinski--Rosenbloom Theorem}.\index{Richter--Rogosinski--Rosenbloom Theorem}\index{Theorem!Richter--Rogosinski--Rosenbloom}

\Cref{thm:richter} was overlooked in the modern literature on truncated polynomial moment problems.
The problem probably arose around 1997/98 when it was stated as an open problem in a published paper.\footnote{We do not give the references for this and subsequent papers who reproved \Cref{thm:richter}.}
The paper \cite{richte57} and numerous works of J.\ H.\ B.\ Kemperman were not included back then.
Especially \cite[Thm.\ 1]{kemper68} where Kemperman fully states the general theorem (\Cref{thm:richter}) and attributed it therein to Richter and Rogosinski is missing.
Later on, this missing piece was not added in several other works.
The error continued in the literature for several years and \Cref{thm:richter} was reproved in several papers in weaker forms.
Even nowadays papers appear not aware of \Cref{thm:richter} or of the content of \cite{richte57}.
\exmsymbol
\end{rem}

\section*{Problems}%%%%%%%%%%%%%%%%%%%%%
\addcontentsline{toc}{section}{Problems}

\begin{prob}\label{prob:st}
Let $t\in [0,\infty)$ and
\[s(t) := (1,0,t,0,t^2,0,\dots).\]
\begin{enumerate}[\bfseries\qquad a)]
\item Show that $s(t)$ is a moment sequence. What is a representing measure?

\item Is $s(t)$ determinate?
\end{enumerate}
\end{prob}

\begin{prob}\label{prob:momFunctionals}
In Example \ref{exm:momFunctionals} (a), prove or disprove the following statements:
\begin{enumerate}[\bfseries\qquad a)]
\item If for every $x\in\cX\setminus\{y\}$ there exists a function $f\in\cV$ with $f(x) \neq f(y)$, then $\delta_y$ is unique and $L$ is determinate.

\item If there exists a point $x\in\cX\setminus\{y\}$ with $f(x) = f(y)$, then $\delta_x$ and $\delta_y$ are two different representing measures of $L$, i.e., $L$ is indeterminate.
\end{enumerate}
\end{prob}

\begin{prob}\label{prob:momFuncDeterminate}
Prove \Cref{prop:momFuncDeterminate}.
\emph{Hint}: Use the Stone--Weierstrass Theorem.
\end{prob}

\begin{prob}\label{prob:momFuncIndeterminate}
Prove \Cref{prop:momFuncIndeterminate}.
\end{prob}

\begin{prob}\label{prob:convexLV}
Prove \Cref{lem:convexLV}.
\end{prob}

\begin{prob}\label{prob:cLclosed}
Non-closed and closed moment cones $\cL(\cV)$.
Let $\cV$ be finite-dimensional.
\begin{enumerate}[\bfseries\qquad a)]
\item Give an example of $\cL$ which is not closed.
Give an explicit $L\in\partial\cL\setminus\cL$.

\item Show that if there exists a $e\in\cV\subseteq\cC(\cX,\rset)$ with $e(x)>0$ for all $x\in\cX$ and let $\cX\subset\rset^n$ be compact, then $\cL$ is closed.
\end{enumerate}
\end{prob}

\begin{prob}\label{prob:cLrset}
Give an example of a finite-dimensional $\cV$ with $\cL(\cV) \cong \rset^{\dim\cV}$.
\end{prob}

\begin{prob}\label{prob:richterFromRosen}
Show that \Cref{thm:richter} follows from Rosenbloom's Theorem, i.e., show that the additional assumption that all $f_i$ are bounded on the measurable space $(\cX,\cA)$ can be omitted.
\end{prob}

\begin{prob}
Let $n\in\nset$.
Let $A,B\subseteq\rset^n$ be open and convex with $B\subseteq A$.
Show that either
\[A=B \qquad\text{or}\qquad \inter (A\setminus B) = A\setminus\overline{B} \neq \emptyset.\]
This argument is implicitly used in the proof of \Cref{thm:richter}.
For more on convex sets, see e.g.\ \cite{rock72} or \cite{schne14}. 
\end{prob}

%%%%%%%%%%%%%%%%%%%%%%%%%%%%%%%%%%%%%%%%%%%%%%%%
\chapter{Adapted Spaces and Choquet Theory}%%%%%
\label{ch:adapted}%%%%%%%%%%%%%%%%%%%%%%%%%%%%%%
%%%%%%%%%%%%%%%%%%%%%%%%%%%%%%%%%%%%%%%%%%%%%%%%

In the previous chapter we introduced moments, moment sequences, and moment functional, especially the truncated versions.
Now we will go to \emph{adapted spaces} and \emph{Choquet's Theory} to determine basic criteria when a linear function
\[L:\cV\to\rset\]
is a moment functional.
The theory of adapted spaces is called after Gustave Choquet,\index{Choquet!Gustave} see \cite{choquet69,phelpsLectChoquetTheorem}.

\section{Adapted Spaces of Continuous Functions}
%%%%%%%%%%%%%%%%%%%%%%%%%%%%%%%%%%%%%%%%%%%%%%%%%

\begin{dfn}
Let $\cX$ be a set and $\cV$ be a real vector space of functions $f:\cX\to\rset$.
We define
\[\cV_+ := \{f\in\cV \,|\, f\geq 0\}.\label{dfn:cV+}\]
\end{dfn}

\begin{dfn}\label{dfn:dominate}
Let $\cX$ be a locally compact Hausdorff space and $f,g\in\cat(\cX,\rset)_+$.
If, for any $\varepsilon>0$, there is an $h_\varepsilon\in\cat_c(\cX,\rset)$ such that
\[g \leq \varepsilon f + h_\varepsilon,\]
then we say $f$ \emph{dominates}\index{dominate} $g$.
\end{dfn}

Equivalent expressions are the following.

\begin{lem}[see e.g.\ {\cite[Lem.\ 1.4]{schmudMomentBook}}]\label{lem:adapted}
Let $\cX$ be a locally compact Hausdorff space and let $f,g\in\cat(\cX,\rset)_+$.
Then the following are equivalent:
\begin{enumerate}[(i)]
\item $f$ dominates $g$.

\item For every $\varepsilon>0$, there exists a compact set $K_\varepsilon\subseteq\cX$ such that
\[g(x) \leq \varepsilon\cdot f(x)\]
for all $x\in\cX\setminus K_\varepsilon$.

\item For every $\varepsilon>0$, there exists an $\eta_\varepsilon\in\cat_c(\cX,\rset)$ with $0 \leq \eta_\varepsilon\leq 1$ such that
\[g \leq \varepsilon\cdot f + \eta_\varepsilon\cdot g.\]
\end{enumerate}
\end{lem}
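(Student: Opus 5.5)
I will prove the cycle (i)$\Rightarrow$(ii)$\Rightarrow$(iii)$\Rightarrow$(i). All three conditions are local-versus-global statements about the inequality $g\leq\varepsilon f$ outside a compact set, so the work is in passing between compactly supported functions and compact sets. The tool for that is Urysohn's Lemma for locally compact Hausdorff spaces: for every compact $K\subseteq\cX$ there is an $\eta\in\cat_c(\cX,\rset)$ with $0\leq\eta\leq 1$ and $\eta=1$ on $K$.

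\textbf{(i)$\Rightarrow$(ii).}
1. Fix $\varepsilon>0$. Apply (i) with $\varepsilon/2$ to get $h:=h_{\varepsilon/2}\in\cat_c(\cX,\rset)$ with $g\leq\frac{\varepsilon}{2}f+h$.
2. Set $K_\varepsilon:=\supp h$, which is compact.
3. For $x\notin K_\varepsilon$ we have $h(x)=0$, so $g(x)\leq\frac{\varepsilon}{2}f(x)\leq\varepsilon f(x)$, using $f\geq 0$.

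Here we could even have used $\varepsilon$ directly instead of $\varepsilon/2$.

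\textbf{(ii)$\Rightarrow$(iii).}
1. Given $\varepsilon>0$, take $K_\varepsilon$ from (ii).
2. By Urysohn's Lemma choose $\eta_\varepsilon\in\cat_c(\cX,\rset)$ with $0\leq\eta_\varepsilon\leq 1$ and $\eta_\varepsilon=1$ on $K_\varepsilon$.
3. On $K_\varepsilon$: $g=\eta_\varepsilon g\leq\varepsilon f+\eta_\varepsilon g$, since $f\geq 0$.
4. Off $K_\varepsilon$: $g\leq\varepsilon f\leq\varepsilon f+\eta_\varepsilon g$, since $\eta_\varepsilon g\geq 0$.

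\textbf{(iii)$\Rightarrow$(i).}
1. Set $h_\varepsilon:=\eta_\varepsilon g$.
2. It is continuous, being a product of continuous functions.
3. Its support is contained in the compact set $\supp\eta_\varepsilon$, and a closed subset of a compact set is compact, so $h_\varepsilon\in\cat_c(\cX,\rset)$.
4. The inequality $g\leq\varepsilon f+h_\varepsilon$ is then exactly (iii).

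The only nontrivial ingredient is the existence of the cutoff $\eta_\varepsilon$ in (ii)$\Rightarrow$(iii). This is where local compactness and the Hausdorff property are needed. I will cite the standard locally compact version of Urysohn's Lemma, which takes an open set $U\supseteq K_\varepsilon$ with compact closure and produces $\eta$ with $K_\varepsilon\prec\eta\prec U$. Everything else is pointwise bookkeeping that uses $f,g\geq 0$.
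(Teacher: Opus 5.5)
Your proof is correct. The paper gives no proof of its own: it leaves this as Problem~\ref{prob:adaptedLem} and points to \cite[Lem.\ 1.4]{schmudMomentBook}. Your cycle (i)$\Rightarrow$(ii)$\Rightarrow$(iii)$\Rightarrow$(i), using $K_\varepsilon:=\supp h_\varepsilon$, a Urysohn cutoff $\eta_\varepsilon$ equal to $1$ on $K_\varepsilon$, and $h_\varepsilon:=\eta_\varepsilon g$, is exactly the standard argument given in that reference.
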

\begin{proof}
See Problem \ref{prob:adaptedLem}.
\end{proof}

\begin{dfn}\label{dfn:adaptedSpace}
Let $\cX$ be a locally compact Hausdorff space and let $E\subseteq\cat(\cX,\rset)$ be a vector space.
Then $E$ is called \emph{adapted space}\index{space!adapted}\index{adapted!space}, if the following conditions hold:
\begin{enumerate}[(i)]
\item $E = E_+ - E_+$,

\item for every $x\in\cX$, there exists a $f\in E_+$ such that $f(x) > 0$, and

\item every $g\in E_+$ is dominated by some $f\in E_+$.
\end{enumerate}
\end{dfn}

The space $\cat_c(\cX,\rset)_+$ is of special interest because of the Riesz Representation Theorem.
The following result shows that any $g\in\cat_c(\cX,\rset)_+$ is dominated (and even bounded) by some $f\in E_+$ for any given adapted space $E\subseteq\cat(\cX,\rset)$.

\begin{lem}\label{lem:adaptedCompact}
Let $\cX$ be a locally compact Hausdorff space, let $g\in\cat_c(\cX,\rset)_+$, and let $E\subseteq\cat(\cX,\rset)$ be an adapted space.
Then there exists a $f\in E_+$ such that $f\geq g$.
\end{lem}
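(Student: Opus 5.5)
We prove the statement by a compactness argument based only on condition (ii) of \Cref{dfn:adaptedSpace}; conditions (i) and (iii) are not needed. Let $C := \supp g$, which is compact. If $g=0$ we may take $f=0\in E_+$, so assume $g\neq 0$ and set $M := \max_{x\in\cX} g(x) > 0$. This maximum exists because $g$ is continuous with compact support.

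First we cover $C$ by sets on which some element of $E_+$ is bounded away from zero. By condition (ii) of \Cref{dfn:adaptedSpace}, for every $y\in C$ there is an $f_y\in E_+$ with $f_y(y)>0$. Since $f_y$ is continuous, the set
\[U_y := \big\{x\in\cX \,\big|\, f_y(x) > \tfrac{1}{2} f_y(y)\big\}\]
is an open neighbourhood of $y$. The family $(U_y)_{y\in C}$ is an open cover of the compact set $C$, so there are finitely many points $y_1,\dots,y_m\in C$ with $C\subseteq U_{y_1}\cup\dots\cup U_{y_m}$.

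Next we glue these finitely many functions together. Put
\[f_0 := \sum_{j=1}^m \frac{2}{f_{y_j}(y_j)}\, f_{y_j}.\]
Then $f_0\in E_+$, because $E$ is a vector space and $E_+$ is closed under sums and non-negative scalar multiples. For every $x\in C$ there is an index $j$ with $x\in U_{y_j}$. All summands are non-negative, so
\[f_0(x) \;\geq\; \frac{2}{f_{y_j}(y_j)}\, f_{y_j}(x) \;>\; 1.\]
Now set $f := M\cdot f_0\in E_+$. On $C$ we get $f > M \geq g$. On $\cX\setminus C$ we have $g=0\leq f$. Hence $f\geq g$ on all of $\cX$.

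The only real step is getting a single element of $E_+$ that is uniformly bounded below on $C$. Pointwise positivity from condition (ii) does not give this directly. The neighbourhoods $U_y$, defined with the threshold $\tfrac12 f_y(y)$, turn pointwise positivity into a uniform lower bound, and finiteness of the subcover lets us sum the functions without leaving $E_+$. Everything else is bookkeeping.
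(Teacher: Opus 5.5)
Your proof is correct. The paper gives no proof of its own and defers the lemma to Problem \ref{prob:adaptedCompact}. Your argument is the standard compactness one: use condition (ii) of the definition of adapted space to obtain, for each point of $\supp g$, a function in $E_+$ bounded below on a neighbourhood of that point, extract a finite subcover, and sum. Your normalization differs only cosmetically from the usual version, which rescales each $f_y$ so that $f_y(y)>g(y)$ and covers by the open sets $\{f_y>g\}$, whereas you use the threshold $\frac{1}{2}f_y(y)$ and multiply by $M=\max g$ at the end.
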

\begin{proof}
See Problem \ref{prob:adaptedCompact}.
\end{proof}

\begin{exm}
Let $n\in\nset$ and let $\cX\subseteq\rset^n$ be closed.
Then the space
\[E=\rset[x_1,\dots,x_n]\]
of polynomials on $\cX$ is an adapted space, see Problem \ref{prob:adaptedPolynomials}.
\exmsymbol
\end{exm}

\section{The Basic Representation Theorem}%%%
%%%%%%%%%%%%%%%%%%%%%%%%%%%%%%%%%%%%%%%%%%%%%

One important reason adapted spaces have been introduced is to get the following representation theorem.
It is a general version of \Cref{thm:haviland} and will be used to solve most moment problems in an efficient way.

\begin{basrepthm}[see e.g.\ {\cite[Thm.\ 34.6]{choquet69}}]\label{thm:basicrepresentation}\index{Theorem!Basic Representation}\index{Basic Representation Theorem}\index{representation!Theorem!Basic}
Let $\cX$ be a locally compact Hausdorff space, let $E\subseteq \cat(\cX,\rset)$ be an adapted subspace, and let
\[L:E\to\rset\]
be a linear functional.
Then the following are equivalent:
\begin{enumerate}[(i)]
\item The functional $L$ is $E_+$-positive, i.e., $L(f)\geq 0$ for all $f\in E_+$.

\item $L$ is a moment functional, i.e., there exists a (Radon) measure $\mu$ on $\cX$ such that
\begin{enumerate}[(a)]
\item all $f\in E$ are $\mu$-integrable and,

\item  for all $f\in E$,
\[L(f) = \int_\cX f(x)~\diff\mu(x).\]
\end{enumerate}
\end{enumerate}
\end{basrepthm}

The proof of this very general result exceeds the time available in this course.
The complete proof with all technical details can be found in my lecture notes on T-systems
\begin{center}
\href{https://arxiv.org/abs/2403.04548}{https://arxiv.org/abs/2403.04548 Thm.\ 2.9}
\end{center}
or in the corrected lecture notes on my homepage
\begin{center}
\href{https://www.uni-konstanz.de/zukunftskolleg/community/philipp-di-dio/}{https://www.uni-konstanz.de/zukunftskolleg/community/philipp-di-dio/.}
\end{center}

\section{Classical Moment Problems}
%%%%%%%%%%%%%%%%%%%%%%%%%%%%%%%%%%%%

In this section we give several classical solutions of moment problems: the Stieltjes, Hamburger, and Hausdorff moment problem, as well as Haviland's Theorem.
We ordered the results chronologically.

The first moment problem was solved by T.\ J.\ Stieltjes\index{Stieltjes, T.\ J.} in 1894 \cite{stielt94}.
He was the first who fully stated the moment problem, solved the first one, and by doing that also introduced the integral theory named after him: the \emph{Stieltjes integral}.

\begin{stielthm}[\cite{stielt94}]\label{thm:stieltjesMP}\index{Theorem!Stieltjes}\index{Stieltjes' Theorem}\index{moment!problem!Stieltjes|see{Theorem, Stieltjes}}
Let $s = (s_i)_{i\in\nset_0}$ be a real sequence.
Then the following are equivalent:
\begin{enumerate}[(i)]
\item $s$ is a $[0,\infty)$-moment sequence (Stieltjes moment sequence).

\item $L_s(p)\geq 0$ for all $p\in\pos([0,\infty))$.
\end{enumerate}
\end{stielthm}
\begin{proof}
See Problem \ref{prob:stieltjes}.
\end{proof}

The next moment problem was solved by H.\ L.\ Hamburger\index{Hamburger, H.\ L.}.

\begin{hamthm}[{\cite[Satz X and Existenztheorem (§8, p.\ 289)]{hamburger20}}]\label{thm:hamburgerMP}\index{Hamburger's Theorem}\index{Theorem!Hamburger}\index{moment!problem!Hamburger|see{Theorem, Hamburger}}
Let $s = (s_i)_{i\in\nset_0}$ be a real sequence.
Then the following are equivalent:
\begin{enumerate}[(i)]
\item $s$ is a $\rset$-moment sequence (Hamburger moment sequence or short moment sequence).

\item $L_s(p)\geq 0$ for all $p\in\pos(\rset)$.

\end{enumerate}
\end{hamthm}
\begin{proof}
See Problem \ref{prob:hamburger}.
\end{proof}

Shortly after Hamburger, in 1921, the moment problem on $[0,1]$ was solved by F.\ Hausdorff\index{Hausdorff, F.}.

\begin{hausthm}[{\cite[Satz II and III]{hausdo21}}]\label{thm:hausdorffMP}\index{Hausdorff's Theorem}\index{Theorem!Hausdorff}\index{moment!problem!Hausdorff|see{Theorem, Hausdorff}}
Let $s = (s_i)_{i\in\nset_0}$ be a real sequence.
Then the following are equivalent:
\begin{enumerate}[(i)]
\item $s$ is a $[0,1]$-moment sequence (Hausdorff moment sequence).

\item $L_s(p)\geq 0$ for all $p\in\pos([0,1])$.

\end{enumerate}
\end{hausthm}
\begin{proof}
See Problem \ref{prob:hausdorff}.
\end{proof}

A general approach to solve the $K$-moment problem for any closed $K\subseteq\rset^n$, $n\in\nset$, was presented by E.\ K.\ Haviland\index{Haviland, E.\ K.} in \cite[Theorem]{havila36}, see also \cite[Theorem]{havila35} for the earlier case $K=\rset^n$.
He no longer used continued fractions but employed the Riesz Representation Theorem, i.e., representing a linear functional by integration, and connected the existence of a representing measure to the non-negativity of the linear functional on
\begin{equation}\label{eq:posKdfn}
\pos(K) := \{f\in\rset[x_1,\dots,x_n] \,|\, f\geq 0\ \text{on}\ K\}.
\end{equation}

\begin{havithm}[{\cite[Theorem]{havila36}}]\label{thm:haviland}\index{Haviland's Theorem}\index{Theorem!Haviland}
Let $n\in\nset$, let $K\subseteq\rset^n$ be closed, and let $s = (s_\alpha)_{\alpha\in\nset_0^n}$ be a real sequence.
Then the following are equivalent:
\begin{enumerate}[(i)]
\item $s$ is a $K$-moment sequence.

\item $L_s(p)\geq 0$ for all $p\in\pos(K)$.
\end{enumerate}
\end{havithm}
\begin{proof}
See Problem \ref{prob:haviland}.
\end{proof}

In \cite[Theorem]{havila35} Haviland proves ``only'' the case $K=\rset^n$ with the extension method by M.\ Riesz.\index{Riesz, M.}
In \cite[Theorem]{havila36} this is extended to any closed $K\subseteq\rset^n$.
The idea to do so is attributed by Haviland to A.\ Wintner\index{Wintner, A.} \cite[p.\ 164]{havila36}:
\begin{quote}
A.\ Wintner has subsequently suggested that it should be possible to extend this result [\cite[Theorem]{havila35}] by requiring that the distribution function [measure] solving the problem have a spectrum [support] contained in a preassigned set, a result which would show the well-known criteria for the various standard special momentum problems (Stieltjes, Herglotz [trigonometric], Hamburger, Hausdorff in one or more dimensions) to be put particular cases of the general $n$-dimensional momentum problem mentioned above.
The purpose of this note [\cite{havila36}] is to carry out this extension.
\end{quote}

In \cite{havila36} after the general Theorem \ref{thm:haviland} Haviland then goes through all the classical results (Theorems \ref{thm:stieltjesMP} to \ref{thm:hausdorffMP}, and the Herglotz\index{Herglotz!moment problem}\index{moment!problem!Herglotz} (trigonometric) moment problem\index{trigonometric!moment problem}\index{moment!problem!trigonometric} on the unit circle $\tset$, which we did not included here) and shows how all these results (i.e., conditions on the sequences) are recovered from the at this point known representations of non-negative polynomials.

For the \emph{Hamburger moment problem}\index{Hamburger moment problem|see{Theorem, Hamburger}} (\Cref{thm:hamburgerMP}) Haviland uses
\begin{equation}\label{eq:posR}
\pos(\rset) = \left\{ f^2 + g^2 \,\middle|\, f,g\in\rset[x]\right\}
\end{equation}
which was already known to D.\ Hilbert\index{Hilbert, D.} \cite{hilbert88}.
%We prove a stronger version of (\ref{eq:posR}) in \Cref{cor:nonnegRRx}.
For the \emph{Stieltjes moment problem}\index{Stieltjes moment problem|see{Theorem, Stieltjes}} (\Cref{thm:stieltjesMP}) he uses
\begin{equation}\label{eq:pos0infty1}
\pos([0,\infty)) = \left\{ f_1^2 + f_2^2 + x\cdot (g_1^2 + g_2^2) \,\middle|\, f_1,f_2,g_1,g_2\in\rset[x]\right\}
\end{equation}
with the reference to G.\ P\'olya\index{P\'olya, G.} and G.\ Szeg{\"o}\index{Szeg\"o, G.} (previous editions of \cite{polya64,polya70}).
In \cite[p.\ 82, ex.\ 45]{polya64} the representation (\ref{eq:pos0infty1}) is still included, while it was already known before, see e.g.\ \cite[p.\ 6, footnote]{shohat43}, that
\begin{equation}\label{eq:pos0infty2}
\pos([0,\infty)) = \left\{ f^2 + x\cdot g^2 \,\middle|\, f,g\in\rset[x]\right\}
\end{equation}
is sufficient.
Also in \cite[Prop.\ 3.2]{schmudMomentBook} the representation (\ref{eq:pos0infty1}) is used, not the simpler representation (\ref{eq:pos0infty2}).
%We prove a stronger version of (\ref{eq:pos0infty2}) in \Cref{cor:nonneg0inftyRx}.

For the $[-1,1]$-moment problem, Haviland uses
\begin{equation}\label{eq:pos-11}
\pos([-1,1]) = \left\{ f^2 + (1-x^2)\cdot g^2 \,\middle|\, f,g\in\rset[x]\right\}.
\end{equation}
For the \emph{Hausdorff moment problem}\index{Hausdorff moment problem|see{Theorem, Hausdorff}} (\Cref{thm:hausdorffMP}) he uses that any strictly positive polynomial on $[0,1]$ is a linear combination of
\begin{equation}\label{eq:lukacs}
x^m\cdot (1-x)^{p}
\end{equation}
with $m,p\in\nset_0$, $p\geq m$, and with non-negative coefficients.

Haviland gives this with the references to a previous edition of \cite{polya70}.
But this result is actually due to S.\ N.\ Bernstein\index{Bernstein, S.\ N.} \cite{bernstein12,bernstein15}.

\begin{bernthm}[\cite{bernstein12} for (i), \cite{bernstein15} for (ii); or see e.g.\ {\cite[p.\ 30]{achieser56}} or {\cite[Prop.\ 3.4]{schmudMomentBook}}]\label{thm:bernstein}\index{Theorem!Bernstein}\index{Bernstein!Theorem}
Let $f\in\cat([0,1],\rset)$ and let
\begin{equation}\label{eq:bernPoly}
B_{f,d}(x) := \sum_{k=0}^d \binom{d}{k}\cdot x^k\cdot (1-x)^{d-k}\cdot f\left(\frac{k}{d}\right)
\end{equation}
be the \emph{Bernstein polynomials}\index{polynomial!Bernstein}\index{Bernstein!polynomial} of $f$ of degree $d\in\nset$.
Then the following hold:
\begin{enumerate}[(i)]
\item The polynomials $B_{f,d}$ converge uniformly on $[0,1]$ to $f$, i.e.,
\[\|f-B_{f,d}\|_\infty \xrightarrow{d\to\infty} 0.\]

\item If additionally $f\in\rset[x]$ with $f>0$ on $[0,1]$, then there exist a constant $D=D(f)\in\nset$ and constants $c_{k,l}\geq 0$ for all $k,l=0,\dots,D$ such that
\[f(x) = \sum_{k,l=0}^D c_{k,l}\cdot x^k\cdot (1-x)^l.\]

\item The statements (i) and (ii) also hold on $[0,1]^n$ for any $n\in\nset$. Especially every $f\in\rset[x_1,\dots,x_n]$ with $f>0$ on $[0,1]^n$ is of the form
\[f(x) = \sum_{\alpha_1,\dots,\beta_n=0}^D c_{\alpha_1,\dots,\beta_n}\cdot x_1^{\alpha_1}\cdots x_n^{\alpha_n}\cdot (1-x_1)^{\beta_1}\cdots (1-x_n)^{\beta_n}\]
for some $D\in\nset$ and $c_{\alpha_1,\dots,\beta_n}\geq 0$.
\end{enumerate}
\end{bernthm}

The multidimensional statement (iii) follows from the classical one-dimensional cases (i) and (ii), see e.g.\ \cite{hildeb33}, \cite{schnabel68}, and \cite[p.\ 51]{lorentz86}.
For this and more on Bernstein polynomials see e.g.\ \cite{lorentz86}.
The experts in real algebraic geometry will of course recognize (iii) as a special case of \emph{Schmüdgen's Theorem}\index{Theorem!Schm\"{u}dgen}\index{Schm\"{u}dgen!Theorem} \cite[Cor.\ 3]{schmud91}. (iii) has been known long before the general case was first proved in \cite{schmud91}.

\Cref{thm:bernstein} only holds for $f>0$.
Allowing zeros at the interval end points is possible and gives the following ``if and only if''-statement.

\begin{cor}\label{cor:bernstein}
Let $f\in\rset[x]\setminus\{0\}$.
Then the following are equivalent:
\begin{enumerate}[(i)]
\item $f>0$ on $(0,1)$.

\item For some $D\in\nset$,
\[f(x) = \sum_{i=0}^D c_{k,l}\cdot x^l\cdot (1-x)^k\]
with $c_{k,l}\geq 0$ for all $k,l=0,\dots,D$ and $c_{k',l'}>0$ at least once.
\end{enumerate}
\end{cor}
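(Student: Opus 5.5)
For (ii)$\Rightarrow$(i), take $x\in(0,1)$. Every term $c_{k,l}\,x^l(1-x)^k$ is $\geq 0$ there. The term with $c_{k',l'}>0$ is strictly positive, because $x^{l'}(1-x)^{k'}>0$ on $(0,1)$. Hence $f(x)>0$ on $(0,1)$.

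For (i)$\Rightarrow$(ii), I would reduce to Bernstein's Theorem~\ref{thm:bernstein}(ii) by factoring out the boundary zeros. Since $f\neq 0$ and $f>0$ on $(0,1)$, the roots of $f$ in $[0,1]$ can only be $0$ and $1$. Let $a\in\nset_0$ be the multiplicity of the root at $0$ and $b\in\nset_0$ the multiplicity of the root at $1$, and write
\[f(x) = x^a\cdot(1-x)^b\cdot g(x), \qquad g\in\rset[x].\]
By construction $g(0)\neq 0$ and $g(1)\neq 0$. On $(0,1)$ we have $g = f/(x^a(1-x)^b)>0$, so by continuity $g(0)\geq 0$ and $g(1)\geq 0$. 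Together with $g(0),g(1)\neq 0$ this gives $g>0$ on all of $[0,1]$.

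Now Theorem~\ref{thm:bernstein}(ii) applies to $g$ and gives
\[g(x)=\sum_{k,l=0}^{D'} d_{k,l}\,x^k(1-x)^l, \qquad d_{k,l}\geq 0.\]
Multiplying by $x^a(1-x)^b$ shifts the exponents and yields a representation of $f$ of the required form with $D=D'+a+b$ and non-negative coefficients. At least one coefficient is strictly positive, since otherwise $f=0$, which is excluded. (The index $i$ in the displayed sum of the statement should read as the double sum over $k,l$.)

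The only delicate step is showing that $g$ has no zeros at the endpoints. This is exactly why the multiplicities $a$ and $b$ must be taken to be full. The rest is direct bookkeeping once Bernstein's theorem is invoked.
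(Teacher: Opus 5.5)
Your proof is correct, and it follows the route the paper intends: the paper's proof only points to Problem~\ref{prob:bernstein}, which asks you to derive the corollary from \Cref{thm:bernstein}~(ii). Factoring out $x^a(1-x)^b$ with the full boundary multiplicities, so that the cofactor is strictly positive on $[0,1]$ and \Cref{thm:bernstein}~(ii) applies, is the natural way to carry that out; your easy direction and your reading of the single index $i$ in the displayed sum as a double sum over $k,l$ are also fine.
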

\begin{proof}
See Problem \ref{prob:bernstein}.
\end{proof}

On $[-1,1]$ a strengthened version of \Cref{thm:bernstein} (ii) is attributed to F.\ Luk\'acs\index{Luk\'acs, F.} \cite{lukacs18} (\emph{Luk\'acs Theorem}).\index{Theorem!Luk\'acs|see{Luk\'acs--Markov}}\index{Luk\'acs Theorem|see{Luk\'acs--Markov Theorem}}
Note that Luk\'acs in \cite{lukacs18} reproves several results/formulas, which already appeared in a work by M.\ R.\ Radau\index{Radau, M.\ R.} \cite{radau80}, as also pointed out by L.\ Brickman\index{Brickman, L.} \cite[p.\ 196]{brickman59}.
Additionally, in \cite[p.\ 61, footnote 4]{kreinMarkovMomentProblem} M.\ G.\ Kre\u{\i}n\index{Kre\u{\i}n, M.\ G.} and A.\ A.\ Nudel'man\index{Nudel'man, A.\ A.} state that A.\ A.\ Markov\index{Markov, A.\ A.} proved a more precise version of Luk\'acs Theorem already in 1906 \cite{markov06},\footnote{We do not have access to \cite{markov06} and can therefore neither confirm nor decline this statement.} see also \cite{markov95}.
Kre\u{\i}n and Nudel'man called it \emph{Markov's Theorem}.\index{Markov's Theorem|see{Luk\'acs--Markov Theorem}}\index{Theorem!Markov|see{Luk\'acs--Markov}}
It is the following.
%We will get a stronger version in the \Cref{thm:luma2}.

\begin{lumathm}[\cite{markov06} or e.g.\ \cite{lukacs18}, {\cite[p.\ 61, Thm.\ 2.2]{kreinMarkovMomentProblem}}]\index{Luk\'acs--Markov Theorem}\index{Theorem!Luk\'acs--Markov}\label{thm:luma}
Let $-\infty < a < b < \infty$ and let $p\in\rset[x]$ be with $\deg p = n$ and $p\geq 0$ on $[a,b]$.
Then the following hold:
\begin{enumerate}[(i)]
\item If $\deg p = 2m$ for some $m\in\nset_0$, then $p$ is of the form
\[p(x) = f(x)^2 + (x-a)(b-x)\cdot g(x)^2\]
for some $f,g\in\rset[x]$ with $\deg f = m$ and $\deg g = m-1$.

\item If $\deg p = 2m+1$ for some $m\in\nset_0$, then $p$ is of the form
\[p(x) = (x-a)\cdot f(x)^2 + (b-x)\cdot g(x)^2\]
for some $f,g\in\rset[x]$ with $\deg f = \deg g = m$.
\end{enumerate}
\end{lumathm}

For case (i) note that the relation
\begin{equation}\label{eq:abProduct}
(x-a)(b-x) = \frac{1}{b-a}\left[ (x-a)^2(b-x) + (x-a)(b-x)^2 \right]
\end{equation}
implies
\begin{equation}\label{eq:posab}
\pos([a,b]) = \left\{f(x)^2 + (x-a)\cdot g(x)^2 + (b-x)\cdot h(x)^2 \,\middle|\, f,g,h\in\rset[x]\right\}.
\end{equation}
The special part about the \Cref{thm:luma} is the degree bound on the polynomials $f$ and $g$.
Equation (\ref{eq:abProduct}) destroys these degree bounds, since we have to go one degree higher.

%In the \Cref{thm:luma2} we will see how from \Cref{thm:karlinPosab} an even stronger version follows which describes the polynomials $f$ and $g$ more precisely and up to a certain point uniquely.
In \cite[p.\ 61, Thm.\ 2.2 and p.\ 373, Thm.\ 6.4]{kreinMarkovMomentProblem} the \Cref{thm:luma} is called \emph{Markov--Luk\'acs Theorem}\index{Markov--Luk\'acs Theorem|see{Luk\'acs--Markov Theorem}}\index{Theorem!Markov--Luk\'acs|see{Luk\'acs--Markov}} since Markov gave the more precise version much earlier than Luk\'acs.\footnote{Here, we use the alphabetical order, as usually used in naming theorems.}
In \cite{havila36} Haviland uses this result without any reference or attribution to either Luk\'acs or Markov.

For the two-dimensional Hausdorff moment problem, Haviland uses with a reference to \cite{hildeb33} that any polynomial $f\in\rset[x,y]$ which is strictly positive on $[0,1]^2$ is a linear combination of
\[x^m\cdot y^n\cdot (1-x)^{p}\cdot (1-y)^{q}\]
with $n,m,q,p\in\nset_0$ and non-negative coefficients.
This is actually \Cref{thm:bernstein} (iii).

T.\ H.\ Hildebrandt\index{Hildebrandt, T.\ H.} and I.\ J.\ Schoenberg\index{Schoenberg, I.\ J.} \cite{hildeb33} already solved the moment problem on $[0,1]^n$ for all $n\in\nset$ getting the same result as Haviland.
The idea of using $\pos(K)$-descriptions to solve the moment problem was therefore already used by Hildebrandt and Schoenberg in 1933 \cite{hildeb33}, before Haviland uses this in \cite{havila35} and generalized this in \cite{havila36} as suggested to him by Wintner.

With these broader historical remarks we see that of course more people are connected to Theorem \ref{thm:haviland}.
It might also be appropriate to call Theorem \ref{thm:haviland} the \emph{Haviland--Wintner}\index{Theorem!Haviland--Wintner}\index{Haviland--Wintner Theorem} or \emph{Haviland--Hildebrandt--Schoenberg--Wintner Theorem}.\index{Theorem!Haviland--Hildebrandt--Schoenberg--Wintner}\index{Haviland--Hildebrandt--Schoenberg--Wintner Theorem}
But as so often, the list of contributors is long (and maybe even longer) and hence the main contribution (the general proof) is rewarded by calling it just \emph{Haviland's Theorem}.

The last classical moment problem which we want to mention on the long list was solved by K.\ I.\ {\v{S}}venco \cite{svenco39}.\index{Svenco, K.\ I.@{\v{S}}venco, K.\ I.}

\begin{svethm}[\cite{svenco39}]\index{Svenco's Theorem@{\v{S}}venco's Theorem}\index{Theorem!Svenco@{\v{S}}venco}\index{moment!problem!Svenco@{\v{S}}venco|see{Theorem, {\v{S}}venco}}\label{thm:svenco}
Let $s=(s_i)_{i\in\nset_0}$ be a real sequence.
Then the following are equivalent:
\begin{enumerate}[(i)]
\item $s$ is a $(-\infty,0]\cup [1,\infty)$-moment sequence.

\item $L_s(p)\geq 0$ for all $p\in\pos((-\infty,0]\cup[1,\infty))$.

\end{enumerate}
\end{svethm}

The general case of \Cref{thm:svenco} on
\begin{equation}\label{eq:zaun}
\rset\setminus \bigcup_{i=1}^n (a_i,b_i)
\end{equation}
for any $n\in\nset$ and
\[a_1 < b_1 < \dots < a_n < b_n\]
was proved by V.\ A.\ Fil'\v{s}tinski\v{\i}\index{Fil'\v{s}tinski\v{\i}, V.\ A.} \cite{filsti64}.
All non-negative polynomials on (\ref{eq:zaun}) can be explicitly written down.
More precisely, all moment problems on closed and semi-algebraic sets $K\subseteq\rset$ follow nowadays easily from \Cref{thm:haviland} resp.\ the \Cref{thm:basicrepresentation} and some well established results from real algebraic geometry, see e.g.\ \cite[Prop.\ 2.7.3]{marshallPosPoly}.

\Cref{thm:haviland} was important to give the solutions of the classical moment problem, i.e., mostly one-dimensional cases.
After that it was no longer used and only became important again when descriptions of strictly positive and non-negative polynomials on $K\subseteq\rset^n$ with $n\geq 2$ be came available.
This process was started with \cite{schmud91} and real algebraic geometry was revived by it.

\section*{Problems}%%%%%%%%%%%%%%%%%%%%%
\addcontentsline{toc}{section}{Problems}

\begin{prob}\label{prob:adaptedLem}
Prove \Cref{lem:adapted}.
\end{prob}

\begin{prob}\label{prob:compactAdapted}
Let $\cX$ be a compact topological Hausdorff space and let $E\subseteq\cat(\cX,\rset)$ be a subspace such that there exists an $e\in E$ such that $e(x)>0$ for all $x\in\cX$. Show that $E$ is an adapted space.
\end{prob}

\begin{prob}\label{prob:adaptedPolynomials}
Let $n\in\nset$ and let $\cX\subseteq\rset^n$ be closed.
Show that $\rset[x_1,\dots,x_n]$ on $\cX$ is an adapted space.
\end{prob}

\begin{prob}\label{prob:adaptedPolynomials2}
Let $n\in\nset$, let $\cX\subseteq\rset^n$ be closed, and let $E\subseteq\rset[x_1,\dots,x_n]$ be an adapted space.
Show that if $E$ is finite dimensional, then $\cX$ is compact.
\end{prob}

\begin{prob}
Let $n\in\nset$.
\begin{enumerate}[\bfseries\qquad a)]
\item Let $\cX=\rset^n$ and let $E\subseteq\cC(\cX,\rset)$ be an adapted space.
Show that $E$ is infinite dimensional.

\item What is the minimal requirement on $\cX\subseteq\rset^n$ such that any adapted space $E\subseteq\cC(\cX,\rset)$ is infinite dimensional?
\end{enumerate}
\end{prob}

\begin{prob}\label{prob:adaptedCompact}
Prove \Cref{lem:adaptedCompact}.
\end{prob}

\begin{prob}\label{prob:stieltjes}
Prove \Cref{thm:stieltjesMP} with the \Cref{thm:basicrepresentation}.
\end{prob}

\begin{prob}\label{prob:hamburger}
Prove \Cref{thm:hamburgerMP} with the \Cref{thm:basicrepresentation}.
\end{prob}

\begin{prob}\label{prob:hausdorff}
Prove \Cref{thm:hausdorffMP} with the \Cref{thm:basicrepresentation}.
\end{prob}

\begin{prob}\label{prob:haviland}
Prove \Cref{thm:haviland} with the \Cref{thm:basicrepresentation}.
\end{prob}

\begin{prob}\label{prob:bernstein}
Use \Cref{thm:bernstein} (ii) to prove \Cref{cor:bernstein}.
\end{prob}

%%%%%%%%%%%%%%%%%%%%%%%%%%%%%%%%%%%%%%%%%
%%%%%%%%%%%%%%%%%%%%%%%%%%%%%%%%%%%%%%%%%
\part{Linear Operators on Polynomials}%%%
%%%%%%%%%%%%%%%%%%%%%%%%%%%%%%%%%%%%%%%%%
%%%%%%%%%%%%%%%%%%%%%%%%%%%%%%%%%%%%%%%%%
\label{part:linOpPoly}

%\motto{Pure mathematics is, in its way, the poetry of logical ideas.\\ \medskip
%\ \hspace{1cm} \normalfont{Albert Einstein \cite{einstein35noether}}\index{Einstein, A.}}

\chapter{Representations of Linear Operators on Polynomials}%%%%
\label{ch:linOpRepr}%%%%%%%%%%%%%%%%%%%%%%%%%%%%%%%%%%%%%%%%%%%%
%%%%%%%%%%%%%%%%%%%%%%%%%%%%%%%%%%%%%%%%%%%%%%%%%%%%%%%%%%%%%%%%

In this chapter we finally start with our investigation of linear operators
\[T:\rset[x_1,\dots,x_n]\to\rset[x_1,\dots,x_n].\]

\section{Representations of General Linear Operators on $\rset[x_1,\dots,x_n]$}

To work with linear operators on polynomials we give at first representations of them.

\begin{thm}[folklore, \emph{canonical representation}]\label{thm:canonicalRepresentation}
Let $n\in\nset$ and let
\[T:\rset[x_1,\dots,x_n]\to\rset[x_1,\dots,x_n]\]
be linear.
Then, for all $\alpha\in\nset_0^n$, there exist unique polynomials $q_\alpha\in\rset[x_1,\dots,x_n]$ such that
\begin{equation}\label{eq:canRepr}
T = \sum_{\alpha\in\nset_0^n} q_\alpha\cdot\partial^\alpha.
\end{equation}
\end{thm}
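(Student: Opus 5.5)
The plan is to determine the $q_\alpha$ recursively by testing $T$ on monomials, ordered by degree. First I fix what (\ref{eq:canRepr}) means. The sum is infinite, but applied to any fixed polynomial $p$ it is finite, since $\partial^\alpha p = 0$ whenever $|\alpha|>\deg p$, or more precisely whenever $\alpha\not\leq\beta$ componentwise for every monomial $x^\beta$ in $p$. So the right-hand side of (\ref{eq:canRepr}) is a well-defined linear operator for \emph{any} family $(q_\alpha)_{\alpha\in\nset_0^n}$. The statement then reduces to this: the map
\[(q_\alpha)_{\alpha\in\nset_0^n}\mapsto \sum_\alpha q_\alpha\partial^\alpha\]
is a bijection onto the space of linear maps $\rset[x_1,\dots,x_n]\to\rset[x_1,\dots,x_n]$. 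Since both sides are determined by their values on the monomial basis, it suffices to match the values on all $x^\beta$.

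Next I write down the triangular system. For $\beta\in\nset_0^n$,
\[\Big(\sum_\alpha q_\alpha\partial^\alpha\Big)x^\beta = \sum_{\alpha\leq\beta} \frac{\beta!}{(\beta-\alpha)!}\, q_\alpha\, x^{\beta-\alpha}.\]
Requiring this to equal $T x^\beta$ for all $\beta$ gives
\[T x^\beta = \beta!\, q_\beta + \sum_{\alpha\leq\beta,\ \alpha\neq\beta} \binom{\beta}{\alpha}\,\alpha!\, q_\alpha\, x^{\beta-\alpha}.\]
This system is triangular with respect to the componentwise partial order on $\nset_0^n$. The coefficient of $q_\beta$ is $\beta!\neq 0$, and every other term involves only $q_\alpha$ with $\alpha<\beta$. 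So I define recursively, by induction on $|\beta|$,
\[q_\beta := \frac{1}{\beta!}\Big(T x^\beta - \sum_{\alpha<\beta}\frac{\beta!}{(\beta-\alpha)!}\, q_\alpha\, x^{\beta-\alpha}\Big),\]
starting from $q_0 = T1$. The strict predecessors of $\beta$ have smaller total degree, so the recursion is well-founded.

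Existence follows from the construction: the operator $\sum_\alpha q_\alpha\partial^\alpha$ agrees with $T$ on every monomial, so by linearity it agrees with $T$ everywhere. For uniqueness, suppose two families both represent $T$. Then their difference $(r_\alpha)$ gives the zero operator. Applying it to $x^\beta$ and using induction on $|\beta|$ (all $r_\alpha$ with $\alpha<\beta$ already vanish), we get $\beta!\, r_\beta = 0$, hence $r_\beta = 0$. Alternatively, uniqueness follows directly from the uniqueness of the solution of the triangular recursion.

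The only real obstacle is making the meaning of the infinite sum precise: it has to be read as pointwise convergence on $\rset[x_1,\dots,x_n]$, where the sum is finite for each argument, rather than as a formal sum of operators. Once that is settled, the rest is the inversion of a unipotent-up-to-$\beta!$ triangular system, which is routine.
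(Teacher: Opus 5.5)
Your proposal is correct and is the paper's proof. Both start from $q_0 = T1$ and determine $q_\beta$ by induction on $|\beta|$ as $q_\beta = \frac{1}{\beta!}(Tx^\beta - \sum_{\alpha<\beta} q_\alpha\,\partial^\alpha x^\beta)$, with uniqueness coming from the nonzero diagonal coefficient $\beta!$. Your explicit remark that the infinite sum is finite on each polynomial, and your separate uniqueness argument via the difference family, make precise points the paper leaves implicit.
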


The representation (\ref{eq:canRepr}) is called \emph{canonical representation}\index{representation!canonical}\index{canonical!representation} of $T$.

\begin{proof}
We proceed by induction over $d=|\alpha|\in\nset_0$.

\underline{$d=0$:}
Set
\[q_0 := T1,\]
i.e., $q_0$ is unique and $T_0 := q_0$ represents $T$ on $\rset[x_1,\dots,x_n]_{\leq 0} = \rset$.

\underline{$d\to d+1$:}
Assume, for $d\in\nset_0$, we have unique $q_\alpha\in\rset[x_1,\dots,x_n]$ for all $\alpha\in\nset_0^n$ with $|\alpha|\leq d$ such that
\[T_d := \sum_{\alpha\in\nset_0^n:|\alpha|\leq d} q_\alpha\cdot\partial^\alpha\]
represents $T$ on $\rset[x_1,\dots,x_n]_{\leq d}$.
Let $\beta\in\nset_0^n$ with $|\beta|=d+1$.
Then
\[Tx^\beta = \sum_{\alpha\in\nset_0^n:|\alpha|\leq d} q_\alpha\cdot\partial^\alpha x^\beta + q_\beta\cdot\partial^\beta x^\beta\]
implies
\[q_\beta := \frac{1}{\beta!}\cdot \left(Tx^\beta - \sum_{\alpha\in\nset_0^n:|\alpha|\leq d} q_\alpha\cdot\partial^\alpha x^\beta \right),\]
i.e., $q_\beta$ is unique and
\[T_{d+1} := \sum_{\alpha\in\nset_0^n:|\alpha|\leq d+1} q_\alpha\cdot\partial^\alpha\]
represents $T$ on $\rset[x_1,\dots,x_n]_{\leq d+1}$ with unique $q_\alpha$ for all $\alpha\in\nset_0^n$ with $|\alpha|\leq d+1$.

In summary, (\ref{eq:canRepr}) represents $T$ on all $\rset[x_1,\dots,x_n]$ with unique $q_\alpha$ for all $\alpha\in\nset_0^n$.
\end{proof}

\begin{prop}[{\cite[Lem.\ 5.1]{didio25KPosPresGen}}]
Let $n\in\nset$ and let
\[T:\rset[x_1,\dots,x_n]\to\rset[x_1,\dots,x_n] \quad\text{with}\quad T = \sum_{\alpha\in\nset_0^n} q_\alpha\cdot\partial^\alpha\]
be linear with unique $q_\alpha\in\rset[x_1,\dots,x_n]$ for all $\alpha\in\nset_0^n$.
Then the following are equivalent:
\begin{enumerate}[\;\, (i)]
\item $T\rset[x_1,\dots,x_n]_{\leq d} \subseteq \rset[x_1,\dots,x_n]_{\leq d}$ for all $d\in\nset_0$.

\item $\deg q_\alpha\leq |\alpha|$ for all $\alpha\in\nset_0^n$.
\end{enumerate}
\end{prop}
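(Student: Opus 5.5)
The plan is to prove both implications directly from the canonical representation of \Cref{thm:canonicalRepresentation}. We use two facts throughout. First, $\partial^\alpha x^\beta = 0$ unless $\alpha\leq\beta$ componentwise. Second, $\partial^\beta x^\beta = \beta!$ is a nonzero constant.

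For (ii) $\Rightarrow$ (i), take a monomial $x^\beta$ with $|\beta|\leq d$. Then
\[Tx^\beta = \sum_{\alpha\leq\beta} q_\alpha\cdot\partial^\alpha x^\beta.\]
Each $\partial^\alpha x^\beta$ is a scalar multiple of $x^{\beta-\alpha}$, which has degree $|\beta|-|\alpha|$. By (ii), each summand therefore has degree at most
\[\deg q_\alpha + |\beta|-|\alpha| \leq |\beta| \leq d.\]
Linearity extends this from monomials to all of $\rset[x_1,\dots,x_n]_{\leq d}$.

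For (i) $\Rightarrow$ (ii), we induct on $|\beta|$ and reuse the explicit formula for $q_\beta$ from the proof of \Cref{thm:canonicalRepresentation}:
\[q_\beta = \frac{1}{\beta!}\left(Tx^\beta - \sum_{|\alpha|\leq |\beta|-1} q_\alpha\cdot\partial^\alpha x^\beta\right).\]

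In the base case $|\beta| = 0$, we have $q_0 = T1\in\rset[x_1,\dots,x_n]_{\leq 0}$ by (i), so $\deg q_0 \leq 0$.

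For the induction step, fix $\beta$ and assume $\deg q_\alpha\leq|\alpha|$ for all $|\alpha|<|\beta|$.
\begin{enumerate}[(a)]
\item By (i) applied with $d=|\beta|$, the term $Tx^\beta$ has degree at most $|\beta|$.
\item By the hypothesis, each term $q_\alpha\cdot\partial^\alpha x^\beta$ has degree at most $|\alpha| + (|\beta|-|\alpha|) = |\beta|$; this is the same estimate as in the first direction.
\end{enumerate}
Hence $\deg q_\beta\leq|\beta|$, which completes the induction. The sum in the formula only effectively runs over $\alpha\leq\beta$, so no other terms need to be controlled.

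I expect no real obstacle here. The only delicate point is ordering the induction by total degree $|\beta|$, so that every $q_\alpha$ in the formula for $q_\beta$ is already controlled. The explicit recursion from the proof of \Cref{thm:canonicalRepresentation} provides exactly this ordering.
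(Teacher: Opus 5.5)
Your proof is correct and follows essentially the paper's route: (ii) $\Rightarrow$ (i) is the same degree count, and your strong induction on $|\beta|$ via the recursion for $q_\beta$ is the direct form of the paper's minimal-counterexample argument (it takes the lexicographically smallest $\alpha$ with $\deg q_\alpha>|\alpha|$, which forces $\deg Tx^\alpha=\deg q_\alpha>|\alpha|$). The paper's proof has its two direction labels swapped, while yours are assigned correctly.
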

\begin{proof}
\underline{(i) $\Rightarrow$ (ii):}
Let $p\in\rset[x_1,\dots,x_n]$.
Since $\deg q_\alpha\leq |\alpha|$,
\[\deg (q_\alpha\cdot\partial^\alpha p) \leq \deg p\]
for all $\alpha\in\nset_0^n$, i.e., (ii) is proved.

\underline{(ii) $\Rightarrow$ (i):}
Assume to the contrary that there is an $\alpha\in\nset_0^n$ with $\deg q_\alpha > |\alpha|$.
Take the smallest of these $\alpha$'s with respect to the lexicographic order.
Then
\[\deg (Tx^\alpha) = \deg (q_\alpha\cdot\partial^\alpha x^\alpha) = \deg q_\alpha > |\alpha|\]
which contradicts (ii).
\end{proof}

\begin{prop}[{\cite[Rem.\ 2.3]{didio25hadamardLanger}}]
Let $n\in\nset$ and let
\[T:\rset[x_1,\dots,x_n]\to\rset[x_1,\dots,x_n]\]
be linear.
Then, for all $i\in\nset_0$, there are linear functionals
\[l_i:\rset[x_1,\dots,x_n]\to\rset\]
and polynomials $p_i\in\rset[x_1,\dots,x_n]$ such that
\begin{equation}\label{eq:tensorRepr}
T = \sum_{i\in\nset_0} l_i\cdot p_i, \quad\text{i.e.,}\quad Tf = \sum_{i\in\nset_0} l_i(f)\cdot p_i
\end{equation}
for all $f\in\rset[x_1,\dots,x_n]$.
\end{prop}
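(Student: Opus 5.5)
The plan is to use the monomial basis and the coordinate functionals it induces. Let $\{x^\alpha\}_{\alpha\in\nset_0^n}$ be the monomial basis of $\rset[x_1,\dots,x_n]$, and let $\nset_0^n\to\nset_0$, $\alpha\mapsto i(\alpha)$, be any bijection, for instance an enumeration by total degree and then lexicographically within each degree. For each $i=i(\alpha)$ I set
\[l_i(f) := \text{the coefficient of } x^\alpha \text{ in } f \quad\text{and}\quad p_i := T x^\alpha.\]
Each $l_i$ is clearly a linear functional on $\rset[x_1,\dots,x_n]$, and each $p_i$ is a polynomial. An equivalent description of $l_i$ is $l_i(f)=\frac{1}{\alpha!}(\partial^\alpha f)(0)$, i.e., $l_i$ is a scaled derivative of the point evaluation $l_0$ at $0$; this links the construction to the canonical representation of \Cref{thm:canonicalRepresentation}.

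Next I check that the series in (\ref{eq:tensorRepr}) is well defined. Fix $f$. Only finitely many coefficients of $f$ are non-zero, so $l_i(f)\neq 0$ for only finitely many $i$. Hence $\sum_i l_i(f)\cdot p_i$ is a finite sum for every $f$ and needs no topology. This finiteness is the one point that needs care. The infinite sum is understood pointwise in $f$; it is not a convergent series of operators. Once this is said explicitly, the apparent obstacle disappears.

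Finally I verify the identity. Write $f=\sum_\alpha c_\alpha x^\alpha$ as a finite sum, so $c_\alpha = l_{i(\alpha)}(f)$. By linearity of $T$,
\[Tf = \sum_\alpha c_\alpha\cdot Tx^\alpha = \sum_{i\in\nset_0} l_i(f)\cdot p_i,\]
which is (\ref{eq:tensorRepr}).

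Alternatively, one could derive the representation from the canonical representation $T=\sum_\alpha q_\alpha\partial^\alpha$ by expanding $\partial^\alpha f$ into monomials. The direct basis argument above is simpler, so I would use it. I would also remark that the representation is not unique, since any other basis together with its dual coordinate functionals works equally well.
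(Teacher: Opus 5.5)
Your proof is correct. It uses the mirror image of the paper's decomposition.

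The paper expands the \emph{output} $Tf$ in the monomial basis. With $l_\alpha$ the coordinate functional of $x^\alpha$, it writes $Tf=\sum_\alpha l_\alpha(Tf)\cdot x^\alpha$, i.e., $T=\sum_{\alpha\in\nset_0^n}(l_\alpha\circ T)\cdot x^\alpha$. There the polynomials are the fixed monomials, and all information about $T$ sits in the functionals $l_\alpha\circ T$ (the rows of the matrix of $T$).

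You expand the \emph{input} $f$ instead, obtaining $T=\sum_\alpha l_\alpha\cdot Tx^\alpha$. Your functionals are fixed, and $T$ is encoded in the polynomials $p_\alpha=Tx^\alpha$ (the columns of the matrix).

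In both versions the sum is finite for each fixed $f$. In yours this holds because $f$ has finitely many non-zero coefficients; in the paper's, because $Tf$ does. You state this well-definedness explicitly, whereas the paper leaves it implicit.

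Neither route is harder. The paper's form has the $p_\alpha=x^\alpha$ independent of $T$, so $T$ is coded entirely by a family of linear functionals on $\rset[x_1,\dots,x_n]$, i.e., via the duality with $\rset^{\nset_0^n}$, by sequences. Yours is the more direct ``a linear map is determined by its values on a basis'' argument.

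Your closing remark on non-uniqueness is right. The paper's representation is itself an instance of such a different choice.
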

\begin{proof}
For every $\alpha\in\nset_0^n$, let
\[l_\alpha:\rset[x_1,\dots,x_n]\to\rset\]
be the linear functional defined by
\[l_\alpha(x^\beta) := \begin{cases}
1 & \text{if}\ \beta=\alpha,\\ 0 & \text{if}\ \beta\neq\alpha\end{cases}\]
and linear extension to all $\rset[x_1,\dots,x_n]$.
Then
\[Tf = \sum_{\alpha\in\nset_0^n} l_\alpha(Tf)\cdot x^\alpha\]
for all $f\in\rset[x_1,\dots,x_n]$, i.e.,
\[T = \sum_{\alpha\in\nset_0^n} (l_\alpha\circ T) \cdot x^\alpha.\]
Since $\nset_0^n$ is countable, (\ref{eq:tensorRepr}) is proved.
\end{proof}

\section{Representation of Diagonal Operators on $\rset[x_1,\dots,x_n]$}

A special class of linear operators on $\rset[x_1,\dots,x_n]$ are diagonal operators.

\begin{dfn}
Let $n\in\nset$.
A linear operator
\[T:\rset[x_1,\dots,x_n]\to\rset[x_1,\dots,x_n]\]
is called \emph{diagonal},\index{diagonal!operator}\index{operator!diagonal} if
\[Tx^\alpha = t_\alpha x^\alpha \quad\text{with}\quad t_\alpha\in\rset\]
for all $\alpha\in\nset_0^n$.
The sequence $t = (t_\alpha)_{\alpha\in\nset_0^n}$ is called \emph{diagonal sequence}\index{sequence!diagonal}\index{diagonal!sequence} of $T$.
\end{dfn}

Clearly, a diagonal operator is uniquely determined by its diagonal sequences and every real sequence $t = (t_\alpha)_{\alpha\in\nset_0^n}$ gives a diagonal operator.

\begin{thm}[{\cite[Rem.\ 4.2]{didio25hadamardLanger}}]\label{thm:diagonalRepr}
Let $n\in\nset$ and let
\[T:\rset[x_1,\dots,x_n]\to\rset[x_1,\dots,x_n]\]
be a linear operator.
Then the following are equivalent:
\begin{enumerate}[(i)]
\item $T$ is a diagonal operator, i.e., $Tx^\alpha = t_\alpha x^\alpha$ with unique $t_\alpha\in\rset$ for all $\alpha\in\nset_0^n$.

\item The operator $T$ is of the form
\begin{equation}\label{eq:diagRepr1}
T = \sum_{\alpha\in\nset_0^n} \frac{c_\alpha}{\alpha!}\cdot x^\alpha\cdot \partial^\alpha
\end{equation}
for a unique real sequence $c = (c_\alpha)_{\alpha\in\nset_0^n}$.
\end{enumerate}
If one of the equivalent statements (i) or (ii) holds, then the diagonal sequence $t=(t_\alpha)_{\alpha\in\nset_0^n}$ and the sequence $c=(c_\alpha)_{\alpha\in\nset_0^n}$ of coefficients $c_\alpha$ fulfill the relations
\begin{equation}\label{eq:talphacalphaRelations}
t_\alpha = \sum_{\beta\in\nset_0^n:\ \beta\preceq\alpha} \binom{\alpha}{\beta}\cdot c_\beta \qquad\text{and}\qquad c_\alpha = \sum_{\beta\in\nset_0^n:\ \beta\preceq\alpha} (-1)^{|\alpha-\beta|}\cdot \binom{\alpha}{\beta}\cdot t_\beta
\end{equation}
for all $\alpha\in\nset_0^n$.
\end{thm}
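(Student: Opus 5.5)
The plan is to use the canonical representation from \Cref{thm:canonicalRepresentation} together with one computation: the action of a single term $x^\beta\partial^\beta$ on a monomial. For all $\alpha,\beta\in\nset_0^n$,
\[x^\beta\cdot\partial^\beta x^\alpha = \begin{cases}\frac{\alpha!}{(\alpha-\beta)!}\cdot x^\alpha & \text{if}\ \beta\preceq\alpha,\\ 0 & \text{otherwise},\end{cases}\]
so $\frac{1}{\beta!}x^\beta\partial^\beta x^\alpha = \binom{\alpha}{\beta} x^\alpha$. Every operator of the form (\ref{eq:diagRepr1}) is well defined, since applied to a polynomial only finitely many terms are nonzero.

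\underline{(ii) $\Rightarrow$ (i):} Apply (\ref{eq:diagRepr1}) to $x^\alpha$ and use the computation above. This gives
\[Tx^\alpha = \Big(\sum_{\beta\preceq\alpha}\binom{\alpha}{\beta}c_\beta\Big)x^\alpha.\]
Hence $T$ is diagonal, and the first relation in (\ref{eq:talphacalphaRelations}) holds. The $t_\alpha$ are unique because $x^\alpha\neq 0$.

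\underline{(i) $\Rightarrow$ (ii):} I would define $c$ recursively by the first relation. Write it as
\[c_\alpha := t_\alpha - \sum_{\beta\prec\alpha,\ \beta\neq\alpha}\binom{\alpha}{\beta}c_\beta,\]
which is a well-founded recursion on $|\alpha|$. The diagonal operator built from this $c$ via (\ref{eq:diagRepr1}) then agrees with $T$ on every monomial, by the computation in (ii) $\Rightarrow$ (i), and so equals $T$ by linearity.

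For uniqueness of $c$ there are two routes. One is that the recursion forces each $c_\alpha$. The other is the uniqueness part of \Cref{thm:canonicalRepresentation}: (\ref{eq:diagRepr1}) is a canonical representation with $q_\alpha = \frac{c_\alpha}{\alpha!}x^\alpha$, and the $q_\alpha$ are unique, so the $c_\alpha$ are unique.

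The remaining step, and the only real work, is the inversion formula $c_\alpha = \sum_{\beta\preceq\alpha}(-1)^{|\alpha-\beta|}\binom{\alpha}{\beta}t_\beta$. This is Möbius inversion on the poset $(\nset_0^n,\preceq)$, which is a product of chains. I would substitute the first relation into the right-hand side and swap the sums:
\[\sum_{\beta\preceq\alpha}(-1)^{|\alpha-\beta|}\binom{\alpha}{\beta}\sum_{\gamma\preceq\beta}\binom{\beta}{\gamma}c_\gamma = \sum_{\gamma\preceq\alpha}\binom{\alpha}{\gamma}c_\gamma\sum_{\gamma\preceq\beta\preceq\alpha}(-1)^{|\alpha-\beta|}\binom{\alpha-\gamma}{\beta-\gamma}.\]
This uses $\binom{\alpha}{\beta}\binom{\beta}{\gamma} = \binom{\alpha}{\gamma}\binom{\alpha-\gamma}{\beta-\gamma}$. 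The inner sum factorizes over the coordinates into $\prod_i(1-1)^{\alpha_i-\gamma_i}$, which equals $\delta_{\gamma,\alpha}$. The whole expression therefore reduces to $c_\alpha$, as required.

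I expect this binomial bookkeeping to be the main obstacle, although it is routine; everything else follows directly from the monomial computation.
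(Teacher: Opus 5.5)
Your proposal is correct and follows essentially the paper's route: (ii) $\Rightarrow$ (i) by evaluating each term $\frac{c_\beta}{\beta!}x^\beta\partial^\beta$ on monomials, and (i) $\Rightarrow$ (ii) by fixing $c_\alpha$ inductively on $|\alpha|$, which is exactly your triangular recursion. The one addition is your M\"obius-inversion proof of the second relation in (\ref{eq:talphacalphaRelations}), which the paper leaves as Problem~\ref{prob:calphaEquation}; your binomial computation there is correct.
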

\begin{proof}
\underline{(i) $\Rightarrow$ (ii):}
At first we prove (\ref{eq:diagRepr1}) by induction over $d = |\alpha|$.

$d=0$:
Set
\[c_0 := T1 = t_0.\]
Then $T$ is represented by $T_0 := c_0 = t_0$ on $\rset[x_1,\dots,x_n]_{\leq 0} = \rset$.

$d\to d+1$:
Assume for all $\alpha\in\nset_0^n$ with $|\alpha|\leq d$ there are unique $c_\alpha$ such that $T$ is represented by
\[\sum_{\alpha\in\nset_0^n: |\alpha|\leq d} \frac{c_\alpha}{\alpha!}\cdot x^\alpha\cdot\partial^\alpha\]
on $\rset[x_1,\dots,x_n]_{\leq d}$.
Let $\beta\in\nset_0^n$ with $|\beta|=d+1$.
Then
\[t_\beta x^\beta = Tx^\beta = \sum_{\alpha\in\nset_0^n: |\alpha|\leq d} \frac{c_\alpha}{\alpha!}\cdot x^\alpha\cdot\partial^\alpha x^\beta + \frac{c_\beta}{\beta!}\cdot x^\beta\cdot\partial^\beta x^\beta\]
implies
\[c_\beta x^\beta = Tx^\beta - \sum_{\alpha\in\nset_0^n: |\alpha|\leq d} \frac{c_\alpha}{\alpha!}\cdot x^\alpha\cdot\partial^\alpha x^\beta\]
with unique $c_\beta\in\rset$.

In summary, (\ref{eq:diagRepr1}) represents $T$ on $\rset[x_1,\dots,x_n]$.

\underline{(ii) $\Rightarrow$ (i):}
By (\ref{eq:diagRepr1}),
\[Tx^\alpha = \sum_{\beta\in\nset_0^n} \frac{c_\beta}{\beta!}\cdot x^\beta\cdot\partial^\beta x^\alpha = \sum_{\beta\in\nset_0^n: \beta\preceq\alpha} \binom{\alpha}{\beta}\cdot x^\alpha = t_\alpha x^\alpha\]
with $t_\alpha\in\rset$, i.e., we proved (i).

We already proved the first equation of (\ref{eq:talphacalphaRelations}) in the step ``(ii) $\Rightarrow$ (i)''.
The second equation in (\ref{eq:talphacalphaRelations}) is given as Problem \ref{prob:calphaEquation}.
\end{proof}

\begin{exm}[{\cite[Exm.\ 4.3]{didio25hadamardLanger}}]
Let $n\in\nset$ and let
\[T:\rset[x_1,\dots,x_n]\to\rset[x_1,\dots,x_n],\quad f\mapsto Tf = f(0)\]
be the evaluation operator at $x=0$.
Then
\[Tx^\alpha = \delta_{0,\alpha}\cdot x^\alpha\]
for all $\alpha\in\nset_0^n$, i.e., $T$ is a diagonal operator and
\[T = \sum_{\alpha\in\nset_0^n} \frac{(-1)^{|\alpha|}}{\alpha!}\cdot x^\alpha\cdot\partial^\alpha,\]
i.e.,
\[t = (\delta_{0,\alpha})_{\alpha\in\nset_0^n} \qquad\text{and}\qquad c = \big((-1)^{|\alpha|}\big)_{\alpha\in\nset_0^n}. \tag*{$\circ$}\]
%\exmsymbol
\end{exm}

\begin{exm}[{\cite[Exm.\ 4.3]{didio25hadamardLanger}}]
Let $n\in\nset$ and let
\[\id:\rset[x_1,\dots,x_n]\to\rset[x_1,\dots,x_n],\quad f\mapsto \id f = f\]
be the identity map on $\rset[x_1,\dots,x_n]$.
Then
\[Tx^\alpha = x^\alpha \qquad\text{and}\qquad T = \sum_{\alpha\in\nset_0^n} \frac{\delta_{0,\alpha}}{\alpha!}\cdot x^\alpha\cdot\partial^\alpha,\]
i.e., $t= (1)_{\alpha\in\nset_0^n}$ and $c = (\delta_{0,\alpha})_{\alpha\in\nset_0^n}$.
\exmsymbol
\end{exm}

\section*{Problems}%%%%%%%%%%%%%%%%%%%%%
\addcontentsline{toc}{section}{Problems}

\begin{prob}
Let $n\in\nset$ and let
\[T = \sum_{\alpha\in\nset_0^n} q_\alpha\cdot\partial^\alpha\]
with $q_\alpha\in\rset[x_1,\dots,x_n]$ for all $\alpha\in\nset_0^n$.
Prove or disprove the following statements:
\begin{enumerate}[\bfseries\qquad a)]
\item If only finitely many $q_\alpha$ are non-zero, then $\dim T\rset[x_1,\dots,x_n] < \infty$.

\item If $\dim T\rset[x_1,\dots,x_n] < \infty$, then only finitely many $q_\alpha$ are non-zero.
\end{enumerate}
\end{prob}

\begin{prob}\label{prob:calphaEquation}
Prove
\[c_\alpha = \sum_{\beta\in\nset_0^n:\ \beta\preceq\alpha} (-1)^{|\alpha-\beta|}\cdot \binom{\alpha}{\beta}\cdot t_\beta\]
in (\ref{eq:talphacalphaRelations}) of \Cref{thm:diagonalRepr}.
\end{prob}

\begin{prob}\label{prob:scaling}
Let $n\in\nset$ and $y=(y_1,\dots,y_n)\in\rset^n$.
Define the \emph{scaling operator}\index{operator!scaling}\index{scaling!operator}
\[S_y:\rset[x_1,\dots,x_n]\to\rset[x_1,\dots,x_n]\]
by
\[(S_y f)(x) = f(x_1 y_1, \dots,x_n y_n)\]
for all $f\in\rset[x_1,\dots,x_n]$.
\begin{enumerate}[\bfseries\qquad a)]
\item Show that $S_y$ is a diagonal operator.

\item What is the diagonal sequences $t=(t_\alpha)_{\alpha\in\nset_0^n}$ and the sequence $c = (c_\alpha)_{\alpha\in\nset_0^n}$ of coefficients in (\ref{eq:diagRepr1})?
\end{enumerate}
\end{prob}

\begin{prob}
Let $\kset$ be a field.
\begin{enumerate}[\bfseries\qquad a)]
\item For $\kset = \cset$, how do the canonical representations of $T$ look like in the general case (\Cref{thm:canonicalRepresentation}) and the diagonal case (\Cref{thm:diagonalRepr})?

\item For a field $\kset$ with finite characteristic ($\mathrm{char}\, \kset < \infty$), how do the canonical representations of $T$ look like in the general case (\Cref{thm:canonicalRepresentation}) and the diagonal case (\Cref{thm:diagonalRepr})?
\end{enumerate}
\end{prob}

\chapter{The Topological Vector Spaces $\cset^{\nset_0^n}$ and $\cset[x_1,\dots,x_n]$}%%%
\label{ch:topVecSpaces}%%%%%%%%%%%%%%%%%%%%%%%%%%%%%%%%%%%%%%%%%%%%%%%%%%%%%%%%%%%%%%%%%%
%%%%%%%%%%%%%%%%%%%%%%%%%%%%%%%%%%%%%%%%%%%%%%%%%%%%%%%%%%%%%%%%%%%%%%%%%%%%%%%%%%%%%%%%%

We have seen in the previous chapter that we have to deal with infinite sums of operators on $\rset[x_1,\dots,x_n]$.
We also have to work with infinite sums and therefore limits in $\rset[x_1,\dots,x_n]$ and $\rset^{\nset_0^n}$.
Hence, we look at suitable topologies on $\rset[x_1,\dots,x_n]$ and $\rset^{\nset_0^n}$.
For that we can go without any problem to $\cset[x_1,\dots,x_n]$ and $\cset^{\nset_0^n}$.
We recommend \cite{treves67,koethe69TopVecSp1,koethe79TopVecSp2,pietsch72,schaef99} for further reading, especially \cite{treves67}.

\section{The Fr\'echet Space $\cset^{\nset_0^n}$}%%%
%%%%%%%%%%%%%%%%%%%%%%%%%%%%%%%%%%%%%%%%%%%%%%%%%%%%

\begin{dfn}
A \emph{Fr\'echet space}\index{space!Fr\'echet}\index{Fr\'echet!space} is a topological vector space which is
\begin{enumerate}[(i)]
\item metrizable (in particular, it is Hausdorff),

\item complete, and

\item locally convex.
\end{enumerate}
\end{dfn}

\begin{exm}[{\cite[p.\ 91, Exm. III]{treves67}}]\label{exm:frechet}
Let $n\in\nset$.
The space
\[\cset^{\nset_0^n} := \big\{ s = (s_\alpha)_{\alpha\in\nset_0^n} \,\big|\, s_\alpha\in\cset\ \text{for all}\ \alpha\in\nset_0^n\big\}\]
of all complex sequences indexed by $\nset_0^n$ equipped with the semi-norms
\[\big|(s_\alpha)_{\alpha\in\nset_0^n}\big|_d := \sup_{\alpha\in\nset_0^n: |\alpha|\leq d} |s_\alpha|\]
for all $d\in\nset_0$ is a Fr\'echet space.
To see this, let
\[\iota:\nset_0^n\to\nset_0\]
be a bijection.
Then a metric is given by
\[m(s,t) := \sum_{\alpha\in\nset_0^n} \frac{1}{2^{\iota(\alpha)}}\cdot\frac{|s_\alpha-t_\alpha|}{1 + |s_\alpha - t_\alpha|}\]
for any $s=(s_\alpha)_{\alpha\in\nset_0^n}$ and $t=(t_\alpha)_{\alpha\in\nset_0^n}\in\cset^{\nset_0^n}$.
To understand the topology of $\cset^{\nset_0^n}$ it is sufficient to understand the convergence
\[s^{(k)}\xrightarrow{k\to\infty} s\]
for sequences
\[s=(s_\alpha)_{\alpha\in\nset_0^n},\ s^{(k)}= (s_{k,\alpha})_{\alpha\in\nset_0^n}\in\cset^{\nset_0^n}.\]
From the semi-norms $|\,\cdot\,|_d$ we find
\begin{align*}
s^{(k)} \xrightarrow{k\to\infty} s\quad &\Leftrightarrow\quad \big| s - s^{(k)}\big|_d \xrightarrow{k\to\infty} 0\ \text{for all}\ d\in\nset_0\\
&\Leftrightarrow\quad s_{k,\alpha}\xrightarrow{k\to\infty} s_\alpha\ \text{for all}\ \alpha\in\nset_0^n,
\end{align*}
i.e., the topology of $\cset^{\nset_0^n}$ is generated by the coordinate-wise convergence.
\exmsymbol
\end{exm}

\begin{rem}\label{rem:frechet}
Let $n\in\nset$. Then
\[\cset^{\nset_0^n} \cong \cset[[x_1,\dots,x_n]]. \tag*{$\circ$}\]
%\exmsymbol
\end{rem}

\begin{exm}
Every Banach space is a Fr\'echet space.
\exmsymbol
\end{exm}

\begin{rem}
$\cset^{\nset_0^n}$ is a Fr\'echet space that is not a Banach space.
\exmsymbol
\end{rem}

\begin{lem}[{\cite[Cor.\ 3.7 (i)]{didio24posPresConst}}]\label{lem:momConeClosedFrechet}
Let $n\in\nset$ and let $K\subseteq\rset^n$ be closed.
Then the cone
\[\cS(\rset[x_1,\dots,x_n]:K\to\rset,\{x^\alpha\}_{\alpha\in\nset_0^n})\]
of $K$-moment sequences is closed in the Fr\'echet topology of $\cset^{\nset_0^n}$.
\end{lem}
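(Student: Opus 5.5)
Via Haviland's Theorem (\Cref{thm:haviland}), the cone of $K$-moment sequences has a purely dual description:
\[\cS(\rset[x_1,\dots,x_n]:K\to\rset,\{x^\alpha\}_{\alpha\in\nset_0^n}) = \big\{ s\in\rset^{\nset_0^n} \,\big|\, L_s(p)\geq 0\ \text{for all}\ p\in\pos(K)\big\}.\]
So the plan is to write the cone as an intersection of closed sets and read off closedness.

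First, $\rset^{\nset_0^n}$ is a closed subset of $\cset^{\nset_0^n}$. In the Fr\'echet topology of Example \ref{exm:frechet}, convergence is coordinatewise, and each coordinate map $s\mapsto \mathrm{Im}\, s_\alpha$ is continuous. Hence $\rset^{\nset_0^n}=\bigcap_\alpha \{\mathrm{Im}\, s_\alpha = 0\}$ is closed.

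Second, fix $p=\sum_{|\alpha|\leq d} p_\alpha x^\alpha\in\pos(K)$. The map
\[s\mapsto L_s(p)=\sum_{|\alpha|\leq d} p_\alpha s_\alpha\]
is a finite linear combination of coordinate functionals. Explicitly,
\[|L_s(p)-L_t(p)|\leq \Big(\sum_\alpha |p_\alpha|\Big)\cdot |s-t|_d,\]
so it is continuous. Therefore $H_p:=\{s\in\rset^{\nset_0^n} \,|\, \mathrm{Re}\, L_s(p)\geq 0\}$ is closed. On $\rset^{\nset_0^n}$ the real part is irrelevant, but taking it keeps $H_p$ visibly closed as a subset of $\cset^{\nset_0^n}$.

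Third, by Haviland's Theorem the moment cone equals $\rset^{\nset_0^n}\cap\bigcap_{p\in\pos(K)} H_p$. An arbitrary intersection of closed sets is closed, which proves the lemma.

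The only substantive ingredient is the direction (ii)$\Rightarrow$(i) of Haviland's Theorem: a pointwise limit of moment sequences is still $\pos(K)$-nonnegative, and Haviland then supplies a representing measure. The limiting measure is never constructed directly. The remaining steps are routine continuity checks. One should also make sure that ``$K$-moment sequence'' is read with real $s$, which is why intersecting with $\rset^{\nset_0^n}$ is needed.
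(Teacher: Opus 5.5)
Your proof is correct and follows essentially the same route as the paper: both rest on Haviland's Theorem together with the continuity of $s\mapsto L_s(p)$, which holds because $\deg p<\infty$. The paper argues sequentially (a coordinatewise limit of $K$-moment sequences stays $\pos(K)$-nonnegative, so Haviland makes it a $K$-moment sequence), while you write the cone as an intersection of closed half-spaces; this also makes the realness of the limit explicit and does not need metrizability.
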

\begin{proof}
Let $s^{(k)},s\in\cS$ with $s^{(k)}\xrightarrow{k\to\infty} s$ in $\cset^{\nset_0^n}$.
Then
\[\underbrace{L_{s^{(k)}}(p)}_{\geq 0} \quad\xrightarrow{k\to\infty}\quad L_s(p) \geq 0\]
for all $p\in\pos(K)$, since $\deg p < \infty$.
Hence, by \Cref{thm:haviland}, $s\in\cS$.
\end{proof}

\section{The LF-Space $\cset[x_1,\dots,x_n]$}%%%
%%%%%%%%%%%%%%%%%%%%%%%%%%%%%%%%%%%%%%%%%%%%%%%%

\begin{dfn}\label{dfn:LF}
Let $E$ be a complex vector space such that
\[ E = \bigcup_{n\in\nset} E_n\]
with Fr\'echet spaces $E_n$ for all $n\in\nset$ and the natural injections $E_n \hookrightarrow E_{n+1}$, i.e., $E_n\subseteq E_{n+1}$, is an isomorphism.
If $E$ is equipped with the topology such that a convex $V$ is a neighborhood of zero in $E$ if and only if $V\cap E_n$ is a neighborhood of zero in $E_n$ for all $n\in\nset$, then $E$ is called a \emph{LF-space}.\index{LF-space}\index{space!LF-}
The set $\{E_n\}_{n\in\nset}$ is called a \emph{defining sequence}\index{sequence!defining}\index{defining!sequence} of $E$.
\end{dfn}

The LF stands for \emph{limit of Fr\'echet spaces}.

\begin{exm}\label{exm:polyLF}
Let $n\in\nset$ and set
\[E_d := \cset[x_1,\dots,x_n]_{\leq d}\]
for $d\in\nset_0$.
Since $E_d$ are finite dimensional, the $E_d$ are Fr\'echet spaces.
Since
\[\cset[x_1,\dots,x_n] = \bigcup_{d\in\nset_0} E_d,\]
we can equip $\cset[x_1,\dots,x_n]$ with the topology by \Cref{dfn:LF}.
Then $\cset[x_1,\dots,x_n]$ is a LF-space.
\exmsymbol
\end{exm}

We will see a simpler version how to understand the topology of $\cset[x_1,\dots,x_n]$.

\begin{prop}[{\cite[Prop.\ 13.1]{treves67}}]
Let $E$ be an LF-space, let $\{E_n\}_{n\in\nset}$ be a defining sequence of $E$, let $F$ be an arbitrary locally convex topological vector space, and let
\[l:E\to F\]
be linear.
Then the following are equivalent:
\begin{enumerate}[(i)]
\item $l:E\to F$ is continuous.

\item For all $n\in\nset$,
\[l|_{E_n}:E_n\to F\]
is continuous.
\end{enumerate}
\end{prop}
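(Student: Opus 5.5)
The implication (i) $\Rightarrow$ (ii) is immediate once we know that each inclusion $\iota_n:E_n\hookrightarrow E$ is continuous. Then $l|_{E_n}=l\circ\iota_n$ is a composition of continuous maps. To see that $\iota_n$ is continuous, take a neighborhood of zero in $E$. It contains a convex neighborhood $V$ of zero, because the topology of \Cref{dfn:LF} is generated by convex sets. By the very definition of the LF-topology, $\iota_n^{-1}(V)=V\cap E_n$ is a neighborhood of zero in $E_n$. Linearity of $\iota_n$ then gives continuity at every point.

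For (ii) $\Rightarrow$ (i), I use that $l$ is linear, so it suffices to check continuity at $0$. Since $F$ is locally convex, I fix a convex neighborhood $W$ of zero in $F$. The key point is that linearity preserves convexity under preimages: $U:=l^{-1}(W)$ is convex in $E$, because for $x,y\in U$ and $t\in[0,1]$ we have $l(tx+(1-t)y)=t\,l(x)+(1-t)\,l(y)\in W$.

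By \Cref{dfn:LF}, a convex set is a neighborhood of zero in $E$ if and only if its trace on every $E_n$ is a neighborhood of zero there. So I compute
\[U\cap E_n = (l|_{E_n})^{-1}(W).\]
This is a neighborhood of zero in $E_n$ by the continuity of $l|_{E_n}$ assumed in (ii). Hence $U$ is a neighborhood of zero in $E$ with $l(U)\subseteq W$. Since the convex neighborhoods $W$ form a base at zero in $F$, $l$ is continuous at $0$ and therefore everywhere.

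The only delicate point is a foundational one: that \Cref{dfn:LF} really defines a topological vector space topology whose zero neighborhoods are generated by exactly those convex sets. Concretely, one must check that the family of convex $V$ with $V\cap E_n$ a zero neighborhood for all $n$ is closed under finite intersections and scalar shrinking, and can be taken absorbing and balanced. Both directions above rely only on this characterization of zero neighborhoods. I would state it explicitly as part of the definition, following \cite{treves67}, rather than reprove it.
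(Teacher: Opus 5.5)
Your proof is correct. It is exactly the standard argument behind \cite[Prop.\ 13.1]{treves67}, which the paper cites without proving: necessity follows from continuity of the inclusions $E_n\hookrightarrow E$, and sufficiency from pulling back a convex zero neighborhood $W\subseteq F$ and using $l^{-1}(W)\cap E_n=(l|_{E_n})^{-1}(W)$.

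The foundational point you flag is genuine but harmless. \Cref{dfn:LF} only specifies which convex sets are zero neighborhoods, so the local convexity of $E$ used in your step (i) $\Rightarrow$ (ii) must be read as part of the definition, as Tr\`eves does. Your step (ii) $\Rightarrow$ (i) uses only the ``if'' part of the definition as stated.
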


\begin{exm}\label{exm:dualPoly}
Let $s\in\cset^{\nset_0^n}$.
Then the Riesz functional
\[L_s:\cset[x_1,\dots,x_n]\to\cset\]
is continuous, since
\[E_d = \cset[x_1,\dots,x_n]_{\leq d}\]
are finite dimensional and every linear functional on a finite dimensional space are continuous.

On the other side, every linear functional
\[l:\cset[x_1,\dots,x_n]\to\cset\]
is uniquely determined by
\[s_\alpha := l(x^\alpha)\]
for all $\alpha\in\nset_0^n$, i.e.,
\[L_{\cdot}: \cset^{\nset_0^n}\to \big(\cset[x_1,\dots,x_n]\big)^*,\quad s\mapsto L_s\]
is an isomorphism where $E^*$ is the dual of $E$, i.e., the set of all continuous linear functionals on a topological vector space $E$.
Hence, the Riesz functional $L_s$ is the dual pairing map
\[\langle\,\cdot\,,\,\cdot\,\rangle: E^*\times E,\quad (s,p)\mapsto \langle s,p\rangle:= L_s(p)\]
between
\[E^* = \cset^{\nset_0^n} \qquad\text{and}\qquad E = \cset[x_1,\dots,x_n]. \tag*{$\circ$}\]
%\exmsymbol
\end{exm}

\begin{thm}[{\cite[Thm.\ 13.1]{treves67}}]
Any LF-space is complete.
\end{thm}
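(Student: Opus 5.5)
The plan is to show that every Cauchy net in an LF-space $E=\bigcup_{n\in\nset} E_n$ converges, by reducing to completeness of a single Fr\'echet space $E_n$. I would organise the argument in three steps.

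\textbf{Step 1: the inclusions $E_n\hookrightarrow E$ are topological embeddings.} The topology that $E$ induces on each $E_n$ should coincide with the original Fr\'echet topology of $E_n$. Continuity of $E_n\hookrightarrow E$ is immediate from \Cref{dfn:LF}. For the converse, let $U$ be a convex neighbourhood of zero in $E_n$. I would build a convex neighbourhood $V$ of zero in $E$ with $V\cap E_n\subseteq U$. The construction is inductive: choose convex balanced neighbourhoods $V_m\subseteq E_m$ for $m\geq n$ with $V_{m+1}\cap E_m\subseteq V_m$, and take $V=\bigcup_m V_m$. The extension step uses the standard Hahn--Banach type lemma: if $F\subseteq G$ is a closed subspace of a locally convex space and $W$ is a convex neighbourhood in $F$, then $\conv(W\cup W')$, for a suitable neighbourhood $W'$ in $G$, meets $F$ inside $2W$. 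The closedness of $E_m$ in $E_{m+1}$ holds because the injection is an isomorphism onto its image and $E_m$ is complete.

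\textbf{Step 2: bounded sets, and in particular Cauchy sequences, lie in a single $E_n$.} This is the main obstacle. First I would show that each $E_n$ is closed in $E$. Then I would argue by contradiction. Suppose a bounded set $B$ meets $E_{n+1}\setminus E_n$ for infinitely many $n$, say at points $x_k\in E_{n_k+1}\setminus E_{n_k}$. Using Step 1 and the closedness of $E_{n_k}$ in $E_{n_k+1}$, I would construct inductively a convex neighbourhood $V$ with $x_k\notin k\cdot V$ for all $k$. This contradicts boundedness of $B$. Since a Cauchy sequence is bounded, it then lies in some $E_n$.

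\textbf{Step 3: conclude completeness.} A Cauchy sequence in $E$ that lies in $E_n$ is Cauchy in $E_n$ by Step 1, so it converges in the Fr\'echet space $E_n$ and hence in $E$. For general Cauchy nets or filters, which are needed since $E$ need not be metrizable, I would pass to the closed convex hull argument. One shows a Cauchy filter has a bounded member after intersecting with a neighbourhood, and then applies Step 2 to that member. Alternatively, I would invoke the standard result that a locally convex inductive limit of complete spaces with the strict property is complete, using Step 1 to identify the traces of $E$'s topology on each $E_n$.

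For the case of interest, \Cref{exm:polyLF}, all of this collapses. Each $E_d$ is finite-dimensional, so the conclusion is elementary once Step 2 is shown for polynomials. That case of Step 2 works by picking linear functionals $L_s$ with the sequence $s_\alpha$ growing fast enough to be unbounded on $B$.
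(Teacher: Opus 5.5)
Your Steps 1 and 2 are sound: they are the standard facts that $E$ induces on each $E_n$ its original topology and that every bounded subset of $E$ lies in some $E_n$. But together they only yield \emph{sequential} completeness. Completeness needs Cauchy filters or nets, since $E$ is in general not metrizable, and Step 3 does not close this gap.

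Your proposed bridge is false: a Cauchy filter need not have a bounded member, even after intersecting with a neighbourhood. The neighbourhood filter of $0$ in $E$ is convergent, hence Cauchy. Yet no zero-neighbourhood of $E$ is bounded, unless the defining sequence is eventually stationary, in which case $E=E_N$ is Fr\'echet and there is nothing to prove. Already in $\cset[x]$ this fails. Every zero-neighbourhood $U$ contains some $\varepsilon_k x^k$ with $\varepsilon_k>0$ for every $k$, because $U\cap E_k$ is a zero-neighbourhood of $E_k$. So $U$ meets $E_k\setminus E_{k-1}$ for all $k$ and is unbounded by your own Step 2.

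Your alternative, invoking ``the standard result'' on strict inductive limits of complete spaces, is the theorem itself, so it is circular. Your closing remark on $\cset[x_1,\dots,x_n]$ has the same defect, since Step 2 controls sequences, not nets. A minor point on Step 1: for convex balanced $W,W'$ with $W'\cap F\subseteq W$, the extension lemma gives $\conv(W\cup W')\cap F=W$ exactly. You need this exact form for your inclusions $V_{m+1}\cap E_m\subseteq V_m$, and it requires no closedness of $F$.

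The missing idea avoids boundedness altogether; it is the route of Tr\`eves' proof, which the paper only cites. Let $\mathcal{F}$ be a Cauchy filter on $E$ and $\mathcal{U}$ the zero-neighbourhood filter of $E$.

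\emph{Key lemma:} there is an $n$ with $(M+U)\cap E_n\neq\emptyset$ for all $M\in\mathcal{F}$ and $U\in\mathcal{U}$.

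Suppose not. Then for each $n$ pick $M_n\in\mathcal{F}$ and a convex balanced $U_n\in\mathcal{U}$ with $(M_n+U_n)\cap E_n=\emptyset$. Replacing $U_n$ by $U_1\cap\dots\cap U_n$, the $U_n$ may be taken decreasing. Then $W:=\conv\bigcup_n (U_n\cap E_n)$ is a zero-neighbourhood of $E$ by the very definition of the LF-topology. Hence there is $M\in\mathcal{F}$ with $M-M\subseteq W$. Fix $x_0\in M$, say $x_0\in E_k$, and $y\in M\cap M_k$. Write $y-x_0\in W$ as a convex combination and split off the terms with index $\leq k$. This gives $y-x_0=w'+w''$ with $w'\in E_k$ and $w''\in U_k$. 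Hence $y-w''=x_0+w'\in(M_k+U_k)\cap E_k$, a contradiction.

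With this $n$, the sets $(M+U)\cap E_n$ form a filter base on $E_n$ that is Cauchy for the uniformity of $E$. By your Step 1 it is Cauchy in $E_n$, so it converges in the Fr\'echet space $E_n$ to some $x$. This $x$ is a cluster point of the Cauchy filter $\mathcal{F}$, so $\mathcal{F}\to x$. Thus Step 1 is exactly what is needed, while Step 2 and the closedness of $E_n$ in $E_{n+1}$ play no role in completeness.
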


In the following continuation of \Cref{exm:polyLF} we give a simpler interpretation of the topology of the LF-space $\cset[x_1,\dots,x_n]$.

\begin{exm}[\Cref{exm:polyLF} continued]
Let $n\in\nset$ and let
\[p = \sum_{\alpha\in\nset_0^n} p_\alpha\cdot x^\alpha,\quad p_k = \sum_{\alpha\in\nset_0^n} p_{k,\alpha}\cdot x^\alpha\quad\in\cset[x_1,\dots,x_n]\]
for all $k\in\nset$.
Then
\[p_k \xrightarrow{k\to\infty} p\quad \text{in}\ \cset[x_1,\dots,x_n]\]
if and only if
\begin{equation}\label{eq:polyConvergence}
\langle s, p_k\rangle \xrightarrow{k\to\infty} \langle s, p\rangle \quad\text{for all}\ s\in\cset^{\nset_0^n}
\end{equation}
by \Cref{exm:dualPoly}.
For
\[\delta_\alpha := (\delta_{\alpha,\beta})_{\beta\in\nset_0^n}\in\cset^{\nset_0^n}\]
with $\alpha\in\nset_0^n$, (\ref{eq:polyConvergence}) implies
\[p_{k,\alpha} = \langle \delta_\alpha, p_k\rangle \xrightarrow{k\to\infty} \langle \delta_\alpha,p\rangle = p_\alpha\]
for all $\alpha\in\nset_0^n$, i.e., we have in $\cset[x_1,\dots,x_n]$ coordinate-wise convergence.

We now prove that (\ref{eq:polyConvergence}) also implies
\[\sup_{k\in\nset} \deg p_k < \infty.\]
To prove this assume
\[\deg p_k \xrightarrow{k\to\infty}\infty,\]
i.e., w.l.o.g.\ $\deg p_k = k$.
For each $k\in\nset$, let $\alpha_k\in\nset_0^n$ be such that
\[p_{k,\alpha} = \langle p_k,\delta_{\alpha_k}\rangle \neq 0.\]
Hence, there exists a $s\in\cset^{\nset_0^n}$ such that
\[\langle s,p_k\rangle = k \to\infty.\]
But this contradicts (\ref{eq:polyConvergence}), i.e., we have that
\[p_k \xrightarrow{k\to\infty} p\quad \text{in}\ \cset[x_1,\dots,x_n]\]
if and only if
\[\sup_{k\in\nset}\deg p_k \leq D < \infty \quad\text{and}\quad p_k\to p\ \text{in}\ \cset[x_1,\dots,x_n]_{\leq D}.\]

In summary, we proved that to understand the LF-topology of $\cset[x_1,\dots,x_n]$ it is sufficient to understand the convergence in it.
And similar to $\cset^{\nset_0^n}$, in $\cset[x_1,\dots,x_n]$ we have the coordinate-wise convergence with the additional uniform degree boundedness of the $p_k$.
\exmsymbol
\end{exm}

\begin{lem}
Let $n\in\nset$ and let $K\subseteq\rset^n$ be closed.
Then
\[\pos(K)\]
is closed in the LF-topology of $\cset[x_1,\dots,x_n]$.
\end{lem}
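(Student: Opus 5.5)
The plan is to use the description of convergence in the LF-topology just established in the continuation of \Cref{exm:polyLF}. A sequence $p_k$ converges to $p$ in $\cset[x_1,\dots,x_n]$ if and only if $\sup_k\deg p_k\leq D<\infty$ and $p_k\to p$ in the finite-dimensional space $\cset[x_1,\dots,x_n]_{\leq D}$.

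There is a subtlety: the LF-topology is not metrizable, so sequential closedness does not automatically give closedness. I would therefore argue with the definition of the topology directly. By \Cref{dfn:LF}, a set $A$ is closed as soon as $A\cap E_d$ is closed in $E_d$ for every $d$. I will justify this through complements. If $U$ denotes the complement of $A$, then $U\cap E_d$ is open in each $E_d$. In a strict inductive limit of finite-dimensional spaces, such a set is open: the LF-topology on $\cset[x_1,\dots,x_n]$ is the finest locally convex topology, and any set whose traces on all $E_d$ are open is open. This is a standard fact for countable-dimensional spaces, which I would quote from \cite{treves67}.

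Alternatively, and more in the spirit of the notes, I would present the argument via $\pos(K)$ as an intersection of closed sets:
\[\pos(K) = \rset[x_1,\dots,x_n]\cap\bigcap_{x\in K}\{p \,|\, \mathrm{Re}\, l_x(p)\geq 0\}.\]
Each point evaluation $l_x$ is continuous, since it is a Riesz functional $L_s$ with $s=(x^\alpha)_\alpha$ (\Cref{exm:dualPoly}). Hence each half-space $\{\mathrm{Re}\, l_x\geq0\}$ is closed. The real subspace $\rset[x_1,\dots,x_n]$ is closed as well. It is the intersection of the kernels of $p\mapsto \mathrm{Im}\, L_{\delta_\alpha}(p)$, which are real-linear maps whose restrictions to every $E_d$ are continuous, so they are continuous by the proposition \cite[Prop.\ 13.1]{treves67} above. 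An arbitrary intersection of closed sets is closed, so $\pos(K)$ is closed.

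To show the two descriptions agree, take $p\in\rset[x_1,\dots,x_n]$ with $\mathrm{Re}\,p(x)=p(x)\geq0$ for all $x\in K$. Then $p\in\pos(K)$, and the converse is clear.

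The main obstacle is only the technical point of continuity of $\mathrm{Re}$ and $\mathrm{Im}$ of the functionals in the complex LF-space. The proposition reduces this to finite-dimensional $E_d$, where it is trivial. In the writeup I would also state the sequential version (limits have bounded degree, coefficients converge, hence pointwise convergence on $K$ preserves $\geq0$) as an illustration.
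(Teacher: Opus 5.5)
Your proposal is correct. The argument you would actually write up, the intersection description, takes a different route from the paper's.

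The paper works only with sequences. It takes $p_k\to p$, uses the characterization from the continuation of \Cref{exm:polyLF} (uniformly bounded degree and coefficient-wise convergence in $\cset[x_1,\dots,x_n]_{\leq D}$), and passes to the limit in $p_k(x)\geq 0$ for each $x\in K$. As you observe, this by itself only gives sequential closedness in a non-metrizable space.

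Your representation
\[\pos(K)=\rset[x_1,\dots,x_n]\cap\bigcap_{x\in K}\big\{p \,\big|\, \mathrm{Re}\, l_x(p)\geq 0\big\}\]
uses nothing beyond the continuity of all linear functionals on $\cset[x_1,\dots,x_n]$ from \Cref{exm:dualPoly}. It works for nets, not just sequences. It even shows that $\pos(K)$ is closed in the weak topology $\sigma(\cset[x_1,\dots,x_n],\cset^{\nset_0^n})$, which is the natural statement given the duality with $K$-moment sequences. It also makes clear that the bounded-degree part of the convergence characterization is not needed here: $p_k(x)\to p(x)$ already follows from continuity of $l_x$. The paper's argument is shorter and matches how closedness is used later in the notes, where only limits of sequences occur.

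Two simplifications for your write-up:
\begin{enumerate}[(a)]
\item Continuity of $\mathrm{Re}\, l_x$ and of $\mathrm{Im}\, L_{\delta_\alpha}=\mathrm{Re}(-i L_{\delta_\alpha})$ follows by composing continuous complex-linear functionals with the continuous map $\mathrm{Re}:\cset\to\rset$. So no real-linear version of the continuity criterion for LF-spaces is required.
\item You can drop your first route. The fact it quotes, that a set whose traces on all $\cset[x_1,\dots,x_n]_{\leq d}$ are open is open, is true for inductive limits of finite-dimensional spaces. However, it is a genuine theorem, not a consequence of \Cref{dfn:LF}, which only prescribes the convex neighbourhoods of zero. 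If you keep it, it needs a precise reference or a proof.
\end{enumerate}
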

\begin{proof}
Let $p_k,p\in\rset[x_1,\dots,x_n]$ with
\[p_k \quad\xrightarrow{k\to\infty}\quad p\quad \text{in}\ \cset[x_1,\dots,x_n].\]
Then
\[\sup_{k\in\nset} \deg p_k \leq D <\infty\]
for some $D\in\nset$,
\[p_k\to p \quad\text{in}\ \cset[x_1,\dots,x_n],\]
and hence
\[\underbrace{p_k(x)}_{\geq 0} \quad\xrightarrow{k\to\infty}\quad p(x)\geq 0\]
for all $x\in K$, i.e., $p\in\pos(K)$.
\end{proof}

\section*{Problems}%%%%%%%%%%%%%%%%%%%%%
\addcontentsline{toc}{section}{Problems}

\begin{prob}[{\cite[Lem.\ 2.8]{didio25KPosPresGen}} or \Cref{lem:polySharpPartialFrechetSpace}]\label{prob:sharpFrechet}
Show that
\begin{multline*}
\cset[x_1,\dots,x_n]^\sharp[[\partial_1,\dots,\partial_n]] :=\\ \left\{ \sum_{\alpha\in\nset_0^n} p_\alpha\cdot \partial^\alpha \,\middle|\, p_\alpha\in\cset[x_1,\dots,x_n]_{\leq |\alpha|},\alpha\in\nset_0^n\right\}.
\end{multline*}
is a Fr\'echet space with the coordinate-wise convergence.
\end{prob}

\begin{prob}[{\cite[Rem.\ 2.9]{didio25KPosPresGen}}]\label{prob:formalformalseries}
Show that
\begin{multline*}
\cset[[x_1,\dots,x_n]][[\partial_1,\dots,\partial_n]] :=\\ \left\{ \sum_{\alpha\in\nset_0^n} p_\alpha\cdot \partial^\alpha \,\middle|\, p_\alpha\in\cset[[x_1,\dots,x_n]],\alpha\in\nset_0^n \right\}
\end{multline*}
is a Fr\'echet space with the coordinate-wise convergence.
\end{prob}

\begin{prob}
Let
\begin{multline*}
\cT := \{T:\cset[x_1,\dots,x_n]\to\cset[x_1,\dots,x_n]\ \text{linear}\}\\
= \left\{ \sum_{\alpha\in\nset_0^n} q_\alpha\cdot\partial^\alpha \,\middle|\, q_\alpha\in\cset[x_1,\dots,x_n]\ \text{for all}\ \alpha\in\nset_0^n \right\}.
\end{multline*}
\begin{enumerate}[\bfseries\qquad a)]
\item Exists a topology such that $\cT$ is a Fr\'echet space?

\item Exists a topology such that $\cT$ is a LF-space?
\end{enumerate}
\end{prob}

\begin{prob}
Let $n\in\nset$ and let
\[S: \cset^{\nset_0^n}\to\cset^{\nset_0^n}\]
be linear.
\begin{enumerate}[\bfseries\qquad a)]
\item Show that, for every $\alpha\in\nset_0^n$, there exists a polynomial $p_\alpha\in\cset[x_1,\dots,x_n]$ such that the $\alpha$-th coordinate $(Ss)_\alpha$ of $Ss$ with $s\in\cset^{\nset_0^n}$ is of the form
\[(Ss)_\alpha = \langle p_\alpha,s\rangle = L_s(p_\alpha).\]

\item Take in $\cset^{\nset_0^n}$ the standard basis
\[\cB := \{\delta_\alpha\}_{\alpha\in\nset_0^n} \qquad\text{with}\qquad \delta_\alpha := (\delta_{\alpha,\beta})_{\beta\in\nset_0^n}.\]
Show that, with respect to the basis $\cB$, the operator $S$ is represented by an infinite dimensional matrix $\tilde{S}$ such that in every row of $\tilde{S}$ there are only finitely many non-zero entries.

\item How many non-zero entries can $\tilde{S}$ have in each column?
\end{enumerate}
\textit{Hint:} Since $|\nset_0^n| = |\nset_0|$, it is sufficient to show (a) and (b) only for $n=1$.
\end{prob}

\begin{prob}
Let $n\in\nset$, let $\cset^{\nset_0^n}$ be with the Fr\'echet topology, and let $\cset[x_1,\dots,x_n]$ be with the LF-topology.
The dual pairing is given by
\[\langle\,\cdot\,,\,\cdot\,\rangle: \cset^{\nset_0^n}\times\cset[x_1,\dots,x_n]\to\cset,\quad (s,p)\mapsto \langle s,p\rangle := L_s(p).\]
Let
$S:\cset^{\nset_0^n}\to\cset^{\nset_0^n}$
be linear.
\begin{enumerate}[\bfseries\qquad a)]
\item How is the dual $S^*:\cset[x_1,\dots,x_n]\to\cset[x_1,\dots,x_n]$ defined?

\item Show $S^{**} = S$.
\end{enumerate}
\end{prob}

%%%%%%%%%%%%%%%%%%%%%%%%%%%%%%%%%%%%
%%%%%%%%%%%%%%%%%%%%%%%%%%%%%%%%%%%%
\part{$K$-Positivity Preserver}%%%%%
\label{part:KposPres}%%%%%%%%%%%%%%%
%%%%%%%%%%%%%%%%%%%%%%%%%%%%%%%%%%%%
%%%%%%%%%%%%%%%%%%%%%%%%%%%%%%%%%%%%

\chapter{$K$-Positivity Preserver}%%%
\label{ch:kPosPres}%%%%%%%%%%%%%%%%%%
%%%%%%%%%%%%%%%%%%%%%%%%%%%%%%%%%%%%%

We collected enough about linear operators
\[T:\rset[x_1,\dots,x_n]\to\rset[x_1,\dots,x_n]\]
and about moments to characterize all $T$ with
\[T\pos(K)\subseteq\pos(K)\]
for closed $K\subseteq\rset^n$.
The results in this chapter appeared in \cite{guterman08,borcea11,didio24posPresConst} and especially \cite{didio25KPosPresGen}.

\section{Definition}%%%
%%%%%%%%%%%%%%%%%%%%%%%

For a closed set $K\subseteq\rset^n$, we remind the reader of
\[\pos(K) := \big\{f\in\rset[x_1,\dots,x_n] \,\big|\, f\geq 0\ \text{on}\ K\big\}.\]
For the special case $K=\emptyset$, see Problem \ref{prob:emptyK}.
Hence, we here only look at the non-trivial case $K\neq\emptyset$.

\begin{dfn}
Let $n\in\nset$, let $K\subseteq\rset^n$ be closed and non-empty, and let
\[T:\rset[x_1,\dots,x_n]\to\rset[x_1,\dots,x_n]\]
be linear.
If
\[T\pos(K)\subseteq\pos(K),\]
then we call $T$ a \emph{$K$-positivity preserver}.\index{preserver!$K$-positivity}\index{positivity!$K$-!preserver}
If $K=\rset^n$, then, for short, we call $T$ a \emph{positivity preserver}.\index{preserver!positivity}\index{positivity!preservers}
\end{dfn}

At first we see in the following example, that, for every $K\subseteq\rset^n$ with $K\neq\emptyset$, there are non-trivial $K$-positivity preserver.

\begin{exm}\label{exm:simpleKposPres}
Let $n\in\nset$, let $K\subseteq\rset^n$ be closed and non-empty, let $p\in\pos(K)$, and let
\[L:\rset[x_1,\dots,x_n]\to\rset\]
be a $K$-moment functional.
Then
\[T:\rset[x_1,\dots,x_n]\to\rset[x_1,\dots,x_n],\quad f\mapsto Tf := L(f)\cdot p\]
is a $K$-positivity preserver.
\exmsymbol
\end{exm}

\begin{dfn}\label{dfn:Ty}
Let $q_\alpha\in\rset[x_1,\dots,x_n]$ for all $\alpha\in\nset_0^n$ and let
\[T = \sum_{\alpha\in\nset_0^n} q_\alpha\cdot\partial^\alpha\]
be a linear operator.
For $y\in\rset^n$, we define the linear map
\[T_y:\rset[x_1,\dots,x_n]\to\rset[x_1,\dots,x_n]\]
by
\begin{equation}\label{eq:Ty}
T_y := \sum_{\alpha\in\nset_0^n} q_\alpha(y)\cdot\partial^\alpha.
\end{equation}
\end{dfn}

Clearly,
\[(Tf)(x) \;=\; \sum_{\alpha\in\nset_0^n} q_\alpha(x)\cdot(\partial^\alpha f)(x) \;=\; (T_x f)(x)\]
for all $f\in\rset[x_1,\dots,x_n]$ and $x\in\rset^n$.

\section{Characterization of $K$-Positivity Preserver}%%%
%%%%%%%%%%%%%%%%%%%%%%%%%%%%%%%%%%%%%%%%%%%%%%%%%%%%%%%%%

\begin{lem}[{\cite[Lem.\ 3.3]{didio25KPosPresGen}}]\label{lem:KposMomSeq}
Let $q_\alpha\in\rset[x_1,\dots,x_n]$ for all $\alpha\in\nset_0^n$ and let $K\subseteq\rset^n$ be closed and non-empty.
If
\[T = \sum_{\alpha\in\nset_0^n} q_\alpha\cdot\partial^\alpha\]
is a $K$-positivity preserver, then
\[(\alpha!\cdot q_\alpha(y))_{\alpha\in\nset_0^n}\]
is a $(K-y)$-moment sequence for all $y\in K$.
\end{lem}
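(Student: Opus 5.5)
Fix $y\in K$ and consider the linear functional
\[L:\rset[x_1,\dots,x_n]\to\rset,\qquad L(f) := \big(T\tilde f\,\big)(y), \qquad \tilde f(x) := f(x-y),\]
i.e., we translate $f$ by $y$, apply $T$, and evaluate at the point $y$. By Haviland's Theorem (\Cref{thm:haviland}) applied to the closed set $K-y$, it suffices to show two things: first, that $L$ is the Riesz functional of the sequence $(\alpha!\cdot q_\alpha(y))_{\alpha\in\nset_0^n}$, i.e.\ $L(x^\alpha)=\alpha!\,q_\alpha(y)$ for all $\alpha$; and second, that $L(f)\geq 0$ for all $f\in\pos(K-y)$.

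For the first claim, we use the identity $(Tg)(y)=(T_y g)(y)$ from \Cref{dfn:Ty}. With $g(x)=(x-y)^\alpha$ we have $\partial^\beta g(y)=\alpha!\,\delta_{\alpha,\beta}$, because differentiating $(x-y)^\alpha$ and evaluating at $x=y$ leaves a nonzero value only when $\beta=\alpha$. Hence
\[L(x^\alpha)=\sum_{\beta} q_\beta(y)\,\partial^\beta (x-y)^\alpha\big|_{x=y}=\alpha!\,q_\alpha(y).\]
Only finitely many terms are nonzero, since $\partial^\beta g=0$ for $\beta\not\preceq\alpha$, so the sum is well defined.

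For the positivity claim, take $f\in\pos(K-y)$. Then $\tilde f(x)=f(x-y)\geq 0$ for all $x\in K$, because $x-y\in K-y$; thus $\tilde f\in\pos(K)$. Since $T$ is a $K$-positivity preserver, $T\tilde f\in\pos(K)$, and as $y\in K$ we get $L(f)=(T\tilde f)(y)\geq 0$. Haviland's Theorem then shows that $(\alpha!\,q_\alpha(y))_\alpha$ is a $(K-y)$-moment sequence.

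The argument is short. The only step needing care is the first one: computing $\partial^\beta(x-y)^\alpha$ at $x=y$ correctly, and checking that the formal infinite sum reduces to finitely many terms. The hypothesis $y\in K$ is used exactly once, in the final evaluation.
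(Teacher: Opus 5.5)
Your proof is correct and essentially the paper's argument: both evaluate $T$ at the point $y\in K$ and then apply Haviland's Theorem. The only difference is where the translation happens. You pre-compose with the shift $f\mapsto f(\cdot-y)$ and apply Haviland directly on the closed set $K-y$, while the paper applies Haviland to $f\mapsto (Tf)(y)$ on $K$ and then translates the resulting measure using Taylor's formula at $y$.
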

\begin{proof}
Let $y\in K$ and define
\[L_y:\rset[x_1,\dots,x_n]\to\rset,\quad f\mapsto L_y(f) := (T_y f)(y).\]
Since $T$ is a $K$-positivity preserver,
\[L_y(f) = (T_y f)(y) \geq 0\]
for all $f\in\pos(K)$.
Hence, $L_y$ is a $K$-moment functional by \Cref{thm:haviland}, i.e., there exists a (Radon) measure $\mu_y$ with $\supp\mu_y\subseteq K$ such that
\[\sum_{\alpha\in\nset_0^n} q_\alpha(y)\cdot f^{(\alpha)}(y) = (T_y f)(y) = L_y(f) = \int_K f(x)~\diff\mu_y(x)\]
for all $f\in \rset[x_1,\dots,x_n]$.
Using the Taylor formula
\[f(x) = \sum_{\alpha\in\nset_0^n} \frac{f^{(\alpha)}(y)}{\alpha!}\cdot (x-y)^\alpha\]
we obtain
\begin{equation}\label{eq:proofTaylor}
\sum_{\alpha\in\nset_0^n} q_\alpha(y)\cdot f^{(\alpha)}(y) = \sum_{\alpha\in\nset_0^n} \frac{f^{(\alpha)}(y)}{\alpha!}\cdot \int (x-y)^\alpha~\diff\mu_y(x)
\end{equation}
for all $f\in\rset[x_1,\dots,x_n]$.
For $\beta\in\nset_0^n$, set
\[f_\beta(x) := (x-y)^\beta.\]
Then
\[f^{(\alpha)}_{\beta}(y) = \beta!\cdot \delta_{\alpha,\beta}.\]
Hence, by \Cref{eq:proofTaylor},
\[\beta!\cdot q_\beta(y) = \int_K (x-y)^\beta~\diff\mu_y(x) = \int_{K-y} z^\beta~\diff\mu_y(z+y),\]
i.e.,
\[(\alpha!\cdot q_\alpha(y))_{\alpha\in\nset_0^n}\]
is a $(K-y)$-moment sequence with representing measure $\nu_y(\,\cdot\,) := \mu_y(\,\cdot\,+y)$.
\end{proof}

\begin{lem}[{\cite[Lem.\ 3.4]{didio25KPosPresGen}}]\label{lem:K-yMomSeqMeasure}
Let
\[T = \sum_{\alpha\in\nset_0^n} q_\alpha\cdot \partial^\alpha\]
with $q_\alpha\in\rset[x_1,\dots,x_n]$ for $\alpha\in\nset_0^n$ and let $K\subseteq\rset^n$ be closed and non-empty.
If $y\in K$ is such that
\[(\alpha!\cdot q_\alpha(y))_{\alpha\in\nset_0^n}\]
is a moment sequence with representing measure $\mu_y$, then
\[(T_y f)(y) = \int f(x+y)~\diff\mu_y(x)\]
for all $f\in\rset[x_1,\dots,x_n]$.
\end{lem}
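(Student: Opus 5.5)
The plan is to compute both sides on a polynomial basis adapted to the point $y$ and compare them term by term. Since both sides are linear in $f$, it suffices to check the identity on a basis of $\rset[x_1,\dots,x_n]$. I will take the shifted monomials $f_\beta(x) := (x-y)^\beta$, $\beta\in\nset_0^n$. These form a basis, because $x^\beta = ((x-y)+y)^\beta$ expands by the binomial formula into a finite combination of the $(x-y)^\gamma$ with $\gamma\preceq\beta$.

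\emph{Left-hand side.} By \Cref{dfn:Ty},
\[(T_y f_\beta)(y) = \sum_{\alpha\in\nset_0^n} q_\alpha(y)\cdot(\partial^\alpha f_\beta)(y).\]
For each fixed $f_\beta$ this sum is finite, since $\partial^\alpha f_\beta = 0$ whenever $\alpha\not\preceq\beta$. Moreover, $(\partial^\alpha f_\beta)(y) = \beta!\cdot\delta_{\alpha,\beta}$, which is the same computation already used in the proof of \Cref{lem:KposMomSeq}. Hence
\[(T_y f_\beta)(y) = \beta!\cdot q_\beta(y).\]

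\emph{Right-hand side.} We have $f_\beta(x+y) = x^\beta$. By hypothesis $\mu_y$ represents the moment sequence $(\alpha!\cdot q_\alpha(y))_{\alpha\in\nset_0^n}$. Each monomial $x^\beta$ is therefore $\mu_y$-integrable, and
\[\int x^\beta~\diff\mu_y(x) = \beta!\cdot q_\beta(y).\]
The two sides agree on every basis element $f_\beta$, so linearity gives the identity for all $f\in\rset[x_1,\dots,x_n]$.

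A more direct alternative avoids the basis change. Insert the Taylor expansion
\[f(x+y) = \sum_\alpha \frac{f^{(\alpha)}(y)}{\alpha!}\cdot x^\alpha,\]
which is a finite sum, into the integral. Integrating term by term is justified because all monomials are integrable. This gives
\[\sum_\alpha \frac{f^{(\alpha)}(y)}{\alpha!}\cdot\alpha!\cdot q_\alpha(y) = (T_y f)(y).\]
I would present this version, since it is a one-line computation.

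There is no real obstacle here. The only point needing care is integrability: linearity of the integral requires every polynomial to be $\mu_y$-integrable, and this follows from the definition of a moment functional, because all monomials $x^\alpha$ are integrable by hypothesis. Note also that the assumption $y\in K$ and the closedness of $K$ play no role in this lemma.
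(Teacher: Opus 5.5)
Your proof is correct, and the Taylor-expansion version you plan to present is exactly the paper's argument. You expand $f(x+y)=\sum_\alpha \frac{f^{(\alpha)}(y)}{\alpha!}\,x^\alpha$, integrate term by term using $\int x^\alpha~\diff\mu_y(x)=\alpha!\cdot q_\alpha(y)$, and recognize $(T_y f)(y)$; your shifted-monomial variant is the same computation checked on the basis $(x-y)^\beta$, as in the proof of \Cref{lem:KposMomSeq}.
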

\begin{proof}
Since $\mu_y$ is a representing measure of $(\alpha!\cdot q_\alpha(y))_{\alpha\in\nset_0^n}$,
\[\alpha!\cdot q_\alpha(y) = \int x^\alpha~\diff\mu_y(x)\]
for all $\alpha\in\nset_0^n$.
From Taylor's formula
\[f(x+y) = \sum_{\alpha\in\nset_0^n} \frac{f^{(\alpha)}(y)}{\alpha!}\cdot x^\alpha\]
we obtain
\begin{multline*}
\int f(x+y)~\diff\mu_y(x) = \sum_{\alpha\in\nset_0^n} \frac{f^{(\alpha)}(y)}{\alpha!}\cdot \int x^\alpha~\diff\mu_y(x)\\
= \sum_{\alpha\in\nset_0^n} f^{(\alpha)}(y)\cdot q_\alpha(y) = (T_y f)(y).\qedhere
\end{multline*}
\end{proof}

By combining \Cref{lem:KposMomSeq} and \Cref{lem:K-yMomSeqMeasure}, we get the following.

\begin{thm}[{\cite[Main Thm.\ 3.5]{didio25KPosPresGen}}]\label{thm:kPosPresCara}
Let $n\in\nset$ and let $K\subseteq\rset^n$ be closed and non-empty.
Let
\[T:\rset[x_1,\dots,x_n]\to\rset[x_1,\dots,x_n] \quad\text{with}\quad T = \sum_{\alpha\in\nset_0^n} q_\alpha\cdot\partial^\alpha\]
and $q_\alpha\in\rset[x_1,\dots,x_n]$ for all $\alpha\in\nset_0^n$ be linear.
Then the following are equivalent:
\begin{enumerate}[(i)]
\item $T$ is a $K$-positivity preserver.

\item For all $y\in K$, the sequence $(\alpha!\cdot q_\alpha(y))_{\alpha\in\nset_0^n}$ is a $(K-y)$-moment sequence.
\end{enumerate}
If one of the equivalent conditions (i) or (ii) holds, then, for any $y\in K$,
\[(T_y f)(y) = \int f(x+y)~\diff\mu_y(x)\]
for all $f\in\rset[x_1,\dots,x_n]$, where $\mu_y$ is a representing measure of the $(K-y)$-moment sequence $(\alpha!\cdot q_\alpha(y))_{\alpha\in\nset_0^n}$.
\end{thm}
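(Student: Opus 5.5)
The plan is to obtain (i)$\Rightarrow$(ii) directly from \Cref{lem:KposMomSeq} and to prove (ii)$\Rightarrow$(i) using \Cref{lem:K-yMomSeqMeasure}. The final formula is then exactly the content of \Cref{lem:K-yMomSeqMeasure}, applied to any representing measure $\mu_y$.

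For (i)$\Rightarrow$(ii) there is nothing further to do. \Cref{lem:KposMomSeq} states precisely that if $T$ is a $K$-positivity preserver, then $(\alpha!\cdot q_\alpha(y))_{\alpha\in\nset_0^n}$ is a $(K-y)$-moment sequence for every $y\in K$.

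For (ii)$\Rightarrow$(i), I fix $f\in\pos(K)$ and $y\in K$, and let $\mu_y$ be a representing measure of the $(K-y)$-moment sequence $(\alpha!\cdot q_\alpha(y))_{\alpha}$. Then $\supp\mu_y\subseteq K-y$, and in particular $\mu_y$ is a moment measure, so \Cref{lem:K-yMomSeqMeasure} applies. It gives
\[(Tf)(y) = (T_y f)(y) = \int f(x+y)~\diff\mu_y(x),\]
where the first equality is the identity $(Tf)(x)=(T_xf)(x)$ noted after \Cref{dfn:Ty}. For $x\in\supp\mu_y\subseteq K-y$ we have $x+y\in K$, so $f(x+y)\geq 0$ $\mu_y$-almost everywhere. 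Since $\mu_y$ is a positive measure, the integral is non-negative. Hence $(Tf)(y)\geq 0$ for all $y\in K$, that is, $Tf\in\pos(K)$.

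The only point that needs care is that the sum $\sum_\alpha q_\alpha(y)\,\partial^\alpha f(y)$ is finite for polynomial $f$. This justifies interchanging the sum with the integral in \Cref{lem:K-yMomSeqMeasure}, and that lemma already uses this implicitly. I also have to make sure that ``supported on $K-y$'' is used in the sense $\mu_y(\rset^n\setminus(K-y))=0$. This holds because $K-y$ is closed and $\supp\mu_y\subseteq K-y$. With these checks there is no real obstacle, since the theorem is just the combination of the two lemmas.
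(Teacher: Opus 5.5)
Your proposal is correct and matches the paper's proof. The paper also gets (i)$\Rightarrow$(ii) directly from \Cref{lem:KposMomSeq}, and gets (ii)$\Rightarrow$(i) and the closing integral formula from \Cref{lem:K-yMomSeqMeasure}, writing $(Tf)(y)=(T_yf)(y)=\int_{K-y} f(x+y)~\diff\mu_y(x)\geq 0$ because $\supp\mu_y\subseteq K-y$.
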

\begin{proof}
(i) $\Rightarrow$ (ii):
This is \Cref{lem:KposMomSeq}.

(ii) $\Rightarrow$ (i): 
Let $y\in K$ and suppose $(\alpha!\cdot q_\alpha(y))_{\alpha\in\nset_0^n}$ is a $(K-y)$-moment sequence having representing measure $\mu_y$ with $\supp\mu_y\subseteq K-y$.
Let $f\in\pos(K)$.
Then, by \Cref{lem:K-yMomSeqMeasure},
\[(Tf)(y)=(T_y f)(y)=\int_{K-y} f(x+y)~\diff\mu_y(x)=\int_K f(z)~\diff\mu_y(z-y)\geq 0,\]
since $f\geq 0$ on $K$ and $\supp\mu_y(\,\cdot\,-y)\subseteq K$.
Since $f\in\pos(K)$ and $y\in K$ were arbitrary, we have $T\pos(K)\subseteq\pos(K)$.
Hence, (i) is proved.
\end{proof}

The special case of diagonal $\rset^n$-positivity preserver can (and must) be proved separately.

\begin{thm}[{\cite[Cor.\ 4.3]{borcea11}}]
Let $n\in\nset$ and
\[T:\rset[x_1,\dots,x_n]\to\rset[x_1,\dots,x_n]\quad\text{with}\quad Tx^\alpha = t_\alpha x^\alpha\]
for $t=(t_\alpha)_{\alpha\in\nset_0^n}$ and $t_\alpha\in\rset$ for all $\alpha\in\nset_0^n$.
Then the following are equivalent:
\begin{enumerate}[(i)]
\item $T$ is a postivity preserver.

\item $t=(t_\alpha)_{\alpha\in\nset_0^n}$ is a moment sequence.
\end{enumerate}
If one of the statements (i) or (ii) holds and $\mu$ is a representing measure of $t$, then
\begin{equation}\label{eq:integralTdiagonal}
(Tf)(x) = \int f(x_1 y_1,\dots, x_n y_n)~\diff\mu(y_1,\dots,y_n)
\end{equation}
for all $f\in\rset[x_1,\dots,x_n]$ and $x\in\rset^n$.
\end{thm}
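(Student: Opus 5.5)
The plan is to prove both implications directly, evaluating at a point $x=(1,\dots,1)$ for (i)$\Rightarrow$(ii) and using a scaling identity for (ii)$\Rightarrow$(i). We do not go through \Cref{thm:kPosPresCara} in the converse direction, since the diagonal structure gives a cleaner formula.

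\underline{(ii) $\Rightarrow$ (i) and (\ref{eq:integralTdiagonal}):}
Let $\mu$ be a representing measure of $t$ on $\rset^n$. For $f=\sum_\alpha f_\alpha x^\alpha$ and fixed $x\in\rset^n$, we have
\[\int f(x_1y_1,\dots,x_ny_n)~\diff\mu(y) = \sum_\alpha f_\alpha x^\alpha\int y^\alpha~\diff\mu(y) = \sum_\alpha f_\alpha t_\alpha x^\alpha = (Tf)(x).\]
This is a finite sum, and every $y^\alpha$ is $\mu$-integrable, so the computation is valid and gives (\ref{eq:integralTdiagonal}). If $f\geq 0$ on $\rset^n$, then the integrand $y\mapsto f(x_1y_1,\dots,x_ny_n)$ is non-negative. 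Hence $(Tf)(x)\geq 0$ for every $x$, so $T$ is a positivity preserver.

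\underline{(i) $\Rightarrow$ (ii):}
Define the linear functional $L(f):=(Tf)(1,\dots,1)$. Then
\[L(x^\alpha)=t_\alpha\cdot 1^\alpha=t_\alpha,\]
so $L=L_t$ is the Riesz functional of $t$. If $f\in\pos(\rset^n)$, then $Tf\in\pos(\rset^n)$ by (i), and therefore $L_t(f)=(Tf)(1,\dots,1)\geq 0$. By \Cref{thm:haviland} with $K=\rset^n$, the sequence $t$ is a moment sequence.

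The main point to watch is why this must be done separately from \Cref{thm:kPosPresCara}. That theorem only yields that $(\alpha!\,q_\alpha(y))_\alpha=(c_\alpha y^\alpha)_\alpha$ is a moment sequence for each $y$, where $q_\alpha=\frac{c_\alpha}{\alpha!}x^\alpha$ by \Cref{thm:diagonalRepr}. Converting this statement about the sequence $c$ into one about $t$ is not immediate. Evaluating at $x=(1,\dots,1)$ sidesteps the issue entirely, because $T$ acts on monomials by scalars.

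There is one more subtlety in (ii) $\Rightarrow$ (i), namely integrability of $f(x_1y_1,\dots,x_ny_n)$. This is harmless: the function is a polynomial in $y$, and all moments of $\mu$ exist by assumption.
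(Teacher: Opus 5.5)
Your proof is correct and follows the paper's argument: evaluation at $\one=(1,\dots,1)$ together with \Cref{thm:haviland} for (i)$\Rightarrow$(ii), and monomial-wise evaluation of the integral plus non-negativity of the integrand for (ii)$\Rightarrow$(i). Deriving (\ref{eq:integralTdiagonal}) directly from (ii), as you do, is slightly cleaner than the paper, which obtains the formula inside (i)$\Rightarrow$(ii) and then assumes it in the converse.

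Your side remark overstates the obstacle, though. At $y=\one$, \Cref{thm:kPosPresCara} says $c$ is a moment sequence, say with representing measure $\nu$. The first relation in (\ref{eq:talphacalphaRelations}) then gives $t_\alpha=\sum_{\beta\preceq\alpha}\binom{\alpha}{\beta}c_\beta=\int(z+\one)^\alpha\,\diff\nu(z)$, so that route also yields (ii) immediately.
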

\begin{proof}
(i) $\Rightarrow$ (ii):
Define
\[L: \rset[x_1,\dots,x_n]\to\rset, \quad f\mapsto L(f) = (Tf)(\one)\]
Hence,
\[L(x^\alpha) = (Tx^\alpha)(\one) = (t_\alpha x^\alpha)(\one) = t_\alpha\]
for all $\alpha\in\nset_0^n$ and
\[L(f) = (Tf)(\one) \geq 0\]
for all $f\in\pos(\rset^n)$, i.e., by \Cref{thm:haviland}, $L$ is a moment functional and $t$ is a moment sequence.
Let $\mu$ be a representing measure of $t$.
Then
\[Tx^\alpha = t_\alpha x^\alpha = \int y^\alpha~\diff\mu(y) \cdot x^\alpha = \int (x_1 y_1,\dots,x_n y_n)^\alpha~\diff\mu(y)\]
for all $\alpha\in\nset_0^n$.
By linearity of $T$, (\ref{eq:integralTdiagonal}) is proved.

(ii) $\Rightarrow$ (i):
Let $t$ be a moment sequences with representing measure $\mu$ and (\ref{eq:integralTdiagonal}) holds.
Let $f\in\pos(\rset^n)$.
Then
\[(Tf)(x) = \int \underbrace{f(x_1 y_1,\dots, x_n y_n)}_{\geq 0}~\diff\mu(x) \geq 0\]
for all $x\in\rset^n$, i.e., $Tf\in\pos(\rset^n)$ and $T$ is a positivity preserver.
\end{proof}

\begin{cor}[{\cite[Thm.\ 5.2]{didio25hadamardLanger}}]\label{cor:odot}
Let $n\in\nset$ and let
\[s=(s_\alpha)_{\alpha\in\nset_0^n}\quad\text{and}\quad t=(t_\alpha)_{\alpha\in\nset_0^n}\]
be two moment sequences with representing measures $\nu$ and $\mu$.
Then the \emph{Hadamard product}\index{Hadamard!product}\index{product!Hadamard}
\[s\odot t := (s_\alpha\cdot t_\alpha)_{\alpha\in\nset_0^n}\]
of $s$ and $t$ is a moment sequence with representing measure $\nu\odot\mu$, where $\nu\odot\mu$ is defined by
\[\nu\odot\mu = (\nu\times\mu)\circ m^{-1},\]
i.e.,
\[(\nu\odot\mu)(A) = (\nu\times\mu)(m^{-1}(A))\]
for all Borel sets $A\in\fB(\rset^n)$ and
\[m:\rset^n\times\rset^n\to\rset^n,\quad (x,y)\mapsto m(x,y) := (x_1 y_1,\dots,x_n y_n).\]
\end{cor}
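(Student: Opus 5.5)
The plan is to verify directly that the pushforward measure $\nu\odot\mu$ represents $s\odot t$; the preceding theorem on diagonal positivity preservers gives the same conclusion through an operator interpretation.

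First, check that $\nu\odot\mu$ is a well-defined Borel measure on $\rset^n$. The map $m$ is continuous, hence Borel measurable. The product $\nu\times\mu$ is a Borel measure on $\rset^n\times\rset^n=\rset^{2n}$, since both factors are ($\sigma$-finite) Radon measures with finite moments; in particular they are finite, because $s_0$ and $t_0$ are their total masses. So $\nu\odot\mu=(\nu\times\mu)\circ m^{-1}$ is the image measure of $\nu\times\mu$ under $m$, and it is a finite Borel measure.

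Second, compute the moments with the transformation formula for image measures and Fubini--Tonelli. For $\alpha\in\nset_0^n$,
\[\int_{\rset^n} z^\alpha~\diff(\nu\odot\mu)(z) = \int_{\rset^n\times\rset^n} m(x,y)^\alpha~\diff(\nu\times\mu)(x,y) = \int\!\!\int x^\alpha y^\alpha~\diff\nu(x)~\diff\mu(y) = s_\alpha\cdot t_\alpha,\]
because $m(x,y)^\alpha=x^\alpha y^\alpha$.

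The only real point is integrability, which is needed to justify the transformation formula and Fubini. Apply Tonelli first to $|z^\alpha|$:
\[\int |z^\alpha|~\diff(\nu\odot\mu)(z) = \int |x^\alpha|~\diff\nu(x)\cdot\int |y^\alpha|~\diff\mu(y).\]
Each factor is finite. To see this, note that $|x^\alpha|\le 1+x^{2\alpha}$ and $x^{2\alpha}\ge 0$, so $\int|x^\alpha|~\diff\nu \le s_0 + s_{2\alpha}<\infty$, and the same bound works for $\mu$ with $t_0+t_{2\alpha}$. This shows all polynomials are $\nu\odot\mu$-integrable, so Fubini applies and the displayed computation holds. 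Hence $s\odot t$ is a moment sequence with representing measure $\nu\odot\mu$.

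Alternatively, one can use the operator picture. The diagonal operator $T$ with sequence $t$ satisfies $(Tf)(x)=\int f(x_1y_1,\dots,x_ny_n)~\diff\mu(y)$ by the previous theorem. Applying $L_s$, which is represented by $\nu$, gives $L_s(Tx^\alpha)=s_\alpha t_\alpha$, and $L_s\circ T$ is positive on $\pos(\rset^n)$. By \Cref{thm:haviland}, $s\odot t$ is therefore a moment sequence. The direct computation is still preferable, since it also identifies the representing measure as $\nu\odot\mu$.
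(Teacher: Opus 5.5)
Your proof is correct. The direct computation $\int z^\alpha\,\diff(\nu\odot\mu)(z)=\int x^\alpha\,\diff\nu(x)\cdot\int y^\alpha\,\diff\mu(y)$, with integrability justified by Tonelli, is what the exercise the paper defers to calls for. Your operator variant, integrating $(Tf)(x)=\int f(x_1y_1,\dots,x_ny_n)\,\diff\mu(y)$ from the preceding theorem against $\nu$, is the route the corollary's placement suggests, and it comes down to the same Fubini computation.
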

\begin{proof}
See Problem \ref{prob:odot}.
\end{proof}

For $K=\rset^n$, the characterization of diagonal positivity preserver is solved.
For $K\subsetneq\rset^n$ it is open.

\begin{open}
Let $n\in\nset$, $K\subseteq\rset^n$ be closed and non-empty, and let
\[T:\rset[x_1,\dots,x_n]\to\rset[x_1,\dots,x_n] \quad\text{with}\quad Tx^\alpha = t_\alpha x^\alpha\]
for all $\alpha\in\nset_0^n$.
What are necessary and sufficient conditions for
\[t = (t_\alpha)_{\alpha\in\nset_0^n}\in\rset^{\nset_0^n}\]
such that $T$ is a $K$-positivity preserver?
\end{open}

\section{Properties of $K$-Positivity Preservers}%%%
%%%%%%%%%%%%%%%%%%%%%%%%%%%%%%%%%%%%%%%%%%%%%%%%%%%%

We have already seen in \Cref{exm:simpleKposPres} non-trivial $K$-positivity preserver.
From known $K$-positivity preserver we gain new $K$-positivity preserver by the following method.

\begin{exm}[{\cite[Exm.\ 3.1]{didio25KPosPresGen}}]
Let $n\in\nset$, $p=(p_1,\dots,p_n)\in\rset[x_1,\dots,x_n]^n$, and $s = (s_\alpha)_{\alpha\in\nset_0^n}$ be a moment sequence.
Then
\[T = \sum_{\alpha\in\nset_0^n} \frac{p^{\alpha}\cdot s_\alpha}{\alpha!}\cdot\partial^\alpha\]
is a positivity preserver with non-constant coefficients.
\exmsymbol
\end{exm}
\begin{proof}
For any $y\in\rset^n$, we have that $p_y := (p^\alpha(y))_{\alpha\in\nset_0^n}$ is a moment sequence with representing measure $\delta_{p(y)}$. 
Hence, by \Cref{cor:odot}, we have for every $y\in\rset^n$ that
\[p_y\odot s = (p^\alpha(y)\cdot s_\alpha)_{\alpha\in\nset_0^n}\]
is a moment sequence.
By \Cref{thm:kPosPresCara}, we have that $T$ is a positivity preserver.
\end{proof}

From the previous example we see that the degree of
\[q_\alpha = \frac{p_\alpha\cdot s_\alpha}{\alpha!}\]
can be much larger than $|\alpha|$.
The following result shows that the degree of the polynomial coefficients $q_\alpha$ of a positivity preserver $T$ can even grow arbitrarily large.
It is sufficient to show this for the case $n=1$.

\begin{prop}[{\cite[Prop.\ 3.2]{didio25KPosPresGen}}]
Let $(r_i)_{i\in\nset_0}$ be a sequence in $\nset$.
Then there exist a sequence $(c_i)_{i\in\nset_0}$ in $(0,\infty)$ and a sequence $(k_i)_{i\in\nset_0}$ in $\nset$ such that
\[T = \sum_{i\in\nset_0} \frac{p_i}{i!}\cdot\partial_x^i\]
is a positivity preserver with
\[p_{2i}(x) = c_i + x^{2k_i r_i} \qquad\text{and}\qquad p_{2i+1}(x) = 0\]
for all $i\in\nset_0$.
\end{prop}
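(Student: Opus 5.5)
The plan is to apply \Cref{thm:kPosPresCara} with $K=\rset$. Since $K-y=\rset$ for every $y$, it suffices to choose $(c_i)$ and $(k_i)$ such that, for every $y\in\rset$, the sequence $s(y)=(i!\cdot q_i(y))_{i\in\nset_0}=(p_i(y))_{i\in\nset_0}$ is a Hamburger moment sequence. Here
\[s(y)_{2i}=c_i+y^{e_i},\qquad e_i:=2k_ir_i,\qquad s(y)_{2i+1}=0.\]
By \Cref{thm:hamburgerMP} together with $\pos(\rset)=\{f^2+g^2\}$ from (\ref{eq:posR}), this reduces to showing that every Hankel form is positive semidefinite:
\[Q_y(a):=\sum_{j,l=0}^N s(y)_{j+l}\,a_ja_l\geq 0\quad\text{for all}\ N\in\nset_0,\ a\in\rset^{N+1}.\]
Because the odd entries vanish, only pairs $(j,l)$ with $j+l=2m$ even contribute, and the diagonal terms are $s(y)_{2j}a_j^2$.

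I will prove this by diagonal dominance with summable weights. It suffices to arrange, for all $j<l$ with $j+l=2m$ and all $y\in\rset$, that
\begin{equation*}
s(y)_{2m}^2\leq \varepsilon_{j,l}^2\, s(y)_{2j}\,s(y)_{2l}\qquad\text{with}\qquad \sup_j\sum_{l\neq j}\varepsilon_{j,l}\leq 1 .
\end{equation*}
A natural choice is $\varepsilon_{j,l}=2^{-|j-l|-1}$. Given this, $2|s_{2m}a_ja_l|\leq \varepsilon_{j,l}\,(s_{2j}a_j^2+s_{2l}a_l^2)$, and summing these estimates shows that the diagonal dominates the off-diagonal part, so $Q_y\geq 0$.

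The inequality will be secured by a recursive choice of $k_i$ first and then of $c_i$. I will pick the $k_i$ so that $(e_i)$ is strictly convex with a margin, for example $e_m\leq \tfrac{1}{2}(e_j+e_l)-(l-j)$ whenever $j+l=2m$. This is possible because each $e_i$ only needs to be a large multiple of $2r_i$, so choosing $k_i$ to grow very fast inductively works. I will then take $c_i\geq 1$ strongly log-convex, e.g.\ $c_m^2\leq \eta_{j,l}\,c_jc_l$ with $\eta_{j,l}$ tiny, and in addition $c_i$ large compared with $R^{2e_i}$ for a suitable threshold $R$. Next, expand
\[(c_m+y^{e_m})^2\leq 2c_m^2+2y^{2e_m}\]
and bound the two pieces separately. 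The first is at most $2\eta_{j,l}c_jc_l\leq 2\eta_{j,l}s_{2j}s_{2l}$. For the second there are two regimes. If $|y|\geq R$ with $R>1$, convexity with margin gives $y^{2e_m}\leq R^{-2(l-j)}y^{e_j+e_l}\leq R^{-2(l-j)}s_{2j}s_{2l}$. If $|y|\leq R$, then $y^{2e_m}\leq R^{2e_m}$, and this is made small relative to $c_jc_l$ by choosing the $c$'s large.

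The main obstacle is making this choice consistent, because the constants $c_j,c_l$ must beat $R^{2e_m}$ for $m$ between $j$ and $l$, while also remaining log-convex. The first thing I would try is to fix $R=2$ and the exponents $e_i$ first, and then define $c_i:=\exp(\phi(i))$ with $\phi$ convex and growing much faster than $i\mapsto e_i\log 4$. An example is $\phi(i)=\Lambda_i$, where $\Lambda_i$ is chosen recursively so that the second differences of $\phi$ exceed any prescribed amount. This yields both $c_m^2\ll c_jc_l$ and $R^{2e_m}\ll c_jc_l$, since $c_jc_l\geq c_m^2\geq c_m\gg R^{2e_m}$. Finally I would verify that the resulting $\varepsilon_{j,l}$ are summable in the required sense.
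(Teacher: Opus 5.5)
Your proof is correct, but it follows a different route from the paper's. Both reduce the claim, via \Cref{thm:kPosPresCara} and Hamburger's theorem, to positive semidefiniteness of the Hankel matrices $\cH_i(p(y))$. The two arguments then diverge.

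\textbf{The paper's route.} It uses leading principal minors and shows by induction that $\det\cH_i(p(y))\geq 1$ for all $i$ and $y$. Because the odd entries vanish, Laplace expansion along the last row gives
\[\det\cH_{i+1}(p(y)) = p_{2i+2}(y)\cdot\det\cH_i(p(y)) + q_i(p_0(y),\dots,p_{2i}(y)),\]
where $q_i$ does not depend on $c_{i+1}$ or $k_{i+1}$. The choice $c_{i+1}\geq 1+\max_{[-2,2]}|q_i|$ handles $|y|\leq 2$, and a large $k_{i+1}$ handles $|y|\geq 2$.

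\textbf{Your route.} You prove nonnegativity of the Hankel form by weighted diagonal dominance (a Schur test). This reduces everything to the pairwise inequalities $s_{2m}^2\leq\varepsilon_{j,l}^2 s_{2j}s_{2l}$.

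\textbf{Comparison.} The paper's argument is shorter and needs no summability bookkeeping. However, its conditions are implicit, since they pass through the unwieldy polynomials $q_i$, and $c_{i+1}$ and $k_{i+1}$ must be chosen jointly, step by step. Your argument decouples the choices: all $k_i$ first, then all $c_i$. It also gives explicit structural sufficient conditions: exponents convex with a margin, and constants strongly log-convex and dominating $R^{2e_i}$. As a bonus, you get positive semidefiniteness with a uniform margin.

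\textbf{A bookkeeping slip.} With $R=2$ and margin $l-j$, the regime $|y|\geq R$ only gives
\[s_{2m}^2\leq\bigl(2\eta_{j,l}+2\cdot 4^{-(l-j)}\bigr)\,s_{2j}s_{2l}.\]
This exceeds $(2^{-|j-l|-1})^2=4^{-(l-j)-1}$, so your ``natural choice'' of $\varepsilon_{j,l}$ is not met as stated. Either of two fixes works.
\begin{itemize}
\item Take $R=4$. Since $l-j\geq 2$, you get $2\cdot 16^{-(l-j)}\leq\frac{1}{2}\cdot 4^{-(l-j)-1}$. Then let $\phi$ dominate $e_i\log 16$ instead of $e_i\log 4$.
\item Keep $R=2$, and define $\varepsilon_{j,l}$ as the square root of the bound you actually obtain in the two regimes. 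Only even differences $l-j=2d$ occur, so $\sum_{l\neq j}\varepsilon_{j,l}\leq 2\sum_{d\geq 1}\sqrt{2}\cdot 4^{-d}$ plus negligible $\eta$-terms, which is about $2\sqrt{2}/3<1$.
\end{itemize}
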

\begin{proof}
By \Cref{thm:kPosPresCara}, it is sufficient to show that $p(y) := (p_i(y))_{i\in\nset_0}$ is a $\rset$-moment sequence for all $y\in\rset$.
Since we are in the one-dimensional case, this is equivalent to $\cH_i(p(y))\succeq 0$ for all Hankel matrices $\cH_i(p(y)) := (p_{j+l}(y))_{j,l=0}^i$, $i\in\nset_0$, see e.g.\ \cite[Thm.\ 3.8 and Prop.\ 3.11]{schmudMomentBook} or \Cref{thm:hamburgerMP}.
But for this it is sufficient to ensure that
\begin{equation}\label{eq:hankeldet}
\det \cH_i(p(y))>0
\end{equation}
for all $i\in\nset_0$ and $y\in\rset$.
We construct the sequences $(c_i)_{i\in\nset_0}$ and $(k_i)_{i\in\nset_0}$ from (\ref{eq:hankeldet}) by induction:

\underline{$i=0$:}
We have
\[\det \cH_0(p(y)) = p_0(y) = c_0 + y^{2k_0 r_0} \geq 1\]
for $c_0=1$ and $k_0=1$.

\underline{$i\to i+1$:}
Assume we have $(c_j)_{j=0}^i$ and $(k_j)_{j=0}^i$ such that
\[\det \cH_j(p(y))\geq 1\]
for $j=0,\dots,i$ and $y\in\rset$.
Since $p_{2i+1}=0$ for $i\in\nset_0$, we get
\[\det\cH_{i+1}(p(y))=p_{2i+2}(y)\cdot\det\cH_i(p(y))+q_i(p_0(y),p_2(y),\dots,p_{2i}(y))\]
for some $q_i\in\rset[x_1,\dots,x_n]$ by expanding the determinant $\det\cH_{i+1}(p(y))$.
Hence, we can choose $c_{i+1}\gg 1$ and $k_{i+1}\gg 1$ such that
\[c_{i+1} \geq 1 + \max_{x\in [-2,2]} |q_i(p_0(x),p_2(x),\dots,p_{2i}(x))|\]
and
\[x^{2k_{i+1} r_{i+1}} \geq 1 + |q_i(p_0(x),p_2(x),\dots,p_{2i}(x)|\]
for all $x\in (-\infty,-2]\cup [2,\infty)$ since $\det\cH_i(p(y))\geq 1$.
\end{proof}

\begin{dfn}\label{dfn:polySharpPartial}
Let $n\in\nset$.
We define
\[\cset[x_1,\dots,x_n]^\sharp[[\partial_1,\dots,\partial_n]] := \left\{ \sum_{\alpha\in\nset_0^n} p_\alpha\cdot \partial^\alpha \,\middle|\, p_\alpha\in\cset[x_1,\dots,x_n]_{\leq |\alpha|},\alpha\in\nset_0^n\right\}.\]
\end{dfn}

The following was already stated as Problem \ref{prob:sharpFrechet}.

\begin{lem}[{\cite[Lem.\ 2.8]{didio25KPosPresGen}}]\label{lem:polySharpPartialFrechetSpace}
Let $n\in\nset$.
The vector space
\[\cset[x_1,\dots,x_n]^\sharp[[\partial_1,\dots,\partial_n]]\]
is a Fr\'echet space with the coordinate-wise convergence.
\end{lem}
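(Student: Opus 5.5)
The plan is to identify $\cset[x_1,\dots,x_n]^\sharp[[\partial_1,\dots,\partial_n]]$ with a countable product of finite-dimensional spaces and inherit the Fr\'echet structure from there, exactly as was done for $\cset^{\nset_0^n}$ in \Cref{exm:frechet}. Writing each coefficient $p_\alpha\in\cset[x_1,\dots,x_n]_{\leq|\alpha|}$ in the monomial basis, $p_\alpha=\sum_{|\beta|\leq|\alpha|} p_{\alpha,\beta}\,x^\beta$, the map
\[\Phi:\ \sum_{\alpha\in\nset_0^n} p_\alpha\cdot\partial^\alpha\ \mapsto\ (p_{\alpha,\beta})_{(\alpha,\beta)\in I}, \qquad I:=\{(\alpha,\beta)\in\nset_0^n\times\nset_0^n \,|\, |\beta|\leq|\alpha|\},\]
is a linear bijection onto $\cset^I$. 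The formal sum is just a notation for the family $(p_\alpha)_\alpha$, so no convergence issues arise. Since $I$ is countably infinite, choosing a bijection $I\to\nset_0$ shows $\cset^I\cong\cset^{\nset_0}$.

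First I define the topology on our space by the seminorms
\[\Big|\sum_\alpha p_\alpha\partial^\alpha\Big|_d:=\sup_{|\alpha|\leq d}\ \sup_{|\beta|\leq|\alpha|}|p_{\alpha,\beta}|,\qquad d\in\nset_0.\]
Each of these is a finite maximum of coefficient absolute values. Since $\{(\alpha,\beta)\in I: |\alpha|\leq d\}$ exhausts $I$ as $d\to\infty$, this countable family of seminorms separates points. The resulting topology is locally convex, and by the argument of \Cref{exm:frechet} it is metrizable with metric
\[m(T,S)=\sum_{i\in I}2^{-\iota(i)}\frac{|T_i-S_i|}{1+|T_i-S_i|}.\]

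Next I check that convergence in these seminorms is exactly coordinate-wise convergence. That is, $T_k\to T$ if and only if $p_{k,\alpha}\to p_\alpha$ in the finite-dimensional space $\cset[x_1,\dots,x_n]_{\leq|\alpha|}$ for every $\alpha$, which in turn holds if and only if $p_{k,\alpha,\beta}\to p_{\alpha,\beta}$ for all $(\alpha,\beta)\in I$. This holds because each seminorm involves only finitely many coordinates, and each coordinate is controlled by one seminorm.

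Completeness is the remaining point. Let $(T_k)$ be Cauchy for the metric. Then for each fixed $(\alpha,\beta)\in I$ the scalar sequence $(p_{k,\alpha,\beta})_k$ is Cauchy in $\cset$ and converges to some $p_{\alpha,\beta}$. The crucial observation is that $p_\alpha:=\sum_{|\beta|\leq|\alpha|}p_{\alpha,\beta}x^\beta$ again has degree at most $|\alpha|$, because the index set for each $\alpha$ is finite and fixed. Hence the limit $T=\sum_\alpha p_\alpha\partial^\alpha$ lies in $\cset[x_1,\dots,x_n]^\sharp[[\partial_1,\dots,\partial_n]]$, and $T_k\to T$ coordinate-wise, so $T_k\to T$ in the topology.

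The only step requiring care is this closedness of the degree constraint under limits. It is exactly what fails for arbitrary linear operators, where the degrees of the $q_\alpha$ are unbounded and a limit of polynomials would only be a formal power series (compare Problem \ref{prob:formalformalseries}). Here the bound $\deg p_\alpha\leq|\alpha|$ makes each coordinate space finite-dimensional and fixed, so the argument reduces entirely to \Cref{exm:frechet}.
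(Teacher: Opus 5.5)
Your proposal is correct and follows essentially the same route as the paper, which also identifies the space, via the coefficients $c_{\alpha,\beta}$ of $x^\beta\partial^\alpha$ with $|\beta|\leq|\alpha|$, with a countable product isomorphic to $\cset^{\nset_0}$ and inherits the Fr\'echet structure from \Cref{exm:frechet}. The only difference is that you spell out the seminorms, the metric, and the completeness argument, which the paper leaves implicit in that isomorphism.
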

\begin{proof}
We have
\begin{multline*}
\cset[x_1,\dots,x_n]^\sharp[[\partial_1,\dots,\partial_n]] =\\ \left\{\sum_{\alpha\in\nset_0^n} \sum_{\beta\in\nset_0^n:|\beta|\leq |\alpha|} c_{\alpha,\beta}\cdot x^\beta\cdot\partial^\alpha \,\middle|\, c_{\alpha,\beta}\in\cset\ \text{for}\ \alpha,\beta\in\nset_0^n\ \text{with}\ |\beta|\leq |\alpha|\right\},
\end{multline*}
i.e., $\cset[x_1,\dots,x_n]^\sharp[[\partial_1,\dots,\partial_n]]$ is as a vector space isomorphic to the vector space of sequences
\[\cS := \big\{(c_{\alpha,\beta})_{\alpha,\beta\in\nset_0^n: |\beta|\leq |\alpha|} \,\big|\, c_{\alpha,\beta}\in\cset\ \text{for}\ \alpha,\beta\in\nset_0^n\ \text{with}\ |\beta|\leq |\alpha|\big\}.\]
But
\[\cS \cong\cset[[x_1,\dots,x_n]] \cong \cset^{\nset_0^n} \cong \cset^{\nset_0}\]
is a Fr\'echet space.
Hence, so is $\cset[x_1,\dots,x_n]^\sharp[[\partial_1,\dots,\partial_n]]$ when endowed with the coordinate-wise convergence.
\end{proof}

\begin{rem}[{\cite[Rem.\ 2.9]{didio25KPosPresGen}} and Problem \ref{prob:formalformalseries}]
By the same argument as in the proof of \Cref{lem:polySharpPartialFrechetSpace} we have that
\[\cset[[x_1,\dots,x_n]][[\partial_1,\dots,\partial_n]] := \left\{ \sum_{\alpha\in\nset_0^n} p_\alpha\cdot \partial^\alpha \,\middle|\, p_\alpha\in\cset[[x_1,\dots,x_n]],\alpha\in\nset_0^n \right\}\]
is a Fr\'echet space with the coordinate-wise convergence.
\exmsymbol
\end{rem}

\begin{lem}[{\cite[Lem.\ 3.6 (ii)]{didio25KPosPresGen}}]\label{lem:closedPosPres}
Let $K\subseteq\rset^n$ be closed.
Then the set of $K$-positivity preservers $T$ such that
\[T\rset[x_1,\dots,x_n]_{\leq d}\subseteq\rset[x_1,\dots,x_n]_{\leq d} \qquad\text{for all}\ d\in\nset_0\]
is closed in the Fr\'echet topology of $\cset[x_1,\dots,x_n]^\sharp [[\partial_1,\dots,\partial_n]]$.
\end{lem}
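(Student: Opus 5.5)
Take a sequence $T_k = \sum_{\alpha} q_{k,\alpha}\cdot\partial^\alpha$ of $K$-positivity preservers, each mapping $\rset[x_1,\dots,x_n]_{\leq d}$ into itself for all $d\in\nset_0$. Assume $T_k\to T = \sum_\alpha q_\alpha\cdot\partial^\alpha$ in the Fr\'echet topology of $\cset[x_1,\dots,x_n]^\sharp[[\partial_1,\dots,\partial_n]]$ given by \Cref{lem:polySharpPartialFrechetSpace}. We must show that $T$ is again a $K$-positivity preserver that maps $\rset[x_1,\dots,x_n]_{\leq d}$ into itself. Since this Fr\'echet space is metrizable, closedness is the same as sequential closedness, so sequences suffice.

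First, by the earlier proposition characterizing degree preservation, each $q_{k,\alpha}$ has $\deg q_{k,\alpha}\leq|\alpha|$. This is exactly why each $T_k$ lies in $\cset[x_1,\dots,x_n]^\sharp[[\partial_1,\dots,\partial_n]]$. Membership of the limit $T$ in this space then gives $\deg q_\alpha\leq|\alpha|$. The $q_\alpha$ are real, since coordinatewise limits of real coefficients are real. Applying the same proposition again, $T$ preserves every $\rset[x_1,\dots,x_n]_{\leq d}$. Convergence in the Fr\'echet topology is coordinatewise convergence of the coefficients $c_{\alpha,\beta}$ of $x^\beta\partial^\alpha$. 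Because each $q_{k,\alpha}$ lives in the finite-dimensional space $\rset[x_1,\dots,x_n]_{\leq|\alpha|}$, this means $q_{k,\alpha}\to q_\alpha$ there, and in particular $q_{k,\alpha}(y)\to q_\alpha(y)$ for every $y\in\rset^n$ and every $\alpha$.

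For positivity there are two equivalent routes. The direct one: fix $f\in\pos(K)$ with $\deg f=D$. Then $T_kf = \sum_{|\alpha|\leq D} q_{k,\alpha}\partial^\alpha f$ is a finite sum. Each term converges in $\rset[x_1,\dots,x_n]_{\leq D}$, so $T_kf\to Tf$ coefficientwise with degrees bounded by $D$, i.e.\ in the LF-topology. Pointwise, $(Tf)(x)=\lim_k (T_kf)(x)\geq 0$ for all $x\in K$, so $Tf\in\pos(K)$; equivalently, this is the closedness of $\pos(K)$ in the LF-topology proved in the preceding lemma. The alternative route uses \Cref{thm:kPosPresCara}: for $y\in K$ the sequences $(\alpha!\,q_{k,\alpha}(y))_\alpha$ are $(K-y)$-moment sequences converging coordinatewise to $(\alpha!\,q_\alpha(y))_\alpha$, and \Cref{lem:momConeClosedFrechet} shows the limit is again a $(K-y)$-moment sequence.

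The only point needing care, which I expect to be the main (mild) obstacle, is justifying that $T_kf\to Tf$. This works because only finitely many $\alpha$ (those with $|\alpha|\leq\deg f$) contribute, so coordinatewise convergence in the infinite series suffices. Degree preservation is what places all operators in a single Fr\'echet space where the topology is coordinatewise convergence.
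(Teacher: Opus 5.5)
Your proof is correct, but your main argument is not the one in the paper. The paper uses only what you call the alternative route. For each $y\in K$ the sequences $(\alpha!\cdot q_{k,\alpha}(y))_{\alpha\in\nset_0^n}$ are $(K-y)$-moment sequences by \Cref{thm:kPosPresCara}. They converge coordinatewise, \Cref{lem:momConeClosedFrechet} makes the limit again a $(K-y)$-moment sequence, and \Cref{thm:kPosPresCara} is applied once more.

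Your direct route instead fixes $f\in\pos(K)$ and notes that $T_kf$ is a finite sum over $|\alpha|\leq\deg f$. Its terms converge in $\rset[x_1,\dots,x_n]_{\leq \deg f}$, so you can pass to the pointwise limit on $K$. This is more elementary. Direction (i) $\Rightarrow$ (ii) of \Cref{thm:kPosPresCara} and \Cref{lem:momConeClosedFrechet} both rest on \Cref{thm:haviland}, which your argument avoids entirely. It also needs no nonemptiness of $K$, which \Cref{thm:kPosPresCara} assumes. You also state explicitly that the limit is again real and degree preserving. The paper leaves this implicit in writing $T_0\in\cset[x_1,\dots,x_n]^\sharp[[\partial_1,\dots,\partial_n]]$.

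The paper's route stays within the characterization of $K$-positivity preservers by $(K-y)$-moment sequences used throughout the notes. It reduces closedness of the preserver set to closedness of the moment cones, pointwise in $y$. Logically, though, it establishes nothing beyond what your direct argument gives.
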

\begin{proof}
Let
\[T_k = \sum_{\alpha\in\nset_0^n} q_{k,\alpha}\cdot\partial^\alpha\]
be $K$-positivity preservers for all $k\in\nset$ with
\begin{equation}\label{eq:proofTkConvergence}
T_k\to T_0\in\cset[x_1,\dots,x_n]^\sharp [[\partial_1,\dots,\partial_n]].
\end{equation}
Then, for all $y\in K$,
\[s_k(y) = (s_{k,\alpha})_{\alpha\in\nset_0^n} := (\alpha!\cdot q_{k,\alpha}(y))_{\alpha\in\nset_0^n}\]
is a $(K-y)$-moment sequence by \Cref{thm:kPosPresCara}.
By (\ref{eq:proofTkConvergence}),
\[s_{k,\alpha}(y)\to s_{0,\alpha}\]
for all $\alpha\in\nset_0^n$ and, by \Cref{lem:momConeClosedFrechet}, $s_0 := (s_{0,\alpha})_{\alpha\in\nset_0^n}$ is a $(K-y)$-moment sequence.
Since $y\in K$ was arbitrary, it follows from \Cref{thm:kPosPresCara} that $T_0$ is a $K$-positivity preserver.
\end{proof}

\section{Positivity Preserver with Constant Coefficients and the Convolution of Sequences}%%%
%%%%%%%%%%%%%%%%%%%%%%%%%%%%%%%%%%%%%%%%%%%%%%%%%%%%%%%%%%%%%%%%%%%%%%%%%%%%%%%%%%%%%%%%%%%%%

We have characterized general $K$-positivity preserver in \Cref{thm:kPosPresCara}, i.e., with polynomial coefficients $q_\alpha\in\rset[x_1,\dots,x_n]$.
To learn more, we look at the simple cases of constant coefficients $q_\alpha\in\rset$.

\begin{exm}\label{exm:translation}
Let $n=1$ and $t\in\rset$.
Then
\[e^{t\partial_x} = \sum_{k\in\nset_0} \frac{t^k}{k!}\cdot\partial_x^k\]
is positivity preserver by \Cref{thm:kPosPresCara}, since
\[(t^k)_{k\in\nset_0}\]
is a moment sequence with representing measure $\delta_t$.
\exmsymbol
\end{exm}

\begin{exm}\label{exm:heat}
Let $n=1$.
Then
\[\exp(t\cdot\partial_x^2) = \sum_{j\in\nset_0} \frac{t^j\cdot \partial_x^{2j}}{j!}\]
is a positivity preserver for all $t\geq 0$.
\exmsymbol
\end{exm}
\begin{proof}
See Problem \ref{prob:10t}.
\end{proof}

The following is a linear operator $T$ which is not a positivity preserver.

\begin{exm}[{\cite[Exm.\ 1.4]{didio24posPresConst}}]\label{exm:notpos}
Let $k\geq 3$ and $a\in\rset\setminus\{0\}$.
Then
\[\exp(a\partial_x^k) := \sum_{j\in\nset_0} \frac{a^j\cdot\partial_x^{j\cdot k}}{j!}\]
is not a positivity preserver, since
\[q_{2k} = \frac{a^2}{2} \neq 0 \qquad\text{but}\qquad q_{2k+2} = 0,\]
i.e., $(j!\cdot q_j)_{j\in\nset_0}$ is not a moment sequence.
\exmsymbol
\end{exm}

Since linear operators on $\rset[x_1,\dots,x_n]$ are characterized by the (polynomials) coefficients $q_\alpha$ (\Cref{thm:canonicalRepresentation}) we only need to look at
\[(q_\alpha)_{\alpha\in\nset_0^n}.\]
By \Cref{thm:kPosPresCara}, $K$-positivity preservers are characterized by
\[(\alpha!\cdot q_\alpha(y))_{\alpha\in\nset_0^n}\]
for fixed $y\in K$.
Hence, looking at constant coefficients is a natural step and we include in the following definition also the $\alpha!$ coefficients.

\begin{dfn}\label{dfn:Ds}
Let $s = (s_\alpha)_{\alpha\in\nset_0^n}$ be a real sequence.
We define
\[D:\rset^{\nset_0^n}\to\rset[[\partial_1,\dots,\partial_n]],\quad s\mapsto D(s) := \sum_{\alpha\in\nset_0^n} \frac{s_\alpha}{\alpha!}\cdot\partial^\alpha.\]
\end{dfn}

\begin{dfn}
Let $s = (s_\alpha)_{\alpha\in\nset_0^n}$ and $t = (t_\alpha)_{\alpha\in\nset_0^n}$ be two real sequences.
We define \textit{the convolution}
\[s*t = (u_\alpha)_{\alpha\in\nset_0^n}\]
of $s$ and $t$ by
\begin{equation}\label{eq:dfnConvolutionSequences}
u_\alpha := \sum_{\beta\preceq\alpha} \binom{\alpha}{\beta}\cdot s_\beta\cdot t_{\alpha-\beta}.
\end{equation}
\end{dfn}

The following lemma immediately follows from the previous two definitions.

\begin{lem}\label{lem:convolutionSt}
Let $s = (s_\alpha)_{\alpha\in\nset_0^n}$ and $t = (t_\alpha)_{\alpha\in\nset_0^n}$ be two real sequences.
Then
\[D(s) D(t) = D(t) D(s) = D(s*t).\]
\end{lem}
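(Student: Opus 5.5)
The plan is to compute the product $D(s)D(t)$ directly in the ring of formal power series $\rset[[\partial_1,\dots,\partial_n]]$. This ring is commutative because the partial derivatives $\partial_1,\dots,\partial_n$ commute on polynomials. Products of formal series are well defined coefficientwise, since each coefficient receives only finitely many contributions. The identity $D(s)D(t)=D(t)D(s)$ then follows immediately from commutativity. Alternatively, it follows once we show $s*t=t*s$, which holds by the substitution $\beta\mapsto\alpha-\beta$ in (\ref{eq:dfnConvolutionSequences}) together with $\binom{\alpha}{\beta}=\binom{\alpha}{\alpha-\beta}$.

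For the main identity, we expand
\[D(s)D(t)=\sum_{\beta,\gamma\in\nset_0^n}\frac{s_\beta t_\gamma}{\beta!\,\gamma!}\,\partial^{\beta+\gamma},\]
using $\partial^\beta\partial^\gamma=\partial^{\beta+\gamma}$. Next we collect terms with $\beta+\gamma=\alpha$. For a fixed $\alpha$ there are only finitely many such pairs, namely $\beta\preceq\alpha$ with $\gamma=\alpha-\beta$, so the regrouping is legitimate in the coordinatewise topology of \Cref{lem:polySharpPartialFrechetSpace}, or simply as an identity of formal series. This gives the coefficient of $\partial^\alpha$ as
\[\sum_{\beta\preceq\alpha}\frac{s_\beta t_{\alpha-\beta}}{\beta!(\alpha-\beta)!}=\frac{1}{\alpha!}\sum_{\beta\preceq\alpha}\binom{\alpha}{\beta}s_\beta t_{\alpha-\beta}=\frac{u_\alpha}{\alpha!},\]
with the multi-index binomial $\binom{\alpha}{\beta}=\alpha!/(\beta!(\alpha-\beta)!)$. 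Therefore $D(s)D(t)=D(s*t)$.

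The only point needing care is justifying that composition of these infinite-order operators acting on $\rset[x_1,\dots,x_n]$ agrees with the formal product. To see this, apply both sides to a fixed polynomial $f$ of degree $d$. Then $\partial^\gamma f=0$ for $|\gamma|>d$, so $D(t)f$ is a finite sum and again a polynomial of degree at most $d$. Applying $D(s)$ to it is likewise a finite sum, and the double sum over $(\beta,\gamma)$ is finite. The rearrangement above is therefore just a rearrangement of a finite sum, and the identity holds pointwise on every polynomial.
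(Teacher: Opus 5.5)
Your proof is correct and takes the route the paper intends: the paper only remarks that the lemma ``immediately follows'' from the definitions of $D$ and $*$ and leaves it as an exercise. Your coefficient comparison using $\frac{1}{\beta!(\alpha-\beta)!}=\frac{1}{\alpha!}\binom{\alpha}{\beta}$, together with the check that both sides act by finite sums on each polynomial, is exactly that computation.
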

\begin{proof}
See Problem \ref{prob:convolutionST}.
\end{proof}

\begin{dfn}[see e.g.\ {\cite[Sect.\ 3.9]{bogachevMeasureTheory}}]\label{dfn:addConv}
Let $n\in\nset$, let $\mu$ and $\nu$ be measures on $\rset^n$, and let
\[a:\rset^n\times\rset^n\to\rset^n,\quad (x,y)\mapsto x+y.\]
We define the \emph{additive convolution} $\mu*\nu$ by
\[\mu * \nu := (\mu\times\nu)\circ a^{-1}.\]
We define
\[\mu^{*0} := \delta_0 \quad\text{and}\quad \mu^{*k} := \underbrace{\mu*\dots *\mu}_{k\text{-times}}\]
for all $k\in\nset$.
\end{dfn}

\begin{thm}[{\cite[Cor.\ 3.4]{didio24posPresConst}}]\label{thm:convolutionSt}
Let $n\in\nset$, let $s\in\rset^{\nset_0^n}$ be a moment sequence with representing measure $\mu$, and $t\in\rset^{\nset_0^n}$ be a moment sequence with representing measure $\nu$.
Then $s*t$ is represented by the measure $\mu*\nu$.
\end{thm}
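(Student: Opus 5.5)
The plan is to compute the moments of $\mu*\nu$ directly from \Cref{dfn:addConv} and compare them with the convolution formula (\ref{eq:dfnConvolutionSequences}). By definition of the image measure, for every Borel function $g\geq 0$ (or every $g$ integrable with respect to $\mu*\nu$) we have
\[\int_{\rset^n} g(z)~\diff(\mu*\nu)(z) = \int_{\rset^n\times\rset^n} g(x+y)~\diff(\mu\times\nu)(x,y).\]
Applying this with $g(z)=z^\alpha$ and expanding by the multinomial binomial formula
\[(x+y)^\alpha = \sum_{\beta\preceq\alpha}\binom{\alpha}{\beta}\, x^\beta\, y^{\alpha-\beta},\]
Fubini's theorem on the product measure splits each term, so that $\int x^\beta y^{\alpha-\beta}~\diff(\mu\times\nu) = s_\beta\cdot t_{\alpha-\beta}$. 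Summing gives exactly $u_\alpha=(s*t)_\alpha$, i.e., $\mu*\nu$ represents $s*t$.

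The main step requiring care is integrability: we must know that $z^\alpha$ is $(\mu*\nu)$-integrable, so that the change of variables and Fubini are justified for signed integrands. I will first apply the image-measure formula to the non-negative function $|z^\alpha|$. Since $|x_i+y_i|\leq |x_i|+|y_i|$, we get $|(x+y)^\alpha|\leq \prod_i(|x_i|+|y_i|)^{\alpha_i} = \sum_{\beta\preceq\alpha}\binom{\alpha}{\beta}|x^\beta|\,|y^{\alpha-\beta}|$. Each summand is $(\mu\times\nu)$-integrable by Tonelli, because $|x^\beta|$ is $\mu$-integrable and $|y^{\alpha-\beta}|$ is $\nu$-integrable. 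This holds because $\mu,\nu$ are representing measures of moment sequences, so all monomials are integrable. Hence every monomial is $(\mu*\nu)$-integrable and Fubini applies to each term.

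Finally, I note that $\mu*\nu$ is a (positive, Radon) Borel measure on $\rset^n$, being the pushforward of the product of two finite measures under the continuous map $a$. So $s*t$ is indeed a moment sequence with representing measure $\mu*\nu$. As a consistency check, this is compatible with \Cref{lem:convolutionSt}: by \Cref{thm:kPosPresCara}, $D(s)$ acts as $f\mapsto\int f(\cdot+x)~\diff\mu(x)$, and composing two such translation-averages yields averaging against $\mu*\nu$.
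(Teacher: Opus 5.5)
Your proof is correct and complete. Finiteness of $\mu$ and $\nu$ (from $s_0,t_0<\infty$) makes $\mu\times\nu$ well defined and Tonelli applicable. The bound $|(x+y)^\alpha|\leq\sum_{\beta\preceq\alpha}\binom{\alpha}{\beta}|x^\beta|\,|y^{\alpha-\beta}|$ gives integrability of every monomial under $\mu*\nu$. Fubini together with the binomial expansion then yields exactly $(s*t)_\alpha$.

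The notes give no argument of their own; the proof is deferred to Problem \ref{prob:convolutionST2}. Its placement right after \Cref{lem:convolutionSt} suggests the operator route that your last paragraph sketches. By \Cref{lem:K-yMomSeqMeasure} with $K=\rset^n$, $(D(s)f)(x)=\int f(x+y)~\diff\mu(y)$, and likewise for $D(t)$ with $\nu$. Hence
\[(D(s*t)f)(0)=(D(s)D(t)f)(0)=\int_{\rset^n}\int_{\rset^n} f(y+z)~\diff\nu(z)~\diff\mu(y)=\int_{\rset^n} f~\diff(\mu*\nu).\]
Taking $f=x^\alpha$ and using $(D(u)x^\alpha)(0)=u_\alpha$ gives the claim.

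That version explains conceptually why the binomially weighted convolution $*$ is the right operation: composing translation averages corresponds to convolving measures. However, it depends on \Cref{lem:convolutionSt}, which is itself left as an exercise, and it needs exactly the same Tonelli/Fubini justification for the double integral. Your direct computation is self-contained and makes the integrability of all moments of $\mu*\nu$ explicit, so nothing is lost by taking it.
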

\begin{proof}
See Problem \ref{prob:convolutionST2}.
\end{proof}

\section*{Problems}%%%%%%%%%%%%%%%%%%%%%
\addcontentsline{toc}{section}{Problems}

\begin{prob}\label{prob:odot}
Prove \Cref{cor:odot}.
\end{prob}

\begin{prob}\label{prob:odot}
Let $n\in\nset$ and let $\nu=\delta_x$ and $\mu=\delta_y$ with $x,y\in\rset^n$ in \Cref{cor:odot}.
What is the measure $\nu\odot\mu$?
\end{prob}

\begin{prob}\label{prob:translation}
In \Cref{exm:translation}, show that
\[(e^{t\partial_x}p)(x) = p(x+t)\]
for all $p\in\rset[x_1,\dots,x_n]$ and $t\in\rset$.
\end{prob}

\begin{prob}\label{prob:10t}
Show that \Cref{exm:heat} is a positivity preserver.
What is a representing measure of the constant coefficient sequence?\\
\textit{Hint:} For a fixed variance $\sigma
$, what moments does the Gaussian distributions have?
\end{prob}

\begin{prob}\label{prob:convolutionST}
Prove \Cref{lem:convolutionSt}.
\end{prob}

\begin{prob}\label{prob:convolutionST2}
Prove \Cref{thm:convolutionSt}.
\end{prob}

\begin{prob}\label{prob:emptyK}
What happens to the results in this \Cref{ch:kPosPres}, when $K=\emptyset$?
\end{prob}

%\advanced %%%%%%%%%%%%%%%%%%%%%%%%%%%%%%%%%%%%%%%%%%%%%%%%%
%\chapter{$K$-Positivity Preserver on Matrix Polynomials}%%%
%\label{ch:matrix}%%%%%%%%%%%%%%%%%%%%%%%%%%%%%%%%%%%%%%%%%%
%%%%%%%%%%%%%%%%%%%%%%%%%%%%%%%%%%%%%%%%%%%%%%%%%%%%%%%%%%%

%\cite{didio25matrix}

%%%%%%%%%%%%%%%%%%%%%%%%%%%%%%%%%%%%%%%%%%%%%%%%%%%%%%%%%%%%%%%
%%%%%%%%%%%%%%%%%%%%%%%%%%%%%%%%%%%%%%%%%%%%%%%%%%%%%%%%%%%%%%%
\part{Generators of $K$-Positivity Preserving Semi-Groups}%%%%%
\label{part:generators}%%%%%%%%%%%%%%%%%%%%%%%%%%%%%%%%%%%%%%%%
%%%%%%%%%%%%%%%%%%%%%%%%%%%%%%%%%%%%%%%%%%%%%%%%%%%%%%%%%%%%%%%
%%%%%%%%%%%%%%%%%%%%%%%%%%%%%%%%%%%%%%%%%%%%%%%%%%%%%%%%%%%%%%%

\chapter{Finite and Infinite Dimensional Lie Groups}%%%
\label{ch:lieGroups}%%%%%%%%%%%%%%%%%%%%%%%%%%%%%%%%%%%
%%%%%%%%%%%%%%%%%%%%%%%%%%%%%%%%%%%%%%%%%%%%%%%%%%%%%%%

Lie groups are an essential structure in mathematics.
They are widely studied and taught.
For literature see e.g.\ \cite{warner83} and \cite{hall04}.

Unfortunately, this only holds for the usual, i.e., finite dimensional, Lie groups.
In our study we need infinite dimensional versions, since we are working on the infinite dimensional space $\rset[x_1,\dots,x_n]$.
We already identified $\cset[x_1,\dots,x_n]$ as a LF-space and $\cset^{\nset_0^n}$ as a Fr\'echet space.
These are the topologies we need also for the infinite dimensional versions of Lie groups.
We therefore repeat the finite dimensional results which are required for our study and also introduce (regular) Fr\'echet Lie groups.
For more literature on the topic, see e.g.\ \cite{leslie67}, \cite{omori74}, \cite{omori97}, \cite{schmed23}.

\section{Finite Dimensional Lie Groups}

\begin{dfn}
Let $n\in\nset$.
A \emph{Lie group} $G$\index{Lie!group}\index{group!Lie} is a differentiable manifold which is also endowed with a group structure such that the map
\[G\times G\to G,\quad (\sigma,\tau)\mapsto \sigma\tau^{-1}\]
is $C^\infty$.
\end{dfn}

\begin{exms}
Let $n\in\nset$.
\begin{enumerate}[\bfseries (a)]
\item The Euclidean space $\rset^n$ with the vector addition is a Lie group.

\item $\cset^* := \cset\setminus\{0\}$ with the multiplication is a Lie group.

\item The product $G\times H$ of two Lie groups $G$ and $H$ is itself are Lie group with the product manifold structure and the direct product group structure:
\[(\sigma_1,\tau_1)\cdot (\sigma_2,\tau_2) = (\sigma_1\sigma_2,\tau_1\tau_2).\]

\item The manifold $\Gl(n,\rset)$\label{GlnR} of all non-singular $n\times n$-matrices with real entries are a Lie group under matrix multiplication.
\exmsymbol
\end{enumerate}
\end{exms}

\begin{dfn}
A Lie algebra $\fg$ over $\rset$ is a real vector space $\fg$ together with a bilinear operator
\[ [\,\cdot\,,\,\cdot\,]: \fg\times\fg\to\fg,\label{bracket}\]
called the \emph{bracket},\index{bracket} such that
\begin{enumerate}[(i)]
\item $[x,y] = -[y,x]$ and

\item $[[x,y],z] + [[y,z],x] + [[z,x],y] = 0$
\end{enumerate}
for all $x,y,z\in\fg$.
\end{dfn}

\begin{rem}
We see that Lie groups are finite dimensional.
However, our Lie algebra definition covers also the infinite dimensional cases.
\exmsymbol
\end{rem}

\begin{exms}\label{exms:lieAlgebras}
Let $n\in\nset$.
\begin{enumerate}[\bfseries\quad (a)]
\item Any (finite or infinite dimensional) vector space becomes a Lie algebra if all brackets are set to zero.
Such a Lie algebra is called \emph{abelian}.\index{Lie!algebra!abelian}\index{abelian!algebra!Lie}

\item The vector space $\gl(n,\rset)$ of all $n\times n$-matrices form a Lie algebra, if we set
\[[A,B] := AB - BA\]
for all $A,B\in\gl(n,\rset)$.\label{glnr}

\item A $2$-dimensional vector space with basis $x$, $y$ becomes a Lie algebra if we set
\[[x,x] = [y,y] = 0 \qquad\text{and}\qquad [x,y] = y\]
and extend it bilinearly with $[y,x]= -y$.

\item The vector space $\rset^3$ with the bilinear operation $x\times y$ of the vector cross product is a Lie algebra.
\exmsymbol
\end{enumerate}
\end{exms}

\begin{prop}[see e.g.\ {\cite[Prop.\ 3.7]{warner83}}]
Let $G$ be a Lie group.
Then the tangent space $T_e G$\label{tangentspace} of $G$ at the identity $e\in G$ is a finite dimensional Lie algebra with
\[\dim G = \dim T_e.\]
\end{prop}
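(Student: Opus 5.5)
The plan is to identify $T_eG$ with the space of left-invariant vector fields on $G$, to use the commutator of vector fields as the bracket, and to read off the dimension from the manifold structure. First, since $G$ is a differentiable manifold of dimension $\dim G$, the tangent space $T_eG$ at the identity is a real vector space of dimension $\dim G$. This already gives finite dimensionality and the equality $\dim G = \dim T_eG$. What remains is to construct a bracket on $T_eG$ that satisfies antisymmetry and the Jacobi identity.

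For $\sigma\in G$, let $l_\sigma:G\to G$, $\tau\mapsto\sigma\tau$, be left translation. It is a diffeomorphism, since its smooth inverse is $l_{\sigma^{-1}}$; smoothness of both follows from the smoothness of $(\sigma,\tau)\mapsto\sigma\tau^{-1}$ together with smoothness of inversion. A vector field $X$ on $G$ is called left-invariant if $\diff l_\sigma(X_\tau) = X_{\sigma\tau}$ for all $\sigma,\tau\in G$. Write $\fg$ for the space of left-invariant vector fields. I would show that the map
\[\fg\to T_eG,\quad X\mapsto X_e\]
is a linear isomorphism. Injectivity holds because $X_\sigma = \diff l_\sigma(X_e)$. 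For surjectivity, given $v\in T_eG$, define $X^v_\sigma := \diff l_\sigma(v)$. Left-invariance of $X^v$ follows from the chain rule and $l_\sigma\circ l_\tau = l_{\sigma\tau}$.

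The main obstacle is showing that $X^v$ is smooth. I would handle it as follows. For $f\in C^\infty(G)$, choose a curve $\gamma$ with $\gamma(0)=e$ and $\gamma'(0)=v$. Then
\[(X^vf)(\sigma) = \frac{\diff}{\diff t}\Big|_{t=0} f(\sigma\gamma(t)),\]
which is the derivative in the second argument of the smooth map $(\sigma,\tau)\mapsto f(\sigma\tau)$ at $\tau=e$. It is therefore smooth in $\sigma$, so $X^v$ is a smooth vector field.

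Next, for $X,Y\in\fg$ define the commutator $[X,Y]f := X(Yf)-Y(Xf)$. This is a vector field because the second-order terms cancel; in local coordinates it is a derivation. If $\phi$ is a diffeomorphism and $X,Y$ are $\phi$-related to themselves, then so is $[X,Y]$. Applied to $\phi=l_\sigma$, this shows that $[X,Y]$ is again left-invariant. Bilinearity and antisymmetry are immediate. The Jacobi identity follows by expanding the three double commutators as operators on $C^\infty(G)$: the twelve terms cancel in pairs, exactly as for the matrix commutator in Examples \ref{exms:lieAlgebras} (b). Finally, I would transport the bracket to $T_eG$ via the isomorphism, setting $[v,w] := [X^v,X^w]_e$. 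This makes $T_eG$ a Lie algebra of dimension $\dim G$.
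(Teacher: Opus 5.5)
Your proposal is correct and is essentially the standard proof of the result the paper cites without proof (Warner, Prop.~3.7, also alluded to in the paper's subsequent remark on left-invariant vector fields). That proof likewise identifies $T_eG$ with the left-invariant vector fields via $X\mapsto X_e$, proves smoothness of $X^v$ by differentiating $(\sigma,t)\mapsto f(\sigma\gamma(t))$ at $t=0$, shows that the commutator preserves left-invariance through $l_\sigma$-relatedness, and transports the bracket to $T_eG$, with $\dim T_eG=\dim G$ read off from the manifold structure.
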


\begin{dfn}
Let $G$ be a Lie group.
We define the \emph{Lie algebra $\fg$ of the Lie group $G$} to be $\fg := T_e G$ the tangent space of $G$ at the identity $e\in G$.
\end{dfn}

\begin{rem}
Alternatively, one can define the Lie algebra $\fg$ of the Lie group $G$ to be the (Lie) algebra of left invariant vector fields on $G$, see e.g.\ \cite{warner83}.
\exmsymbol
\end{rem}

\begin{rem}
An extremely strong theorem which was proved by Igor Dmitrievich Ado,\index{Ado!I.\ D.} the so called \emph{Ado's Theorem},\index{Theorem!Ado}\index{Ado!Theorem} states that every finite dimensional Lie algebra has a faithful $(1:1)$ representation in $\gl(n,\rset)$, i.e., for any Lie algebra $\fg$ there exists a sufficient large $n$ such that $\fg$ is isomorphic to a sub-Lie algebra of $\gl(n,\rset)$.
\exmsymbol
\end{rem}

\begin{thm}[see e.g.\ {\cite[Thm.\ 3.19]{warner83}}]\label{thm:lieGroupAlegbra1:1}
Let $G$ be a Lie group with Lie algebra $\fg$.
Then there is a one-to-one correspondence between connected Lie subgroups of $G$ and subalgebras of $\fg$.
\end{thm}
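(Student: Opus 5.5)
The plan is to construct maps in both directions and show they are mutually inverse. In one direction, a connected Lie subgroup $H\subseteq G$ is sent to its Lie algebra. Here a Lie subgroup means an abstract subgroup $H$ with a Lie group structure such that the inclusion $\iota:H\hookrightarrow G$ is an injective immersion. Since $\diff\iota_e:T_eH\to T_eG$ is injective and commutes with brackets of left invariant vector fields ($\iota$-related fields have $\iota$-related brackets), the image $\fh:=\diff\iota_e(T_eH)$ is a subalgebra of $\fg$. In the other direction, a subalgebra $\fh\subseteq\fg$ is sent to a connected Lie subgroup built with the Frobenius Theorem.

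For the construction, let $\fh\subseteq\fg$ be a subalgebra of dimension $k$. Viewing elements of $\fg$ as left invariant vector fields, set
\[\mathcal{D}_\sigma := \{X_\sigma \,|\, X\in\fh\}\subseteq T_\sigma G.\]
This is a smooth left invariant distribution of rank $k$, spanned by a basis $X_1,\dots,X_k$ of $\fh$. It is involutive: for vector fields $Y=\sum f_iX_i$ and $Z=\sum g_jX_j$ in $\mathcal{D}$, the bracket $[Y,Z]$ is a combination of the $X_i$ and the $[X_i,X_j]\in\fh$. By the (global) Frobenius Theorem, $\mathcal{D}$ has a unique maximal connected integral manifold through every point. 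Let $H$ be the one through $e$.

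Next we show that $H$ is a subgroup. Left translation $L_\sigma$ preserves $\mathcal{D}$, so it permutes maximal integral manifolds. For $\sigma\in H$, the manifold $L_{\sigma^{-1}}H$ contains $\sigma^{-1}\sigma=e$, hence equals $H$. Therefore $\sigma^{-1}\tau\in H$ for all $\sigma,\tau\in H$. The group operations on $H$ are smooth in its own manifold structure, because the map $(\sigma,\tau)\mapsto\sigma^{-1}\tau$ is smooth into $G$ and takes values in the integral manifold $H$. Integral manifolds of involutive distributions are initial submanifolds, so smooth maps into $G$ that land in $H$ are smooth into $H$. Thus $H$ is a connected Lie subgroup with $T_eH=\mathcal{D}_e=\fh$.

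It remains to prove uniqueness, i.e., that a connected Lie subgroup $H'$ with Lie algebra $\fh$ equals $H$. Since $H'$ is left invariant with tangent spaces $\diff L_\sigma(\fh)=\mathcal{D}_\sigma$, it is a connected integral manifold of $\mathcal{D}$ through $e$, so $H'$ is an open submanifold of $H$ by maximality. An open subgroup of a connected group is also closed, so $H'=H$, and the two manifold structures agree. Together these steps show that the maps $H\mapsto T_eH$ and $\fh\mapsto H$ are mutually inverse.

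The main obstacle is the Frobenius step: the existence of maximal integral manifolds and the initial-submanifold property needed for smoothness of the group operations. I would invoke it as a standard result, e.g.\ from \cite{warner83}, rather than reprove it.
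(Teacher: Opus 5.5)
Your proposal is correct and is essentially the standard Frobenius-theorem argument of \cite[Thm.\ 3.19]{warner83}, which the paper cites instead of giving its own proof. The steps match that proof: the left invariant involutive distribution spanned by the subalgebra, the maximal connected integral manifold through $e$, left translations to show it is a subgroup, the initial-submanifold property for smoothness of the group operations, and maximality plus connectedness (an open subgroup of a connected group is the whole group) for uniqueness.
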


\section{Matrix Lie Groups and the Matrix Exponential Function}

While abstract Lie groups come in every color, the simpler examples are matrix Lie groups.
These are even sufficient for our study.

\begin{dfn}\label{dfn:matrixExp}
Let $n\in\nset$.
We define the \emph{matrix exponential function}
\[\exp:\rset^{n\times n}\to\rset^{n\times n},\quad A\mapsto \exp A := \sum_{k\in\nset_0} \frac{A^k}{k!}.\]
\end{dfn}

\begin{rem}
For short, we also write
\[e^A := \exp A. \tag*{$\circ$}\]
%\exmsymbol
\end{rem}

\begin{lem}
The matrix exponential is well-defined, i.e.,
\[\exp A\in\rset^{n\times n}\]
for all $A\in\rset^{n\times n}$.
\end{lem}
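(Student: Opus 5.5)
To prove that $\exp A$ is well-defined, I will show that the series $\sum_{k\in\nset_0} A^k/k!$ converges absolutely in $\rset^{n\times n}$. The space $\rset^{n\times n}\cong\rset^{n^2}$ is finite dimensional and complete, so absolute convergence with respect to any norm gives convergence, and the limit then lies in $\rset^{n\times n}$.

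First fix a submultiplicative norm on $\rset^{n\times n}$. I would take the operator norm $\|A\| := \sup_{|x|=1}|Ax|$ induced by the Euclidean norm on $\rset^n$. Any such norm satisfies $\|AB\|\leq\|A\|\cdot\|B\|$. By induction on $k$ this gives $\|A^k\|\leq\|A\|^k$ for all $k\in\nset_0$, where $A^0=\one$ is the identity matrix and $\|\one\|=1$. All norms on $\rset^{n\times n}$ are equivalent, so convergence in this norm is the same as entrywise convergence.

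Next compare with the scalar exponential series:
\[\sum_{k\in\nset_0}\left\|\frac{A^k}{k!}\right\| \leq \sum_{k\in\nset_0}\frac{\|A\|^k}{k!} = e^{\|A\|} < \infty.\]
So the series is absolutely convergent. Concretely, the partial sums $S_N := \sum_{k=0}^N A^k/k!$ form a Cauchy sequence, since for $M>N$
\[\|S_M - S_N\| \leq \sum_{k=N+1}^M\frac{\|A\|^k}{k!},\]
and the right-hand side is a tail of a convergent real series. Completeness of $\rset^{n\times n}$ then yields a limit $\exp A\in\rset^{n\times n}$. As a by-product we also get the bound $\|\exp A\|\leq e^{\|A\|}$.

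There is no real obstacle here. The only point that needs care is the submultiplicativity $\|A^k\|\leq\|A\|^k$, because it is what reduces the matrix series to the scalar exponential series. If one prefers to avoid operator norms, one can bound entries directly: with $m:=\max_{i,j}|a_{ij}|$, induction gives $|(A^k)_{ij}|\leq n^{k-1}m^k\leq (nm)^k$ for $k\geq 1$. Each entry series is then dominated by $\sum_k (nm)^k/k! = e^{nm}$ and converges absolutely.
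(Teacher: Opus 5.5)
Your proof is correct and follows the same route as the paper: both use the spectral (operator) norm, its submultiplicativity $\|A^k\|\leq\|A\|^k$, and the comparison with $\sum_{k}\|A\|^k/k! = e^{\|A\|}<\infty$. Your version is slightly more careful, since you check that the partial sums are Cauchy and invoke completeness, whereas the paper writes the bound on $\|\exp A\|$ before convergence has been established.
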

\begin{proof}
Let $\|\cdot\|$ be the spectral norm on $\rset^{n\times n}$:
\[\|A\| := \max_{x\in\rset^n\setminus\{0\}} \frac{\|Ax\|}{\|x\|}.\]
Clearly,
\[\|A+B\| \leq \|A\| + \|B\| \qquad\text{and}\qquad  \|AB\| \leq \|A\|\cdot \|B\|\]
for all $A,B\in\rset^{n\times n}$.
Hence,
\[\|\exp A\| \leq \sum_{k\in\nset_0} \frac{\|A\|^k}{k!} = e^{\|A\|} < \infty\]
for all $A\in\rset^{n\times n}$.
Hence, in the $\exp$ definition we have bounded dominance and therefore convergence in $\rset^{n\times n}$.
\end{proof}

\begin{prop}\label{prop:matrixExp}
Let $n\in\nset$.
Then the following hold:
\begin{enumerate}[(i)]
\item $\exp 0 = \id$.

\item $(\exp X)^T = \exp (X^T)$ for all $X\in\gl(n,\rset)$.

\item For all $X,Y\in\gl(n,\rset)$ with $[X,Y]=0$,
\[\exp(X+Y) = \exp X\cdot \exp Y.\]

\item For all $X\in\gl(n,\rset)$ and $a,b\in\rset$,
\[\exp ((a+b)X) = \exp(aX)\cdot \exp(bX).\]

\item For all $X\in\gl(n,\rset)$, $\exp X$ is invertiable and
\[(\exp X)^{-1} = \exp(-X).\]

\item For all $X\in\gl(n,\rset)$ and $C\in\Gl(n,\rset)$,
\[\exp (CXC^{-1}) = C\cdot \exp X\cdot C^{-1}.\]

\item Let
\[\tr X := X_{1,1} + X_{2,2} + \dots + X_{n,n}\]
be the trace\index{trace} $\tr X$ of $X=(X_{i,j})_{i,j=1}^n \in\gl(n,\rset)$.
Then
\[\det (\exp X) = e^{\tr X}\]
for all $X\in\gl(n,\rset)$.

\item For all $X\in\gl(n,\rset)$ and $Y\in\gl(m,\rset)$,
\[\exp (X\oplus Y) = \exp X \oplus \exp Y.\]

\item For all $X\in\gl(n,\rset)$,
\[\exp X = \lim_{k\to\infty} \left(\id + \frac{X}{k} \right)^k = \lim_{k\to\infty} \left(\id - \frac{X}{k} \right)^{-k}.\]
\end{enumerate}
\end{prop}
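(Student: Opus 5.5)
We prove the nine items in the order listed. Throughout we use that the series $\sum_k A^k/k!$ converges absolutely in the spectral norm, as shown in the preceding lemma. This justifies every manipulation below: term-wise transposition, conjugation, and rearrangement of double series (Cauchy products). Item (i) is immediate, since $0^0=\id$ and all higher terms vanish. Item (ii) follows from $(X^k)^T=(X^T)^k$ and the continuity of transposition. Item (vi) follows from $(CXC^{-1})^k = CX^kC^{-1}$ together with continuity of $A\mapsto CAC^{-1}$. Item (viii) follows from $(X\oplus Y)^k = X^k\oplus Y^k$.

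The core step is (iii). Since $[X,Y]=0$, the binomial theorem gives $(X+Y)^m=\sum_{k+l=m}\binom{m}{k}X^kY^l$. Because both $\sum X^k/k!$ and $\sum Y^l/l!$ converge absolutely, the Cauchy product theorem (valid in the Banach algebra $\rset^{n\times n}$) yields $\exp X\exp Y=\sum_m\sum_{k+l=m}X^kY^l/(k!\,l!)=\exp(X+Y)$. Item (iv) is then the special case of commuting matrices $aX$ and $bX$. Item (v) follows from (iii) with $Y=-X$ together with (i), applied in both orders.

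For (vii), we first consider $X$ upper triangular with diagonal entries $\lambda_1,\dots,\lambda_n$. Each $X^k$ is upper triangular with diagonal $\lambda_i^k$. Hence $\exp X$ is upper triangular with diagonal $e^{\lambda_i}$, and $\det\exp X=\prod e^{\lambda_i}=e^{\tr X}$. For general $X$ we pass to $\cset^{n\times n}$, where the same series arguments hold. Schur triangularization gives $X=CUC^{-1}$ with $U$ upper triangular, and (vi) (which holds verbatim over $\cset$) reduces the claim to the triangular case, using that both the trace and the determinant are similarity invariant. This excursion to complex matrices is the only step needing care: the real statement follows because both sides are real and the complex computation is valid. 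Alternatively, one could argue via density of diagonalizable matrices and continuity of $\det$, $\tr$, and $\exp$.

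For (ix), write $(\id+X/k)^k=\sum_{j=0}^k\binom{k}{j}k^{-j}X^j$. The coefficient $\binom{k}{j}k^{-j}=\frac{1}{j!}\prod_{i<j}(1-i/k)$ lies in $[0,1/j!]$ and converges to $1/j!$ for each fixed $j$. Therefore
\[\Big\|\exp X-(\id+X/k)^k\Big\|\le\sum_{j\ge0}\Big(\frac{1}{j!}-\binom{k}{j}k^{-j}\Big)\|X\|^j,\]
where we set $\binom{k}{j}=0$ for $j>k$. This bound tends to $0$ by dominated convergence, since each summand is bounded by $\|X\|^j/j!$, which is summable. For the second limit, apply the first to $-X$ to get $(\id-X/k)^k\to\exp(-X)$; for large $k$ the matrix $\id-X/k$ is invertible. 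Since inversion is continuous on $\Gl(n,\rset)$ and $\exp(-X)$ is invertible with inverse $\exp X$ by (v), we obtain $(\id-X/k)^{-k}=((\id-X/k)^k)^{-1}\to\exp X$.

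We expect the main obstacle to be (vii), because it requires leaving the real setting, or alternatively a density argument. The remaining items are routine once absolute convergence and the Cauchy product are available.
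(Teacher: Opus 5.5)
Your proof is correct and complete. Each item gets the standard argument:
\begin{itemize}
\item termwise identities for (i), (ii), (vi) and (viii);
\item absolute convergence plus the Cauchy product in the Banach algebra $\rset^{n\times n}$ for (iii), from which (iv) and (v) follow;
\item Schur triangularization over $\cset$ for (vii);
\item for (ix), the bound $0\le\binom{k}{j}k^{-j}\le 1/j!$ with dominated convergence, and continuity of inversion on $\Gl(n,\rset)$ for the second limit.
\end{itemize}

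The paper gives no proof of this proposition and defers it to Problem~\ref{prob:matrixExp}, so there is no competing route to compare.

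If you want to avoid the excursion to $\cset$ in (vii), you can derive it from (ix) and the continuity of $\det$:
\[\det\exp X=\lim_{k\to\infty}\bigl(\det(\id+X/k)\bigr)^k=\lim_{k\to\infty}\bigl(1+\tr X/k+O(k^{-2})\bigr)^k=e^{\tr X}.\]
This keeps the whole argument over $\rset$.
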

\begin{proof}
See Problem \ref{prob:matrixExp}.
\end{proof}

\begin{thm}
Let $n\in\nset$.
Then the following hold:
\begin{enumerate}[(i)]
\item For $X\in\gl(n,\rset)$, the function
\[\rset\to\Gl(n,\rset),\quad t\mapsto \exp(tX)\]
is $C^\infty$ in $t\in\rset$ with
\[\frac{\diff}{\diff t} \exp(tX)\Big|_{t=0} = X.\]

\item $\gl(n,\rset)$ is the Lie algebra of the Lie group $\Gl(n,\rset)$, i.e.,
\[\gl(n,\rset) = T_\id \Gl(n,\rset).\]
\end{enumerate}
\end{thm}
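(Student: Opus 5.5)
For (i) I would work directly with the power series from \Cref{dfn:matrixExp}. Put $f(t):=\exp(tX)=\sum_{k\in\nset_0}\frac{t^k}{k!}X^k$. This is a power series in the real variable $t$ whose coefficients lie in the finite dimensional Banach space $(\rset^{n\times n},\|\cdot\|)$. By the estimate in the well-definedness lemma, $\sum_k \frac{|t|^k}{k!}\|X^k\|\leq e^{|t|\,\|X\|}$, so the radius of convergence is infinite. Vector-valued power series can be differentiated term by term inside their disk of convergence; one can apply the scalar theorem entrywise, since $\rset^{n\times n}\cong\rset^{n^2}$. Hence $f$ is $C^\infty$, and even real analytic, with
\[f'(t)=\sum_{k\in\nset}\frac{t^{k-1}}{(k-1)!}X^k = X\exp(tX)=\exp(tX)X .\]
Evaluating at $t=0$ and using \Cref{prop:matrixExp} (i) gives $f'(0)=X$. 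By \Cref{prop:matrixExp} (v), $f(t)$ is invertible for every $t$, so $f$ really takes values in $\Gl(n,\rset)$.

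For (ii) I would first note that $\Gl(n,\rset)=\det^{-1}(\rset\setminus\{0\})$ is an open subset of $\rset^{n\times n}$, because $\det$ is a polynomial and hence continuous. So $\Gl(n,\rset)$ is an open submanifold of the vector space $\rset^{n\times n}=\gl(n,\rset)$. Its tangent space at any point, in particular at $\id$, is therefore canonically $\rset^{n\times n}$: every $X\in\gl(n,\rset)$ is the velocity at $t=0$ of a smooth curve through $\id$. By (i), the curve $t\mapsto\exp(tX)$ is such a curve. Conversely, any smooth curve $\gamma$ with $\gamma(0)=\id$ has $\gamma'(0)\in\rset^{n\times n}$. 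This identifies $T_\id\Gl(n,\rset)$ with $\gl(n,\rset)$ as vector spaces.

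The part I expect to need the most care is showing that this identification respects the brackets. The bracket on $T_\id G$ comes from left invariant vector fields, and it must agree with the commutator $[A,B]=AB-BA$ of \Cref{exms:lieAlgebras} (b). My plan is as follows.
\begin{enumerate}[(a)]
\item Write the left invariant vector field generated by $A$ as $\tilde A(g)=gA$, which is the derivative of left multiplication by $g$.
\item Compute the Lie bracket of $\tilde A$ and $\tilde B$ in the global linear coordinates of $\rset^{n\times n}$. The derivative of $g\mapsto gB$ in the direction $gA$ is $gAB$. Hence $[\tilde A,\tilde B](g)=gAB-gBA$, and evaluating at $g=\id$ gives $AB-BA$.
\end{enumerate}
As an alternative check, I would verify the identity
\[\frac{\partial^2}{\partial s\,\partial t}\exp(tA)\exp(sB)\exp(-tA)\Big|_{s=t=0}=AB-BA,\]
which follows from the term-by-term derivative formula in (i). This confirms that $\gl(n,\rset)$ with the commutator is exactly the Lie algebra of $\Gl(n,\rset)$.
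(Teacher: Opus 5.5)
Your part (i) is the paper's argument: absolute convergence of the series justifies term-by-term differentiation. This gives $\frac{\diff}{\diff t}\exp(tX)=X\exp(tX)$, hence the value $X$ at $t=0$.

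In part (ii) you take a different route from the paper.

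\begin{itemize}
\item \emph{The paper's route.} It shows only $\gl(n,\rset)\subseteq T_\id\Gl(n,\rset)$, by exhibiting the curves $t\mapsto\exp(tX)$. It then concludes equality from $\dim\gl(n,\rset)=n^2=\dim\Gl(n,\rset)$, and never looks at the bracket.
\item \emph{Your route.} You observe that $\Gl(n,\rset)=\det^{-1}(\rset\setminus\{0\})$ is open in $\rset^{n\times n}$. This makes the identification $T_\id\Gl(n,\rset)=\rset^{n\times n}$ canonical and immediate, so the exponential curves are not actually needed at that step. You then check, via the left invariant fields $g\mapsto gA$ and the coordinate formula $[V,W](g)=DW(g)V(g)-DV(g)W(g)$, that the induced bracket is $AB-BA$. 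That computation, and your alternative check with $\exp(tA)\exp(sB)\exp(-tA)$, are both correct.
\end{itemize}

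Your version is slightly longer but proves more. It makes explicit what the paper's dimension count tacitly relies on: that velocities of curves in $\Gl(n,\rset)$ are matrices and that $\dim\Gl(n,\rset)=n^2$, both of which come from openness. It also establishes the identification as Lie algebras rather than merely as vector spaces, which is what ``is the Lie algebra'' really asserts.

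The paper's version has the advantage of brevity. It also sets up the ``exponential curves plus dimension count'' pattern that it reuses immediately afterwards to compute $\fsl(n,\rset)$.
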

\begin{proof}
(i):
Since in the definition of $\exp X$ the sum is absolute convergent with, we can interchange differentiation and the infinite sum:
\[\frac{\diff}{\diff t} \exp (tX) = \frac{\diff}{\diff t} \sum_{k\in\nset_0} \frac{t^k\cdot X^k}{k!} = \sum_{k\in\nset} \frac{t^{k-1}\cdot X^k}{(k-1)!} = X\cdot \exp(tX).\]
This proves the statement, since $\exp 0 = \id$.

(ii):
Let $X\in\gl(n,\rset)$.
Then, by (i) and \Cref{prop:matrixExp},
\[g_X:\rset\to\Gl(n,\rset),\quad t\mapsto \exp(tX)\]
is a smooth curve in $\Gl(n,\rset)$ with
\[g(0) = \id \quad\text{and}\quad g'(0) = X,\]
i.e., $\gl(n,\rset)\subseteq T_\id\Gl(n,\rset)$.
But since
\[\dim \gl(n,\rset) = n^2 = \dim\Gl(n,\rset),\]
we have $\gl(n,\rset) = T_\id\Gl(n,\rset)$.
\end{proof}

\begin{exm}
Let $n\in\nset$ and let
\[\Sl(n,\rset) := \{X\in\Gl(n,\rset) \,|\, \det X = 1\}\label{SlnR}\]
be the \emph{special linear group}.\index{group!linear!special}
Clearly, $\Sl(n,\rset)$ is a proper subgroup of $\Gl(n,\rset)$ with
\[\dim\Sl(n,\rset) = n^2 - 1.\]
It is also connected.
We want to calculate the corresponding Lie subalgebra $\fsl(n,\rset)$ of the Lie subgroup $\Sl(n,\rset)$.
By \Cref{prop:matrixExp} (vii), we have $\det \exp X = e^{\tr X}$, i.e.,
\[X\in\fsl(n,\rset) \quad\Rightarrow\quad \tr X = 0.\]
But since
\[\dim \fsl(n,\rset) = n^2 - 1 = \dim \{X\in\gl(n,\rset) \,|\, \tr X = 0\},\]
we have
\[\fsl(n,\rset) = \{X\in\gl(n,\rset) \,|\, \tr X = 0\}.\label{slnR} \tag*{$\circ$}\]
\end{exm}

\begin{thm}\label{thm:expProperties}
Let $n\in\nset$, let
\[\|X\| := \sqrt{\sum_{i,j=1}^n x_{i,j}^2}\]
be the Hilbert--Schmidt norm of $X=(x_{i,j})_{i,j=1}^n\in\gl(n,\rset)$, and let
\[\log X := \sum_{k\in\nset} (-1)^{k+1}\cdot\frac{(X-\id)^k}{k}.\]
Then the following hold:
\begin{enumerate}[(i)]
\item For all $X\in\gl(n,\rset)$ with $\|X-\id\|< 1$, $\log X$ is well-defined and
\[\exp(\log X) = X.\]

\item If $X = \id + N$ for some nilpotent $N\in\gl(n,\rset)$, i.e., $N^n=0$, then
\[\log X = \sum_{k=1}^{n-1} (-1)^{k+1}\cdot\frac{N^k}{k} \quad\text{and}\quad \exp(\log X) = X.\]

\item If $X\in\gl(n,\rset)$ is such that $\|X\|< \log 2$ and $\|\id - \exp X\| < 1$, then
\[\log (\exp X) = X.\]
\end{enumerate}
\end{thm}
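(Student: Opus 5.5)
We prove the three statements by reducing them to identities of formal power series in one variable, which we then evaluate at a matrix argument. Throughout we use that the Hilbert--Schmidt norm is submultiplicative, $\|AB\|\leq\|A\|\cdot\|B\|$, so that $\|A^k\|\leq\|A\|^k$.

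\textbf{Statement (i).} The series for $\log X$ is dominated by $\sum_k \|X-\id\|^k/k<\infty$ when $\|X-\id\|<1$. Hence it converges absolutely in $\gl(n,\rset)$ and $\log X$ is well defined. To get $\exp(\log X)=X$, set $Y:=X-\id$. All terms are power series in the single matrix $Y$, so they commute, and the problem is one of composing power series in one commuting variable. In $\rset[[z]]$ we have the identity $\exp(\log(1+z))=1+z$. It holds for real $|z|<1$ by elementary calculus, and two convergent power series that agree on an interval have equal coefficients. The composed double series
\[\sum_m \frac{1}{m!}\left(\sum_k (-1)^{k+1}\frac{z^k}{k}\right)^m\]
converges absolutely whenever $\sum_k |z|^k/k = -\log(1-|z|)<\infty$. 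That is the case for $|z|<1$, so rearrangement is justified. Substituting $z=Y$ with $\|Y\|<1$ gives absolute convergence of the matrix double series by the same majorant. Since substitution $z\mapsto Y$ is a homomorphism on absolutely convergent series in $Y$, we may rearrange and obtain $\exp(\log X)=\id+Y=X$.

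\textbf{Statement (ii).} If $N^n=0$, the series for $\log X$ terminates and is a finite sum, with no convergence issue. The composition $\exp(\log(\id+N))$ is then a polynomial in $N$. Its coefficients are exactly those of the formal identity $\exp(\log(1+z))=1+z$, truncated modulo $z^n$. Here $\exp$ of the nilpotent matrix $\log(\id+N)$ is also a finite sum, and $N^n=0$ kills all higher terms, which gives $X$.

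\textbf{Statement (iii).} We use the converse formal identity $\log(\exp z)=z$ in $\rset[[z]]$, with $\exp z - 1=\sum_{j\geq1}z^j/j!$ substituted. The hypothesis $\|\id-\exp X\|<1$ makes $\log(\exp X)$ well defined by (i). To rearrange we need absolute convergence of
\[\sum_k \frac{1}{k}\left(\sum_{j\geq1}\frac{\|X\|^j}{j!}\right)^k=\sum_k\frac{(e^{\|X\|}-1)^k}{k}.\]
This is finite precisely because $\|X\|<\log 2$ gives $e^{\|X\|}-1<1$. Hence the matrix double series converges absolutely and can be regrouped by powers of $X$. The regrouped coefficients are those of the formal composition, which equals $z$. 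This yields $\log(\exp X)=X$.

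The main obstacle is justifying the rearrangement of the double series, i.e.\ that substitution of a commuting matrix into a composed power series agrees with composing the matrix series. The plan handles this by bounding with the majorant series in $\|Y\|$ resp.\ $\|X\|$ and invoking absolute-convergence rearrangement. The scalar identities themselves can be taken from real analysis.
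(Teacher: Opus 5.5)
Your proposal is correct and follows the same route as the paper, whose proof is only a one-line sketch: absolute convergence of the series within the radius of convergence (via submultiplicativity of the Hilbert--Schmidt norm) reduces everything to the scalar identities $\exp(\log(1+z))=1+z$ and $\log(e^z)=z$, and you supply the majorant and rearrangement details that the paper leaves implicit. As a side remark, in (iii) the hypothesis $\|\id-\exp X\|<1$ is redundant, since it already follows from $\|X\|<\log 2$ via $\|\exp X-\id\|\leq e^{\|X\|}-1<1$.
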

\begin{proof}
Follows by straight forward calculations from the absolute convergence of the $\log$-series within its radius of convergence similar to the case $n=1$ in analysis.
\end{proof}

\begin{cor}\label{cor:invertable}
Let $n\in\nset$ and $X\in\Gl(n,\rset)$.
Then there exists a $A\in\gl(n,\rset)$ such that $X = \exp A$, i.e., every invertable $X$ is of the form $\exp A$.
\end{cor}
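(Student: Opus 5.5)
The plan is to reduce the statement to single Jordan blocks, take logarithms there with \Cref{thm:expProperties} (ii), and reassemble them with \Cref{prop:matrixExp} (vi) and (viii). The decisive step is the real eigenvalues $\lambda<0$. My expectation is that this step cannot be completed as the statement is worded, and that the argument only closes under an extra hypothesis.

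\emph{Reduction to blocks.} Take $X\in\Gl(n,\rset)$ and put it into real Jordan normal form, $X=C\,(J_1\oplus\dots\oplus J_r)\,C^{-1}$ with $C\in\Gl(n,\rset)$. Each $J_i$ is of one of two kinds:
\begin{itemize}
\item a block $\lambda(\id+N)$ with $\lambda\in\rset\setminus\{0\}$ and $N$ nilpotent;
\item a block $R\otimes\id+\id\otimes N'$ with $R=\rho\begin{pmatrix}\cos\theta&-\sin\theta\\ \sin\theta&\cos\theta\end{pmatrix}$, $\rho>0$, belonging to a pair of non-real conjugate eigenvalues.
\end{itemize}
Suppose every block satisfies $J_i=\exp A_i$ with $A_i$ real. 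Then \Cref{prop:matrixExp} (viii) gives $J_1\oplus\dots\oplus J_r=\exp(A_1\oplus\dots\oplus A_r)$, and (vi) gives
\[X=\exp\big(C(A_1\oplus\dots\oplus A_r)C^{-1}\big).\]

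\emph{Blocks with $\lambda>0$.} For $\lambda(\id+N)$ with $\lambda>0$, set $A=(\log\lambda)\,\id+L$, where $L:=\sum_{k=1}^{n-1}(-1)^{k+1}N^k/k$ is the finite sum from \Cref{thm:expProperties} (ii), so that $\exp L=\id+N$. The scalar matrix $(\log\lambda)\id$ commutes with $L$, so \Cref{prop:matrixExp} (iii) gives $\exp A=\lambda(\id+N)$.

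\emph{Complex-pair blocks.} Write $R=\exp(S)$ with $S=(\log\rho)\id+\theta\begin{pmatrix}0&-1\\1&0\end{pmatrix}$. The nilpotent part is handled by the same finite log series applied to $\id+(R^{-1}\otimes\id)(\id\otimes N')$. All the pieces commute, so (iii) again gives a real logarithm of the block.

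\emph{The obstacle: $\lambda<0$.} By \Cref{prop:matrixExp} (vii), $\det\exp A=e^{\tr A}>0$ for every real $A$. So the method, and in fact the statement itself, fails whenever $\det X<0$; $X=\mathrm{diag}(-1,2)$ is a counterexample. Even with $\det X>0$ the blocks with negative eigenvalues must be paired. If two identical blocks $\lambda(\id+N)$ with $\lambda<0$ occur, their direct sum is conjugate to a complex-pair block with $\theta=\pi$ and can be handled as above. An unpaired block such as the one in $\mathrm{diag}(-1,-2)$ admits no real logarithm.

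I would therefore carry out the proof exactly as above and state precisely what it establishes. It shows that $X=\exp A$ with $A\in\gl(n,\rset)$ if and only if the Jordan blocks of $X$ for negative eigenvalues occur in equal pairs, which holds in particular whenever $X=Y^2$ with $Y\in\Gl(n,\rset)$. Over $\cset$, that is for every $X\in\Gl(n,\cset)$ with $A\in\gl(n,\cset)$, the first two steps alone prove the statement, taking any branch of $\log\lambda$. The corollary as worded for real $A$ should be corrected to one of these two forms.
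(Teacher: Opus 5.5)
Your diagnosis is correct, and the paper has no argument to compare against: its proof of \Cref{cor:invertable} is only a pointer to Problem~\ref{prob:invertable}. Over $\rset$ the statement is false. The obstruction is already in the paper, in \Cref{prop:matrixExp}~(vii): $\det\exp A=e^{\tr A}>0$ for every $A\in\gl(n,\rset)$, so $X=\mathrm{diag}(-1,2)$ is not an exponential.

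The complex version is what \Cref{exm:n1d3cont} actually invokes, namely surjectivity of $\exp:\mathrm{gl}(4,\cset)\to\mathrm{Gl}(4,\cset)$. It follows exactly as you say, from the complex Jordan form, a branch of $\log\lambda$, and the finite series of \Cref{thm:expProperties}~(ii). Nothing later in the paper depends on the real version. The applications to $\fD_{c,d}$ only need logarithms of unipotent matrices, and that finite series provides these with real entries. Your constructions for blocks with $\lambda>0$, for complex-pair blocks, and for paired negative blocks via $\theta=\pi$ are all correct.

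Two points need tightening.

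First, your construction proves only the \emph{if} half of the real criterion (this is Culver's theorem). Saying the construction ``shows'' the equivalence overstates it. The \emph{only if} half needs its own argument:
\begin{enumerate}
\item If $X=\exp A$ with $A$ real and $\lambda<0$ is an eigenvalue of $X$, then every eigenvalue $\mu$ of $A$ with $e^{\mu}=\lambda$ is non-real. These $\mu$ split into conjugate pairs $\{\mu,\bar\mu\}$ with $\mu\neq\bar\mu$.
\item Since $A$ is real, its Jordan blocks at $\mu$ and at $\bar\mu$ have the same sizes.
\item The matrix $\exp(\mu\,\id+N_k)=e^{\mu}\exp(N_k)$ is similar to a single Jordan block of size $k$ for $e^{\mu}$. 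This holds because $\exp(N_k)-\id=N_kU$ with $U$ invertible and commuting with $N_k$, so it has nilpotency index exactly $k$.
\end{enumerate}
Hence the Jordan blocks of $X$ at $\lambda$ occur in equal pairs. Your side remark that every square $Y^2$ with $Y\in\Gl(n,\rset)$ satisfies the pairing condition is also true. It follows from the same conjugate-pairing argument with $\mu^2$ in place of $e^{\mu}$, not from your construction.

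Second, the claim about $\mathrm{diag}(-1,-2)$ deserves its one line of proof. Any $A$ with $\exp A=\mathrm{diag}(-1,-2)$ commutes with $\exp A$. Since the eigenvalues $-1,-2$ are distinct, $A$ must then be diagonal, and so $\exp A$ would have positive diagonal entries, a contradiction.
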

\begin{proof}
See Problem \ref{prob:invertable}.
\end{proof}

\section{Regular Fr\'echet Lie Groups}

Already Hideki Omori\index{Omori!Hideki} stated the following, see \cite[pp.\ III-IV]{omori74}:
\begin{quote}\itshape
[G]eneral Fr\'echet manifolds are very difficult to treat.
For instance, there are some difficulties in the definition of tangent bundles, hence in the definition of the concept of $C^\infty$-mappings.
Of course, there is neither an implicit function theorem nor a Frobenius theorem in general.
Thus, it is difficult to give a theory of general Fr\'echet Lie groups.
\end{quote}
A more detailed study is given by Omori in \cite{omori97} and the theory successfully evolved since then, see e.g.\ \cite{omori74,kac85,omori97,stras02,wurz04,schmed23} and references therein.
% glockner17: does it exists?
We will give here only the basic definitions which will be needed for our study.

\begin{dfn}[{\cite[pp.\ 9, Def.\ 3.1]{omori97}}]
A pair $(G,\fg)$ consisting of a metric group $G$ and a complete locally convex topological /vector space $\fg$ is called a \emph{topological group of exponential type}\index{group!topological of exponential type} if there is a continuous mapping
\[\exp:\fg\to G\]
such that the following hold:
\begin{enumerate}[(i)]
\item For every $X\in\fg$, $\{\exp(sX)\}_{s\in\rset}$ is a one-parameter subgroup of $G$.

\item For $X,Y\in\fg$, $X=Y$ if and only if
\[\exp(sX) = \exp(sY)\]
for every $s\in\rset$.

\item For a sequence $(X_n)_{n\in\nset}$ in $\fg$,
\[\lim_{n\to\infty} X_n = X\in\fg\]
if and only if
\[\lim_{n\to\infty} \exp(sX_n) = \exp(sX)\]
uniformly in $s$ on every $[a,b]$ with $-\infty < a < b < \infty$.

\item There exists a continuous mapping $\Ad:G\times\fg\to\fg$ such that
\[h\cdot \exp(sX)\cdot h^{-1} = \exp(s\Ad(h)X)\]
for every, $s\in\rset$, $h\in G$, and $X\in\fg$.
\end{enumerate}
\end{dfn}

\begin{dfn}[{\cite[pp.\ 10]{omori97}}]\label{dfn:C1}
Let $(G,\fg)$ be a topological group of exponential type.
A continuous mapping
\[c:\rset\to G\]
is called \emph{differentiable}\index{differentiable} at $t_0\in\rset$ if and only if
\[\lim_{n\to\infty} \left( c(t_0+ s/n)\cdot c(t_0)^{-1} \right)^n\]
converges uniformly in $s$ on each compact interval $[a,b]$ with $-\infty < a < b< \infty$ to a one-parameter subgroup
\[\exp(sX(t_0)), \qquad \text{where}\ X(t_0)\in\fg.\]
$X(t_0)$ is called the \emph{derivative}\index{derivative} of $c$ at $t_0$ and is denoted by
\[X(t_0) = \dot{c}(t_0) = \left.\frac{\diff}{\diff t} c(t) \right|_{t=t_0}.\]
$c$ is called a $C^1$-curve, if $c$ is differentiable at every $t_0\in\rset$ and $\dot{c}$ is continuous with respect to $t\in\rset$.
By $C^1(G,\fg)$ we denote the space of all $C^1$-curves in $G$.
\end{dfn}

\begin{dfn}[see e.g.\ {\cite[p.\ 63, Dfn.\ 1.1]{omori97}}]\label{dfn:frechetLieGroup}
We call $(G,\,\cdot\,)$ a (\emph{regular}) \emph{Fr\'echet Lie group}\index{group!regular Fr\'echet Lie group} if the following conditions are fulfilled:
\begin{enumerate}[(i)]
\item $G$ is an infinite dimensional smooth Fr\'echet manifold.

\item $(G,\,\cdot\,)$ is a group.

\item The map $G\times G\to G$, $(A,B)\mapsto A\cdot B^{-1}$ is smooth.

\item The \emph{Fr\'echet Lie algebra}\index{algebra!Lie!Fr\'echet} $\fg$ of $G$ is isomorphic to the tangent space $T_e G$ of $G$ at the unit element $e\in G$.

\item $\exp:\fg\to G$ is a smooth mapping such that
\[\left.\frac{\diff}{\diff t} \exp(t u)\right|_{t=0} = u\]
holds for all $u\in\fg$.

\item The space $C^1(G,\fg)$ of $C^1$-curves in $G$ coincides with the set of all $C^1$-curves in $G$ under the Fr\'echet topology.
\end{enumerate}
\end{dfn}

We give here only the definition of a regular Fr\'echet Lie group.
We will get more familiar with this structure in the next two chapter, when we investigate liner operators
\[T:\rset[x_1,\dots,x_n]\to\rset[x_1,\dots,x_n]\]
with constant coefficients.
It will turn out that these are a (commutative) regular Fr\'echet Lie group.

For more on infinite dimensional manifolds, differential calculus, Lie groups, and Lie algebras see e.g.\ \cite{leslie67,omori97,schmed23}.

\section*{Problems}%%%%%%%%%%%%%%%%%%%%%
\addcontentsline{toc}{section}{Problems}

\begin{prob}\label{prob:lieAlgebras}
Check that the \Cref{exms:lieAlgebras} are indeed Lie algebras.
\end{prob}

%\begin{prob}\label{prob:maxtrixExp}
%Why is the matrix exponential map in \Cref{dfn:matrixExp} well-defined?

%\noindent
%\emph{Hint:} Use the \emph{Hilbert--Schmidt norm} $\|A\| := \sqrt{\sum_{i,j=1}^n a_{i,j}^2}$.
%\end{prob}

\begin{prob}\label{prob:matrixExp}
Prove \Cref{prop:matrixExp}.
\end{prob}

\begin{prob}
Use the Jordan decomposition to give a method to calculate $\exp X$ for a real $n\times n$-matrix $X$.
\end{prob}

\begin{prob}
Let $n\in\nset$ and let
\[\mathrm{O}(n,\rset) := \left\{X\in\Gl(n,\rset) \,\middle|\, X^T = X^{-1}\right\}\label{OnR}\]
be the \emph{orthogonal group}.\index{group!orthogonal}
\begin{enumerate}[\bfseries\qquad a)]
\item Is $\mathrm{O}(n,\rset)$ a Lie group?

\item Is $\mathrm{O}(n,\rset)$ connected?

\item Calculate $\fo(n,\rset) = T_\id \mathrm{O}(n,\rset)$.\label{onR}
\end{enumerate}
\end{prob}

\begin{prob}
Let $n\in\nset$ and let
\[\mathrm{SO}(n,\rset) := \left\{X\in\Gl(n,\rset) \,\middle|\, X^T = X^{-1}\ \text{and}\ \det X = 1\right\}\label{SOnR}\]
be the \emph{special orthogonal group}.\index{group!special orthogonal}
\begin{enumerate}[\bfseries\qquad a)]
\item Is $\mathrm{SO}(n,\rset)$ a Lie group?

\item Calculate the Lie algebra $\so(n,\rset)$\label{sonR} of $\mathrm{SO}(n,\rset)$.
\end{enumerate}
\end{prob}

\begin{prob}\label{prob:invertable}
Prove \Cref{cor:invertable}.
\end{prob}

\begin{prob}
Let $a\in\rset$ and set
\[A := \begin{pmatrix}
0 & -a\\ a & 0
\end{pmatrix}.\]
\begin{enumerate}[\bfseries\qquad a)]
\item Calculate $\exp A$.

\item Is $\exp:\gl(n,\rset)\to\Gl(n,\rset)$ bijective?
\end{enumerate}
\end{prob}

\chapter{The Regular Fr\'echet Lie Group $\fD_c$ of Linear Operators with Constant Coefficients}%%%
\label{ch:constCoeff}%%%%%%%%%%%%%%%%%%%%%%%%%%%%%%%%%%%
%%%%%%%%%%%%%%%%%%%%%%%%%%%%%%%%%%%%%%%%%%%%%%%%%%%%%%%%

We are now going to investigate generators of positivity preserving semi-groups.
We start with constant coefficients.
This was first published in \cite{didio24posPresConst}.

\section{The Definition of $\fD_c$ and $\fd_c$}

\begin{dfn}\label{dfn:fDcfdc}
Let $n\in\nset$.
We define
\[\fD_c := \left\{\; T=\sum_{\alpha\in\nset_0^n} q_\alpha\cdot\partial^\alpha\in\rset[[\partial_1,\dots,\partial_n]] \;\middle|\; q_0=1\;\right\} \]
and
\[\fd_c := \left\{\; A=\sum_{\alpha\in\nset_0^n} a_\alpha\cdot\partial^\alpha\in\rset[[\partial_1,\dots,\partial_n]] \;\middle|\; a_0=0\;\right\}.\]
\end{dfn}

\section{The Lie Group $\fD_{c,d}$}

\begin{dfn}\label{dfn:fDcd}
Let $n\in\nset$ and $d\in\nset_0$.
We define
\[\fD_{c,d} := \fD_c|_{\rset[x_1,\dots,x_n]_{\leq d}}
\quad\text{and}\quad
\fd_{c,d} := \fd_c|_{\rset[x_1,\dots,x_n]_{\leq d}}.\]
\end{dfn}

\begin{rem}[{\cite[p.\ 888]{didio24posPresConst}}]
Since
\[A\rset[x_1,\dots,x_n]_{\leq d}\subseteq\rset[x_1,\dots,x_n]_{\leq d}\]
for all $d\in\nset_0$ and $A\in\fD_c$, the set $\fD_{c,d}$ is well-defined.
From \Cref{dfn:fDcd} we see that $\fD_{c,d}$ consists only of operators of the form
\[\sum_{\alpha\in\nset_0^n: |\alpha|\leq d} c_\alpha\cdot\partial^\alpha\]
with $c_\alpha\in\rset$ and $c_0=1$, since on $\rset[x_1,\dots,x_n]_{\leq d}$ every operator $\partial^\beta$ with $|\beta|>d$ fulfills
\[\partial^\beta p = 0\]
for all $p\in\rset[x_1,\dots,x_n]_{\leq d}$, i.e.,
\[\partial^\beta = 0\]
on $\rset[x_1,\dots,x_n]_{\leq d}$.
\exmsymbol
\end{rem}

\begin{rem}[{\cite[Rem.\ 2.7]{didio24posPresConst}}]
We can also define $\fD_{c,d}$ by
\[\fD_c/\langle \partial^\alpha\, |\, |\alpha|=d+1\rangle.\]
Both definitions are almost identical.
However, \Cref{dfn:fDcd} has the following advantage.
In $\fD/\langle \partial^\alpha\, |\, |\alpha|=d+1\rangle$ we have the problem that we are working with equivalence classes and hence we can not calculate $A+B$ for $A\in\fD_d$ and $B\in\fD_{e}$ for $d\neq e$.
With \Cref{dfn:fDcd} we can calculate $A+B$ for $A\in\fD_d$ and $B\in\fD_e$ with $d\neq e$ since $A+B$ is defined on
\[\dom(A+B) = \dom A \cap \dom B = \rset[x_1,\dots,x_n]_{\leq\min\{d,e\}}\]
as usual for (unbounded) operators \cite{schmudUnbound}.
\Cref{dfn:fDcd} can then even be used to calculate $A+B$ for $A$ on $\rset[x_1,\dots,x_n]_{\leq d}$ and $B$ on $\rset[x_1,\dots,x_m]_{\leq e}$ for $n\neq m$ and $d\neq e$ on
\[\dom(A+B)=\rset[x_1,\dots,x_{\min\{n,m\}}]_{\leq\min\{d,e\}}.\tag*{$\circ$}\]
\end{rem}

\begin{exm}[{\cite[Exm.\ 2.8]{didio24posPresConst}}]\label{exm:n1d3}
Let $n=1$ and $d=3$.
Then
\[\fD_3 = \left\{ 1 + c_1\partial_x + c_2\partial_x^2 + c_3\partial_x^3
% \sum_{i=0}^3 c_i\cdot \partial_x^i 
\;\middle|\; c_1,c_2,c_3\in\rset,\ c_0>0 \right\}\quad \text{on}\quad \rset[x]_{\leq 3} .\]
Let
\[A = 1 + a_1\partial_x + a_2\partial_x^2 + a_3\partial_x^3 \quad\text{and}\quad B = 1 + b_1\partial_x + b_2\partial_x^2 + b_3\partial_x^3\]
be in $\fD_3$.
Then $AB = BA$ and
\begin{equation}\label{eq:d3mult}
\begin{split}
AB &= (1 + a_1\partial_x + a_2\partial_x^2 + a_3\partial_x^3)\cdot (1 + b_1\partial_x + b_2\partial_x^2 + b_3\partial_x^3)\\
&= 1 + (a_1 + b_1)\partial_x + (a_2 + a_1 b_1 + b_2)\partial_x^2 + (a_3 + a_2 b_1 + a_1 b_2 + b_3)\partial_x^3,
\end{split}
\end{equation}
since derivatives $\partial^i$ with $i\geq 4$ are the zero operators on $\rset[x]_{\leq 3}$.
Hence, $(\fD_3,\,\cdot\,)$ is a commutative semi-group with neutral element $\one = 1$.

We will now see that $\fD_3$ is even a commutative group.
For that it is sufficient to find for any $A\in\fD_3$ a $B\in\fD_3$ with $AB= \one$.
By (\ref{eq:d3mult}), $AB = \one$ is equivalent to
\begin{align*}
0 &= a_1 + b_1 &&\Rightarrow\quad b_1 = -a_1\\
0 &= a_2 + a_1 b_1 + b_2 &&\Rightarrow\quad b_2 = -a_2 + a_1^2\\
0 &= a_3 + a_2 b_1 + a_1 b_2 + b_3 &&\Rightarrow\quad b_3 = -a_3 + 2a_2 a_1 - a_1^3,
\end{align*}
i.e., every $A\in\fD_3$ has the unique inverse
\[A^{-1} = 1 - a_1\partial_x + (-a_2 + a_1^2)\partial_x^2 + (-a_3 + 2a_2 a_1 - a_1^3)\partial_x^3 \quad\in\fD_3.\]
Hence, $(\fD_3,\,\cdot\,)$ is a commutative group.
\exmsymbol
\end{exm}

We have seen in the previous example that $(\fD_d,\,\cdot\,)$ for $n=1$ and $d=3$ is a commutative group.
This holds for all $n\in\nset$ and $d\in\nset_0$.

\begin{lem}[{\cite[Lem.\ 2.9]{didio24posPresConst}}]\label{lem:ddgroup}
Let $n\in\nset$ and $d\in\nset_0$.
Then $(\fD_{c,d},\,\cdot\,)$ is a commutative group.
\end{lem}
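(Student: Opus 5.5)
We have to show that $\fD_{c,d}$, viewed as a set of operators on the finite-dimensional space $\rset[x_1,\dots,x_n]_{\leq d}$, is closed under composition, contains the identity, is commutative, and has inverses; associativity is inherited from composition of linear maps. We follow the pattern of \Cref{exm:n1d3}. Each element of $\fD_{c,d}$ has the form $A=\sum_{|\alpha|\leq d} a_\alpha\partial^\alpha$ with $a_0=1$, because $\partial^\beta=0$ on $\rset[x_1,\dots,x_n]_{\leq d}$ whenever $|\beta|>d$.

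\emph{Closure, identity and commutativity.} Take $A=\sum a_\alpha\partial^\alpha$ and $B=\sum b_\beta\partial^\beta$ in $\fD_{c,d}$. Constant coefficient operators commute, since $\partial^\alpha\partial^\beta=\partial^{\alpha+\beta}=\partial^\beta\partial^\alpha$. Hence
\[AB=\sum_{|\gamma|\leq d}\Big(\sum_{\alpha+\beta=\gamma} a_\alpha b_\beta\Big)\partial^\gamma = BA,\]
where all terms with $|\gamma|>d$ vanish on $\rset[x_1,\dots,x_n]_{\leq d}$. The coefficient of $\partial^0$ is $a_0b_0=1$, so $AB\in\fD_{c,d}$. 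The identity $\one=1\in\fD_{c,d}$ is the neutral element. (Alternatively, \Cref{lem:convolutionSt} gives the same product formula in terms of the convolution of the sequences.)

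\emph{Inverses.} This is the only step with real content, and it needs an argument valid for all $n$ and $d$. We write $A=\one+N$ with $N=\sum_{1\leq|\alpha|\leq d}a_\alpha\partial^\alpha$. Every term of $N^{k}$ has order at least $k$, so $N^{d+1}=0$ on $\rset[x_1,\dots,x_n]_{\leq d}$, i.e.\ $N$ is nilpotent. We then define
\[B:=\sum_{k=0}^{d}(-1)^kN^k .\]
$B$ is a constant coefficient operator with $\partial^0$-coefficient equal to $1$, so $B\in\fD_{c,d}$. The telescoping identity gives
\[AB=(\one+N)\sum_{k=0}^d(-N)^k=\one-(-N)^{d+1}=\one,\]
and $BA=\one$ holds by commutativity. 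This mirrors the geometric-series idea behind \Cref{thm:expProperties} (ii). As a cross-check, one could instead solve $AB=\one$ recursively in $|\gamma|$, which gives $b_\gamma=-\sum_{\alpha+\beta=\gamma,\,\alpha\neq0}a_\alpha b_\beta$; this is exactly the triangular recursion of \Cref{exm:n1d3}.

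Together these steps show that $(\fD_{c,d},\,\cdot\,)$ is a commutative group. The only point needing care is the nilpotency of $N$, which makes the Neumann series finite and so avoids any convergence issues.
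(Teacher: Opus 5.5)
Your proof is correct. Closure, the neutral element and commutativity are handled as in the paper, via the product coefficients $c_\gamma=\sum_{\alpha+\beta=\gamma}a_\alpha b_\beta$. The two arguments differ only in how the inverse is produced.

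The paper solves $AB=\one$ coefficient by coefficient, by induction on $|\gamma|$. This gives the triangular recursion $b_\gamma=-\sum_{0\neq\alpha\preceq\gamma}a_\alpha b_{\gamma-\alpha}$, which you mention only as a cross-check. You instead write $A=\one+N$, note that $N^{d+1}=0$ on $\rset[x_1,\dots,x_n]_{\leq d}$, and take the finite Neumann series $\sum_{k=0}^d(-N)^k$.

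Your route gives a closed-form, coordinate-free inverse. It also rests on the same nilpotency that makes $\exp$ and $\log$ finite sums in \Cref{thm:liealgebrad}.

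The paper's route yields an explicit coefficient formula that it reuses later. It is cited to show that $(A,B)\mapsto AB^{-1}$ is smooth, and to construct inverses in the untruncated group $\fD_c$ (\Cref{thm:liealgebra} (ii)).

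Neither approach loses anything essential. Both formulas show that the inverse coefficients are polynomials in the $a_\alpha$. Your series also extends to $\fD_c$ by the same coefficient-stabilisation argument the paper uses there for $\exp$: $N^k$ contains only derivatives of order at least $k$.
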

\begin{proof}
Let
\[A = \sum_{\alpha: |\alpha|\leq d} a_\alpha\partial^\alpha\quad \text{and}\quad B = \sum_{\beta: |\beta|\leq d} b_\beta\partial^\beta \quad\in\fD_{c,d},\]
i.e., $a_0 = b_0 =1$.
Then
\[AB = C = \sum_{\gamma\in\nset_0^n: |\gamma|\leq d} c_\gamma\cdot\partial^\gamma\]
with
\begin{equation}\label{eq:inverseSystem}
c_\gamma = \sum_{\alpha,\beta\in\nset_0^n: \alpha + \beta = \gamma} a_\alpha b_\beta.
\end{equation}
Let $\alpha = (\alpha_1,\dots,\alpha_n)\succeq\beta = (\beta_1,\dots,\beta_n)$ on $\nset_0^n$ if and only if $\alpha_i \geq \beta_i$ for all $i=1,\dots,n$.
Then (\ref{eq:inverseSystem}) can be solved by induction on $|\gamma|$.
For $|\gamma|=0$, we have
\[c_0 = a_0\cdot b_0 \quad\text{and}\quad a_0 = c_0 = 1,\]
i.e.,
\[b_0 = 1.\]
So assume (\ref{eq:inverseSystem}) is solved for all $c_\gamma$ with $|\gamma|\leq k-1$ for some $k=0,1,\dots,d-1$.
Then, for any $\gamma\in\nset_0$ with $|\gamma|=k$, we have
\begin{equation}\label{eq:inverseA}
b_\gamma = a_0 b_\gamma = -\sum_{\alpha\in\nset_0^{n}\setminus\{0\}: \gamma\succeq\alpha} a_\alpha\cdot b_{\gamma-\alpha},
\end{equation}
i.e., the system (\ref{eq:inverseSystem}) of equations has a unique solution gained by induction.
Hence, for every $A\in\fD_{c,d}$ there exists a unique $B\in\fD_{c,d}$ with $AB = BA = \one$.
\end{proof}

From \Cref{lem:ddgroup} we have seen that $(\fD_{c,d},\,\cdot\,)$ for any $n\in\nset$ and $d\in\nset_0$ is a commutative group.
Let
\begin{equation}\label{eq:iota}
\iota_d: \{1\}\times\rset^{\binom{n+d}{n}-1}\to\fD_d,\quad (a_\alpha)_{\alpha\in\nset_0^n: |\alpha|\leq d} \mapsto \sum_{\alpha\in\nset_0^n: |\alpha|\leq d} a_\alpha\cdot\partial^\alpha
\end{equation}
be an affine linear map.
Then $\iota_d$ in (\ref{eq:iota}) is a diffeomorphism and it is a coordinate map for $\fD_{c,d}$.
The smooth manifold
\[\{1\}\times\rset^{\binom{n+d}{n}-1}\]
inherits the group structure of $\fD_{c,d}$ through $\iota_d$, i.e.,
\begin{equation}
(\fD_d,\,\cdot\,)\; \overset{\iota_d}{\cong}\; \left(\{1\}\times\rset^{\binom{n+d}{n}-1},\odot\right)
\end{equation}
Hence, the map $\iota_d$ shows the following.

\begin{thm}[{\cite[Thm.\ 2.10]{didio24posPresConst}}]
Let $n\in\nset$ and $d\in\nset_0$.
Then $(\fD_{c,d},\,\cdot\,)$ is a Lie group.
\end{thm}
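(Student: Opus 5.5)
We have to show that $\fD_{c,d}$, equipped with the coordinate chart $\iota_d$ from (\ref{eq:iota}), is a smooth manifold on which the map $(A,B)\mapsto AB^{-1}$ is $C^\infty$. The plan is to carry everything over to the coordinate space $\{1\}\times\rset^{N}$ with $N=\binom{n+d}{n}-1$ and show that the group operations there are polynomial in the coordinates.

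The manifold part is immediate. $\iota_d$ is an affine linear bijection from $\{1\}\times\rset^{N}$ onto $\fD_{c,d}$. The domain is an affine hyperplane and hence diffeomorphic to $\rset^N$ via $(1,a)\mapsto a$. We therefore take the single global chart $\iota_d^{-1}$. This makes $\fD_{c,d}$ a smooth manifold of dimension $N$, and compatibility of charts is trivial because there is only one chart.

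Next we show that multiplication is smooth. By the computation in the proof of \Cref{lem:ddgroup}, the product $C=AB$ has coefficients
\[c_\gamma=\sum_{\alpha+\beta=\gamma}a_\alpha b_\beta,\qquad |\gamma|\le d.\]
These are polynomials, in fact bilinear expressions, in the coordinates $(a_\alpha)$ and $(b_\beta)$. In the chart, multiplication $\rset^N\times\rset^N\to\rset^N$ is thus polynomial and hence $C^\infty$.

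The main point is inversion. By (\ref{eq:inverseA}), the coefficients of $B=A^{-1}$ are determined recursively by $b_0=1$ and
\[b_\gamma=-\sum_{0\neq\alpha\preceq\gamma}a_\alpha b_{\gamma-\alpha}.\]
We show by induction on $|\gamma|$ that each $b_\gamma$ is a polynomial in the finitely many $a_\alpha$ with $\alpha\preceq\gamma$. The base case $b_0=1$ is constant. In the induction step, $b_\gamma$ is a finite sum of products of some $a_\alpha$ with earlier $b_{\gamma-\alpha}$, and these are polynomial by the induction hypothesis. No division occurs, because $a_0=1$. Hence inversion is polynomial and therefore smooth; this is the one step that needs care, and the normalization $a_0=1$ is exactly what makes it work.

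Composing the two smooth maps shows that $(A,B)\mapsto AB^{-1}$ is $C^\infty$. Together with the group property from \Cref{lem:ddgroup}, this proves that $(\fD_{c,d},\,\cdot\,)$ is a (commutative) Lie group.
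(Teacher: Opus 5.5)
Your proposal is correct and follows essentially the same route as the paper. The global affine chart $\iota_d$ gives the manifold structure, and smoothness of $(A,B)\mapsto AB^{-1}$ follows because the product coefficients (\ref{eq:inverseSystem}) and the inverse recursion (\ref{eq:inverseA}) are polynomial; you additionally spell out the induction showing that the inverse coefficients are polynomials (with no division, since $a_0=1$), which the paper leaves implicit.
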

\begin{proof}
The map $\iota_d$ in (\ref{eq:iota}) is a diffeomorphism between $\fD_{c,d}$ and $\{1\}\times\rset^{\binom{n+d}{n}-1}$.
Hence, $\fD_{c,d}$ is a differentiable manifold which possesses the group structure $(\fD_{c,d},\,\cdot\,)$.
By (\ref{eq:inverseSystem}) and (\ref{eq:inverseA}), the map
\[\fD_{c,d}\times \fD_{c,d}\to\fD_{c,d},\quad (A,B)\mapsto AB^{-1}\]
is smooth.
Hence, $(\fD_{c,d},\,\cdot\,)$ is a commutative Lie group.
\end{proof}

\section{The Lie Algebra $\fd_{c,d}$ of $\fD_{c,d}$}

Since every $A\in\fD_{c,d}$ is a linear map
\[A:\rset[x_1,\dots,x_n]_{\leq d}\to\rset[x_1,\dots,x_n]_{\leq d}\]
between finite-dimensional vector spaces $\rset[x_1,\dots,x_n]_{\leq d}$, we can choose a basis of $\rset[x_1,\dots,x_n]_{\leq d}$ and get a matrix representation $\tilde{A}$ of $A$.
Take the monomial basis of $\rset[x_1,\dots,x_n]_{\leq d}$.
Then $\tilde{A}$ is an upper triangular matrix with diagonal entries $1$.

\begin{exm}[\Cref{exm:n1d3} continued, {\cite[Exm.\ 2.11]{didio24posPresConst}}]\label{exm:n1d3cont}
Let $n=1$ and $d=3$.
Then every
\[A = 1 + a_1\partial_x + a_2\partial_x^2 + a_3\partial_x^3 \quad\in\fD_{c,3}\]
has with the monomial basis $\{1,x,x^2,x^3\}$ of $\rset[x]_{\leq 3}$ the matrix representation
\[\tilde{A} = \begin{pmatrix}
1 & a_1 & 2a_2 & 6a_3\\
0 & 1 & 2a_1 & 6a_2\\
0 & 0 & 1 & 3a_1\\
0 & 0 & 0 & 1
\end{pmatrix}\]
and we therefore set
\[\tilde{\fD}_{c,3} := \left\{ \begin{pmatrix}
1 & a_1 & 2a_2 & 6a_3\\
0 & 1 & 2a_1 & 6a_2\\
0 & 0 & 1 & 3a_1\\
0 & 0 & 0 & 1
\end{pmatrix} \,\middle|\, a_1,a_2,a_3\in\rset \right\}.\]
Hence,
\[(\tilde{A}-\id)^4 = 0\]
as a matrix and also
\[(A-\one)^4 = 0\]
as an operator on $\rset[x]_{\leq 3}$.
From \Cref{thm:expProperties} we find that the matrix valued exponential map
\[\exp:\mathrm{gl}(4,\cset)\to\mathrm{Gl}(4,\cset),\quad \tilde{A}\mapsto \exp(\tilde{A}) := \sum_{k\in\nset_0} \frac{\tilde{A}^k}{k!}\]
is surjective and the logarithm
\[\tilde{A}\mapsto \log \tilde{A} := -\sum_{k\in\nset} \frac{(\id-\tilde{A})^k}{k}\]
is well-defined for all $\id + N\in\mathrm{Gl}(4,\cset)$ with $N$ nilpotent, see \Cref{thm:expProperties} (ii).
Since $\tilde{\fD}_{c,3}\subseteq \mathrm{Gl}(4,\cset)$ with $(\id-\tilde{A})^4 = 0$ for all $\tilde{A}\in\tilde{\fD}_{c,3}$, we have
\begin{equation}\label{eq:logA}
\log:\tilde{\fD}_3\to\mathrm{gl}(4,\cset),\quad \tilde{A}\mapsto\log \tilde{A} = -\sum_{k=1}^3 \frac{(\id-\tilde{A})^k}{k}.
\end{equation}
Since also $(A-\one)^4 = 0$ for all $A\in\fD_{c,3}$, we can use (\ref{eq:logA}) also for the differential operators in $\fD_{c,3}$:
\[\log:\fD_{c,3}\to\left\{d_0 + d_1\partial_x + d_2\partial_x^2 + d_3\partial_x^3 \,\middle|\, d_0,\dots,d_3\in\rset\right\},\quad A\mapsto -\sum_{k=1}^3 \frac{(\one-A)^k}{k}.\]
To determine the image $\log\fD_{c,3}$ recall that also $\log$ is an injective map by \Cref{thm:expProperties} (ii) and hence $\log\fD_{c,3}$ is $3$-dimensional with $d_0 = 0$, i.e., we have
\[\log\fD_{c,3} = \left\{d_1\partial_x + d_2\partial_x^2 + d_3\partial_x^3 \,\middle|\, d_1,d_2,d_3\in\rset\right\} =: \fd_{c,3}.\]
In summary, since $A^4 = 0$ for all $A\in\fd_{c,3}$, we have that
\begin{equation}\label{eq:df4exp}
\exp:\fd_{c,3}\to\fD_{c,3},\quad A\mapsto \sum_{k=0}^3 \frac{A^k}{k!}
\end{equation}
is surjective with inverse
\begin{equation}\label{eq:df4log}
\log:\fD_{c,3}\to\fd_{c,3},\quad A\mapsto -\sum_{k=1}^3 \frac{(\one-A)^k}{k}.
\end{equation}
Therefore, $\fd_{c,3}$ is the Lie algebra of $\fD_{c,3}$ and $\exp$ in (\ref{eq:df4exp}) is the exponential map between the Lie algebra $\fd_{c,3}$ and its Lie group $\fD_{c,3}$ with inverse $\log$ in (\ref{eq:df4log}).
\exmsymbol
\end{exm}

The previous example of the Lie algebra $\fd_{c,3}$ of the Lie group $\fD_{c,3}$ holds for all $n\in\nset$ and $d\in\nset_0$.
It is clear that $(\fd_{c,d},\,\cdot\,,+)$ is an algebra on $\rset[x_1,\dots,x_n]_{\leq d}$.
The algebra $\fd_{c,d}$ is a Lie algebra with the brackets
\[ [\,\cdot\,,\,\cdot\,]: \fd_{c,d}\times\fd_{c,d},\quad (A,B) \mapsto [A,B] := AB- BA,\]
which is identical zero since $\fd_{c,d}$ is commutative.
we have the following.

\begin{thm}[{\cite[Thm.\ 2.13]{didio24posPresConst}}]\label{thm:liealgebrad}
Let $n\in\nset$ and $d\in\nset_0$.
Then $(\fd_{c,d},\,\cdot\,,+)$ is the Lie algebra of the Lie group $(\fD_{c,d},\,\cdot\,)$ with exponential map
\[\exp: \fd_{c,d}\to\fD_{c,d},\quad A\mapsto \sum_{k=0}^d \frac{A^k}{k!}\]
and its inverse
\[\log:\fD_{c,d}\to\fd_{c,d},\quad A\mapsto -\sum_{k=1}^d \frac{(\one-A)^k}{k}.\]
\end{thm}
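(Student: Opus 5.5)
The plan is to mimic \Cref{exm:n1d3cont} in general, working throughout with finite sums of nilpotent commuting operators on the finite-dimensional space $\rset[x_1,\dots,x_n]_{\leq d}$. The key observation is a nilpotency statement. Every $A\in\fd_{c,d}$ is a combination of $\partial^\alpha$ with $|\alpha|\geq 1$, so $A^k$ is a combination of $\partial^\beta$ with $|\beta|\geq k$. Hence $A^{d+1}=0$ on $\rset[x_1,\dots,x_n]_{\leq d}$. Likewise $\one-B\in\fd_{c,d}$ for $B\in\fD_{c,d}$, so $(\one-B)^{d+1}=0$. Thus both series in the statement are finite sums and well-defined. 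In the monomial basis the matrices $\tilde A$ are strictly upper triangular, so \Cref{thm:expProperties}~(ii) applies.

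Next I check that the maps land where claimed. For $A\in\fd_{c,d}$, the sum $\exp A=\one+\sum_{k=1}^d A^k/k!$ has constant term $1$, so $\exp A\in\fD_{c,d}$. For $B\in\fD_{c,d}$, each $(\one-B)^k$ with $k\geq1$ has zero constant term, so $\log B\in\fd_{c,d}$.

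That $\exp$ and $\log$ are mutually inverse follows from the formal power series identities $\exp(\log(1+z))=1+z$ and $\log(\exp z)=z$ in $\rset[[z]]$. Substituting the nilpotent operator $z=B-\one$ (resp.\ $z=A$) is legitimate: all operators lie in the commutative algebra $\rset[[\partial_1,\dots,\partial_n]]$ restricted to degree $\leq d$, and truncation at degree $d+1$ is a ring homomorphism $\rset[[z]]\to\rset[z]/(z^{d+1})$. Alternatively, one can cite \Cref{thm:expProperties}~(ii) for the matrix representations. This is the step needing the most care. The norm condition in \Cref{thm:expProperties}~(iii) fails in general, so I would not use it. Instead I rely on the purely algebraic formal-series argument, where nilpotency makes every composition a finite polynomial identity.

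Finally, I identify $\fd_{c,d}$ as the Lie algebra of $\fD_{c,d}$. The set $\fd_{c,d}$ is a commutative algebra, hence a Lie algebra with zero bracket. For $A\in\fd_{c,d}$, the curve $t\mapsto\exp(tA)$ is polynomial in $t$, lies in $\fD_{c,d}$, and satisfies $\exp((s+t)A)=\exp(sA)\exp(tA)$ by commutativity (\Cref{prop:matrixExp}~(iii)). Its derivative at $t=0$ is $A$. Hence $\fd_{c,d}\subseteq T_{\one}\fD_{c,d}$. Via the chart $\iota_d$ both spaces have dimension $\binom{n+d}{n}-1$, so they coincide. Therefore $\exp$ is the exponential map of the Lie group, with inverse $\log$.
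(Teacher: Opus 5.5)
Your proof is correct. Its skeleton is the same as in the paper, whose proof just says the claim follows from \Cref{thm:expProperties} as in \Cref{exm:n1d3cont}: $A$ and $\one-B$ are nilpotent on $\rset[x_1,\dots,x_n]_{\leq d}$, so both series are finite and land in $\fD_{c,d}$ resp.\ $\fd_{c,d}$.

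Where you differ is in showing that $\exp$ and $\log$ are mutually inverse.
\begin{itemize}
\item The paper applies \Cref{thm:expProperties}~(ii) to the unipotent matrix representations. This gives only $\exp\circ\log=\id$ on $\fD_{c,d}$. It then gets $\log\fD_{c,d}=\fd_{c,d}$ from injectivity of $\log$ plus a dimension count.
\item That last step tacitly needs more, since an injective map between spaces of equal dimension need not be onto. One fix: for $B=\sum_\alpha b_\alpha\partial^\alpha$, the coefficient of $\partial^\alpha$ in $\log B$ is $b_\alpha$ plus a polynomial in the $b_\beta$ with $|\beta|<|\alpha|$.
\item You instead transfer the formal identities $\exp(\log(1+z))=1+z$ and $\log(\exp z)=z$ via the quotient map $\rset[[z]]\to\rset[z]/(z^{d+1})$ followed by $z\mapsto B-\one$ resp.\ $z\mapsto A$. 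This yields both compositions at once and needs no dimension argument.
\item The one point to make explicit is that this substitution commutes with composition. It does, because $(\log(1+z))^j$ and $(\exp z-1)^j$ lie in $z^j\rset[[z]]$, so modulo $z^{d+1}$ exactly the terms $j\leq d$ survive. These match the truncations in the statement.
\end{itemize}
Your fallback of citing \Cref{thm:expProperties}~(ii) alone would give only one direction, so keep the formal-series argument.

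Your identification of $\fd_{c,d}$ with $T_{\one}\fD_{c,d}$ via the one-parameter subgroups $t\mapsto\exp(tA)$ plus a dimension count is also correct; the paper leaves this step implicit. It is even immediate, since $\fD_{c,d}=\one+\fd_{c,d}$ is an affine subspace of the operators on $\rset[x_1,\dots,x_n]_{\leq d}$.
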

\begin{proof}
Follows from \Cref{thm:expProperties} similar to \Cref{exm:n1d3cont}.
\end{proof}

\section{The Regular Fr\'echet Lie Group $\fD_c$ and its regular Fr\'echet Lie Algebra $\fd_c$}

We applied (finite dimensional) Lie group arguments for the case of bounded degree.
Without the degree bounds we get the following.

\begin{thm}[{\cite[Thm.\ 2.15]{didio24posPresConst}}]\label{thm:liealgebra}
Let $n\in\nset$.
Then the following hold:
\begin{enumerate}[(i)]
\item $(\fd_c,\,\cdot\,,+)$ is a commutative algebra.

\item $(\fD_c,\,\cdot\,)$ is a commutative group.

\item The map
\[\exp:\fd_c\to\fD_c,\quad A\mapsto \sum_{k\in\nset_0} \frac{A^k}{k!}\]
is bijective.

\item The map
\[\log:\fD_c\to\fd_c,\quad A\mapsto -\sum_{k\in\nset} \frac{(\one-A)^k}{k}\]
is bijective.

\item The maps
\[\exp:\fd_c\to\fD_c \qquad\text{and}\qquad \log:\fD_c\to\fd_c\]
are inverse to each other.
\end{enumerate}
\end{thm}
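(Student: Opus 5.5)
The plan is to work entirely in the formal power series ring $\rset[[\partial_1,\dots,\partial_n]]$, which is isomorphic to $\rset[[x_1,\dots,x_n]]$ via $\partial_i\mapsto x_i$. We use the grading by total degree $|\alpha|$, and we reduce convergence questions to the finite-dimensional truncations $\fD_{c,d}$ and $\fd_{c,d}$ of \Cref{thm:liealgebrad}.

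For (i), note that $\fd_c$ is the ideal of series with vanishing constant term. Sums and scalar multiples clearly stay in $\fd_c$. Products are well-defined because each coefficient of $AB$ is the finite sum $\sum_{\alpha+\beta=\gamma}a_\alpha b_\beta$, and the constant term is $a_0b_0=0$. Commutativity and associativity follow from the corresponding properties of the coefficient convolution, as in \Cref{lem:convolutionSt}.

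For (ii), products of elements with $q_0=1$ again have constant term $1$. The inverse is constructed by exactly the recursion (\ref{eq:inverseA}) of \Cref{lem:ddgroup}, now run over all $\gamma\in\nset_0^n$ by induction on $|\gamma|$ rather than stopping at $|\gamma|\le d$. Uniqueness of the inverse comes from the same recursion.

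The key technical point for (iii)–(v) is convergence of the series $\sum_k A^k/k!$ and $\sum_k(\one-A)^k/k$ in the Fréchet (coordinate-wise) topology. If $A\in\fd_c$, then $A^k$ has only terms $\partial^\alpha$ with $|\alpha|\ge k$. Likewise $(\one-A)^k$ has only terms with $|\alpha|\ge k$ when $A\in\fD_c$. Hence, for each fixed $\alpha$, only the finitely many summands with $k\le|\alpha|$ contribute to the coefficient of $\partial^\alpha$. So both series are coordinate-wise finite sums and define elements of $\rset[[\partial_1,\dots,\partial_n]]$. The constant term of $\exp A$ is $1$ and that of $\log A$ is $0$, so $\exp:\fd_c\to\fD_c$ and $\log:\fD_c\to\fd_c$ are well-defined maps.

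To prove (v), and with it (iii) and (iv), I would compare each coefficient of $\partial^\alpha$ with $|\alpha|\le d$ through the truncation map $\pi_d:A\mapsto A|_{\rset[x_1,\dots,x_n]_{\le d}}$. This map is a ring homomorphism, so the first step is to show that it commutes with $\exp$ and $\log$. This holds because the terms with $k>d$ vanish on $\rset[x_1,\dots,x_n]_{\le d}$, so
\[\pi_d(\exp A)=\sum_{k=0}^d\frac{\pi_d(A)^k}{k!}=\exp\pi_d(A),\]
and similarly for $\log$. By \Cref{thm:liealgebrad}, $\log\exp\pi_d(A)=\pi_d(A)$ and $\exp\log\pi_d(B)=\pi_d(B)$. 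Therefore $\pi_d(\log\exp A)=\pi_d(A)$ for every $d$. Since an element is determined by all its truncations (its coefficient of $\partial^\alpha$ is read off on $\rset[x_1,\dots,x_n]_{\le|\alpha|}$), this gives $\log\exp A=A$, and likewise $\exp\log B=B$. These two identities yield bijectivity in (iii) and (iv) and establish (v).

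The main obstacle is justifying $\pi_d\circ\exp=\exp\circ\pi_d$ rigorously: one has to check that the truncation of an infinite, coordinate-wise convergent series equals the finite truncated series. This follows from the degree-filtration argument above. Alternatively, one could avoid the truncations altogether and appeal to the formal power series identities $\log(\exp z)=z$ and $\exp(\log(1+w))=1+w$ in $\rset[[z]]$, substituted into the complete filtered commutative ring.
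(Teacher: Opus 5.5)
Your proposal is correct and follows essentially the same route as the paper. The filtration fact that $A^k$ and $(\one-A)^k$ contain only $\partial^\alpha$ with $|\alpha|\geq k$ gives coordinate-wise well-definedness of $\exp$ and $\log$, and the recursion (\ref{eq:inverseA}) run over all $\gamma$ gives the inverses in (ii). Part (v) is then reduced to \Cref{thm:liealgebrad} on $\rset[x_1,\dots,x_n]_{\leq d}$, and your truncation homomorphism $\pi_d$ is a cleaner formalization of the paper's step $\exp(\log A)p=\exp_d(\log_d A)p=Ap$ for $\deg p=d$.
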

\begin{proof}
(i): That is clear.

(ii): That
\[A\cdot B = B\cdot A\]
for all $A,B\in\fD_c$ is clear.
The inverse of $A\in\fD_c$ is uniquely determined by solving (\ref{eq:inverseSystem}) to get (\ref{eq:inverseA}) for all $\gamma\in\nset_0^n$.
This is a formal power series argument with coordinate-wise convergence (i.e., in the Fr\'echet topology, see \Cref{exm:frechet}).

(iii): At first we show that
\[\exp:\fd_c\to\fD_c\]
is well-defined.
To see this, note that for any $A\in\fd_c$ we have
\[A^k = \sum_{\alpha\in\nset_0^n: |\alpha|\geq k} c_\alpha\cdot\partial^\alpha,\]
i.e., $A^k$ contains no differential operators of order $\leq k-1$.
Hence, the sum
\[\sum_{k=0}^K \frac{A^k}{k!} = \sum_{\alpha\in\nset_0^n} c_{K,\alpha}\cdot\partial^\alpha \]
converges coefficient-wise to
\[\exp A =\sum_{k\in\nset_0} \frac{A^k}{k!} = \sum_{\alpha\in\nset_0^n} c_\alpha\cdot\partial^\alpha,\]
i.e., in the Fr\'echet topology of
\[\fD_c\subsetneq\rset[[\partial_1,\dots,\partial_n]] \cong \rset[[x_1,\dots,x_n]],\]
see \Cref{exm:frechet}.
In other words, the coefficients $c_\alpha$ depend only on $A^k$ for $k=0,\dots,|\alpha|$.
We therefore have
\[c_{K,\alpha} = c_\alpha\]
for all $K>|\alpha|$, and hence $\exp A\in\fD_c$ is well-defined.
With that we have
\[\exp\fd_c\subseteq\fD_c.\]
For equality we give the inverse map in (v).

(iv): To show that
\[\log:\fD_c\to\fd_c\]
is well-defined the same argument as in (iii) holds for $(\one-A)^k$ with $A\in\fD_c$.
It shows that
\[\log A\in\fd_c\]
for all $A\in\fD_c$ is well-defined and we have
\[\log\fD_c\subseteq\fd_c.\]

(v): To prove that $\exp$ and $\log$ are inverse to each other we remark
\[\rset[x_1,\dots,x_n] = \bigcup_{d\in\nset_0} \rset[x_1,\dots,x_n]_{\leq d}.\]
For $d\in\nset_0$, define
\[\exp_d A := \sum_{k=0}^d \frac{A^k}{k!} \quad\text{and}\quad \log_d A := -\sum_{k=1}^d \frac{(\one-A)^k}{k}.\]
Then, for every $p\in\rset[x_1,\dots,x_n]$ with $d = \deg p$,
\[\exp(\log A)p = \exp_d(\log_d A)p = Ap\]
for all $A\in\fd_c$ by \Cref{thm:liealgebrad}, i.e.,
\[\exp(\log A) = A\]
for all $A\in\fD_c$.
Similarly,
\[\log(\exp A)p = Ap\]
for all $A\in\fd_c$.
This also shows the remaining assertions
\[\exp(\fd_c) = \fD_c \qquad\text{and}\qquad \log \fD_c = \fd_c\]
from (iii) and (iv).
\end{proof}

\begin{cor}[{\cite[Cor.\ 2.16]{didio24posPresConst}}]\label{cor:lieprops}
Let $n\in\nset$ and let $\fd_c,\fD_c\subseteq\rset[[\partial_1,\dots,\partial_n]]$ be Fr\'echet spaces (equipped with the coordinate-wise convergence).
Then the following hold:
\begin{enumerate}[(i)]
\item $\fD_c\times\fD_c\to\fD_c$, $(A,B)\mapsto AB^{-1}$ is smooth.

\item $\exp:\fd_c\to\fD_c$ is smooth and
\[\left.\frac{\diff}{\diff t}\right|_{t=0} \exp(t u) = u\]
for all $u\in\fd_c$.

\item $\log:\fd_c\to\fD_c$ is smooth.
\end{enumerate}
\end{cor}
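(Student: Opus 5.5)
The plan is to reduce every statement to finitely many coordinates. In the Fr\'echet topology of $\rset[[\partial_1,\dots,\partial_n]]\cong\rset^{\nset_0^n}$ (coordinate-wise convergence, \Cref{exm:frechet}), a map $F:\rset^{\nset_0^n}\supseteq U\to\rset^{\nset_0^n}$ is smooth as soon as two things hold. First, each output coordinate $F(\cdot)_\gamma$ must depend only on finitely many input coordinates. Second, it must depend on them polynomially. Then every coordinate is a smooth map on a finite-dimensional space. The Gateaux derivatives of all orders are computed coordinate-wise and are again of this ``finitely determined polynomial'' form, so they are continuous in the product topology. I will first record this as a small lemma: the product topology is initial for the coordinate projections, and $F$ factors through the finite-dimensional truncations. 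After that I check each map in (i)--(iii) against this criterion.

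For (i), write $A=\sum a_\alpha\partial^\alpha$, $B=\sum b_\beta\partial^\beta$ with $a_0=b_0=1$. By (\ref{eq:inverseA}) in the proof of \Cref{lem:ddgroup}, the inverse coefficient $(B^{-1})_\gamma$ is obtained by induction on $|\gamma|$ as a polynomial in $\{b_\beta:\beta\preceq\gamma\}$. The division by $b_0$ disappears because $b_0=1$. The product then has coordinates
\[(AB^{-1})_\gamma=\sum_{\alpha+\beta=\gamma}a_\alpha\,(B^{-1})_\beta,\]
which is again a polynomial in the finitely many variables $a_\alpha,b_\beta$ with $\alpha,\beta\preceq\gamma$. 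So the lemma applies. Equivalently, this is the statement that the truncation maps $\fD_c\to\fD_{c,d}$ are group homomorphisms, and that $\fD_{c,d}$ is a Lie group by \cite[Thm.\ 2.10]{didio24posPresConst}.

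For (ii) and (iii), I use the observation from the proof of \Cref{thm:liealgebra} (iii). The coefficient $(\exp A)_\gamma$ depends only on $A^k$ for $k\le|\gamma|$, and hence only on the coefficients $a_\alpha$ with $\alpha\preceq\gamma$. Concretely,
\[(\exp A)_\gamma=\sum_{k=0}^{|\gamma|}\frac{1}{k!}\sum_{\alpha^{(1)}+\dots+\alpha^{(k)}=\gamma}a_{\alpha^{(1)}}\cdots a_{\alpha^{(k)}},\]
which is polynomial. The same argument applies to $\log$ with $(\one-A)^k$, which is how I read (iii): $\log$ as the inverse map $\fD_c\to\fd_c$ of \Cref{thm:liealgebra} (iv), (v). For the derivative, note that $t\mapsto(\exp(tu))_\gamma$ is a polynomial in $t$. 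Its linear term is $u_\gamma$, coming only from the $k=1$ summand; all $k\ge2$ summands are $O(t^2)$. Hence
\[\left.\frac{\diff}{\diff t}\right|_{t=0}\exp(tu)=u\]
holds coordinate-wise, which is the required convergence in the Fr\'echet topology.

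The main obstacle is the first step: fixing a notion of smoothness between Fr\'echet spaces, for instance Michal--Bastiani/Gateaux $C^\infty$, and justifying that coordinate-wise polynomial, finitely determined maps are smooth in that sense. The key point there is that difference quotients converge coordinate-wise, and that the resulting derivative maps $U\times(\rset^{\nset_0^n})^k\to\rset^{\nset_0^n}$ are jointly continuous. Once this lemma is in place, (i)--(iii) are bookkeeping with (\ref{eq:inverseA}) and the truncation argument of \Cref{thm:liealgebra}.
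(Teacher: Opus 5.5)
Your proposal is correct and follows essentially the paper's route: each coordinate of $AB^{-1}$, $\exp A$ and $\log A$ is a polynomial in finitely many lower-order coefficients (via (\ref{eq:inverseSystem}), (\ref{eq:inverseA}) and the truncation argument in \Cref{thm:liealgebra}), and the identity $\left.\frac{\diff}{\diff t}\right|_{t=0}\exp(tu)=u$ is checked coordinate-wise. What you add is an explicit lemma, which the paper takes for granted, that such finitely determined polynomial maps are smooth in the coordinate-wise Fr\'echet topology, and you correctly read (iii) as $\log:\fD_c\to\fd_c$.
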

\begin{proof}
(i): Let
\[A = \sum_{\alpha\in\nset_0^n} a_\alpha\partial^\alpha \qquad\text{and}\qquad B = \sum_{\alpha\in\nset_0^n} b_\alpha\partial^\alpha\]
with $a_0 = b_0 = 1$.
From (\ref{eq:inverseSystem}) we see that the multiplication is smooth since every coordinate $c_\gamma$ of the product
\[AB = \sum_{\alpha\in\nset_0^n} c_\alpha\partial^\alpha\]
is a polynomial in $a_\alpha$ and $b_\alpha$ with $|\alpha|\leq |\gamma|$.
The inverse
\[B^{-1} = \sum_{\alpha\in\nset_0^n} d_\alpha\partial^\alpha\]
is smooth because of (\ref{eq:inverseA}), i.e., also the coefficients $d_\gamma$ of the inverse depend polynomially on the coefficients $b_\alpha$ with $|\alpha|\leq |\gamma|$.

(ii): In the proof of \Cref{thm:liealgebra} (iii) we have already seen that the coefficients $c_\gamma$ of
\[\exp A = \sum_{\alpha\in\nset_0^n} c_\alpha\partial^\alpha\]
depend polynomially on the coefficients $a_\alpha$ of
\[A = \sum_{\alpha\in\nset_0^n\setminus\{0\}} a_\alpha\partial^\alpha\]
with $|\alpha|\leq |\gamma|$. 

The condition
\[\left.\frac{\diff}{\diff t}\right|_{t=0} \exp(t u) = u\]
then follows by direct calculations.

(iii): Follows like (ii) from \Cref{thm:liealgebra} (iv).
\end{proof}

It is easy to see that $\fd_c$ and $\fD_c$ are both infinite dimensional smooth (Fr\'echet) manifolds.
Hence, summing everything up we have the following.

\begin{thm}[{\cite[Thm.\ 2.17]{didio24posPresConst}}]\label{thm:mainFrechetLieGroups}
Let $n\in\nset$.
Then $(\fD_c,\,\cdot\,)$ as a Fr\'echet space is a commutative regular Fr\'echet Lie group with the commutative Fr\'echet Lie algebra $(\fd_c,\,\cdot\,,+)$.
The exponential map
\[\exp:\fd_c\to\fD_c,\quad A\mapsto \sum_{k\in\nset_0} \frac{A^k}{k!}\]
is smooth and bijective with the smooth and bijective inverse
\[\log:\fD_c\to\fd_c,\quad A\mapsto -\sum_{k\in\nset} \frac{(\one-A)^k}{k}.\]
\end{thm}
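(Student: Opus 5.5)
The plan is to check the items (i)–(vi) of \Cref{dfn:frechetLieGroup} one after another for $G=\fD_c$ and $\fg=\fd_c$, using mostly what has already been established.

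\textbf{Manifold structure.} The coordinate map
\[\fD_c\to\{1\}\times\rset^{\nset_0^n\setminus\{0\}},\qquad \sum_\alpha a_\alpha\partial^\alpha\mapsto(a_\alpha)_\alpha,\]
is the infinite-dimensional analogue of $\iota_d$ in (\ref{eq:iota}). It identifies $\fD_c$ with an affine translate of $\fd_c\cong\rset^{\nset_0^n\setminus\{0\}}$. By \Cref{exm:frechet} the latter is a Fréchet space with coordinate-wise convergence. Hence $\fD_c$ is an infinite dimensional smooth Fréchet manifold covered by a single global chart, which gives (i).

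\textbf{Group and smoothness conditions.}
\begin{enumerate}[(a)]
\item Item (ii) is \Cref{thm:liealgebra} (ii).
\item Item (iii) is \Cref{cor:lieprops} (i).
\item For (iv), I will use the fact that $\fD_c$ is an affine subspace of $\rset[[\partial_1,\dots,\partial_n]]$ directed by $\fd_c$. So $T_{\one}\fD_c=\fd_c$. Concretely, a curve $t\mapsto\one+tu$ with $u\in\fd_c$ has derivative $u$, and every tangent vector arises this way.
\item The bracket on $\fd_c$ is the commutator, which vanishes by commutativity (\Cref{thm:liealgebra} (i)). This gives the commutative Fréchet Lie algebra.
\item Item (v) is \Cref{cor:lieprops} (ii).
\item Smoothness and bijectivity of $\exp$ and $\log$, with $\log$ the inverse of $\exp$, follow from \Cref{thm:liealgebra} (iii)–(v) and \Cref{cor:lieprops} (iii). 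Note that (iii) of the corollary should read $\log:\fD_c\to\fd_c$.
\end{enumerate}
Throughout, smoothness reduces to one structural fact already used in the proofs of \Cref{thm:liealgebra} and \Cref{cor:lieprops}: every coordinate of a product, inverse, exponential or logarithm is a polynomial in finitely many input coordinates, namely those with $|\alpha|\le|\gamma|$.

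\textbf{Regularity, item (vi).} I expect this to be the main obstacle, since it requires comparing Omori's notion of differentiability (\Cref{dfn:C1}) with ordinary differentiability in the Fréchet space. I would reduce it to the truncations $\fD_{c,d}$. Write $\pi_d$ for the truncation map. For a continuous curve $c$ in $\fD_c$, both notions of $C^1$ are coordinate-wise statements about finitely many coefficients at a time. This is because the coefficient $\gamma$ of $\bigl(c(t_0+s/k)c(t_0)^{-1}\bigr)^k$ depends only on the coefficients of $c$ with index $\le|\gamma|$. Hence it suffices to prove, for every $d$, that the curve $\pi_d c$ in the finite-dimensional Lie group $\fD_{c,d}$ is Omori-$C^1$ if and only if it is $C^1$ in the usual sense.

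In the finite-dimensional case the argument runs as follows. Write $c(t_0+h)c(t_0)^{-1}=\exp(X(h))$ with $X(h)=\log\bigl(c(t_0+h)c(t_0)^{-1}\bigr)$; this uses \Cref{thm:liealgebrad} and is a smooth chart. Commutativity then gives
\[\bigl(c(t_0+s/k)c(t_0)^{-1}\bigr)^k=\exp\bigl(kX(s/k)\bigr).\]
Therefore uniform convergence on compact $s$-intervals to $\exp(sX(t_0))$ is equivalent to $kX(s/k)\to sX(t_0)$ uniformly, since $\exp$ is a homeomorphism. This is in turn equivalent to differentiability of $X$ at $0$, with $X'(0)=\dot c(t_0)c(t_0)^{-1}$. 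The step I would scrutinise most carefully is this last equivalence between sequential difference quotients $h=s/k$ and genuine differentiability. If a direct argument fails, I would fall back on the standard finite-dimensional result that matrix Lie groups are regular, invoking \Cref{thm:expProperties}. Continuity of $\dot c$ then transfers coordinate-wise, which completes the proof.
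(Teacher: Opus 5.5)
Your proposal is correct and takes the same route as the paper: (i)--(v) are read off from \Cref{thm:liealgebra} and \Cref{cor:lieprops}, and regularity is obtained by linearizing through the global chart $\log$. Your version of (vi) is in fact more careful than the paper's: the paper sets $\tilde f=\log F$ and asserts $f=\tilde f$, whereas, as your computation shows, the Omori derivative is $\frac{\diff}{\diff t}\log F=F'F^{-1}$. The passage to the truncations $\fD_{c,d}$ is harmless but unnecessary, since the same computation works coordinate-wise in $\fD_c$ itself.

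The step you flagged does go through. Let $Y$ be the Omori derivative at $t_0$, and for $0<|h|\leq 1$ take $k=\lceil 1/|h|\rceil$. Then $s=kh$ satisfies $1\leq|s|<2$, and $|X(h)/h-Y|=|kX(s/k)-sY|/|s|\leq\sup_{|s|\leq 2}|kX(s/k)-sY|$, which tends to $0$ as $h\to 0$.
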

\begin{proof}
We have that $\fD_c$ is an infinite dimensional smooth manifold, $\fD_c$ is a Fr\'echet space (with the coefficient-wise convergence topology, see \Cref{exm:frechet}) and, by \Cref{thm:liealgebra} (ii), we also have that $(\fD_c,\,\cdot\,)$ is a commutative group.
By \Cref{cor:lieprops} (i), we have that
\[(A,B)\mapsto AB^{-1}\]
is continuous in the Fr\'echet topology.
Hence, $(\fD_c,\,\cdot\,)$ is an infinite dimensional commutative Fr\'echet Lie group.

The properties about $\exp$ and $\log$ are \Cref{thm:liealgebra} (iii) -- (v).

We now prove the regularity condition (vi) in \Cref{dfn:frechetLieGroup}.
Let
$F:\rset\to\fD_c$
be a $C^1$-differentiable function, i.e.,
\begin{equation}\label{eq:diffDef}
\lim_{n\to\infty} \left( F\!\left(t+\frac{s}{n}\right)\cdot F(t)^{-1} \right)^n
\end{equation}
converges uniformly on each compact interval to a one-parameter subgroup
\[\exp (s f(t))\]
where $f:\rset\to\fd$ is the derivative $\dot{F}(t)$ of $F(t)$ at $t\in\rset$, see \cite[p.\ 10]{omori97}.
But we can take the logarithm of $F$
\[\tilde{f}(t) := \log F(t)\]
for all $t\in\rset$ to see that $\tilde{f}$ is $C^1$ since $\log$ is smooth by \Cref{cor:lieprops} (iii).
By \Cref{cor:lieprops} (ii), we have $f = \tilde{f}$.
Hence, $C^1(\fD_c,\fd_c)$ coincides with the set of all $C^1$-curves in $\fD_c$ under the Fr\'echet topology of $\fD_c$.
\end{proof}

In the previous proof we can also replace (\ref{eq:diffDef}) by the fact that a function
\[F:\rset\to\rset[[\partial_1,\dots,\partial_n]],\quad t\mapsto F(t) := \sum_{\alpha\in\nset_0^n} F_\alpha(t)\cdot\partial^\alpha\]
is $C^m$ for some $m\in\nset_0$ if and only if every coordinate function $F_\alpha$ is $C^m$.

\section*{Problems}%%%%%%%%%%%%%%%%%%%%%
\addcontentsline{toc}{section}{Problems}

\begin{prob}\label{prob:}
Let $n=1$, $d=3$, and let
\[A = 2\partial_x - \partial_x^2 + \partial_x^3 \quad\in\fd_{c,3}.\]
Calculate $\exp A\in\fD_{c,3}$.
\end{prob}

\begin{prob}\label{prob:}
Let $n=1$, $d=3$, and let
\[T = 1 - \partial_x + \partial_x^2 + \partial_x^3 \quad\in\fD_{c,3}.\]
Calculate $\log T\in\fd_{c,3}$.
\end{prob}

\chapter{Generators of $\rset^n$-Positivity Preserving Semi-Groups with Constant Coefficients}
\label{ch:generatorsConstant}%%%%%%%%%%%%%%%%%%%%%%%%%%%%%%%%%%%%%%%%%%%%%%%%%%%%%%
%%%%%%%%%%%%%%%%%%%%%%%%%%%%%%%%%%%%%%%%%%%%%%%%%%%%%%%%%%%%%%%%%%%%%%%%%%%%%%%%%%%

In the previous chapter we established the rules how to calculate and deal with
\[\exp A\]
when
\[A:\rset[x_1,\dots,x_n]\to\rset[x_1,\dots,x_n]\]
has constant coefficients.
We can therefore now look at maps $A$ such that
\[(\exp (t\cdot A))_{t\geq 0}\]
is a positivity preserving semi-group.
The results in this chapter are published in \cite{didio24posPresConst}.

\section{Generators with Constant Coefficients and Finite Degree}

\begin{dfn}\label{dfn:fd+}
Let $n\in\nset$.
We define the set
\[\fD_{c,+} := \left\{A\in\fD_c \,\middle|\, A\ \text{is a positivity preserver}\right\}\label{fDc+}\]
of all \emph{positivity preservers with constant coefficients} and we define the set
\[\fd_{c,+}:=\left\{A\in\fd_c\,\middle|\,\exp(tA)\in\fD_{c,+}\ \text{for all}\ t\geq 0\right\}\label{fdc+}\]
of all \emph{generators of positivity preservers with constant coefficients}.\index{generator!positivity perservers!with constant coefficient}
\end{dfn}

In \Cref{exm:translation} (and Problem \ref{prob:translation}) we have already seen that
\[(e^{t\partial_x} f)(x) = f(x+t)\]
for all $x,t\in\rset$.
Hence,
\[e^{t\partial_x}\]
is a positivity preserver with constant coefficients for all $t\in\rset$ and $\partial_x\in\fd_{c,+}$.

In \Cref{exm:heat} we have seen that
\[e^{t\partial_x^2}\]
is a positivity preserver with constant coefficients for all $t\geq 0$.
Hence, $\partial_x^2\in\fd_{c,+}$.

In \Cref{exm:notpos} we have also seen that $\partial_x^k\not\in\fd_{c,+}$ for all $k\geq 3$.

The following result shows that the cases in \Cref{exm:translation} and \Cref{exm:heat} are the only generators of positivity preserves of finite rank.

\begin{lem}[{\cite[Lem.\ 4.4]{didio24posPresConst}}]
Let
\[A = \sum_{j=1}^k a_j\partial_x^j \quad\in\fd_{c,+}.\]
Then $k\leq 2$.
\end{lem}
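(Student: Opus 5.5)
We may assume $a_k\neq 0$, since otherwise we simply lower $k$. Suppose $k\geq 3$; we derive a contradiction from \Cref{thm:kPosPresCara} with $K=\rset$. The operator $\exp(tA)$ has constant coefficients, so it is a positivity preserver exactly when its coefficient sequence is an $\rset$-moment sequence. Write
\[\exp(tA)=\sum_{m\in\nset_0} q_m(t)\cdot\partial_x^m .\]
Then for every $t\geq 0$ the sequence $s(t)=(m!\cdot q_m(t))_{m\in\nset_0}$ must be a moment sequence.

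The strategy is to use small $t$. Every coefficient $q_m(t)$ is a polynomial in $t$, since only the terms $A^j$ with $j\leq m$ contribute to $\partial_x^m$. Let $\mu_t$ be a representing measure of $s(t)$. Its total mass is $s_0(t)=q_0(t)=1$, so $\mu_t$ is a probability measure.

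The even moments of $\mu_t$ then give a contradiction as follows.
\begin{itemize}
\item The top-order term $\partial_x^{2k}$ first appears in $A^2/2$. So the coefficient of $\partial_x^{2k}$ in $\exp(tA)$ is $q_{2k}(t)=\tfrac{t^2}{2}a_k^2+O(t^3)$. This gives $s_{2k}(t)=(2k)!\,\tfrac{t^2}{2}a_k^2+O(t^3)$, which is of order exactly $t^2$.
\item The coefficients $q_m$ with $m\leq 2k$ that are of order $t$ come only from $tA$. These are the $q_j$ with $j\leq k$.
\item Order $t$ contributes nothing at index $2k+2$. Since $k\geq 3$, we have $2k+2>k$, so $q_{2k+2}(t)=O(t^2)$. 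Its $t^2$-part is $\tfrac12\sum_{i+j=2k+2}a_ia_j$, and this sum is empty because $i,j\leq k$. Hence in fact $q_{2k+2}(t)=O(t^3)$.
\end{itemize}
For a probability measure, the Cauchy--Schwarz (Lyapunov) inequality $s_{2k}^2\leq s_{2k-2}\, s_{2k+2}$ holds, since the Hankel matrix is positive semidefinite. Here $s_{2k}^2\sim c\,t^4$ with $c>0$. We also have $s_{2k-2}(t)=O(t)$, because index $2k-2>k$ when $k\geq 3$, so in fact $s_{2k-2}=O(t^2)$. Together with $s_{2k+2}=O(t^3)$, the right-hand side is $O(t^5)$. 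For small $t>0$ this contradicts $c\,t^4\leq O(t^5)$.

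It remains to check $s_{2k-2}=O(t^2)$. The index $2k-2$ exceeds $k$ exactly when $k>2$, so the linear term vanishes. The Lyapunov inequality itself needs only the positive semidefiniteness of the $2\times2$ principal minor of the Hankel matrix on indices $k-1,k+1$, which \Cref{thm:hamburgerMP} provides.

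The main obstacle is this bookkeeping of the orders in $t$. Concretely, we must verify that the $t^2$ term of $q_{2k+2}$ is exactly zero, and that the leading term of $q_{2k}$ is nonzero. The latter requires that no other pair $i+j=2k$ besides $i=j=k$ appears, which holds since $i,j\leq k$. The argument parallels \Cref{exm:notpos}, which is the case $A=a\partial_x^k$ with $t=1$.
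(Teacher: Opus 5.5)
Your argument is correct, but it takes a different route from the paper. The paper uses \Cref{exm:notpos} as a black box: it fixes $f_0\in\pos(\rset)$ and $x_0$ with $[\exp(a_k\partial_x^k)f_0](x_0)<0$. It then uses that $\fd_{c,+}$ is invariant under conjugation by dilations $f(x)\mapsto f(\lambda x)$, so $A_\lambda=\sum_{j}\lambda^{k-j}a_j\partial_x^j\in\fd_{c,+}$ for all $\lambda>0$. Finally, continuity of $\exp$ in the Fr\'echet topology (\Cref{thm:mainFrechetLieGroups}) as $\lambda\to 0$, where $A_\lambda\to a_k\partial_x^k$, gives $[\exp(A_{\lambda_0})f_0](x_0)<0$ for some small $\lambda_0>0$.

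You instead expand the coefficients of $\exp(tA)$ in $t$. They are polynomials in $t$ because $A$ has no zeroth-order term. You then show that the single $2\times 2$ Hankel minor on indices $k-1,k+1$ violates $s_{2k}(t)^2\le s_{2k-2}(t)\,s_{2k+2}(t)$ for small $t>0$: the left side is $\frac{((2k)!)^2}{4}a_k^4t^4+O(t^5)$, while the right side is $O(t^2)\cdot O(t^3)$. Your bookkeeping is right: $2k-2>k$ for $k\ge 3$, no pair $i+j=2k+2$ with $1\le i,j\le k$ exists, and only $i=j=k$ contributes to $\partial_x^{2k}$ at order $t^2$.

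What each approach buys:
\begin{itemize}
\item The paper's route is conceptual. Under rescaling only the top-order part $a_k\partial_x^k$ survives, so any obstruction for that part is inherited by $A$. This is also how the paper passes to $n\ge 2$ once $\exp(\partial^\alpha)\notin\fD_{c,+}$ is known.
\item Your route is elementary and quantitative. It needs neither the Fr\'echet Lie group structure nor the scaling invariance, and it shows directly that $\exp(tA)$ fails to be a positivity preserver for every sufficiently small $t>0$.
\item Your route also yields an explicit witness: the failing minor gives a square $(c_1x^{k-1}+c_2x^{k+1})^2$ on which $\exp(tA)$ is negative at $x=0$.
\end{itemize}
A minor point: the Cauchy--Schwarz inequality you use holds for any representing measure, so the normalization $s_0=1$ and the label ``Lyapunov'' are unnecessary.
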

\begin{proof}
Let $k\geq 3$ and $a_k = 1$.
By \Cref{exm:notpos}, we have that $\exp(\partial_x^k)\not\in\fD_{c,+}$, i.e., it is not a positivity preserver.
Hence, there exists a $f_0\in\rset[x]$ with $f_0\geq 0$ and $x_0\in\rset$ such that $[\exp(\partial_x^k) f_0](x_0) = -1$.

Assume to the contrary that $A\in\fd_{c,+}$.
By scaling $x$ and $A$ we have that
\begin{equation}\label{eq:aScaling}
A_\lambda := \sum_{j=1}^k \lambda^{k-j} a_j \partial_x^j\quad\in\fd_+
\end{equation}
holds for all $\lambda > 0$.
By \Cref{thm:mainFrechetLieGroups},
\[[\exp(A_\lambda) f_0](x_0)\]
is continuous in $\lambda$.
Since $(\exp(A_0) f_0)(x_0)=-1$, there exists a $\lambda_0 > 0$ such that
\[[\exp(A_{\lambda_0}) f_0](x_0) < 0,\]
i.e., $A\not\in\fd_{c,+}$ and therefore we have $k\leq 2$.
\end{proof}

It is easy to see that the previous result also holds for $n\geq 2$.
To see this let $\alpha\in\nset_0^n$ with $|\alpha|\geq 3$.
Then from \Cref{thm:kPosPresCara} it follows that
\[\exp(\partial^\alpha)\not\in\fD_{c,+}.\]
Choosing the same scaling argument (\ref{eq:aScaling}) we find $\partial^\alpha\not\in\fd_{c,+}$.

\section{Infinitely Divisible Measures}

\begin{dfn}
Let $n\in\nset$.
If there exists a measure $\nu$ such that
\[\mu = \nu^{*k},\]
then a measure $\mu$ on $\rset^n$ is called \emph{divisible by $k\in\nset$}.\index{measure!divisible}
If it is divisible by any $k\in\nset$, then a measure $\mu$ on $\rset^n$ is called \emph{infinitely divisible}.\index{measure!infinitely divisible}
\end{dfn}

Infinitely divisible measures are fully characterized by the \emph{L\'evy--Khinchin formula}.\index{formula!L\'evy--Khinchin}\index{L\'evy--Khinchin formula}

\begin{thm}[L\'evy--Khinchin, see e.g.\ {\cite[Cor.\ 15.8]{kallenberg02}} or {\cite[Satz 16.17]{klenkewtheorie}}]\label{thm:leviKhinchinFormula}
Let $n\in\nset$ and let $\mu$ be a measure on $\rset^n$.
Then the following are equivalent:
\begin{enumerate}[(i)]
\item $\mu$ is infinitely divisible.

\item There exist a vector $b\in\rset^n$, a symmetric matrix $\Sigma\in\rset^{n\times n}$ with $\Sigma\succeq 0$, and a measure $\nu$ on $\rset^n$ with $\nu(\{0\}) = 0$ such that
\[\log \int e^{itx}~\diff\mu(x) = itb - \frac{1}{2} t^T\Sigma t + \int (e^{itx} - 1 - itx \cdot\chi_{\|x\|_2<1})~\diff\mu(x)\]
for the characteristic function of $\mu$.
\end{enumerate}
\end{thm}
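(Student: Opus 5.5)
We would prove the L\'evy--Khinchin formula with characteristic functions, not moments. Three conventions are needed. First, $\mu$ is a probability measure; a finite measure $c\mu'$ reduces to this by scaling, since $c\mu'=(c^{1/k}\mu')^{*k}$. Second, $tx$ denotes the scalar product $\langle t,x\rangle$. Third, the jump integral in (ii) is taken against the L\'evy measure $\nu$, not $\mu$, and we require $\int\min\{1,\|x\|_2^2\}\,\diff\nu(x)<\infty$; otherwise the right-hand side is not well defined. We write $\varphi(t):=\int e^{itx}\,\diff\mu(x)$. Convolution of measures corresponds to multiplication of characteristic functions, and by L\'evy's continuity theorem pointwise convergence of characteristic functions to a function continuous at $0$ is equivalent to weak convergence of probability measures. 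Both facts are standard and we would quote them rather than prove them.

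\underline{(ii) $\Rightarrow$ (i):} Call $\psi(t)$ the right-hand side of (ii). We first show that $e^{\psi}$ is a characteristic function for every admissible triple $(b,\Sigma,\nu)$. The drift term $e^{itb}$ belongs to $\delta_b$, and $e^{-\frac12 t^T\Sigma t}$ belongs to the Gaussian with covariance $\Sigma$. For the jump part we truncate, setting $\nu_\varepsilon:=\nu|_{\{\|x\|_2\geq\varepsilon\}}$, which is a finite measure. Then $\exp\int(e^{itx}-1)\,\diff\nu_\varepsilon$ is the characteristic function of the compound Poisson measure $e^{-\nu_\varepsilon(\rset^n)}\sum_{m\ge0}\nu_\varepsilon^{*m}/m!$. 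The compensator $-it\int x\chi_{\|x\|_2<1}\,\diff\nu_\varepsilon$ is again a drift. As $\varepsilon\to0$ these exponents converge to $\psi$ locally uniformly in $t$, by dominated convergence with the bound $|e^{itx}-1-itx|\le\frac12|t|^2\|x\|_2^2$ near $0$. L\'evy's continuity theorem then shows that $e^\psi$ is a characteristic function. Finally, $(b/k,\Sigma/k,\nu/k)$ is again an admissible triple, and the $k$-fold convolution power of the corresponding measure has characteristic function $e^{\psi}$. So $\mu$ is infinitely divisible.

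\underline{(i) $\Rightarrow$ (ii):} Write $\mu=\mu_k^{*k}$, so that $\varphi=\varphi_k^k$ with $\varphi_k$ the characteristic function of $\mu_k$. The argument has three steps.
\begin{enumerate}[(a)]
\item We show $\varphi(t)\neq0$ for all $t$. The function $|\varphi_k|^2=|\varphi|^{2/k}$ is a characteristic function (that of $\mu_k*\mu_k(-\,\cdot\,)$). As $k\to\infty$ it tends to $\chi_{\{\varphi\ne0\}}$. This limit equals $1$ near $0$ and is therefore continuous at $0$, so by L\'evy's theorem it is a characteristic function and hence continuous everywhere. Since it only takes the values $0$ and $1$ and equals $1$ near $0$, it is identically $1$. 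Hence $\varphi$ has no zeros, and it admits a unique continuous logarithm $\log\varphi$ with $\log\varphi(0)=0$.
\item We identify the exponent. Since $\varphi_k=e^{\frac1k\log\varphi}$, we get $k(\varphi_k-1)\to\log\varphi$ locally uniformly. Moreover $k(\varphi_k(t)-1)=\int(e^{itx}-1)\,k\,\diff\mu_k(x)$.
\item We extract the triple. Consider the finite measures $\rho_k(\diff x):=\min\{1,\|x\|_2^2\}\,k\,\mu_k(\diff x)$. Integrating $\mathrm{Re}\,k(1-\varphi_k)$ over a small cube of $t$'s gives a uniform bound on the total mass of $\rho_k$ and also tightness at infinity; this is the classical truncation inequality. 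By Prokhorov's theorem a subsequence converges weakly to a finite measure $\rho$. We then set $\nu:=\rho|_{\rset^n\setminus\{0\}}/\min\{1,\|x\|_2^2\}$. The mass of $\rho$ at $0$, together with the quadratic form obtained as the second-order Taylor limit of the integrand at $0$, yields $\Sigma\succeq0$. The compensated first-order terms give $b$. Passing to the limit in $\int(e^{itx}-1)\,k\,\diff\mu_k$ then produces the formula in (ii).
\end{enumerate}

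The main obstacle is step (c), the tightness and mass bound for $\rho_k$ together with the careful limit near $x=0$. Near $0$ the integrand $(e^{itx}-1-itx\chi_{\|x\|_2<1})/\min\{1,\|x\|_2^2\}$ extends only discontinuously, with a direction-dependent limit $-\frac12(tx)^2/\|x\|_2^2$. This limit is exactly what creates the Gaussian part, and it has to be handled directionally, for instance by replacing the cutoff $\chi_{\|x\|_2<1}$ with a smooth cutoff first and correcting $b$ afterwards. Step (a) is the second delicate point, but it is standard.
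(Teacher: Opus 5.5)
The paper gives no proof of this theorem; it only cites Kallenberg and Klenke. Your route is the standard analytic proof: zero-freeness of $\varphi$, then $k(\varphi_k-1)\to\log\varphi$, then compactness of $\rho_k=\min\{1,\|x\|_2^2\}\,k\mu_k$. Your corrections of the statement are the right ones: the jump integral is against $\nu$, one needs $\int\min\{1,\|x\|_2^2\}\,\diff\nu<\infty$, and $\mu$ must be a probability measure. The direction (ii) $\Rightarrow$ (i) and steps (a), (b) are fine. In (b), add why $\varphi_k=e^{\frac1k\log\varphi}$: the quotient of the two sides is continuous, takes values in the $k$-th roots of unity, and equals $1$ at $t=0$, so it is $\equiv1$ on the connected set $\rset^n$.

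The gap is in step (c), exactly where the Gaussian part is produced, and it is real for $n\geq2$. (For $n=1$ your recipe works, since there $\Sigma=\rho(\{0\})$.)
\begin{itemize}
\item The weak limit $\rho$ forgets the directions along which the mass of $\rho_k$ collapses to the origin; in general $\rho(\{0\})=\tr\Sigma$ and nothing more. For $\mu=\mathcal{N}(0,\Sigma)$ one has $\mu_k=\mathcal{N}(0,\Sigma/k)$ and $\rho_k\to(\tr\Sigma)\,\delta_0$. So $\Sigma=\mathrm{diag}(1,0)$ and $\Sigma=\mathrm{diag}(0,1)$ give the same $\rho$.
\item Your proposed remedy does not address this. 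A smooth cutoff only removes the discontinuity of $x\,\chi_{\|x\|_2<1}$ on the unit sphere, which does need handling (or can be avoided by choosing a continuity radius of $\rho$). The direction-dependent limit $-\frac12(t\cdot x)^2/\|x\|_2^2$ at $x=0$ is untouched by it.
\end{itemize}
What is missing is a compactness argument that keeps the angular information. One way: push $\rho_k$ forward under $x\mapsto(x,x/\|x\|_2)$ into $\rset^n\times S^{n-1}$; this family is still tight because $S^{n-1}$ is compact. On the closure of the image, where all these measures live, the compensated integrand extends continuously by $-\frac12(t\cdot u)^2$ at the points $(0,u)$. A weak limit $\tilde\rho$ then gives $\Sigma:=\int_{\{0\}\times S^{n-1}}uu^T\,\diff\tilde\rho(0,u)\succeq0$. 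An equivalent way:
\begin{itemize}
\item extract convergent subsequences of the positive semidefinite matrices $M_{k,\varepsilon}:=\int_{\|x\|_2<\varepsilon}xx^T\,k\,\diff\mu_k(x)$, whose traces are bounded by $\sup_k\rho_k(\rset^n)$;
\item bound the cubic Taylor remainder on $\{\|x\|_2<\varepsilon\}$ by $\frac16\|t\|_2^3\,\varepsilon\,\sup_k\rho_k(\rset^n)$;
\item let $\varepsilon\downarrow0$.
\end{itemize}
Only once all other terms converge does $t\cdot b_k$ converge for every $t$, which yields $b$.

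A smaller slip: $c\mu'=(c^{1/k}\mu')^{*k}$ is false, since the right-hand side equals $c\,(\mu')^{*k}$. What holds is $c\mu'=(c^{1/k}\eta)^{*k}$ whenever $\mu'=\eta^{*k}$. More importantly, (ii) evaluated at $t=0$ forces $\mu(\rset^n)=1$. For total mass $c\neq1$ the formula would need an extra additive constant $\log c$, so as printed the statement fails, e.g.\ for $2\delta_0$. Restricting to probability measures is therefore a correction of the statement, not a reduction. It is also all the paper uses, since representing measures of $\exp A$ with $A\in\fd_c$ have total mass $1$.
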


\section{Generators of $\rset^n$-Positivity Preserving Semi-Groups with Constant Coefficients}

For $\fD_{c,+}$ and $\fd_{c,+}$ the following holds.

\begin{cor}[{\cite[Cor.\ 4.5]{didio24posPresConst}}]\label{cor:fDplusfdplusProps}
Let $n\in\nset$.
Then the following hold:
\begin{enumerate}[(i)]
\item $\fD_{c,+}$ is a closed and convex set.

\item $\fd_{c,+}$ is a non-trivial, closed, and convex cone.
\end{enumerate}
\end{cor}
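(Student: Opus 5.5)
The plan is to reduce everything to the moment-sequence characterization. For constant coefficients, \Cref{thm:kPosPresCara} with $K=\rset^n$ says that $T=\sum_\alpha q_\alpha\partial^\alpha\in\fD_c$ is a positivity preserver if and only if $(\alpha!\cdot q_\alpha)_{\alpha\in\nset_0^n}$ is a moment sequence, since $K-y=\rset^n$ for every $y$. Put differently, $\fD_{c,+}=D(\cS)\cap\fD_c$, where $\cS$ is the cone of $\rset^n$-moment sequences and $D$ is the map of \Cref{dfn:Ds}. The map $D$ is a linear homeomorphism of $\rset^{\nset_0^n}$ onto $\rset[[\partial_1,\dots,\partial_n]]$ for the coordinate-wise topologies. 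The condition $q_0=1$ becomes $s_0=1$, so $\fD_{c,+}$ corresponds to the moment sequences normalized by $s_0=1$, i.e.\ to probability measures.

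\textbf{Step (i).} First, $\cS$ is a convex cone by \Cref{lem:convexLV}, and it is closed in the Fr\'echet topology by \Cref{lem:momConeClosedFrechet}. Intersecting with the closed affine hyperplane $\{s_0=1\}$ gives a closed convex set. Transporting it by the homeomorphism $D$ gives (i).

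\textbf{Step (ii), non-triviality and cone property.} Non-triviality is immediate, since $\partial_{x_i}$ and $\partial_{x_i}^2$ lie in $\fd_{c,+}$ by \Cref{exm:translation} and \Cref{exm:heat}. If $A\in\fd_{c,+}$ and $\lambda\geq 0$, then $\exp(t\lambda A)=\exp((t\lambda)A)$ with $t\lambda\geq 0$, so $\lambda A\in\fd_{c,+}$.

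\textbf{Step (ii), closedness.} Let $A_k\to A$ coordinate-wise with $A_k\in\fd_{c,+}$. By \Cref{cor:lieprops} (ii), $\exp$ is continuous, so $\exp(tA_k)\to\exp(tA)$ for each fixed $t\geq 0$. Since each $\exp(tA_k)$ lies in $\fD_{c,+}$ and this set is closed by (i), we get $\exp(tA)\in\fD_{c,+}$.

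\textbf{Step (ii), convexity.} This is the step I expect to be the main obstacle. Because $\fd_c$ is commutative (\Cref{thm:liealgebra}) and $\exp$ turns sums into products, for $A,B\in\fd_{c,+}$ and $t\geq0$ we have
\[\exp(t(A+B))=\exp(tA)\exp(tB).\]
The identity itself can be checked degree-wise on $\rset[x_1,\dots,x_n]_{\leq d}$ using \Cref{thm:liealgebrad} and \Cref{prop:matrixExp} (iii). The real point is that a product of two positivity preservers is again a positivity preserver. In operator terms this is trivial: if $S\pos(\rset^n)\subseteq\pos(\rset^n)$ and $T\pos(\rset^n)\subseteq\pos(\rset^n)$, then $ST\pos(\rset^n)\subseteq\pos(\rset^n)$. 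Alternatively, at the level of sequences, \Cref{lem:convolutionSt} and \Cref{thm:convolutionSt} show that the product corresponds to the convolution $\mu*\nu$ of the representing measures. So $A+B\in\fd_{c,+}$. Combined with the cone property, this gives $\lambda A+(1-\lambda)B\in\fd_{c,+}$ for $\lambda\in[0,1]$, so $\fd_{c,+}$ is a convex cone.
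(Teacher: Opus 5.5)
Your proposal is correct and matches the intended argument. The notes state this corollary without proof, but they use exactly these ingredients later, for instance in the proof of \Cref{thm:infinitelyDivisibleMeasure}. Part (i) comes from the moment-sequence characterization in \Cref{thm:kPosPresCara} together with the closed convex moment cone (\Cref{lem:momConeClosedFrechet}). Part (ii) comes from the continuity of $\exp$ (\Cref{cor:lieprops}), the scaling $\exp(t\lambda A)=\exp((t\lambda)A)$, and the identity $\exp(t(A+B))=\exp(tA)\exp(tB)$, combined with the fact that a composition of positivity preservers is again a positivity preserver.
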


\begin{cor}[{\cite[Cor.\ 4.6]{didio24posPresConst}}]\label{cor:posPDE}
Let $n\in\nset$ and $A\in\fd_c$.
Then the following are equivalent:
\begin{enumerate}[(i)]
\item $A\in\fd_{c,+}$.
\item The unique solution $p_t$ of
\begin{equation}\label{eq:timePDE}
\partial_t p = Ap
\end{equation}
for any initial value $p_0\in\pos(\rset^n)$ fulfills $p_t\in\pos(\rset^n)$ for all $t\geq 0$.
\end{enumerate}
\end{cor}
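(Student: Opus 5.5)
The plan is to identify the solution of (\ref{eq:timePDE}) with the semigroup $p_t=\exp(tA)p_0$. Then (i)$\Leftrightarrow$(ii) is just the definition of $\fd_{c,+}$ in \Cref{dfn:fd+}, once existence and uniqueness of the solution are settled. Everything is done degreewise. Fix $p_0\in\rset[x_1,\dots,x_n]$ with $d:=\deg p_0$. Since $A\in\fd_c$ has constant coefficients, $A$ maps $V_d:=\rset[x_1,\dots,x_n]_{\leq d}$ into itself. On $V_d$ it acts as the finite operator $A|_{V_d}\in\fd_{c,d}$, which is nilpotent because every term of $A^{d+1}$ has order at least $d+1$.

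\emph{Existence.} Set $p_t:=\exp(tA)p_0$. By \Cref{thm:liealgebrad} and the proof of \Cref{thm:liealgebra}~(v), on $V_d$ this is the finite sum $\sum_{k=0}^d \frac{t^k}{k!}A^kp_0$. Differentiating this polynomial in $t$ termwise gives $\partial_t p_t = Ap_t$, and $p_t|_{t=0}=p_0$.

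\emph{Uniqueness.} I interpret a solution as a $C^1$ curve $t\mapsto p_t$ in $\rset[x_1,\dots,x_n]$ for the LF-topology. By the description of convergence in that topology, such a curve on a compact $t$-interval stays in some $V_D$ with $D\geq d$. Then (\ref{eq:timePDE}) is a linear ODE $\dot p=A|_{V_D}p$ on the finite-dimensional space $V_D$. By the Picard--Lindelöf theorem for linear ODEs its unique solution is $\exp(tA|_{V_D})p_0$, which agrees with $\exp(tA)p_0$. I expect this step to need the most care: one must make sure the notion of solution forces a uniform degree bound. Alternatively, one can argue coefficientwise by downward induction on the order of the monomials, since $A$ lowers degree strictly.

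\emph{The equivalence.} If (i) holds, then $\exp(tA)\in\fD_{c,+}$ for all $t\geq0$, so $p_t=\exp(tA)p_0\in\pos(\rset^n)$ whenever $p_0\in\pos(\rset^n)$; this gives (ii). Conversely, if (ii) holds, fix $t\geq 0$ and any $p_0\in\pos(\rset^n)$. By uniqueness, $\exp(tA)p_0=p_t\in\pos(\rset^n)$, so $\exp(tA)$ is a positivity preserver. It lies in $\fD_c$ by \Cref{thm:liealgebra}~(iii), hence $\exp(tA)\in\fD_{c,+}$ for all $t\geq 0$, i.e., $A\in\fd_{c,+}$.
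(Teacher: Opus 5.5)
Your proposal is correct and follows the same route as the paper. The paper's proof is the one-line observation that $p_t=\exp(tA)p_0$ is the unique solution of $\partial_t p = Ap$, so both (i) and (ii) are equivalent to $\exp(tA)$ being a positivity preserver for all $t\geq 0$. The only difference is that you actually justify existence (via nilpotency of $A$ on $\rset[x_1,\dots,x_n]_{\leq d}$) and uniqueness (via the local degree bound forced by the LF-topology plus finite-dimensional ODE uniqueness), both of which the paper takes for granted.
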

\begin{proof}
Since $p_t = \exp(tA)p_0$ is the unique solution of the time evolution (\ref{eq:timePDE}), we have that (i) $\Leftrightarrow$ $\exp(tA)$ is a positivity preserver for all $t\geq 0$ $\Leftrightarrow$ (ii).
\end{proof}

While we have
\[\fd_{c,+} \subseteq \log\fD_{c,+},\]
equality does not hold as we will see in \Cref{cor:generatorsNotLog}.
The existence of a positivity preserver is equivalent to the existence of an infinitely divisible representing measure as the following result shows.

\begin{thm}[{\cite[Main Thm.\ 4.7]{didio24posPresConst}}]\label{thm:infinitelyDivisibleMeasure}
Let $n\in\nset$.
The following are equivalent:
\begin{enumerate}[(i)]
\item $A\in\fd_{c,+}$.

\item $\exp A$ has an infinitely divisible representing measure.

\item $\exp(tA)$ has an infinitely divisible representing measure for some $t>0$.

\item $\exp(tA)$ has an infinitely divisible representing measure for all $t>0$.
\end{enumerate}
\end{thm}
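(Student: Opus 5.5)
The key dictionary is the following. By Theorem~\ref{thm:kPosPresCara} with $K=\rset^n$ (so $K-y=\rset^n$ for every $y$), an operator $T=D(s)\in\fD_c$ with constant coefficients is a positivity preserver if and only if $s$ is a moment sequence. By Lemma~\ref{lem:convolutionSt} and Theorem~\ref{thm:convolutionSt}, products of such operators correspond to convolutions of sequences and hence of representing measures. So "$\exp(tA)$ has a representing measure" means that $\exp(tA)=D(s_t)$ with $s_t$ a moment sequence represented by some measure $\mu_t$.

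\textbf{(i)$\Rightarrow$(iv).} Let $A\in\fd_{c,+}$ and $t>0$. For each $k\in\nset$ we have $\exp(tA/k)\in\fD_{c,+}$, so $\exp(tA/k)=D(s^{(k)})$ with $s^{(k)}$ a moment sequence represented by some $\nu_k$. Commutativity and Theorem~\ref{thm:liealgebra} give $\exp(tA)=\exp(tA/k)^k$, hence $s_t=(s^{(k)})^{*k}$. By Theorem~\ref{thm:convolutionSt}, $s_t$ is represented by $\nu_k^{*k}$.

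This produces, for every $k$, \emph{some} representing measure that is a $k$-fold convolution. The difficulty is that these may be different measures when $s_t$ is indeterminate.

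\begin{itemize}
\item If $s_t$ is determinate, all these measures coincide and $\mu_t$ is infinitely divisible.
\item In general, I would build a single measure by compactness. Use the semigroup $\exp(tA/m!)$ and choose $\nu_{m!}$ with all moments prescribed. Tightness comes from the fixed second moments. A diagonal weak limit over the nested divisibility structure should then yield a measure with the right moments that is $k$-divisible for all $k$.
\end{itemize}

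The main obstacle is keeping moment convergence under weak limits. Uniform bounds on the moments of order $2j+2$ give uniform integrability of the order-$2j$ moments, so the moments do pass to the limit. I also need divisibility to be preserved in the limit, which holds because convolution is weakly continuous.

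\textbf{(iv)$\Rightarrow$(ii)$\Rightarrow$(iii).} Both implications are trivial.

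\textbf{(iii)$\Rightarrow$(i).} Suppose $\exp(t_0A)=D(s)$ and $s$ has an infinitely divisible representing measure $\mu$. Then there is a convolution semigroup $(\mu_r)_{r\ge0}$ with $\mu_1=\mu$, given by Theorem~\ref{thm:leviKhinchinFormula}: replace $b,\Sigma,\nu$ by $rb,r\Sigma,r\nu$.

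\begin{itemize}
\item Since $\mu$ has all moments, the Lévy measure $\nu$ has all moments away from $0$. Hence every $\mu_r$ has all moments.
\item The moments of $\mu_r$ are polynomial in $r$. They are obtained from cumulants, which are linear in $r$, so the moment sequence $m(r)$ of $\mu_r$ satisfies $m(r)=m(1)^{*r}$ for integer $r$ and is polynomial coordinatewise.
\item On the other side, $r\mapsto\exp(rt_0A)$ has coefficients polynomial in $r$. By Theorem~\ref{thm:liealgebra} its coefficient of $\partial^\alpha$ depends polynomially on the finitely many $a_\beta$ with $|\beta|\le|\alpha|$.
\item Both families are one-parameter groups in $\fD_c$ that agree at $r=1$. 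Taking $\log$ via Theorem~\ref{thm:mainFrechetLieGroups}, $D(m(r))=\exp(r\log D(m(1)))=\exp(rt_0A)$, because the cumulant generator is linear in $r$.
\end{itemize}

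Hence $\exp(tA)=D(m(t/t_0))$ is represented by the positive measure $\mu_{t/t_0}$ for every $t\ge0$. By Theorem~\ref{thm:kPosPresCara}, $\exp(tA)\in\fD_{c,+}$ for all $t\ge0$, which is $A\in\fD_{c,+}$ in the sense of Definition~\ref{dfn:fd+}, i.e.\ $A\in\fd_{c,+}$.
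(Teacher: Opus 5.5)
Your proof is correct, and the cycle (i)$\Rightarrow$(iv)$\Rightarrow$(ii)$\Rightarrow$(iii)$\Rightarrow$(i) closes.

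\textbf{The direction (i)$\Rightarrow$(iv).} This is the paper's (i)$\Rightarrow$(ii) argument; the paper does $t=1$ and then rescales using the cone property. Representing measures of $\exp(tA/m!)$ are convolved $m!$ times and a limit $\nu$ is extracted. For each $l$, the $(m!/l)$-fold convolutions all represent $\exp(tA/l)$, and a further limit of them gives $\omega_l$ with $\omega_l^{*l}=\nu$. The paper gets compactness from vague compactness of representing measures on the adapted space $\rset[x_1,\dots,x_n]$. You get it from tightness and uniform integrability of the fixed moments. Both work. No diagonal argument is needed: the subsequence defining $\nu$ is fixed once and then refined separately for each $l$.

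\textbf{The converse: how the paper does it.} The paper proves (ii)$\Rightarrow$(i) by letting rational powers $\mu^{*q}$ represent $\exp(qA)$. It then passes to all real $q\ge 0$ via continuity of $\exp$ and closedness of $\fD_{c,+}$. It obtains (iii)$\Rightarrow$(i) by rescaling with the closed-cone property of $\fd_{c,+}$ from \Cref{cor:fDplusfdplusProps}. Along the way it tacitly uses two facts: that the roots of $\mu$ have all moments, and that their moment sequences are those of $\exp(A/m)$, by uniqueness of roots in $\fD_c$ via $\log$.

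\textbf{The converse: how you do it, and the trade-off.} You instead construct the full real-time convolution semigroup $\mu_r$ from the L\'evy--Khinchin triplet. You then identify its moments with the coefficients of $\exp(rt_0A)$ through linearity of cumulants in $r$; polynomiality in $r$ plus agreement at integers, via \Cref{thm:convolutionSt}, works equally well. This gives an explicit infinitely divisible representing measure $\mu_{t/t_0}$ of $\exp(tA)$ for every $t\ge 0$. It also makes no use of the closedness statements of \Cref{cor:fDplusfdplusProps}. The price is heavier external probability, which you should cite explicitly:
\begin{itemize}
\item the existence direction of L\'evy--Khinchin, i.e.\ that every triplet yields an infinitely divisible law;
\item the fact that an infinitely divisible law has all moments if and only if its L\'evy measure has all moments away from $0$ (e.g.\ Sato, \emph{L\'evy Processes and Infinitely Divisible Distributions}, Thm.~25.3).
\end{itemize}

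\textbf{A small slip.} Your last line should read ``$\exp(tA)\in\fD_{c,+}$ for all $t\ge 0$, i.e.\ $A\in\fd_{c,+}$'', not ``$A\in\fD_{c,+}$''.
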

\begin{proof}
(i) $\Rightarrow$ (ii):
Let $A\in\fd_{c,+}$, i.e.,
\[\exp (t A)\in\fD_{c,+}\]
has a representing measure $\mu_t$ for all $t\in [0,\infty)$.
Set
\[\nu_k := (\mu_{1/k!})^{*k!}.\]
Then $\nu_k$ is a representing measure of $\exp A $ for all $k\in\nset$.
Since $\rset[x_1,\dots,x_n]$ is an adapted space, $(\nu_k)_{k\in\nset}$ is vaguely compact by \cite[Thm.\ 1.19]{schmudMomentBook} and there exists a subsequence $(k_i)_{i\in\nset}$ such that $\nu_{k_i}\to\nu$ and $\nu$ is a representing measure of $\exp A$.

It remains to show that $\nu$ is infinitely divisible, i.e., for every $l\in\nset$ there exists a measure $\omega_l$ with $\omega_l^{*l} = \nu$.

Let $l\in\nset$.
For $i\geq l$ we define
\[\omega_{l,i}:= (\mu_{1/k_i!})^{*k_i!/l}\]
i.e., $\omega_{l,i}$ is a representing measure of $\exp(A/l)$.
Again, since $\rset[x_1,\dots,x_n]$ is an adapted space by \cite[Thm.\ 1.19]{schmudMomentBook} there exists a subsequence $(i_j)_{j\in\nset}$ such that $\omega_{l,i_j}$ converges to some $\omega_l$, i.e., $\omega_{l,i_j}\xrightarrow{j\to\infty}\omega_l$.
Hence,
\[(\omega_l)^{*l} = \lim_{j\to\infty} (\omega_{l,i_j})^{*l} = \lim_{j\to\infty} \nu_{k_{i_j}} = \nu,\]
i.e., $\nu$ is divisible by all $l\in\nset$ and hence $\nu$ is an infinitely divisible representing measure of $\exp A$.

(ii) $\Rightarrow$ (i):
Let $\mu_1$ be an infinitely divisible representing measure of $\exp A$.
Then
\[\mu_q := \mu_1^{*q}\]
exists for all $q\in\qset\cap[0,\infty)$ and it is a representing measure of $\exp(qA)$, i.e.,
\[\exp(qA)\in\fD_{c,+}\]
for all $q\in [0,\infty)\cap\qset$.
Since, by \Cref{thm:mainFrechetLieGroups},
\[\exp:\fd_c\to\fD_c\]
is continuous and by \Cref{cor:fDplusfdplusProps} (i) $\fD_{c,+}$ is closed, we have that
\[\exp(qA)\in\fD_{c,+}\]
for all $q\geq 0$.
Hence, we have $A\in\fd_{c,+}$.

(iv) $\Rightarrow$ (iii): Clear.

(iii) $\Rightarrow$ (i):
By ``(ii) $\Leftrightarrow$ (i)'' we have that
\[qtA\in\fd_{c,+}\]
for $t>0$ and all $q\in [0,\infty)\cap\qset$.
Since $\fd_{c,+}$ is closed by \Cref{cor:fDplusfdplusProps} (ii), we have
\[q_itA\to A\in\fd_{c,+}\]
for $q_i\in\qset$ with $q_i\to t^{-1}$ as $i\to\infty$.

(i) $\Rightarrow$ (iv):
Since $A\in\fd_+$ and $\fd_{c,+}$ is a closed convex cone by \Cref{cor:fDplusfdplusProps} (ii), we have that
\[tA\in\fd_{c,+}\]
for all $t>0$ and hence by ``(i) $\Leftrightarrow$ (ii)'' we have that $\exp(tA)$ has an infinitely divisible representing measure for all $t>0$.
\end{proof}

\begin{exm}[\Cref{exm:translation} continued,[{\cite[Exm.\ 4.8]{didio24posPresConst}}]
Let $n=1$ and $a\in\rset$.
Then
\[\exp(a\partial_x) = \sum_{k\in\nset_0} \frac{a^k}{k!}\cdot\partial_x^k\]
is represented by $\mu = \delta_a$ since $\delta_a$ is the representing measure of the moment sequence $(a^k)_{k\in\nset_0}$.
For any $r>0$, we have
\[\delta_{a/r}^{*r} = \delta_a,\]
i.e., $\delta_a$ is infinitely divisible.
In fact, $\delta_a$ are the only compactly supported infinitely divisible measures, see e.g.\ \cite[p.\ 316]{klenkewtheorie}.
Hence, by \Cref{thm:infinitelyDivisibleMeasure}, $\partial_x\in\fd_{c,+}$.
\exmsymbol
\end{exm}

\begin{exm}[{\cite[Exm.\ 4.9]{didio24posPresConst}}]\label{exm:nonPosGen}
Let
\[A\in\fD_{c,+}\]
be the positivity preserver represented by the measure
\[\diff\mu = \chi_{[0,1]^n}~\diff\lambda\]
where $\lambda$ is the $n$-dimensional Lebesgue measure and $\chi_{[0,1]^n}$ is the characteristic function of $[0,1]^n$.
Since $\supp\mu$ is compact, $\mu$ is unique.

It is known that the only infinitely divisible measures with compact support are $\delta_x$ for $x\in\rset^n$, see e.g.\ \cite[p.\ 316]{klenkewtheorie}.
Therefore, we have that $\mu$ is not infinitely divisible and hence $\log A\not\in\fd_{c,+}$.
\exmsymbol
\end{exm}

The previous example implies that the inclusion
\[\fd_+\subseteq\log\fD_+\]
is proper.

\begin{cor}[{\cite[Cor.\ 4.10]{didio24posPresConst}}]\label{cor:generatorsNotLog}
Let $n\in\nset$.
Then
\[\fd_{c,+} \subsetneq \log\fD_{c,+}.\]
\end{cor}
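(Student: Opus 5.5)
The inclusion $\fd_{c,+}\subseteq\log\fD_{c,+}$ was already noted before \Cref{thm:infinitelyDivisibleMeasure}. If $A\in\fd_{c,+}$, then $\exp(tA)\in\fD_{c,+}$ for all $t\geq 0$, in particular for $t=1$. By \Cref{thm:liealgebra} (v) we get $A=\log(\exp A)\in\log\fD_{c,+}$. So it remains to exhibit an element of $\log\fD_{c,+}$ that is not in $\fd_{c,+}$.

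For this I will use \Cref{exm:nonPosGen}. Let $A\in\fD_c$ be the constant coefficient operator $A=D(s)=\sum_{\alpha}\frac{s_\alpha}{\alpha!}\partial^\alpha$, where $s_\alpha=\int_{[0,1]^n}x^\alpha\,\diff x$.

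First, $A\in\fD_{c,+}$. We have $s_0=1$, so $A\in\fD_c$. By \Cref{thm:kPosPresCara} with $K=\rset^n$, $A$ is a positivity preserver, because $(\alpha!\cdot q_\alpha(y))_\alpha=s$ is a moment sequence for every $y$ (it is represented by $\mu=\chi_{[0,1]^n}\,\diff\lambda$). Hence $B:=\log A\in\log\fD_{c,+}$, and this is well defined by \Cref{thm:liealgebra} (iv).

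Next, suppose $B\in\fd_{c,+}$. By \Cref{thm:infinitelyDivisibleMeasure}, (i)$\Rightarrow$(ii), the operator $\exp B=A$ then has an infinitely divisible representing measure $\nu$. But $s$ has the compactly supported representing measure $\mu$. So by \Cref{prop:momFuncDeterminate} it is determinate, and $\nu=\mu$. Thus $\mu$ would be compactly supported and infinitely divisible. By the fact cited in \Cref{exm:nonPosGen} (see \cite[p.\ 316]{klenkewtheorie}), $\mu$ would then be a point mass $\delta_x$. This is absurd, since $\mu$ is absolutely continuous with total mass $1$. Hence $B\notin\fd_{c,+}$, and the inclusion is strict.

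The main point needing care is the determinacy step. The representing measure produced by \Cref{thm:infinitelyDivisibleMeasure} might a priori differ from $\mu$, and uniqueness via compact support, \Cref{prop:momFuncDeterminate}, is exactly what forces them to coincide. If one wanted to avoid citing the classification of compactly supported infinitely divisible laws, one could instead argue directly. If $\mu=\omega^{*k}$, then each $\omega$ is supported in an interval of width $1/k$ in each coordinate, because the diameter of the supports adds under convolution. The variances then satisfy $\mathrm{Var}(\mu)=k\,\mathrm{Var}(\omega)\leq k\cdot\frac{n}{k^2}\to 0$. This contradicts $\mathrm{Var}(\mu)>0$.
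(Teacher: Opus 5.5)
Your proof is correct and follows the paper's argument: the paper's proof just cites \Cref{exm:nonPosGen}, which uses the same measure $\chi_{[0,1]^n}\,\diff\lambda$, uniqueness of the representing measure by compact support, and the fact that compactly supported infinitely divisible measures are point masses. You additionally verify the inclusion and that $A\in\fD_{c,+}$, and your optional variance argument, based on $\mathrm{Var}(\omega^{*k})=k\,\mathrm{Var}(\omega)$ and the widths of the supports adding, is a valid elementary substitute for the cited classification.
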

\begin{proof}
We have $\log\fD_{c,+}\setminus\fd_{c,+}\neq\emptyset$ by \Cref{exm:nonPosGen}.
\end{proof}

We have seen in \Cref{thm:infinitelyDivisibleMeasure} the one-to-one correspondence between a positivity preserver
\[e^A\]
with
\[A\in\fD_{c,+}\]
having an infinitely divisible representing measure and
\[A\in\fD_{c,+}\]
being a generator.
The infinitely divisible measures are fully characterized by the L\'evy--Khinchin formula, see \Cref{thm:leviKhinchinFormula}.
The L\'evi--Khinchin formula is used in the following result to fully characterize the generators $\fd_{c,+}$ of the positivity preservers $\fD_{c,+}$.

\begin{thm}[{\cite[Main Thm.\ 4.11]{didio24posPresConst}}]\label{thm:mainPosGenerators}
Let $n\in\nset$.
Then the following are equivalent:
\begin{enumerate}[(i)]
\item $\displaystyle A = \sum_{\alpha\in\nset_0^n\setminus\{0\}} \frac{a_\alpha}{\alpha!}\cdot\partial^\alpha \in\fd_{c,+}$.

\item There exists a symmetric matrix $\Sigma = (\sigma_{i,j})_{i,j=1}^n\in\rset^n$ with $\Sigma\succeq 0$, a vector $b = (b_1,\dots,b_n)^T\in\rset^n$, and a measure $\nu$ on $\rset^n$ with
\[\nu(\{0\}) = 0 \qquad\text{and}\qquad \int_{\rset^n} |x^\alpha|~\diff\nu(x)<\infty\]
for all $\alpha\in\nset_0^n$ with $|\alpha|\geq 2$ such that
\begin{align*}
a_{e_i} &= b_i + \int_{\|x\|_2\geq 1} x_i~\diff\nu(x) && \text{for all}\ i=1,\dots,n,\\
a_{e_i+e_j} &= \sigma_{i,j} + \int_{\rset^n} x^{e_i + e_j}~\diff\nu(x) && \text{for all}\ i,j=1,\dots,n,
\intertext{and}
a_\alpha &= \int_{\rset^n} x^\alpha~\diff\nu(x) &&\text{for all}\ \alpha\in\nset_0^n\ \text{with}\ |\alpha|\geq 3.
\end{align*}
\end{enumerate}
\end{thm}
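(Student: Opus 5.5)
The plan is to reduce the statement to the L\'evy--Khinchin formula (\Cref{thm:leviKhinchinFormula}) via \Cref{thm:infinitelyDivisibleMeasure}. The dictionary is the following: $\exp A$ corresponds, through \Cref{dfn:Ds}, to the moment sequence $s$ with $D(s)=\exp A$, and by \Cref{lem:convolutionSt} and \Cref{thm:convolutionSt} products in $\fD_c$ correspond to additive convolution of representing measures. If $\mu$ represents $\exp A$, then for formal $t$ the moment generating series is
\[\sum_\alpha \frac{s_\alpha}{\alpha!}\, t^\alpha \;=\; \int e^{t\cdot x}\,\diff\mu(x) \;=\; \exp\Big(\sum_{\alpha\neq 0}\frac{a_\alpha}{\alpha!}\, t^\alpha\Big).\]
This holds coefficient-wise as an identity of formal power series in $\rset[[t_1,\dots,t_n]]$, since $\partial\mapsto t$ is an algebra isomorphism $\rset[[\partial]]\cong\rset[[t]]$. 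Hence the $a_\alpha$ are exactly the \emph{cumulants} of $\mu$: $A$ is the formal logarithm of the moment generating series. The theorem then says that the cumulant sequences of infinitely divisible measures with all moments finite are precisely those given by L\'evy--Khinchin data $(b,\Sigma,\nu)$ with $\nu$ having all moments of order $\geq 2$.

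For (ii) $\Rightarrow$ (i): given $(b,\Sigma,\nu)$ with the moment conditions, let $\mu$ be the infinitely divisible measure with this L\'evy triple (with the truncation $\chi_{\|x\|<1}$). First show $\mu$ has all moments. This follows by splitting $\nu=\nu|_{\|x\|<1}+\nu|_{\|x\|\geq1}$: the big-jump part is compound Poisson with finite L\'evy measure having all moments, so its law $e^{-\lambda}\sum_k \nu_1^{*k}/k!$ has all moments; the small-jump plus Gaussian part has entire moment generating function. Then expand $\log\int e^{it\cdot x}\diff\mu$ in $t$. Differentiating under the integral (justified by the moment conditions) identifies the Taylor coefficients at $0$: order one gives $b_i+\int_{\|x\|\geq1}x_i\,\diff\nu$, order two gives $\sigma_{i,j}+\int x^{e_i+e_j}\diff\nu$, and higher orders give $\int x^\alpha\diff\nu$, up to the factors $i^{|\alpha|}$ that convert the characteristic function into the moment series. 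So the cumulants of $\mu$ are the $a_\alpha$, $\mu$ represents $\exp A$, and \Cref{thm:infinitelyDivisibleMeasure} gives $A\in\fd_{c,+}$. The same reasoning applied to $tA$ (triple $(tb,t\Sigma,t\nu)$) is consistent with this.

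For (i) $\Rightarrow$ (ii): \Cref{thm:infinitelyDivisibleMeasure} gives an infinitely divisible representing measure $\mu$ of $\exp A$ with all moments finite. Let $(b,\Sigma,\nu)$ be its L\'evy triple. The key known fact needed is that finiteness of moments of order $k$ of $\mu$ is equivalent to $\int_{\|x\|\geq1}\|x\|^k\diff\nu<\infty$. This is the step I expect to be the main obstacle. I would prove it via the compound Poisson decomposition $\mu=\mu_0*\mu_1$: $\mu_0$ has all moments, and a moment of $\mu$ dominates, up to constants, the corresponding moment of $\mu_1$. For the latter, I would use that $\mu_1\geq e^{-\lambda}\nu_1$ with $\nu_1=\nu|_{\|x\|\geq1}$, together with Fubini to handle the convolution. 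Small jumps are automatically fine since $\int_{\|x\|<1}\|x\|^2\diff\nu<\infty$.

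Once moments of $\nu$ of order $\geq2$ are finite, the cumulants of $\mu$ equal the Taylor coefficients computed above. Indeed, moments and cumulants are related by the same formal $\exp/\log$ bijection of \Cref{thm:liealgebra}, and finite moments make the characteristic function smooth at $0$ with Taylor coefficients given by the moments. This yields the stated formulas for $a_\alpha$ and completes the equivalence.
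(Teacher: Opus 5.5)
Your proposal is correct and follows the paper's route: use \Cref{thm:infinitelyDivisibleMeasure} to obtain an infinitely divisible representing measure $\mu$ of $\exp A$, pass to $\log\int e^{itx}\,\diff\mu$ via $\partial\mapsto it$, and compare Taylor coefficients with the L\'evy--Khinchin exponent. You are more careful than the paper, whose coefficient comparison tacitly assumes that $\mu$ and $\nu$ have all the needed moments and that the (in general only smooth, not analytic) characteristic function has the formal moment series as its Taylor series; your compound Poisson/Fubini argument and your identification of the Taylor jet at $0$ with the formal series supply exactly these missing steps.
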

\begin{proof}
By \Cref{thm:infinitelyDivisibleMeasure} ``(i) $\Leftrightarrow$ (ii)'', we have that (i) $A\in\fd_{c,+}$ if and only if $\exp A$ has an infinitely divisible representing measure $\mu$, i.e., by \Cref{thm:kPosPresCara}, we have
\begin{equation}\label{eq:levi1}
\exp A = \sum_{\alpha\in\nset_0^n} \frac{1}{\alpha!}\cdot \int_{\rset^n} x^\alpha~\diff\mu(x)\cdot\partial^\alpha.
\end{equation}
By \Cref{thm:mainFrechetLieGroups}, we can take the logarithm and hence (\ref{eq:levi1}) is equivalent to
\begin{equation}\label{eq:levi2}
A = \sum_{\alpha\in\nset_0^n\setminus\{0\}} \frac{a_\alpha}{\alpha!}\cdot\partial^\alpha = \log \left(\sum_{\alpha\in\nset_0^n} \frac{1}{\alpha!}\cdot \int_{\rset^n} x^\alpha~\diff\mu(x)\cdot\partial^\alpha\right).
\end{equation}
With the isomorphism
\[\cset[[\partial_1,\dots,\partial_n]]\to\cset[[t_1,\dots,t_n]],\quad \partial_1\mapsto i t_1,\ \dots,\ \partial_n\mapsto it_n\]
we have that (\ref{eq:levi2}) is equivalent to
\begin{align}
\sum_{\alpha\in\nset_0^n\setminus\{0\}} \frac{a_\alpha}{\alpha!}\cdot(it)^\alpha
&= \log \left(\sum_{\alpha\in\nset_0^n} \frac{1}{\alpha!}\cdot \int_{\rset^n} x^\alpha~\diff\mu(x)\cdot (it)^\alpha\right).\label{eq:levi3}
\intertext{But the right hand side of (\ref{eq:levi3}) is now the characteristic function}
&= \log \int e^{i tx}~\diff\mu(x)\notag
\intertext{of $\mu$.
Hence, by the L\'evy--Khinchin formula (see \Cref{thm:leviKhinchinFormula}), we have}
&= i bt -\frac{1}{2}t^T\Sigma t + \int (e^{itx} - 1 - itx \cdot\chi_{\|x\|_2<1})~\diff\nu(x).\label{eq:levi4}
\end{align}
After a power series expansion of
\[e^{itx}\]
in the Fr\'echet topology of
\[\cset[[x_1,\dots,x_n]],\]
see \Cref{exm:frechet}, and a comparison of coefficients we have that (\ref{eq:levi4}) is equivalent to (ii) which ends the proof.
\end{proof}

From the previous result we see that the difference between
\[\fd_{c,+}\]
and a moment sequence is that the representing (L\'evy) measure $\nu$ in (\ref{eq:levi4}) can have a singularity of order $\leq 2$ at the origin.

\section*{Problems}%%%%%%%%%%%%%%%%%%%%%
\addcontentsline{toc}{section}{Problems}

\begin{prob}
Show that
\[e^{t\cdot\partial_x^2}\]
is not a positivity preserver for any $t<0$.
\end{prob}

\begin{prob}\label{prob:}
Let $n\in\nset$ and $a,b\in\rset^n$.
Show
\[\delta_a * \delta_b = \delta_{a+b}.\]
\end{prob}

\chapter{The Set $\fd$}%%%
\label{ch:fd}%%%%%%%%%%%%%
%%%%%%%%%%%%%%%%%%%%%%%%%%

We have seen several cases of linear maps
\[A:\rset[x_1,\dots,x_n]\to\rset[x_1,\dots,x_n]\]
such that for every $t\geq 0$ the maps
\[\exp(t\cdot A):\rset[x_1,\dots,x_n]\to\rset[x_1,\dots,x_n]\]
is well-defined, i.e.,
\[\deg \exp(t\cdot A)p < \infty\]
for all $p\in\rset[x_1,\dots,x_n]$ and $t\geq 0$.
For $A$ with constant coefficients and for $A$ with
\[A\rset[x_1,\dots,x_n]_{\leq d}\subseteq\rset[x_1,\dots,x_n]_{\leq d}\]
this was the case.
The question we want to attack now is to describe all linear operators $A$ such that $\exp(t\cdot A)$ is well-defined, i.e., maps into $\rset[x_1,\dots,x_n]$.
The results presented in this chapter are from \cite{didio25PosToSoS}.

\section{The Set $\fd$ and some Properties of $\fd$}
%%%%%%%%%%%%%%%%%%%%%%%%%%%%%%%%%%%%%%%%%%%%%%%%%%%%

\begin{dfn}
Let
\[A:\rset[x_1,\dots,x_n]\to\rset[x_1,\dots,x_n]\]
be linear and $f_0\in\rset[x_1,\dots,x_n]$.
The unique solution of
\[\partial_t f = Af\]
with initial values $t_0=0$ and $f(\,\cdot\,,0) = f_0$ is
\[f(\,\cdot\,,t) = e^{tA} f_0.\]
$(e^{tA})_{t\in\rset}$ is called \emph{well-defined}, if
\[e^{tA}:\rset[x_1,\dots,x_n]\to\rset[x_1,\dots,x_n]\]
for all $t\in\rset$ and
\[f(x,t) = (e^{tA}f_0)(x) = \sum_{\alpha\in\nset_0^n} c_\alpha(t)\cdot x^\alpha\]
is analytic in $t$, i.e., $c_\alpha$ is analytic for all $\alpha\in\nset_0^n$.
\end{dfn}

\begin{dfn}\label{dfn:fd}
Let $n\in\nset$.
We define
\begin{multline*}
\fd := \Big\{A:\rset[x_1,\dots,x_n]\to\rset[x_1,\dots,x_n]\ \text{linear} \,\Big|\\ e^{tA}:\rset[x_1,\dots,x_n]\to\rset[x_1,\dots,x_n]\
\text{well-defined for all}\ t\in\rset\Big\}.
\end{multline*}
\end{dfn}

From well-definedness of
\[(e^{tA})_{t\in\rset}\]
it is clear, that it is sufficient to require for each $f_0\in\rset[x_1,\dots,x_n]$ there exists a $\varepsilon = \varepsilon(f_0) > 0$ such that
\[(e^{tA}f_0)_{t\in [0,\varepsilon(f_0))}\]
is well-defined, i.e., the coefficients are analytic.

Clearly,
\[\fd = \rset\cdot\fd,\]
i.e., $\fd$ is a cone and
\[\fd = -\fd.\]
The following theorem gives a characterization of $\fd$.

\begin{thm}[{\cite[Thm.\ 4.4]{didio25PosToSoS}}]\label{thm:fdCharac}
Let $n\in\nset$ and let
\[A:\rset[x_1,\dots,x_n]\to\rset[x_1,\dots,x_n]\]
be linear.
Then the following are equivalent:
\begin{enumerate}[(i)]
\item $A\in\fd$.

\item $\displaystyle \sup_{k\in\nset_0} \deg A^k x^\alpha < \infty$ for all $\alpha\in\nset_0^n$.

\item For all $i\in\nset_0$, there exist subspaces $V_i\subseteq\rset[x_1,\dots,x_n]$ with
\begin{enumerate}[(a)]
\item $\dim V_i < \infty$,

\item $\displaystyle \bigcup_{i\in\nset_0} V_i = \rset[x_1,\dots,x_n]$, and

\item $AV_i \subseteq V_i$ for all $i\in\nset_0$.
\end{enumerate}

\end{enumerate}
\end{thm}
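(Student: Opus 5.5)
I will prove the cycle (ii) $\Rightarrow$ (iii) $\Rightarrow$ (i) $\Rightarrow$ (ii).

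\emph{(ii) $\Rightarrow$ (iii).} For $\alpha\in\nset_0^n$ let $W_\alpha := \mathrm{span}\{A^k x^\alpha \mid k\in\nset_0\}$. By (ii) all $A^kx^\alpha$ lie in $\rset[x_1,\dots,x_n]_{\leq D_\alpha}$ with $D_\alpha := \sup_k \deg A^kx^\alpha<\infty$. Hence $\dim W_\alpha<\infty$, and clearly $AW_\alpha\subseteq W_\alpha$. Fix a bijection $\iota:\nset_0\to\nset_0^n$ and set $V_i := W_{\iota(0)}+\dots+W_{\iota(i)}$. Each $V_i$ is finite dimensional and $A$-invariant. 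Since $x^{\iota(j)}\in V_i$ for $j\leq i$, the union of the $V_i$ contains all monomials and is therefore everything.

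\emph{(iii) $\Rightarrow$ (i).} Let $f_0$ be given and choose $i$ with $f_0\in V_i$. The restriction $A|_{V_i}$ is a linear map on a finite-dimensional space, so it is represented by a matrix in some basis. By the matrix exponential (\Cref{dfn:matrixExp} and its well-definedness lemma), $e^{tA|_{V_i}}f_0=\sum_k \frac{t^k}{k!}A^kf_0$ converges in $V_i$ for every $t\in\rset$. Its coordinates, and hence its monomial coefficients $c_\alpha(t)$, are entire functions of $t$, since every linear functional on $V_i$ is continuous. The result lies in $V_i\subseteq\rset[x_1,\dots,x_n]$ and solves $\partial_t f=Af$, $f(\cdot,0)=f_0$. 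So $e^{tA}$ is well-defined for all $t$, i.e., $A\in\fd$.

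\emph{(i) $\Rightarrow$ (ii).} This is the main obstacle, because one must extract a uniform degree bound on all iterates from analyticity alone. I would argue by contradiction. Fix $\alpha$ and assume $\deg A^kx^\alpha$ is unbounded. By well-definedness, $f(\cdot,t)=e^{tA}x^\alpha$ is a polynomial for each $t$ with analytic coefficients $c_\beta(t)$. From the ODE $\partial_t f=Af$ we get $\partial_t^k f|_{t=0}=A^kx^\alpha$. Comparing coefficients gives $c_\beta^{(k)}(0)=l_\beta(A^kx^\alpha)$, so the Taylor series $c_\beta(t)=\sum_k \frac{t^k}{k!}\,l_\beta(A^kx^\alpha)$ holds near $0$. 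If infinitely many $\beta$ occur in the supports of the $A^kx^\alpha$, then infinitely many $c_\beta$ are not identically zero. Their zero sets are discrete in a small interval $[0,\varepsilon)$, and a countable union of discrete sets cannot exhaust the interval. So some $t_0$ has $c_\beta(t_0)\neq0$ for infinitely many $\beta$. This contradicts $f(\cdot,t_0)$ being a polynomial.

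The delicate points are these: the differentiation identity $\partial_t^kf|_{t=0}=A^kf_0$ must be justified coefficientwise from the definition of the unique solution, and one must check that the Baire/countability step is valid. I expect it to be, since each nonzero analytic $c_\beta$ has only countably many zeros. Hence the degrees are bounded and (ii) holds.
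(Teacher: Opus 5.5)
Your steps (ii)$\Rightarrow$(iii) and (iii)$\Rightarrow$(i) are correct and are the paper's argument. The paper uses the same Krylov spaces $\mathrm{span}\{x^\alpha,Ax^\alpha,\dots,A^kx^\alpha\}$; summing them into a nested family is harmless.

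The gap is in (i)$\Rightarrow$(ii), at the step you flag yourself. Your chain is: unbounded $\deg A^kx^\alpha$ $\Rightarrow$ infinitely many $c_\beta\not\equiv 0$ (via $c_\beta^{(k)}(0)=l_\beta(A^kx^\alpha)$) $\Rightarrow$ contradiction. Here $e^{tA}x^\alpha$ is defined as the solution of $\partial_tf=Af$. Then the identity $\partial_t^kf|_{t=0}=A^kx^\alpha$ already for $k=2$ needs $\partial_t(Af)=A\,\partial_tf$. That means differentiating termwise in $l_\beta(Af(\cdot,t))=\sum_\gamma l_\beta(Ax^\gamma)\,c_\gamma(t)$. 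This sum is finite for each fixed $t$, but the set of $\gamma$ it runs over may be unbounded as $t$ varies. A general linear $A$ is only known to commute with $\partial_t$ along curves that stay in a fixed $\rset[x_1,\dots,x_n]_{\leq D}$. So at that point the identity presupposes the local degree bound you are trying to prove, and it cannot be justified coefficientwise as it stands.

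The fix is to reverse the order, and your countability argument already does the work, since it uses no ODE at all.
\begin{itemize}
\item Apply it directly to the coefficients $c_\beta$ of $f(\cdot,t)=e^{tA}x^\alpha$. It shows that only finitely many $c_\beta$ are not identically zero, so there is a $D$ with $e^{tA}x^\alpha\in\rset[x_1,\dots,x_n]_{\leq D}$ for all $t$. This is exactly the uniform degree bound the paper takes from Problem \ref{prob:subDegree}.
\item Then all $\partial_t^kf(\cdot,t)$ lie in $\rset[x_1,\dots,x_n]_{\leq D}$. Each $A^k$ restricted to this finite-dimensional space is a linear map into a finite-dimensional space, so it commutes with $\partial_t$.
\item Induction gives $\partial_t^{k+1}f=\partial_t(A^kf)=A^k\partial_tf=A^{k+1}f$. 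Hence $A^kx^\alpha=\partial_t^kf|_{t=0}\in\rset[x_1,\dots,x_n]_{\leq D}$ directly, with no contradiction needed.
\end{itemize}
With this reordering your proof is the paper's proof, and it also supplies the argument for the exercise the paper leaves open.
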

\begin{proof}
The implications ``(ii) $\Rightarrow$ (i)'' and ``(iii) $\Rightarrow$ (i)'' are clear.

(i) $\Rightarrow$ (ii):
Since $e^{tA}$ is well-defined, for each $\alpha\in\nset_0^n$, there exists a degree $D=D(\alpha)\in\nset_0^n$ such that
\[e^{tA}x^\alpha\ \subseteq\ \rset[x_1,\dots,x_n]_{\leq D}\]
for all $t\in\rset$, see Problem \ref{prob:subDegree}.
Hence,
\[A^k x^\alpha\ =\ \partial^k_t e^{tA}x^\alpha \Big|_{t=0}\ \in\ \rset[x_1,\dots,x_n]_{\leq D}\]
for all $k\in\nset_0$, which proves (ii).

[(i) $\Leftrightarrow$ (ii)] $\Rightarrow$ (iii):
Let $\alpha\in\nset_0^n$.
By (i) and (ii), there exists a $D=D(\alpha)\in\nset_0$ with
\[e^{tA}x^\alpha,\ A^k x^\alpha\ \in\ \rset[x_1,\dots,x_n]_{\leq D}\]
for all $k\in\nset_0$ and all $t\in\rset$.
Set
\[V_{\alpha,0} := \rset\cdot x^\alpha.\]
By (ii),
\[AV_{\alpha,0}\quad \subseteq\quad \rset[x_1,\dots,x_n]_{\leq D}.\]
For all $i\in\nset_0$, define
\[V_{\alpha,i+1} := V_{\alpha,i} + AV_{\alpha,i},\]
i.e.,
\[V_{\alpha,k}=\rset\cdot x^\alpha +\rset\cdot Ax^\alpha +\dots+\rset\cdot A^k x^\alpha\]
for all $k\in\nset_0$.
By (ii) and the definition of $V_{\alpha,i+1}$,
\[V_{\alpha,i} \quad\subseteq\quad V_{\alpha,i+1} \quad\subseteq\quad \rset[x_1,\dots,x_n]_{\leq D}\]
and hence
\begin{equation}\label{eq:increasingViSequence}
V_{\alpha,0} \quad\subseteq\quad V_{\alpha,1} \quad\subseteq\quad \dots \quad\subseteq\quad \rset[x_1,\dots,x_n]_{\leq D}.
\end{equation}
Since (\ref{eq:increasingViSequence}) is an increasing sequence of finite dimensional vector spaces $V_{\alpha,i}$ bounded from above by the finite dimensional vector space $\rset[x_1,\dots,x_n]_{\leq D}$, there exists an index $I(\alpha)\in\nset_0$ such that
\[V_{\alpha,I(\alpha)} = V_{\alpha,I(\alpha)+1},\]
i.e.,
\[\dim V_{\alpha,I(\alpha)} < \infty \qquad\text{and}\qquad AV_{\alpha,I(\alpha)} \subseteq V_{\alpha,I(\alpha)}.\]
Since $\alpha\in\nset_0^n$ was arbitrary and $x^\alpha\in V_{\alpha,I(\alpha)}$,
\[\rset[x_1,\dots,x_n] = \bigcup_{\alpha\in\nset_0^n} V_{\alpha,I(\alpha)}.\]
Since $\nset_0^n$ is countable, we proved (iii) (a) -- (c).
\end{proof}

\Cref{thm:fdCharac} tells us that
\[e^{tA}\]
is only the matrix exponential function, since for an operator $A\in\fd$ we only need to know $A$ on every finite dimensional invariant subspace $V_i$, i.e., its calculation is easy as soon as the $V_i$ for $A$ are known.
An algorithm for the calculation of these $V_i$ is explicitly given in the proof of \Cref{thm:fdCharac}.
We therefore have the following corollary.

\begin{cor}[{\cite[Thm.\ 4.4]{didio25PosToSoS}}]\label{cor:taylor}
Let $n\in\nset$ and $A\in\fd$.
Then
\[e^{tA} = \sum_{k\in\nset_0} \frac{t^k\cdot A^k}{k!} = \lim_{k\to\infty} \left(\id + \frac{A}{k} \right)^k = \lim_{k\to\infty} \left(\id - \frac{A}{k} \right)^{-k}\]
for all $t\in\rset$.
\end{cor}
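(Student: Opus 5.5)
The plan is to reduce everything to finite dimensions via \Cref{thm:fdCharac} and then invoke the matrix identities of \Cref{prop:matrixExp}. Fix $A\in\fd$ and $t\in\rset$. By \Cref{thm:fdCharac} (iii) there are finite dimensional subspaces $V_i$ with $AV_i\subseteq V_i$ and $\bigcup_i V_i=\rset[x_1,\dots,x_n]$. Given $p\in\rset[x_1,\dots,x_n]$, pick $i$ with $p\in V_i$. On $V_i$ the restriction $A_i:=A|_{V_i}$ is a linear map of a finite dimensional space. After choosing a basis it is a matrix in $\gl(\dim V_i,\rset)$, and all four expressions in the statement applied to $p$ stay inside $V_i$.

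First I identify $e^{tA}p$ with $\exp(tA_i)p$. The solution $f(\,\cdot\,,t)=e^{tA}p$ of $\partial_t f=Af$, $f(\,\cdot\,,0)=p$, is unique. The curve $t\mapsto\exp(tA_i)p\in V_i$ solves the same initial value problem by the derivative formula $\frac{\diff}{\diff t}\exp(tX)=X\exp(tX)$ proved after \Cref{prop:matrixExp}. Since $\exp(tA_i)$ is the absolutely convergent series $\sum_k t^kA_i^k/k!$, this gives
\[e^{tA}p=\sum_{k\in\nset_0}\frac{t^kA^kp}{k!},\]
with convergence in the finite dimensional space $V_i$, hence in the LF-topology of $\rset[x_1,\dots,x_n]$ (degree bounded and coefficient-wise).

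Next, \Cref{prop:matrixExp} (ix) applied to the matrix $tA_i$ gives
\[\exp(tA_i)=\lim_k(\id+tA_i/k)^k=\lim_k(\id-tA_i/k)^{-k}\]
on $V_i$. Since $(\id+tA/k)^kp=(\id+tA_i/k)^kp$, the first limit transfers immediately. I read the statement as having $tA$ in place of $A$ inside the limits, or equivalently as taking $t=1$ after rescaling $A\mapsto tA$, using that $\fd$ is a cone.

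The main obstacle is the inverse $(\id-tA/k)^{-1}$ on all of $\rset[x_1,\dots,x_n]$, which must exist and be compatible with the restriction to $V_i$. For $k>|t|\cdot\max\{|\lambda|:\lambda$ eigenvalue of $A_i\}$ the map $\id-tA_i/k$ is invertible on $V_i$. This threshold, however, depends on $i$, so global invertibility of $\id-tA/k$ for a fixed $k$ is not automatic. I would handle this by interpreting the limit pointwise in $p$: for each $p$, choose $V_i\ni p$ and note that for $k$ large $(\id-tA/k)$ restricted to $V_i$ is a bijection of $V_i$. Then $(\id-tA/k)^{-k}p$ is well-defined as the unique preimage in $V_i$, independent of the choice of $i$. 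For independence, take two invariant spaces $V_i,V_j\ni p$; both lie in the invariant space $V_i+V_j$, where injectivity forces the preimages to agree. With this interpretation the second limit follows from the matrix identity, and the proof is complete.
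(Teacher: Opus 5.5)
Your proof is correct and is essentially the argument the paper intends. The paper defers the proof to Problem \ref{prob:expd}, but the remark before the corollary and its use in the proof of \Cref{thm:fdCcara}, where $e^{tA}p=e^{tA_i}p=\lim_{k\to\infty}(\one-tA_i/k)^{-k}p$ on $V_i$, show the same route: pick a finite dimensional $A$-invariant $V_i\ni p$ from \Cref{thm:fdCharac} (iii) and apply the matrix identities of \Cref{prop:matrixExp} there. Reading the limits with $tA$ in place of $A$ and interpreting $(\id-tA/k)^{-k}p$ pointwise on $V_i$ is the right way to make the statement precise, since $\id-tA/k$ need not be invertible on all of $\rset[x_1,\dots,x_n]$ for any fixed $k$ (e.g.\ $n=1$, $A=x\partial_x$, $t=1$, where $(\id-A/k)x^k=0$).
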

\begin{proof}
See Problem \ref{prob:expd}.
\end{proof}

While $\fd$ is a cone, it is not convex.

\begin{prop}\label{lem:fdNotConvex}
Let $n\in\nset$.
$\fd$ is a cone and it is not convex.
\end{prop}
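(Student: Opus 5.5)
The cone property is immediate from the definition. If $A\in\fd$ and $\lambda\in\rset$, then $e^{t(\lambda A)}=e^{(\lambda t)A}$ is well-defined for all $t\in\rset$. Alternatively, by \Cref{thm:fdCharac} (ii), $\deg(\lambda A)^k x^\alpha\leq\deg A^k x^\alpha$. Hence $\rset\cdot\fd\subseteq\fd$, as already remarked before \Cref{thm:fdCharac}.

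For non-convexity, convexity of a cone would be equivalent to closedness under addition. So it suffices to find $A,B\in\fd$ with $A+B\notin\fd$, and the criterion to use throughout is \Cref{thm:fdCharac} (ii). I would work with $n=1$ first and then embed the example into general $n$. Choose
\[A := x^2\partial_x \qquad\text{and}\qquad B := \partial_x.\]

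First, $B\in\fd$: since $B$ lowers degree, each $\rset[x]_{\leq d}$ is $B$-invariant, so (iii) of \Cref{thm:fdCharac} holds. Second, $A\in\fd$: we have $Ax^m = m x^{m+1}$, so at first glance the degree grows. This is the point that needs care, and the choice must be adjusted so that $A$ itself lies in $\fd$. A cleaner choice is
\[A := x^2\partial_x^2,\]
which is diagonal with $Ax^m = m(m-1)x^m$, hence $A\in\fd$. Then take $B:=x\cdot\pi$ with a degree-raising nilpotent piece. Here the obstacle is precisely to find two operators, each with finite-dimensional invariant flags, whose sum raises degree indefinitely.

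The simplest such pair uses $\partial_x$ (degree-lowering) and a degree-raising nilpotent-like operator. Take
\[B := x^2\,\chi,\qquad A:=\partial_x,\]
where $\chi:\rset[x]\to\rset[x]$ is the linear map $\chi(x^m):=\delta_{m,0}$, i.e.\ $Bf = f(0)\,x^2$. Then $B^2 = 0$, because $(Bf)(0)=0$, so $e^{tB}=\id+tB$ and $B\in\fd$. Also $A=\partial_x\in\fd$. For the sum, I would show that $\deg (A+B)^k 1$ is unbounded. Compute $(A+B)1 = x^2$, $(A+B)x^2 = 2x$, $(A+B)(2x)=2$, giving a cycle inside $\rset[x]_{\leq 2}$, so this sum is in fact in $\fd$. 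The pair must therefore be chosen so that the degree keeps climbing. The refinement is to use a weighted shift: take $B x^{m} := x^{m+2}$ for even $m$ and $Bx^m:=0$ for odd $m$, so that $B^2=0$, and take $A x^{m}:=x^{m+1}$... but that is not in $\fd$.

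So the final plan is the following. Choose $A$ and $B$ with $A^2=B^2=0$ and $A+B$ acting as an unbounded shift. Define on the monomial basis
\[Ax^{2j} := x^{2j+1},\quad Ax^{2j+1}:=0,\qquad Bx^{2j+1}:=x^{2j+2},\quad Bx^{2j}:=0.\]
Then $A^2=B^2=0$, so $e^{tA}=\id+tA$ and $e^{tB}=\id+tB$ are well-defined and $A,B\in\fd$. However, $(A+B)x^m = x^{m+1}$, so $\deg(A+B)^k 1 = k$ is unbounded. By \Cref{thm:fdCharac} (ii), $A+B\notin\fd$. For general $n$, apply the same construction in the variable $x_1$, tensored with the identity in $x_2,\dots,x_n$, i.e.\ let it act on $x_1^{m}x'^{\beta}$ through the exponent $m$.

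The main obstacle is the choice of example, specifically making both summands lie in $\fd$ while their sum does not; nilpotency ($A^2=B^2=0$) resolves this cleanly.
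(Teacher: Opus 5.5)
Your final construction is correct and is essentially the paper's argument (\Cref{exm:ABshiftEvenOdd}): both split the monomials by parity so that each summand lies in $\fd$ while $A+B$ raises the degree by one at every step, and both conclude with \Cref{thm:fdCharac} (ii). The only difference is that your nilpotent choice $A^2=B^2=0$ (with $A+B$ multiplication by $x$) stands in for the paper's invariant flags $\rset[x]_{\leq 2m}$ and $\rset[x]_{\leq 2m+1}$ (where $A+B$ acts as $x^k\mapsto x^k+x^{k+1}$). In a write-up, delete the abandoned attempts, since one of them wrongly asserts $B^2=0$ for the even shift $x^{2j}\mapsto x^{2j+2}$ (in fact $B^2 1=x^4$).
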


The cone properties follows immediately from \Cref{dfn:fd}.
The non-convexness of $\fd$ is shown by the following examples.
It is sufficient to show it for $n=1$.

\begin{exm}[{\cite[Exm.\ 4.2]{didio25PosToSoS}}]\label{exm:ABshiftEvenOdd}
Let $n=1$.
Define the linear operators
\[A:\rset[x]\to\rset[x] \qquad\text{and}\qquad B:\rset[x]\to\rset[x]\]
by
\[Ax^k := \begin{cases}
x^k & \text{for}\ k=2m\ \text{with}\ m\in\nset_0,\\
x^{k+1} & \text{for}\ k=2m-1\ \text{with}\ m\in\nset
\end{cases}\]
and
\[Bx^k := \begin{cases}
x^{k+1} & \text{for}\ k=2m\ \text{with}\ m\in\nset_0,\\
x^k & \text{for}\ k=2m-1\ \text{with}\ m\in\nset
\end{cases}\]
for all $k\in\nset_0$ with linear extension to all $\rset[x]$.
Then
\[A\rset[x]_{\leq 2m}\ \subseteq\ \rset[x]_{\leq 2m} \qquad\text{and}\qquad B\rset[x]_{\leq 2m+1}\ \subseteq\ \rset[x]_{\leq 2m+1}\]
for all $m\in\nset_0$.
Since
\[A|_{\rset[x]_{\leq 2m}} \qquad\text{and}\qquad B|_{\rset[x]_{\leq 2m+1}}\]
are linear operators on finite dimensional spaces, i.e., matrices,
\[e^{tA} \qquad\text{and}\qquad e^{tB}\]
are well-defined for every $p\in\rset[x]$.
Hence, they are well-defined as linear maps
\[\rset[x]\to\rset[x]\]
and therefore $A,B\in\fd$.

However,
\[(A+B)x^k\quad =\quad \begin{cases}
x^k + x^{k+1} & \text{for}\ k=2m\ \text{with}\ m\in\nset_0,\\
x^{k+1} + x^k & \text{for}\ k=2m-1\ \text{with}\ m\in\nset
\end{cases}\quad =\quad x^k + x^{k+1}\]
for all $k\in\nset_0$, i.e.,
\[\deg \left(\sum_{i=0}^d \frac{t^i}{i!}\cdot (A+B)^i 1\right) = d \quad\xrightarrow{d\to\infty}\quad\infty\]
for all $t\neq 0$.
Hence, by \Cref{thm:fdCharac} (ii),
\[e^{(A+B)}\]
is not a map
\[\rset[x]\to\rset[x]\]
and $A+B\notin\fd$.
\exmsymbol
\end{exm}

We have seen in \Cref{exm:ABshiftEvenOdd} that $\fd$ is not closed under addition.
The following example shows that $\fd$ is also not closed under multiplication.

\begin{exm}[\Cref{exm:ABshiftEvenOdd} continued, {\cite[Exm.\ 4.6]{didio25PosToSoS}}]\label{exm:ABshiftEvenOdd2}
Let
\[A,B:\rset[x]\to\rset[x]\]
be given as in \Cref{exm:ABshiftEvenOdd}.
Then
\begin{equation}\label{eq:AB}
AB x^{2m} = x^{2m+2},\qquad ABx^{2m+1} = x^{2m+2},
\end{equation}
and
\begin{equation}\label{eq:BA}
BAx^{2m} = x^{2m+1},\qquad BA x^{2m+1} = x^{2m+3},
\end{equation}
for all $m\in\nset_0$, i.e., by \Cref{thm:fdCharac} (ii), $AB\notin\fd$ as well as $BA\notin\fd$.
\exmsymbol
\end{exm}

If two operators $A,B\in\fd$ possess a common family
\[\{V_i\}_{i\in\nset_0}\]
of invariant and finite dimensional subspace $V_i$ as provided by \Cref{thm:fdCharac} (iii), then
\[A+B\in\fd, \quad AB\in\fd,\quad\text{and}\quad BA\in\fd\]
and the Lie bracket $[\,\cdot\,,\,\cdot\,]$ is given by
\[[A,B] := AB - BA\ \in\fd.\]

In general, $\fd$ is not closed under this Lie bracket.
The operators $A,B\in\fd$ in \Cref{exm:ABshiftEvenOdd} provide an example of $[A,B]\notin\fd$.

\begin{exm}[\Cref{exm:ABshiftEvenOdd} and \ref{exm:ABshiftEvenOdd2} continued, {\cite[Exm.\ 4.7]{didio25PosToSoS}}]\label{exm:ABshiftEvenOdd3}
Let
\[A:\rset[x]\to\rset[x] \qquad\text{and}\qquad B:\rset[x]\to\rset[x]\]
be defined as in \Cref{exm:ABshiftEvenOdd}.
By (\ref{eq:AB}) and (\ref{eq:BA}) in \Cref{exm:ABshiftEvenOdd2},
\[[A,B]x^{2m} = x^{2m+2} - x^{2m+1} \qquad\text{and}\qquad [A,B]x^{2m+1} = x^{2m+2} - x^{2m+3}\]
for all $m\in\nset_0$, i.e.,
\[[A,B]\notin\fd\]
by \Cref{thm:fdCharac} (ii).
\exmsymbol
\end{exm}

In summary, for $A,B\in\fd$, in order to have
\[A+B\in\fd,\quad AB\in\fd,\quad BA\in\fd, \quad\text{and}\quad [A,B]\in\fd\]
it is sufficient that the operators $A$ and $B$ possess a common family
\[\{V_i\}_{i\in\nset_0}\]
of invariant finite dimensional subspaces $V_i$ in \Cref{thm:fdCharac} (iii).

\section{The Finite Dimensional Invariant Subspaces $V_i$ of $A\in\fd$}

The \Cref{exm:ABshiftEvenOdd} and \Cref{thm:fdCharac} (iii) showed that if for a linear operator
\[A:\rset[x_1,\dots,x_n]\to\rset[x_1,\dots,x_n]\]
there exist
\[d_0 < d_1 < d_2 < \dots\]
in $\nset_0$ with
\begin{equation}\label{eq:invariantDegree}
A\rset[x_1,\dots,x_n]_{\leq d_i}\ \subseteq\ \rset[x_1,\dots,x_n]_{\leq d_i}
\end{equation}
for all $i\in\nset_0$, then $A\in\fd$.
This was the property used in \cite{didio24posPresConst,didio25KPosPresGen}.

The next examples shows that (\ref{eq:invariantDegree}) is sufficient, but (\ref{eq:invariantDegree}) is not necessary as seen from \Cref{thm:fdCharac} (ii) and (iii).

\begin{exm}[{\cite[Exm.\ 4.3]{didio25PosToSoS}}]
Let $n=1$ and let
\[(p_i)_{i\in\nset}= (2,3,5,7,11,\dots)\]
be the list of all prime numbers.
Define the linear map
\[A:\rset[x]\to\rset[x]\]
by
\[Ax^k := \begin{cases}
x^{p_{m+1}} & \text{for}\ k=2m\ \text{with}\ m\in\nset,\\
0 & \text{else}.
\end{cases}\]
Then
\[A\rset[x]_{\leq d}\ \not\subseteq\ \rset[x]_{\leq d}\]
for all $d\in\nset$ with $d\geq 8$.
However,
\[A1 = 0, \quad Ax = 0, \quad Ax^2 = x^{p_2} = x^3,\quad Ax^3 = 0,\quad Ax^4 = x^{p_3} = x^5,\quad \dots\]
and therefore
\[A^2 x^k = 0\]
for all $k\in\nset_0$, i.e.,
\[e^{tA}:\rset[x]\to\rset[x]\]
is well-defined for all $t\in\rset$ and $A\in\fd$ by \Cref{thm:fdCharac} (ii).
\exmsymbol
\end{exm}

In general, in \Cref{thm:fdCharac} we can only have
\[V_i \subseteq V_{i+1}\]
for the invariant subspaces $V_i$ of an operator $A\in\fd$.
This shows the next example.

\begin{exm}[{\cite{didio25PosToSoS}}]\label{exm:jordan}
Let $n=1$ and $\partial_x\in\fd$.
Then every finite dimensional invariant subspace
\[V\subseteq\rset[x]\]
of $\partial_x$ is of the form
\[\rset[x]_{\leq d}\]
for some $d\in\nset_0$.

To see this, let $V$ be an invariant subspace of $\partial_x$ with
\[\dim V < \infty.\]
Let $p\in V$ be of highest degree, i.e.,
\[\deg q \leq \deg p =: d\]
for all $q\in V$ and hence
\[V\subseteq\rset[x]_{\leq d}.\]
Then
\[\partial_x^k p\in V \qquad\text{and}\qquad \deg (\partial_x^k p) = d - k\]
for all $k=1,\dots,d$.
Since
\[\deg (\partial_x^k p) = d-k,\]
the
\[\{\partial_x^k p\}_{k=0}^d\]
are linearly independent and span $\rset[x]_{\leq d}$, i.e.,
\[\rset[x]_{\leq d}\subseteq V.\]
In summary,
\[V=\rset[x]_{\leq d}\]
for some $d\in\nset_0$.
\exmsymbol
\end{exm}

The operator $\partial_x$ is therefore a Jordan block of infinite size and we can in general not hope for $\rset[x_1,\dots,x_n]$ to be split into disjoint invariant subspaces $V_i$ to get
\[\rset[x_1,\dots,x_n] = V_1 \oplus V_2 \oplus \dots.\]
Even when we allow
\[\dim V_i = \infty\]
in the decomposition, then \Cref{exm:jordan} shows that for $\partial_x$ only the trivial decomposition
\[\rset[x] = V_1\]
exists.

Additionally, for the invariant subspaces $V_i$ of $A\in\fd$ we have the following.

\begin{cor}[{\cite[Cor.\ 4.5]{didio25PosToSoS}}]
Let $n\in\nset$, let $A\in\fd$, and let
\[V\subseteq\rset[x_1,\dots,x_n]\]
be a finite or infinite dimensional subspace.
Then the following are equivalent:
\begin{enumerate}[(i)]
\item $AV\subseteq V$.

\item $e^{tA}V \subseteq V$ for all $t\in\rset$.
\end{enumerate}
\end{cor}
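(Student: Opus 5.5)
The plan is to prove both implications by reducing everything to finite dimensional linear algebra, using \Cref{thm:fdCharac}. The key observation is that for every $p\in V$ the orbit $\{A^k p\}_{k\in\nset_0}$ lies in a finite dimensional $A$-invariant subspace. Indeed, by \Cref{thm:fdCharac} (iii), $p$ lies in some $V_i$, since $p$ is a finite linear combination of monomials and we may take the span of finitely many of the $V_{\alpha,I(\alpha)}$ constructed in the proof. Set
\[W_p := \mathrm{span}\{A^k p \,|\, k\in\nset_0\}\subseteq V_i.\]
Then $\dim W_p<\infty$ and $AW_p\subseteq W_p$. On $W_p$ the operator $A$ acts as a matrix, and by \Cref{cor:taylor}
\[e^{tA}p = \sum_{k\in\nset_0}\frac{t^k A^k p}{k!},\]
with the series converging in the finite dimensional space $W_p$.

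\underline{(i) $\Rightarrow$ (ii):}
Let $p\in V$ and $t\in\rset$. Since $AV\subseteq V$, all $A^k p\in V$, so $W_p\subseteq V\cap V_i$, which is a finite dimensional space. The partial sums $\sum_{k=0}^K \frac{t^k A^k p}{k!}$ lie in the finite dimensional subspace $W_p$ of $V$. Finite dimensional subspaces are closed, so the limit $e^{tA}p$ lies in $W_p\subseteq V$. This works even if $V$ itself is infinite dimensional, because we only ever use the finite dimensional $W_p$.

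\underline{(ii) $\Rightarrow$ (i):}
Let $p\in V$. The curve $t\mapsto e^{tA}p$ takes values in $W_p$ by construction, and in $V$ by (ii). Hence it takes values in $U:=W_p\cap V$, a finite dimensional, hence closed, subspace. The curve is smooth (the matrix exponential on $W_p$), so its derivative at $t=0$ is
\[Ap=\lim_{t\to 0}\frac{e^{tA}p-p}{t}.\]
Each difference quotient lies in $U$, so the limit $Ap$ lies in $U\subseteq V$. This gives $AV\subseteq V$.

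The only delicate point is the topology: limits must be taken inside a fixed finite dimensional space rather than in $\rset[x_1,\dots,x_n]$ with an unspecified topology. This is exactly why I will work in $W_p$, or equivalently in $\rset[x_1,\dots,x_n]_{\leq D}$ with $D$ the degree bound from \Cref{thm:fdCharac} (ii). There, convergence is coefficient-wise, and every subspace is closed.
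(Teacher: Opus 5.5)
Your proof is correct and follows the same route as the paper: the exponential series $e^{tA}=\sum_{k} \frac{t^k A^k}{k!}$ gives (i) $\Rightarrow$ (ii), and differentiating at $t=0$ ($A = \partial_t e^{tA}|_{t=0}$) gives (ii) $\Rightarrow$ (i). Working inside the finite dimensional $A$-invariant space $W_p$ only makes explicit the closedness of the subspace needed for these limits, which the paper's two-line proof leaves implicit.
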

\begin{proof}
The direction ``(i) $\Rightarrow$ (ii)'' follows from
\[e^{tA} = \sum_{k\in\nset_0} \frac{t^k\cdot A^k}{k!}\]
and the direction ``(ii) $\Rightarrow$ (i)'' follows from
\[A = \partial_t e^{tA}\big|_{t=0}.\qedhere\]
\end{proof}

\section{Lie Algebras in $\fd$}

\begin{cor}[{\cite[Cor.\ 4.8]{didio25PosToSoS}}]\label{cor:fdV}
Let $n\in\nset$ and let $\cV := \{V_i\}_{i\in\nset_0}$ be a family of subspaces
\[V_i\subseteq\rset[x_1,\dots,x_n]\]
such that
\[\dim V_i < \infty \qquad\text{and}\qquad \rset[x_1,\dots,x_n]=\bigcup_{i\in\nset_0} V_i.\]
Then
\[\fd_\cV := \big\{A\in\fd \,\big|\, AV_i\subseteq V_i\ \text{for all}\ i\in\nset_0\big\}\label{fdV}\]
is a regular Fr\'echet Lie algebra in $\fd$ with Lie bracket
\[[\,\cdot\,,\,\cdot\,]:\fd_\cV\times\fd_\cV\to\fd_\cV,\quad (A,B)\mapsto [A,B] := AB - BA.\]
The set
\[G_\cV := \exp(\fd_\cV)\]
is the corresponding regular Fr\'echet Lie group.
\end{cor}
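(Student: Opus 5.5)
The plan is to reduce everything to finite dimensional matrix Lie algebras and then pass to the projective limit, as in the proof of \Cref{thm:mainFrechetLieGroups}. First I would replace $\cV$ by an increasing family: set $W_k := V_0+\dots+V_k$. Each $W_k$ is finite dimensional and $\bigcup_k W_k=\rset[x_1,\dots,x_n]$. Any $A$ with $AV_i\subseteq V_i$ for all $i$ also satisfies $AW_k\subseteq W_k$. Conversely, $\fd_\cV$ is described by the conditions on the $V_i$, so I keep those conditions but work with the restrictions $A|_{W_k}$.

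Second, I show that $\fd_\cV$ is a Lie algebra. It is clearly a real vector space, and the same holds for the composition $AB$. By \Cref{thm:fdCharac} (iii), every linear $A$ with $AV_i\subseteq V_i$ for all $i$ lies automatically in $\fd$. Hence
\[\fd_\cV=\{A \text{ linear} \,|\, AV_i\subseteq V_i\ \text{for all}\ i\}.\]
This set is an associative algebra under composition. Therefore $[A,B]=AB-BA\in\fd_\cV$, and the Jacobi identity and antisymmetry are inherited from the associative algebra, as in \Cref{exms:lieAlgebras} (b).

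Third, I put a topology on $\fd_\cV$. The map $A\mapsto (A|_{W_k})_{k\in\nset_0}$ identifies $\fd_\cV$ with a closed subspace of $\prod_k \gl(W_k)$, namely the compatible families that preserve each $V_i$. A countable product of finite dimensional spaces is Fr\'echet, and closed subspaces of Fr\'echet spaces are Fr\'echet. The topology is the ``coordinate-wise'' convergence $A_m\to A$ if and only if $A_m p\to Ap$ for every $p$, analogous to \Cref{exm:frechet}.

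Fourth, I treat the group $G_\cV=\exp(\fd_\cV)$. On each $W_k$ the operator $e^{tA}$ is the matrix exponential of $A|_{W_k}$ (\Cref{cor:taylor}), and $e^{A}$ preserves each $V_i$. Multiplication, inversion $e^{A}\mapsto e^{-A}$ and $\exp$ are computed blockwise on the $W_k$ by finite dimensional smooth matrix maps, so they are smooth in the product Fr\'echet structure. The derivative condition
\[\frac{\diff}{\diff t}\Big|_{t=0}\exp(tu)=u\]
holds blockwise. For the group structure I take $G_\cV$ to be the group generated by $\exp(\fd_\cV)$, or, as the statement suggests, I verify closedness directly.

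The main obstacle is exactly here. The matrix exponential on $\gl(W_k)$ is neither injective nor locally a chart globally, and $\exp(\fd_\cV)$ need not be closed under products, since $e^Ae^B$ need not be an exponential. Moreover, unlike the unipotent case in \Cref{thm:liealgebra}, there is no global $\log$. My first attempt is to work locally: $\log$ is defined near $\id$ on each block by \Cref{thm:expProperties}, but a uniform neighborhood across all $k$ fails in the product topology. I would therefore fall back on Omori's framework, in the spirit of \Cref{dfn:frechetLieGroup}, and realize $G_\cV$ as the projective limit of the finite dimensional Lie groups $\exp(\fd_\cV|_{W_k})\subseteq\Gl(W_k)$.

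For regularity I would argue as in \Cref{thm:mainFrechetLieGroups}: a $C^1$ curve is $C^1$ if and only if each restriction to $W_k$ is $C^1$. On each block the curve can be reconstructed by solving a linear ODE in $\Gl(W_k)$, and these blockwise solutions are compatible with one another.
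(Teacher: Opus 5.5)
Your Lie algebra part is correct and covers everything the paper's proof argues. The paper only says the corollary ``follows immediately from \Cref{thm:fdCharac} (iii)'': every linear $A$ with $AV_i\subseteq V_i$ for all $i$ lies in $\fd$, so $\fd_\cV$ is an associative algebra and hence closed under $[A,B]=AB-BA$. Your realization of $\fd_\cV$ as a closed subspace of $\prod_k\gl(W_k)$ adds the Fr\'echet structure the paper leaves implicit.

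The group part is a genuine gap, and your diagnosis is right: $\exp(\fd_\cV)$ need not be a group, so ``verifying closedness directly'' cannot work. Take $n=1$ and $V_i:=\rset[x]_{\leq i+1}$. Every $X\in\gl(V_0)$, extended by $x^k\mapsto 0$ for $k\geq 2$, lies in $\fd_\cV$, and $e^{C}|_{V_0}=e^{C|_{V_0}}$ for all $C\in\fd_\cV$. In the basis $\{1,x\}$ take $A|_{V_0}=\pi\left(\begin{smallmatrix}0&-1\\1&0\end{smallmatrix}\right)$ and $B|_{V_0}=\left(\begin{smallmatrix}0&-1\\0&0\end{smallmatrix}\right)$. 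Then $(e^{A}e^{B})|_{V_0}=\left(\begin{smallmatrix}-1&1\\0&-1\end{smallmatrix}\right)$. This is not $e^{Z}$ for any real $Z$: such a $Z$ would have the two distinct eigenvalues $\pm i\pi(2m+1)$ for some $m\in\mathbb{Z}$, which would make $e^{Z}$ diagonalizable. Hence $e^{A}e^{B}\notin\exp(\fd_\cV)$.

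Your fallbacks do not repair this in general. Let $\cV$ consist of the planes $U_k:=\mathrm{span}\{x^{2k},x^{2k+1}\}$ together with the sums $U_0\oplus\dots\oplus U_k$. Then $\fd_\cV\cong\prod_k\gl(2,\rset)$. By the polar decomposition, the group generated by $\exp(\fd_\cV)$ is $G=\prod_k\Gl^+(2,\rset)$, where $\Gl^+$ denotes the matrices of positive determinant; the projective limit of the generated groups on the $W_k$ is again this $G$.

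In the natural (pointwise) topology, every neighbourhood of $\one$ contains the rotation loop in a single factor $U_m$ with $m$ large. This loop projects onto a generator of $\pi_1(\Gl^+(2,\rset))\cong\mathbb{Z}$, so it is not null-homotopic in $G$. A manifold modelled on a Fr\'echet space, by contrast, has convex chart neighbourhoods in which every loop is null-homotopic. So neither $\exp(\fd_\cV)$ nor the group it generates is a Fr\'echet Lie group. The same two phenomena occur for the degree filtration $V_i=\rset[x_1,\dots,x_n]_{\leq i}$ once $n\geq 2$, because $\fd_\cV$ then induces all of $\gl$ on each space of homogeneous polynomials of degree $d\geq 1$.

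The group assertion therefore needs an extra hypothesis making $\exp$ bijective on every block with compatible inverses. Examples are $n=1$ with $V_i=\rset[x]_{\leq i}$ (triangular operators, where $\exp$ maps each block diffeomorphically onto the triangular matrices with positive diagonal), or subalgebras acting nilpotently on each block, such as $\fd_c$. Under such a hypothesis your blockwise argument goes through with $\log$ as a global chart, exactly as in \Cref{thm:mainFrechetLieGroups}. The paper's one-line proof does not address this point either.
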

\begin{proof}
Follows immediately from \Cref{thm:fdCharac} (iii).
\end{proof}

The question is, are all regular Fr\'echet Lie algebras in $\fd$ contained in some $\fd_\cV$?
For finite dimensional Lie algebras this is true.

\begin{cor}[{\cite[Cor.\ 4.9]{didio25PosToSoS}}]\label{cor:subspaceLieAlgebras}
Let $n\in\nset$ and let $\tilde{\fd}$ be a finite dimensional regular Fr\'echet Lie algebra in $\fd$.
Then there exists a family
\[\cV = \{V_i\}_{i\in\nset_0}\]
of subspaces $V_i\subseteq\rset[x_1,\dots,x_n]$ with
\[\rset[x_1,\dots,x_n] = \bigcup_{i\in\nset_0} V_i \qquad\text{and}\qquad \dim V_i < \infty\]
for all $i\in\nset_0$ such that $\tilde{\fd}\subseteq\fd_\cV$.
\end{cor}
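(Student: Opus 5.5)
The plan is to build the family $\cV$ directly from a basis of $\tilde{\fd}$, imitating the Krylov-space construction in the proof of \Cref{thm:fdCharac} ``(ii) $\Rightarrow$ (iii)'' but applied to all words in the basis elements at once. Let $A_1,\dots,A_m$ be a basis of $\tilde{\fd}$, with $m=\dim\tilde{\fd}<\infty$. For $\alpha\in\nset_0^n$ set
\[W_\alpha := \mathrm{span}\{A_{i_1}A_{i_2}\cdots A_{i_k}x^\alpha \,|\, k\in\nset_0,\ i_1,\dots,i_k\in\{1,\dots,m\}\}.\]
By construction $A_jW_\alpha\subseteq W_\alpha$ for all $j$, so $AW_\alpha\subseteq W_\alpha$ for every $A\in\tilde{\fd}$, and $x^\alpha\in W_\alpha$. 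Since $\nset_0^n$ is countable, enumerating the $W_\alpha$ gives $\cV$ with $\bigcup_\alpha W_\alpha=\rset[x_1,\dots,x_n]$. Then $\tilde{\fd}\subseteq\fd_\cV$ as in \Cref{cor:fdV}, and the whole statement reduces to showing $\dim W_\alpha<\infty$.

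For finite dimensionality, use that $\tilde{\fd}$ is a finite dimensional Lie algebra sitting in $\fd$, hence closed under the bracket. Let $U_\alpha:=\mathrm{span}\{A_1^{k_1}A_2^{k_2}\cdots A_m^{k_m}x^\alpha \,|\, k_1,\dots,k_m\in\nset_0\}$ be the span of ordered monomials. By a Poincar\'e--Birkhoff--Witt type reordering using $[A_i,A_j]=\sum_l c_{ij}^lA_l$, every word in the $A_i$ is a linear combination of ordered monomials, so $W_\alpha=U_\alpha$. Finite dimensionality of $U_\alpha$ then follows by iterating \Cref{thm:fdCharac}. The space $V^{(m)}:=\mathrm{span}\{A_m^kx^\alpha\}$ is finite dimensional by \Cref{thm:fdCharac} (ii). If $V^{(j+1)}$ is finite dimensional, then $V^{(j)}:=\sum_{v\in V^{(j+1)}}\mathrm{span}\{A_j^kv\,|\,k\}$ is finite dimensional, since it is a sum over a finite basis of $V^{(j+1)}$ of finite dimensional Krylov spaces of $A_j$. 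Hence $U_\alpha=V^{(1)}$ is finite dimensional.

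Alternatively, one could use the group side. The regular Fr\'echet Lie group of $\tilde{\fd}$ is locally generated by $e^{t_1A_1}\cdots e^{t_mA_m}$. The orbit span of $x^\alpha$ under these products is finite dimensional by the same iteration, since each $e^{t_jA_j}$ preserves finite sums of $A_j$-Krylov spaces. This span is invariant under the group, and differentiating as in \Cref{cor:fdV} gives invariance under $\tilde{\fd}$. I would use whichever route meshes better with the regularity hypothesis.

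The main obstacle is the reordering step $W_\alpha=U_\alpha$. Rewriting a word into ordered form produces words with fewer letters, because each bracket replaces two letters by one, so an induction on word length works. The difficulty is that one must also confirm that the brackets $[A_i,A_j]$, a priori only operators on $\rset[x_1,\dots,x_n]$, actually lie in $\tilde{\fd}$. This is exactly where the hypothesis that $\tilde{\fd}$ is a Lie algebra inside $\fd$, closed under $[\,\cdot\,,\,\cdot\,]$, is used, and I would state it explicitly at that point.
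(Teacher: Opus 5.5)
Your main (algebraic) argument is correct, and it takes a different route from the paper.

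\textbf{The paper's route.} The paper fixes a basis $g_1,\dots,g_N$. It asserts one degree bound $D(\alpha)$ for all $(a_1g_1+\dots+a_Ng_N)^kx^\alpha$, with $a$ on the unit sphere and all $k$, justified ``by continuity in $a$ and compactness''. It then reruns the Krylov construction from the proof of \Cref{thm:fdCharac}.

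\textbf{Your route.} You never need uniformity over a parameter family. The reordering via $[A_i,A_j]=\sum_l c_{ij}^lA_l$ reduces all words to ordered monomials $A_1^{k_1}\cdots A_m^{k_m}x^\alpha$. Iterated single-operator Krylov spaces are finite dimensional at any polynomial, by \Cref{thm:fdCharac} (ii) and linearity, so these monomials span a finite dimensional space. The argument is purely algebraic and makes the use of bracket closure explicit. It also needs only the basis elements to lie in $\fd$; membership of all of $\tilde{\fd}$ then follows from \Cref{thm:fdCharac} (iii).

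\textbf{Comparison.} Once completed, the paper's route gives a single degree bound for the whole algebra. As written, however, it needs two justifications that yours does not.
\begin{enumerate}
\item Degree is only lower semicontinuous in the coefficients, so continuity plus compactness does not by itself bound $\sup_k$ uniformly. A repair: for each $D$, the set of $a$ with $\deg(\sum_i a_ig_i)^kx^\alpha\le D$ for all $k$ is an algebraic set. By Baire one of them has interior, hence equals $\rset^N$.
\item Passing from powers of single elements to a subspace invariant under every element of $\tilde{\fd}$ needs the same PBW/symmetrization step you make explicit.
\end{enumerate}
Your group-side alternative does not avoid the reordering step either. The span of $e^{t_1A_1}\cdots e^{t_mA_m}x^\alpha$ over all $t$ is just $U_\alpha$, and its invariance under each $A_j$ again comes from reordering. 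So drop it.

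\textbf{A slip to fix.} The claim $\bigcup_\alpha W_\alpha=\rset[x_1,\dots,x_n]$ is false in general, because a union of subspaces is not a subspace. For $\tilde{\fd}=\rset\cdot\id$ you get $W_\alpha=\rset\cdot x^\alpha$, and $1+x_1$ lies in no $W_\alpha$. The proof of \Cref{thm:fdCharac} that the paper refers to makes the same slip. Instead, enumerate $\nset_0^n=\{\alpha^{(0)},\alpha^{(1)},\dots\}$ and take $V_i:=W_{\alpha^{(0)}}+\dots+W_{\alpha^{(i)}}$. These are still finite dimensional and $\tilde{\fd}$-invariant, they increase with $i$, and every polynomial lies in some $V_i$.
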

\begin{proof}
Let $g_1,\dots,g_N$ be a vector space basis of $\tilde{\fd}$, i.e.,
\[\dim \tilde{\fd}=N\in\nset.\]
Let
\begin{equation}\label{eq:a1aN}
a_1,\dots,a_N\in\rset \quad\text{with}\quad a_1^2 + \dots + a_N^2 = 1.
\end{equation}
Then for any $\alpha\in\nset_0^n$ there exists a
$D = D(\alpha)\in\nset$
such that
\[(a_1 g_1 + \dots + a_N g_N)^k x^\alpha \quad\subseteq\quad \rset[x_1,\dots,x_n]_{\leq D}\]
for all $a_1,\dots,a_N$ with (\ref{eq:a1aN}), since
\[(a_1 g_1 + \dots + a_N g_N)^k\]
is continuous in $a_1,\dots,a_N$ and the set (\ref{eq:a1aN}) is compact.
From here, continue as in the proof of \Cref{thm:fdCharac} step ``[(i) $\Leftrightarrow$ (ii)] $\Rightarrow$ (iii)''.
\end{proof}

An infinite dimensional regular Fr\'echet Lie algebra of $\fd$ does not need to be contained in some $\fd_\cV$ as the following example shows.

\begin{exm}[{\cite[Exm.\ 4.10]{didio25PosToSoS}}]\label{exm:nonSubspaceLieAlgebra}
Let $n\in\nset$, let $y\in\rset^n$, and let
\[l_y: \rset[x_1,\dots,x_n]\to\rset,\qquad f\mapsto l_y(f) := f(y)\]
be the point evaluation at $y$.
Define
\[\fd_y := \big\{l_y\cdot p \,\big|\, p\in\rset[x_1,\dots,x_n]\big\}.\label{fdy}\]
Then
\begin{enumerate}[(i)]
\item $\fd_y\subseteq\fd$,
\item $\alpha A+\beta B\in\fd_y$ for all $A,B\in\fd_y$ and $\alpha,\beta\in\rset$,
\item $AB\in\fd_y$ for all $A,B\in\fd_y$,
\item $[A,B]\in\fd_y$ for all $A,B\in\fd_y$, and
\item $\fd_y$ is closed,
\end{enumerate}
i.e., $\fd_y$ is a regular Fr\'echet Lie algebra in $\fd$, since
\[\alpha A+\beta B=l_y\cdot(\alpha p+\beta q)\qquad\text{and}\qquad AB=q(y)\cdot l_y\cdot p\]
with $Af = f(y)\cdot p$, $Bf = f(y)\cdot q$, and $\alpha,\beta\in\rset$.
To see that $\fd_y$ is closed let
\[A_k = l_y\cdot p_k \qquad\text{with}\qquad A_k\xrightarrow{k\to\infty} \big[A:\rset[x_1,\dots,x_n]\to\rset[x_1,\dots,x_n]\big].\]
Hence,
\[A_k1 = p_k\xrightarrow{k\to\infty} p\in\rset[x_1,\dots,x_n]\]
and
$A=l_y\cdot p\in\fd_y$.
But since $\deg p$ in $A = l_y\cdot p$ is not bounded, $\fd_y$ is not contained in any $\fd_\cV$.
\exmsymbol
\end{exm}

\section*{Problems}%%%%%%%%%%%%%%%%%%%%%
\addcontentsline{toc}{section}{Problems}

\begin{prob}\label{prob:subDegree}
Let $n\in\nset$, $a,b$ with $-\infty\leq a<b\leq\infty$, and let
\[A:\rset[x_1,\dots,x_n]\to\rset[x_1,\dots,x_n]\]
be linear such that $e^{tA}:\rset[x_1,\dots,x_n]\to\rset[x_1,\dots,x_n]$ is well-defined.
Show that there exists a $D\in\nset_0$ with
\[\sup_{t\in [a,b]} \deg e^{tA}f_0 \leq D.\]
\end{prob}

\begin{prob}\label{prob:expd}
Prove \Cref{cor:taylor}.
\end{prob}

\chapter{Generators of $K$-Positivity Preserving Semi-Groups}%%%
\label{ch:generators}%%%%%%%%%%%%%%%%%%%%%%%%%%%%%%%%%%%%%%%%%%%
%%%%%%%%%%%%%%%%%%%%%%%%%%%%%%%%%%%%%%%%%%%%%%%%%%%%%%%%%%%%%%%%

We now describe linear operators
\[A:\rset[x_1,\dots,x_n]\to\rset[x_1,\dots,x_n]\]
such that
\[e^{tA}\cC \subseteq\cC\]
for a closed convex cone $\cC\subseteq\rset[x_1,\dots,x_n]$.
In the special case $\cC = \pos(K)$ we have the generators of $K$-positivity preserving semi-groups.

\section{Description via the Resolvent}

\begin{dfn}\label{dfn:fDC}
Let $\cC$ be a convex subset of $\rset[x_1,\dots,x_n]$ which is closed in the LF-topology of $\rset[x_1,\dots,x_n]$.
We denote by
\[\fD_\cC := \left\{ T\in\fD \,\middle|\, T\cC\subseteq\cC\right\}\]
the set of all \textit{$\cC$-preservers} and by
\[\fd_\cC := \left\{ A\in\fd \,\middle|\, e^{tA}\in\fD_\cC\ \text{for all}\ t\geq 0\right\}\]
the set of all \textit{generators of $\cC$-preserving semi-groups}.\index{generator!$\cC$-preserving semi-group}
If $\cC = \pos(\rset^n)$, then we abbreviate
\[\fD_+ := \fD_{\pos(\rset^n)} \quad\text{and}\quad \fd_+ := \fd_{\pos(\rset^n)}.\]
\end{dfn}

Positive semi-groups on Banach lattices can be characterized in terms of their resolvents.
It seems that the case of the LF-space $\rset[x_1,\dots,x_n]$ is rarely considered in the literature.
\cite{didio24posPresConst,didio25KPosPresGen} are two of the very few works on this topic, especially for semi-groups on $\rset[x_1,\dots,x_n]$.
Following ideas from the proof of \cite[Prop.\ 2.3]{ouhabaz05}, we obtain the following  description of $\fd_\cC$ via the resolvent.
Note, we only assume $\cC$ to be convex and closed.
It does \textit{not} need to be a cone.

\begin{prop}[{\cite[Prop.\ 6.2]{didio25KPosPresGen}}]\label{thm:fdCcara}
Let $n\in\nset$, let
\[\cC \subseteq\rset[x_1,\dots,x_n]\]
be non-empty, convex, and closed in the LF-topology of $\rset[x_1,\dots,x_n]$.
Let
\[A\in\fd\]
and denote by
\[\{V_i\}_{i\in\nset_0}\]
the family of finite dimensional invariant subspaces from \Cref{thm:fdCharac} (iii).
For $i\in\nset_0$, we set
\[A_i := A|_{V_i} \qquad\text{and}\qquad \cC_i := \cC\cap V_i.\]
Then the following are equivalent:
\begin{enumerate}[(i)]
\item $A\in\fd_\cC$.

\item For every $i\in\nset_0$, there exists an $\varepsilon_i>0$ such that
\[(\one - \lambda A_i)^{-1}\cC_i \subseteq \cC_i\]
for all $\lambda\in [0,\varepsilon_i)$.
\end{enumerate}
\end{prop}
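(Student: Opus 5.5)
The plan is to reduce everything to the finite dimensional pieces $V_i$. There $A_i$ is a matrix and $\cC_i=\cC\cap V_i$ is a closed convex subset of the finite dimensional space $V_i$; closedness holds because the LF-topology restricted to $V_i\subseteq\rset[x_1,\dots,x_n]_{\leq D}$ is the Euclidean one. Since $AV_i\subseteq V_i$, the corollary on invariant subspaces gives $e^{tA}V_i\subseteq V_i$ with $e^{tA}|_{V_i}=e^{tA_i}$. Because $\bigcup_i V_i=\rset[x_1,\dots,x_n]$ and $e^{tA}$ is computed on each $V_i$, condition (i) is equivalent to
\[e^{tA_i}\cC_i\subseteq\cC_i\quad\text{for all } t\geq 0 \text{ and all } i\in\nset_0 .\]
Indeed, every $p\in\cC$ lies in some $V_i$, hence in $\cC_i$. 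Conversely, $e^{tA}\cC\subseteq\cC$ together with $e^{tA}V_i\subseteq V_i$ gives $e^{tA_i}\cC_i\subseteq\cC_i$. It therefore suffices to prove, for a fixed matrix $A_i$ and a closed convex set $\cC_i$, that
\[e^{tA_i}\cC_i\subseteq\cC_i\ \ \forall t\geq0 \quad\Longleftrightarrow\quad (\one-\lambda A_i)^{-1}\cC_i\subseteq\cC_i\ \ \forall \lambda\in[0,\varepsilon_i).\]
This is the finite dimensional version of the argument in \cite[Prop.\ 2.3]{ouhabaz05}.

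\textbf{(ii) $\Rightarrow$ (i).} Take $\varepsilon_i$ small enough that $\one-\lambda A_i$ is invertible for $\lambda<\varepsilon_i$, which is possible since $\|\lambda A_i\|<1$ suffices. Use \Cref{prop:matrixExp} (ix), or \Cref{cor:taylor}, in the form
\[e^{tA_i}=\lim_{k\to\infty}\left(\one-\tfrac{t}{k}A_i\right)^{-k}.\]
For fixed $t\geq0$ and $k$ large, $t/k<\varepsilon_i$, so each iterate $\left(\one-\tfrac tkA_i\right)^{-k}$ maps $\cC_i$ into $\cC_i$. Since $\cC_i$ is closed, the limit $e^{tA_i}p$ stays in $\cC_i$ for every $p\in\cC_i$.

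\textbf{(i) $\Rightarrow$ (ii).} Use the Laplace transform representation of the resolvent. For $0<\lambda<\varepsilon_i$ with $\varepsilon_i<1/\|A_i\|$,
\[(\one-\lambda A_i)^{-1}p=\int_0^\infty \tfrac1\lambda e^{-s/\lambda}\,e^{sA_i}p~\diff s .\]
The integral converges absolutely because $\|e^{sA_i}\|\leq e^{s\|A_i\|}$ and $\|A_i\|<1/\lambda$. The identity is verified by integrating the derivative $\frac{\diff}{\diff s}\big(e^{s(A_i-1/\lambda)}\big)$. The measure $\frac1\lambda e^{-s/\lambda}\diff s$ is a probability measure on $[0,\infty)$, and $s\mapsto e^{sA_i}p$ is a continuous curve in $\cC_i$. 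The integral is therefore a limit of convex combinations of points of $\cC_i$: approximate it by Riemann sums with weights normalized to total mass one, and control the tail. Convexity and closedness of $\cC_i$ then give $(\one-\lambda A_i)^{-1}p\in\cC_i$. The case $\lambda=0$ is trivial.

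\textbf{Main obstacle.} The delicate step is this last one. One must justify that a Bochner integral against a probability measure of a continuous $\cC_i$-valued curve lands in the closed convex set $\cC_i$, even though $\cC_i$ need not be bounded and the weights in the Riemann sums must be renormalized to stay convex. I would handle it either by truncating the integral to $[0,R]$, renormalizing, and letting $R\to\infty$, or more cleanly by a separating hyperplane argument in $V_i$. In the latter, if the integral $q$ were outside $\cC_i$, a linear functional $\ell$ would satisfy $\ell(q)>\sup\ell(\cC_i)$, contradicting
\[\ell(q)=\int_0^\infty \tfrac1\lambda e^{-s/\lambda}\,\ell\big(e^{sA_i}p\big)~\diff s\leq\sup\ell(\cC_i).\]
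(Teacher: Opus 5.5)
Your proposal is correct and follows the paper's proof. For (i)$\Rightarrow$(ii) the paper likewise writes $\gamma(\gamma\one-A_i)^{-1}p=\gamma\int_0^\infty e^{-\gamma t}e^{tA_i}p~\diff t$ with $\gamma=\lambda^{-1}>\|A_i\|$ and concludes from convexity and closedness of $\cC_i$; for (ii)$\Rightarrow$(i) it uses $e^{tA_i}=\lim_{k\to\infty}(\one-\frac{t}{k}A_i)^{-k}$ together with closedness of $\cC_i$. Your separating-hyperplane argument is a cleaner, fully rigorous replacement for the paper's brief ``approximate $e^{-\gamma t}$ by step functions'' step, which glosses over the renormalization and tail issues you point out.
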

\begin{proof}
(i) $\Rightarrow$ (ii):
Let $i\in\nset_0$ and $p\in\cC_i$.
Let $\|\cdot\|_i$ be the Euclidean norm on $V_i$.
We denote by $\|\cdot\|_i$ the corresponding operator norm for operators $V_i\to V_i$.
For
\[\gamma > \|A_i\|_i\]
and all $t\geq 0$ we have 
\[ \left\| e^{-\gamma t}\cdot e^{tA_i}p \right\|_d \leq \|p\|_d\cdot e^{(\|A_i\|_i-\gamma)\cdot t}\]
and hence
\[ \gamma(\gamma\one - A_i)^{-1}\, p = \gamma\cdot\int_0^\infty e^{-\gamma t}\cdot e^{tA_i} p~\diff t .\]
Therefore, since
\[\gamma\int_0^\infty e^{-\gamma t}~\diff t = 1\]
and $e^{-\gamma t}$ can be approximated by step functions, we conclude that
\[\gamma\cdot (\gamma\one - A_i)^{-1} p\]
is a convex linear combination of elements
\[e^{tA_i}p\]
in the closed convex set $\cC_i$, i.e.,
\[\gamma\cdot(\gamma\one - A_i)^{-1}\cC_i\subseteq\cC_i\]
for all $\gamma> \|A_i\|_i$.
With
\[\varepsilon_d := \|A_i\|_i^{-1}\]
and hence
\[\lambda := \gamma^{-1} \in [0,\varepsilon_d)\]
we get the assertion (ii) for every $i\in\nset_0$.

(ii) $\Rightarrow$ (i): Let $t\geq 0$ and $p\in\cC$.
Let $i\in\nset_0$ such that $p\in V_i$.
Then $p\in\cC_i$ and
\[e^{tA}p\quad  = \quad e^{tA_i}p
\quad \overset{\text{Cor.\ \ref{cor:taylor}}}{=}\quad \lim_{k\to\infty} \left(\one - \frac{tA_i}{k}\right)^{-k}p \quad\in\cC_i\]
since
\[\left(\one - \frac{tA_i}{k}\right)^{-1}\cC_i \quad\subseteq\quad \cC_i\]
by assumption for all $k > \varepsilon_i^{-1}$.
The limit $k\to\infty$ of
\[\left(\one - \frac{tA_i}{k}\right)^{-k}p\]
is in $\cC_i$ because $\cC_i$ is closed.
Since $t\geq 0$ and $p\in\cC$ were arbitrary, we have
\[e^{tA}\cC\subseteq\cC\]
for all $t\geq 0$, i.e., $A\in\fd_\cC$.
\end{proof}

\begin{exms}[{\cite[Exms.\ 6.3]{didio25KPosPresGen}}]\label{exm:resolventProps}
Let $n = 1$, $V_i=\rset[x]_{\leq i}$, and $\cC = \pos(\rset)$.
\begin{enumerate}[\bfseries\; (a)]
\item For $A = a\in\rset$, we have 
\[(\one - \lambda a)^{-1}>0\]
for all $\lambda\in (-\infty,a^{-1})$.
Then $A$ acts as the multiplication with a positive real constant, i.e., it preserves all cones $\cC_d$ with $\varepsilon_d = a^{-1}$.

\item For $A = \partial_x$,
\[(\one - \lambda\partial_x)^{-1} = \sum_{k\in\nset_0} \lambda^k\cdot\partial_x^k \quad\in\fD_\cC\]
for all $\lambda\in\rset$.
Since
\[s_\lambda := (\lambda^k)_{k\in\nset_0}\]
is the moment sequence with representing measure
\[\mu_\lambda := \delta_\lambda\]
and $(k!)_{k\in\nset_0}$ is a moment sequence by
\[k! = \int_0^\infty x^k\cdot e^{-t}~\diff t,\]
we have that
\[(k!\cdot \lambda^k)_{k\in\nset_0}\]
is a moment sequence by \Cref{cor:odot}.
Hence, by \Cref{thm:kPosPresCara}, we have
\[\pm\partial_x\in\fd_\cC.\]

\item For $A = \partial_x^2$,
\[(\one - \lambda\partial_x^2)^{-1} = \sum_{k=0}^\infty \lambda^k\cdot \partial_x^{2k} \quad\in \fD_\cC\]
for all $\lambda\in [0,\infty)$ since
\[s_\lambda = (1,0,\lambda,0,\lambda^2,0,\lambda^3,\dots)\]
is a moment sequence represented by the measure
\[\mu_\lambda = \frac{1}{2}\delta_{-\sqrt{\lambda}} + \frac{1}{2}\delta_{\sqrt{\lambda}},\]
see Problem \ref{prob:st}, i.e., $\partial_x^2\in\fd_\cC$.

\item Let $A = x\partial_x$.
Since
\[(e^{tA}p)(x) = p(e^t x),\]
we have $\pm A\in\fd_\cC$.
Then
\[Ax^d = dx^d\]
for all $d\in\nset_0$, i.e.,
\[(\one - \lambda x\partial_x)^{-1}x^d = (1-\lambda d)^{-1}\,x^d\]
for all $d\in\nset_0$ and $\lambda\in (-\infty,d^{-1})$ with $0^{-1} = +\infty$.
Here, $\varepsilon_d = d^{-1}$ depends explicitly on the degree $d\in\nset_0$ of the restriction $\cC_d$.
\exmsymbol
\end{enumerate}
\end{exms}

\begin{rem}[{\cite[Rem.\ 6.4]{didio25KPosPresGen}}]
In the preceding examples the following cases for $[0,\varepsilon_d)$ appeared:
\begin{enumerate}[\it\; (a)]
\item In Example \ref{exm:resolventProps} (a) the interval
\[[0,\varepsilon_d)\]
can not be extended to $a^{-1}$ or beyond $a^{-1}$.

\item In Example \ref{exm:resolventProps} (c) the interval
\[[0,\varepsilon_d)\]
is bounded from below by $0$ and can not be extended below $0$.
To see this let
\[p(x) = x^2\in\pos(\rset).\]
Then
\[(\one-\lambda\partial_x^2)^{-1} \,x^2 = (1 +\lambda\partial_x^2)\, x^2 = x^2 +2\lambda\]
is negative at $x = 0$ for any $\lambda < 0$, i.e.,
\[(\one-\lambda\partial_x^2)^{-1}x^2\not\in\pos(\rset)\]
for any $\lambda < 0$.

\item In Example \ref{exm:resolventProps} (d) we have $\varepsilon_d =d^{-1} \rightarrow 0$ as $d\to\infty$.
\end{enumerate}
Thus, in \Cref{thm:fdCcara} we can only have $\lambda\in [0,\varepsilon_d)$ with $\lim_{d\to \infty}\varepsilon_d= 0$.
\exmsymbol
\end{rem}

\section{Generators of $\rset^n$-Positivity Preserving Semi-Groups}

The description of generators via the resolvent is complete, but can not be handled properly.
The examples showed that even in the simplest cases it is not easy to check, since we have to check infinitely many conditions.

Easier was the description with the L\'evy--Khinchin formula for the constant coefficient cases, see \Cref{thm:mainPosGenerators}.
We want to adapt this approach also for non-constant coefficient generators.

We restrict to the degree preserving cases, i.e.,
\[A\rset[x_1,\dots,x_n]_{\leq d} \subseteq \rset[x_1,\dots,x_n]_{\leq d}\]
for all $d\in\nset_0$.

\begin{thm}[{\cite[Thm.\ 6.11]{didio25KPosPresGen}}]\label{prop:genK}
Let $n\in\nset$, let $K\subseteq\rset^n$ be closed, and let
\[A = \sum_{\alpha\in\nset_0^n} \frac{a_\alpha}{\alpha!}\cdot \partial^\alpha\]
be with
\[a_\alpha\in\rset[x_1,\dots,x_n]_{\leq |\alpha|}\]
for all $\alpha\in\nset_0^n$.
If
\begin{enumerate}[(i)]
\item for all $y\in K$ the operator
\[A_y := \sum_{\alpha\in\nset_0^n} \frac{a_\alpha(y)}{\alpha!}\cdot\partial^\alpha\]
is a generator of a $K$-positivity preserving semi-group,
\end{enumerate}
then
\begin{enumerate}[(i)]\setcounter{enumi}{1}
\item $e^{tA}$ is a $K$-positivity preserver for all $t\geq 0$.
\end{enumerate}
\end{thm}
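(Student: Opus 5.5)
The plan is to approximate $e^{tA}$ by products of operators obtained by freezing the coefficients of $A$ at the evaluation point. Each such operator is manifestly a $K$-positivity preserver by hypothesis (i). Convergence then reduces to a Chernoff/Trotter-type limit on the finite dimensional spaces $\rset[x_1,\dots,x_n]_{\leq d}$.

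\emph{Step 1: finite dimensional reduction.} Since $a_\alpha\in\rset[x_1,\dots,x_n]_{\leq|\alpha|}$, the operator $A$ maps $V_d:=\rset[x_1,\dots,x_n]_{\leq d}$ into itself. So $A\in\fd$ by \Cref{thm:fdCharac}, and $e^{tA}|_{V_d}=e^{tA|_{V_d}}$ is an honest matrix exponential. Moreover $\pos(K)\cap V_d$ is a closed convex cone in the finite dimensional space $V_d$. It therefore suffices to show $e^{tA}(\pos(K)\cap V_d)\subseteq\pos(K)$ for each fixed $d$.

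\emph{Step 2: the frozen-coefficient operators.} For $h\geq 0$ define $B_h$ by
\[(B_hf)(y) := \big(e^{hA_y}f\big)(y),\qquad f\in V_d,\ y\in\rset^n.\]
Here $e^{hA_y}$ is the constant-coefficient exponential from \Cref{thm:liealgebra}. On $V_d$ only the terms $\partial^\alpha$ with $|\alpha|\leq d$ matter. The coefficient of $\partial^\alpha$ in $e^{hA_y}$ is a finite sum of products $a_{\beta_1}(y)\cdots a_{\beta_m}(y)$ with $\beta_1+\dots+\beta_m=\alpha$ (plus terms coming from $a_0$, which contribute only scalar factors $e^{ha_0(y)}$-type expansions, handled the same way on $V_d$). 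This is a polynomial in $y$ of degree $\leq|\alpha|$. Hence $B_hf$ is a polynomial and $B_hV_d\subseteq V_d$.

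Positivity of $B_h$ follows directly from (i). For $y\in K$ and $f\in\pos(K)$, the operator $e^{hA_y}$ is a $K$-positivity preserver, so $(e^{hA_y}f)(y)\geq 0$. Thus $B_h$ is a $K$-positivity preserver for every $h\geq 0$, and so is every power $B_h^k$.

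\emph{Step 3: Chernoff limit, the main technical point.} On $V_d$, expanding $e^{hA_y}=\sum_{m\leq d}h^mA_y^m/m!$ gives
\[B_h = \one + hA + h^2R_h,\]
where $R_h$ is a linear map on $V_d$ whose matrix entries are polynomials in $h$. Indeed the first order term is exactly $\sum_\alpha \frac{a_\alpha(y)}{\alpha!}(\partial^\alpha f)(y)=(Af)(y)$. Hence $\sup_{h\in[0,1]}\|R_h\|<\infty$ in any operator norm on $V_d$.

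I would then show $B_{t/k}^k\to e^{tA}$ on $V_d$ as $k\to\infty$. Both $\|B_{t/k}\|$ and $\|e^{tA/k}\|$ are bounded by $e^{ct/k}$ for some constant $c$. The telescoping identity
\[B_{t/k}^k-e^{tA}=\sum_{j=0}^{k-1}B_{t/k}^{\,j}\big(B_{t/k}-e^{tA/k}\big)e^{(k-1-j)tA/k}\]
combined with $\|B_{t/k}-e^{tA/k}\|=O(k^{-2})$ then gives a total error $O(k^{-1})e^{ct}$.

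\emph{Step 4: conclusion.} For $f\in\pos(K)\cap V_d$, each $B_{t/k}^kf$ lies in $\pos(K)\cap V_d$. Since this set is closed (it is closed in the LF-topology of $\rset[x_1,\dots,x_n]$, equivalently in $V_d$), the limit $e^{tA}f$ lies in $\pos(K)$. As $d$ was arbitrary, this gives (ii).

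I expect the only delicate point to be Step 3: verifying carefully that $R_h$ stays in $V_d$ with bounded norm, including the degree bookkeeping for the coefficients of $e^{hA_y}$. The rest is formal.
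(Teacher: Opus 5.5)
Your proof is correct and follows the paper's argument. The paper likewise freezes coefficients at the evaluation point, $(G_tf)(x):=(e^{tA_x}f)(x)$, observes that $G_t=\one+tA+t^2R(t)$ is a $K$-positivity preserver by (i), and concludes via $e^{tA}=\lim_{k\to\infty}G_{t/k}^k$ and closedness. Your telescoping estimate justifies this limit, which the paper only asserts; the one nit is that $e^{hA_y}=\sum_{m\le d}h^mA_y^m/m!$ on $V_d$ needs $a_0=0$ (otherwise factor out $e^{ha_0}$), but the $O(h^2)$ bound is unaffected.
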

\begin{proof}
We introduce $y=(y_1,\dots,y_n)\in K$ as new variables.
Then
\[e^{tA_y} = \sum_{k\in\nset_0} \frac{t^k}{k!} A_y^k\]
for all $t\in\rset$ by \Cref{cor:taylor}, i.e., we have
\[e^{tA_y} = \one + t A_y + t^2 R_y(t)\]
for some error term $R_y(t)$ such that $R_y(t)$ on $\rset[x_1,\dots,x_n]_{\leq d}$ is uniformly bounded for all $t\in [0,1]$ and for fixed $d\in\nset_0$.

Let $f\in\rset[x_1,\dots,x_n]$.
Since
\[\deg a_\alpha\leq |\alpha|\]
for all $\alpha\in\nset_0^n$, we have
\[g(x,y,t,f) := e^{tA_y}f = (\one + tA_y + t^2 R_y(t))f\in \rset[x_1,\dots,x_n,y_1,\dots,y_n]_{\leq \deg f}\]
for all $t\in\rset$.
By (i), $A_y$ is a generator of a $K$-positivity preserver with constant coefficients, i.e.,
\[g(x,y,t,f)\geq 0\]
for all $f\in\pos(K)$, $x,y\in K$, and $t\geq 0$.
Hence, with $x=y$ we obtain
\begin{equation}\label{eq:gpos}
g(x,x,t,f) \geq 0
\end{equation}
for all $f\in\pos(K)$, $x\in K$, and $t\geq 0$.
For $t\in\rset$, we define
\[G_t:\rset[x_1,\dots,x_n]\to\rset[x_1,\dots,x_n]\]
by
\[(G_t f)(x) := (e^{tA_y}f)(x,x) = g(x,x,t,f),\]
i.e.,
\[G_t f = (\one + tA + t^2 R(t)) f\]
for some error term $R(t)$ which is on $\rset[x_1,\dots,x_n]_{\leq d}$, $d\in\nset_0$, uniformly bounded in $t\in [0,1]$.
By (\ref{eq:gpos}), $G_t$ is a $K$-positivity preserver for all $t\geq 0$.
Hence,
\[e^{tA} \overset{\text{Cor.\ \ref{cor:taylor}}}{=} \lim_{k\to\infty} \left(\one + \frac{t A}{k} \right)^k = \lim_{k\to\infty} \left(\one + \frac{t A + k^{-1}\cdot t^2 R(k^{-1}\cdot t)}{k} \right)^k = \lim_{k\to\infty} G_{t/k}^k\]
and since all
\[G_{t/k}\]
are $K$-positivity preservers, also
\[G_{t/k}^k\]
are $K$-positivity preservers as well.
Therefore, since the set of $K$-positivity preservers is closed by \Cref{lem:closedPosPres} (ii),
\[e^{tA}\]
is a $K$-positivity preserver.
Since $t\geq 0$ was arbitrary, (ii) is proved.
\end{proof}

We now give the equivalent characterization as in \Cref{thm:mainPosGenerators} for the constant coefficient for the non-constant coefficient case, i.e., we use the L\'evy--Khinchin formula.

\begin{thm}[{\cite[Main Thm.\ 6.12]{didio25KPosPresGen}}]\label{thm:PosGeneratorsGeneral}
Let $n\in\nset$ and let
\[A = \sum_{\alpha\in\nset_0^n} \frac{a_\alpha}{\alpha!}\cdot\partial^\alpha\]
be with
\[a_\alpha\in\rset[x_1,\dots,x_n]_{\leq |\alpha|}\]
for all $\alpha\in\nset_0^n$.
Then the following are equivalent:
\begin{enumerate}[(i)]
\item $A\in\fd_+$, i.e.,
\[e^{tA}\]
is a positivity preserver for all $t\geq 0$.

\item For every $y\in\rset^n$, there exist a symmetric matrix
\[\Sigma(y) = (\sigma_{i,j}(y))_{i,j=1}^n\]
with real entries such that $\Sigma(y)\succeq 0$, a vector
\[b(y) = (b_1(y),\dots,b_n(y))^T\in\rset^n,\]
a constant
\[a_0\in\rset,\]
and a $\sigma$-finite measure $\nu_y$ on $\rset^n$ with
\[\nu_y(\{0\})=0 \qquad\text{and}\qquad \int_{\rset^n} |x^\alpha|~\diff\nu_y(x)<\infty\]
for all $\alpha\in\nset_0^n$ with $|\alpha|\geq 2$ such that
\begin{align*}
a_{e_i}(y) &= b_i(y) + \int_{\|x\|_2\geq 1} x_i~\diff\nu_y(x) && \text{for}\ i=1,\dots,n,\\
a_{e_i+e_j}(y) &= \sigma_{i,j}(y) + \int_{\rset^n} x^{e_i + e_j}~\diff\nu_y(x) && \text{for all}\ i,j=1,\dots,n,
\intertext{and}
a_\alpha(y) &= \int_{\rset^n} x^\alpha~\diff\nu_y(x) &&\text{for}\ \alpha\in\nset_0^n\ \text{with}\ |\alpha|\geq 3.
\end{align*}
\end{enumerate}
\end{thm}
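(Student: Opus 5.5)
The plan is to reduce the statement to the constant coefficient case, \Cref{thm:mainPosGenerators}, by freezing coefficients at a point $y$. The direction (ii) $\Rightarrow$ (i) then goes through \Cref{prop:genK} with $K=\rset^n$. The direction (i) $\Rightarrow$ (ii) needs a localization argument showing that each frozen operator $A_y$ is itself a generator.

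\textbf{(ii) $\Rightarrow$ (i).} Fix $y\in\rset^n$. The data $\Sigma(y)\succeq 0$, $b(y)$ and $\nu_y$ are exactly the L\'evy--Khinchin data of \Cref{thm:mainPosGenerators} (ii) for the constant coefficient operator $A_y-a_0(y)$. Hence that theorem gives $A_y-a_0(y)\in\fd_{c,+}$. Since $a_0(y)$ is a scalar commuting with everything,
\[e^{tA_y}=e^{ta_0(y)}e^{t(A_y-a_0(y))}\]
is a positivity preserver for $t\ge 0$. So hypothesis (i) of \Cref{prop:genK} holds with $K=\rset^n$, and that result yields $A\in\fd_+$. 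The degree condition $a_\alpha\in\rset[x_1,\dots,x_n]_{\leq|\alpha|}$ is exactly what \Cref{prop:genK} requires, and by the earlier proposition it guarantees $A\rset[x]_{\le d}\subseteq\rset[x]_{\le d}$, so $A\in\fd$.

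\textbf{(i) $\Rightarrow$ (ii).} Fix $y$. The goal is to show $A_y-a_0(y)\in\fd_{c,+}$ and then apply \Cref{thm:mainPosGenerators} (i) $\Rightarrow$ (ii).

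1. Write $e^{tA}=\one+tA+t^2R(t)$ on $\rset[x]_{\le d}$, with $R(t)$ uniformly bounded for $t\in[0,1]$. Each $e^{tA}$ is a positivity preserver, so by \Cref{thm:kPosPresCara} the sequence $s(t,y):=(\alpha!\,q_\alpha(t,y))_\alpha$ is a moment sequence. Here $q_\alpha(t,\cdot)$ are the canonical coefficients of $e^{tA}$, and $q_\alpha(t,y)=t\,a_\alpha(y)/\alpha!+O(t^2)$ for $\alpha\ne 0$.

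2. Let $\mu_{t,y}$ be a representing measure of $s(t,y)$. Set $T_t:=D(s(t,y))$ and $B_t:=e^{-ta_0(y)}T_t\in\fD_{c,+}$. Using $\log$ from \Cref{thm:mainFrechetLieGroups}, we have $\log B_t=t\,(A_y-a_0(y))+O(t^2)$ coordinatewise. For $m\in\nset$ put $t=1/m$ and consider $B_{1/m}^{m}$. This is a positivity preserver, represented by an $m$-fold convolution (\Cref{thm:convolutionSt}), and its logarithm $m\log B_{1/m}$ converges coordinatewise to $A_y-a_0(y)$.

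3. Since $\exp$ is continuous, $B_{1/m}^m\to\exp(A_y-a_0(y))$. Because $\fD_{c,+}$ is closed (\Cref{cor:fDplusfdplusProps}), the limit lies in $\fD_{c,+}$.

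4. The same argument with $s\,(A_y-a_0(y))$ for each $s\ge0$, i.e. $B_{s/m}^m$, gives $\exp(s(A_y-a_0(y)))\in\fD_{c,+}$ for all $s\ge 0$. Hence $A_y-a_0(y)\in\fd_{c,+}$, and \Cref{thm:mainPosGenerators} produces $\Sigma(y)$, $b(y)$ and $\nu_y$ with the stated moment relations.

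The main obstacle is step 2: passing from the global positivity of $e^{tA}$ to the frozen operators $A_y$. The coefficients of $e^{tA}$ are not simply $e^{tA_y}$, because the $x$-dependence of the $a_\alpha$ feeds back through the derivatives. I therefore expect a Trotter/Chernoff-type argument to be needed, using only the first-order expansion at $y$ and the closedness of $\fD_{c,+}$. The delicate point is checking that the $O(t^2)$ errors, coordinate by coordinate in $\alpha$, vanish after multiplying by $m$ and taking $m$-th powers. This should work because each coordinate of the product depends only on finitely many lower-order coordinates.
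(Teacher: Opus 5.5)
Your proposal is correct and takes essentially the paper's route. (ii)$\Rightarrow$(i) goes through \Cref{prop:genK}; for (i)$\Rightarrow$(ii) the paper likewise freezes the coefficients of $e^{tA}=\one+tA+t^2R(t)$ at $y$, shows the frozen constant-coefficient operators $T_{t,y}$ are positivity preservers, and obtains $e^{tA_y}=\lim_{k\to\infty}T_{t/k,y}^k$ from closedness of the set of positivity preservers. The differences are minor: the paper gets positivity of $T_{t,y}$ by commuting it with the shifts $e^{z\cdot\nabla}$ rather than via \Cref{thm:kPosPresCara}, it uses the limit formula of \Cref{cor:taylor} instead of $\log$ in $\fD_c$, and it does not explicitly split off the constant $a_0$ as you correctly do.
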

\begin{proof}
By \Cref{thm:mainPosGenerators}, (ii) is equivalent to the fact that
\[A_y = \sum_{\alpha\in\nset_0^n} \frac{a_\alpha(y)}{\alpha!}\cdot\partial^\alpha\]
is a generator of a positivity preservering semi-group with constant coefficients for all $y\in\rset^n$.

(ii) $\Rightarrow$ (i):
That is \Cref{prop:genK}.

(i) $\Rightarrow$ (ii):
Let $y\in\rset^n$.
By (i),
\[A\in\fd_+,\]
i.e.,
\[T_t := e^{tA}\]
is a positivity preserver for all $t\geq 0$.
Hence,
\begin{equation}\label{eq:yPos}
(T_{t,y} f)(y) = (T_t f)(y)\geq 0
\end{equation}
for $t\geq 0$ and $f\in\pos(\rset^n)$, i.e., the linear map
\[T_{t,y}\]
with constant coefficients preserves positivity of $f\in\pos(\rset^n)$ at $x = y$.

Since $T_{t,y}$ is a linear operator with constant coefficients, it commutes with $\partial_i$ for $i=1,\dots,n$ and hence with
\[e^{z\cdot\nabla}\]
which is the shift
\[(e^{z\cdot\nabla}f)(y) = f(y+z)\]
for any $z\in\rset^n$.
Hence, for $x\in\rset^n$, we have
\[(T_{t,y}f)(x) = (T_{t,y}f)(y+x-y) = (T_{t,y} e^{(x-y)\cdot\nabla} f)(y) = e^{(x-y)\cdot\nabla} (T_{t,y} f)(y) \geq 0,\]
by (\ref{eq:yPos}), i.e., $T_{t,y}$ is a positivity preserver.

By \Cref{cor:taylor},
\[T_t = e^{tA} = \sum_{k\in\nset_0} \frac{t^k A^k}{k!} = \one + tA + t^2 R(t)\]
with some error term $R(t)$ which is uniformly bounded for all $t\in [0,1]$ on $\rset[x_1,\dots,x_n]_{\leq d}$ for all fixed $d\in\nset_0$. 
Hence, 
\begin{multline*}
e^{tA_y} \overset{\text{Cor.\ \ref{cor:taylor}}}{=} \lim_{k\to\infty} \left(\one + \frac{t A_y}{k} \right)^k\\ = \lim_{k\to\infty} \left(\one + \frac{t A_y + k^{-1}\cdot t^2 R_y(k^{-1}\cdot t)}{k} \right)^k = \lim_{k\to\infty} T_{t/k,y}^k.
\end{multline*}
Since
\[T_{t/k,y}\]
is a positivity preserver, so is
\[T_{t/k,y}^k\]
and, by \Cref{lem:closedPosPres} (ii), we have that
\[e^{tA_y}\]
is also a positivity preserver for $t\geq 0$.
In summary, $A_y$ is a generator of a positivity preservering semi-group with constant coefficients.
Since $y\in\rset^n$ was arbitrary, we have proved (ii) by 
\Cref{thm:mainPosGenerators}.
\end{proof}

\begin{rem}[{\cite[Rem.\ 6.13]{didio25KPosPresGen}}]\label{rem:shift}
Let
\[A = a x\partial_x.\]
Then
\[A^k x^m = (am)^k x^m\]
and hence
\[e^{tA} x^m = \sum_{k\in\nset_0} \frac{t^k A^k}{k!}x^m = \sum_{k\in\nset_0} \frac{(atm)^k}{k!} x^m = e^{atm} x^m = (e^{at} x)^m,\]
i.e.,
\[(e^{tA}f)(x) = f(e^{at} x)\]
is the scaling which is a $[0,\infty)$-positivity preserver for all $a\in\rset$.
But for $a=-1$ and $y=1$ we have
\[A_1 = -\partial_x\]
which is not a generator of a $[0,\infty)$-positivity preserving semi-group with constant coefficients.
That is, we do not get a description of all generators because the operators
\[ax\partial_x\]
with $a<0$ are not covered.
The reason for the failure is that in the proof of \Cref{thm:PosGeneratorsGeneral} the positivity at arbitrary $x\in\rset^n$ was derived from the positivity at $x = y$ by applying the shift
\[e^{c\partial_x}\]
which commutes with
\[e^{A_y}.\]
On $[0,\infty)$ this reasoning fails because $[0,\infty)$ is not translation invariant.
\exmsymbol
\end{rem}

\section*{Problems}%%%%%%%%%%%%%%%%%%%%%
\addcontentsline{toc}{section}{Problems}

\begin{prob}
Similar to \Cref{rem:shift}, give a second example of a closed set $K\subseteq\rset^n$ and an operator
\[A:\rset[x_1,\dots,x_n]\to\rset[x_1,\dots,x_n]\]
in $\fd$ such that $A$ is a generator of a $K$-positivity preserving semi-group but is not covered by \Cref{thm:PosGeneratorsGeneral}.
\end{prob}

\chapter{Eventually Positive Semi-Groups}
\label{sec:eventually}

Since we characterized the generators of positive semi-groups for $K=\rset^n$, we want to look at some operators $A$ which \emph{seem} to generate positive semi-groups.

\begin{dfn}[see e.g.\ \cite{daners18} or \cite{glueck23}]
Semi-groups
\[(e^{tA})_{t\geq 0}\]
such that
\[e^{tA}\]
is positive for all $t\geq \tau$ for some $\tau>0$ but not for $t\in (0,\tau)$ are called (uniformly) \textit{eventually positive} (or non-negative) semi-groups.
\end{dfn}

\section{A First Example}

\begin{prop}[{\cite[Prop.\ 7.1]{didio25KPosPresGen}}]\label{thm:sigma}
Let $t\geq 0$ and
\[T_t := e^{t\cdot (x\partial_x)^3}: \rset[x]_{\leq 4}\to\rset[x]_{\leq 4},\]
i.e.,
\[a_0 + a_1x + a_2 x^2 + a_3x^3 + a_4 x^4 \;\mapsto\; a_0 + a_1 e^t x + a_2 e^{8t} x^2 + a_3 e^{27t} x^3 + a_4 e^{64t} x^4.\]
Then there exists a constant $\tau\in\rset$ with
\[1.19688\cdot 10^{-2} ~~ <~~ \tau ~~<~~ 1.19689\cdot 10^{-2}\]
such that
\begin{enumerate}[(i)]
\item $T_t$ is not a positivity preserver for any $t \in (0,\tau)$ and

\item $T_t$ is a positivity preserver for $t = 0$ and all $t\geq \tau$,
\end{enumerate}
i.e.,
\[\left(e^{t\cdot (x\partial_x)^3}\right)_{t\geq 0} \quad\text{on}\ \rset[x]_{\leq 4}\]
is an \emph{eventually positive} semi-group.
\end{prop}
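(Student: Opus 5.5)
The plan is to reduce positivity preservation of the diagonal operator $T_t$ on $\rset[x]_{\leq 4}$ to positive semidefiniteness of one explicit $3\times 3$ Hankel matrix that does not depend on $x$, and then to analyse the sign of its determinant as a function of $t$.

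\emph{Step 1 (reduction to truncated moment sequences).} Fix $t\geq 0$ and $x\in\rset$. The functional
\[L_{t,x}:\rset[x]_{\leq 4}\to\rset,\qquad p\mapsto (T_tp)(x),\]
maps $x^k\mapsto s_k:=e^{k^3t}x^k$ for $k=0,\dots,4$. Hence $T_t$ is a positivity preserver on $\rset[x]_{\leq 4}$ if and only if $L_{t,x}\geq 0$ on $\pos(\rset)\cap\rset[x]_{\leq 4}$ for every $x\in\rset$. By (\ref{eq:posR}), every such polynomial is a sum of squares $q^2$ with $\deg q\leq 2$. So the condition is equivalent to the Hankel matrix $(s_{i+j})_{i,j=0}^2$ being positive semidefinite.

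\emph{Step 2 (removing $x$).} For $x\neq 0$ this Hankel matrix factors as $D\,M(t)\,D$, where
\[D=\mathrm{diag}(1,x,x^2)\qquad\text{and}\qquad M(t)=\big(e^{(i+j)^3t}\big)_{i,j=0}^2.\]
For $x=0$ the Hankel matrix is $\mathrm{diag}(1,0,0)\succeq 0$. Thus $T_t$ is a positivity preserver if and only if $M(t)\succeq 0$.

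\emph{Step 3 (minors of $M(t)$).} For $t\geq 0$ all $1\times1$ and $2\times 2$ principal minors of $M(t)$ are nonnegative:
\[1,\quad e^{8t},\quad e^{64t},\quad e^{8t}-e^{2t},\quad e^{64t}-e^{16t},\quad e^{72t}-e^{54t}.\]
So $M(t)\succeq 0$ if and only if
\[f(t):=\det M(t)=e^{72t}-e^{66t}-e^{54t}+2e^{36t}-e^{24t}\geq 0.\]

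\emph{Step 4 (sign of $f$).} A direct computation gives $f(0)=0$, $f'(0)=72-66-54+72-24=0$, and
\[f''(0)=72^2-66^2-54^2+2\cdot 36^2-24^2=-72<0.\]
Hence $f<0$ on some interval $(0,\delta)$. Since the leading term $e^{72t}$ dominates, $f(t)\to+\infty$ as $t\to\infty$. To show there is exactly one sign change on $(0,\infty)$, I will use the generalized Descartes rule of signs for exponential sums $\sum c_ie^{\lambda_it}$. Ordered by exponent, the coefficients of $f$ have signs $+,-,-,+,-$, which is three sign changes. So $f$ has at most three real zeros counted with multiplicity. The double zero at $t=0$ uses two of these, leaving exactly one simple zero $\tau>0$. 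It follows that $f<0$ on $(0,\tau)$ and $f\geq 0$ on $[\tau,\infty)$, which gives (i) and (ii) of \Cref{thm:sigma}.

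\emph{Step 5 (locating $\tau$).} To obtain the stated bounds, I will evaluate $f$ at $1.19688\cdot 10^{-2}$ and at $1.19689\cdot 10^{-2}$ and check that $f$ is negative at the first point and positive at the second. This requires controlled-precision arithmetic, since there is heavy cancellation near $0$. Factoring out $e^{24t}$ and expanding in $t$ should make the computation stable.

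I expect the main obstacle to be the rigorous uniqueness of the positive root, together with this precise numerical bracketing. If the generalized Descartes rule is considered too external to cite, the fallback is to apply the rule in the variable $u=e^{6t}$. There $f$ becomes the polynomial
\[u^{12}-u^{11}-u^9+2u^6-u^4,\]
and after dividing by $u^4$ the ordinary Descartes rule applies directly.
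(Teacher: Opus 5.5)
Your proof is correct. Up to the determinant it matches the paper: there too one passes to the Hankel matrix $\cH(\lambda(t))_2$ of $\lambda_k(t)=e^{tk^3}$ (your $M(t)$), computes the same $h_2(t)=e^{72t}-e^{66t}-e^{54t}+2e^{36t}-e^{24t}$, and gets $h_2<0$ near $0^+$ from $h_2(0)=h_2'(0)=0$ and $h_2''(0)=-72$.

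The routes split after that.
\begin{itemize}
\item \emph{The paper} establishes the remaining sign pattern numerically. It uses Mathematica values of the smallest eigenvalue at $0.0119688$ and $0.0119689$, plots of that eigenvalue on $[0,1]$, and an asserted monotonicity of the determinant for $t\geq 1$.
\item \emph{Your Descartes count} (at most three zeros with multiplicity, two of them at $t=0$) proves rigorously that there is exactly one positive zero and that it is simple. Numerics are then needed only to locate $\tau$.
\end{itemize}

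Your Step 2 also makes explicit the degree-$\leq 4$ version of the diagonal criterion, which the paper uses tacitly, since its diagonal theorem is stated on all of $\rset[x]$. Checking all principal minors is also cleaner than the paper's use of leading minors, which suffice only because $\det\cH(\lambda(t))_1>0$ for $t>0$.

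Your fallback can be sharpened. One has
\[u^8-u^7-u^5+2u^2-1=(u-1)^2\,(u^6+u^5+u^4-u^2-2u-1).\]
The sextic has one sign change and equals $-1$ at $u=1$. Hence $\tau=\frac16\ln u^*$, where $u^*>1$ is its unique positive root. Evaluating the sextic at $u=e^{6t}$ for the two endpoints removes the double zero at $t=0$ and most of the cancellation you worry about in Step 5.
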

\begin{proof}
We have
\[T_t x^k = \lambda_k(t)\cdot x^k\]
with
\[\lambda_k(t) = e^{t\cdot k^3}\]
for all $k=0,\dots,4$.
Hence, we have
\[\cH(\lambda(t))_0 = (1)\succeq 0\]
since $\det\cH(\lambda(t))_0 = 1 \geq 0$ and
\[\cH(\lambda(t))_1 = \begin{pmatrix} 1 & e^t\\ e^t & e^{8t} \end{pmatrix}\succeq 0\]
since
\[\det\cH(\lambda(t))_1 = e^{8t} - e^{2t} = e^{2t}\cdot (e^{6t} - 1) \geq 0\]
for all $t\geq 0$ with $\det\cH(\lambda(t))_1>0$ for $t>0$.
$\cH(\,\cdot\,)_i$ are the Hankel matrices of order $i$.
It is therefore sufficient to look at $\det\cH(\lambda(t))_2$, which is
\[h_2(t) := \det\cH(\lambda(t))_2 = e^{72t} - e^{66t} - e^{54t} + 2e^{36t} - e^{24t}.\]
From
\[h_2(0) = 0,\quad h_2'(0) = 0,\quad \text{and}\quad h_2''(0) = -72\]
we get
\[h(t) < 0\]
for $t\in (0,\varepsilon)$ for some $\varepsilon > 0$.
By calculations from Mathematica \cite{mathematica}, we get for the smallest eigenvalue $\sigma_3(t)$ that
\[\sigma_3(0.0119688) \approx -3.39928\cdot 10^{-8} \quad\text{and}\quad \sigma_3(0.0119689) \approx 1.7888\cdot 10^{-8},\]
i.e., $\cH(\lambda(t))_2$ becomes positive semi-definite between
\[t = 0.0119688 \qquad\text{and}\qquad t = 0.0119689.\]
The smallest eigenvalues $\sigma_3(t)$ of $\cH(\lambda(t))_2$ are depicted in \Cref{fig:sigma} for $t\in [0,0.015]$, $t\in [0,1]$, and $t\in [0,10]$.
We have
\[h_2(t)<0\] for $t\in (0,\tau)$ with
\[\tau\in (0.0119688,0.0119689)\]
which proves (i).
Since we know $\sigma_3(t)$ for $t\in [0,1]$, see \Cref{fig:sigma} (b), and since
\[h_3'(t) > 0\]
for all $t\geq 1$, we have (ii).
\end{proof}

\begin{figure}[htb!]
\begin{subfigure}{.33\textwidth}
\includegraphics[width=\linewidth]{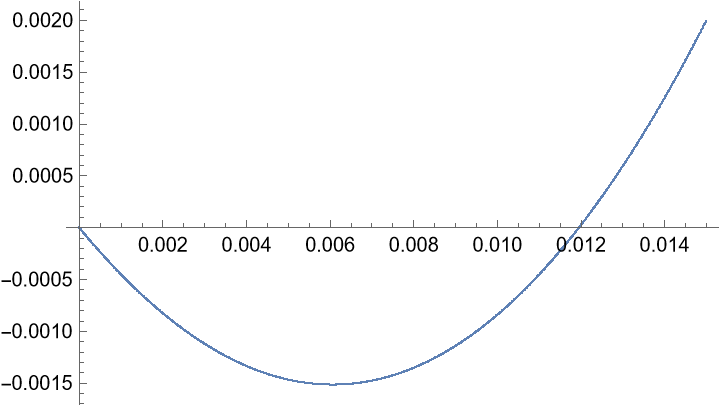}
\caption{$t\in [0,0.015]$}
\end{subfigure}
\begin{subfigure}{.325\textwidth}
\includegraphics[width=\linewidth]{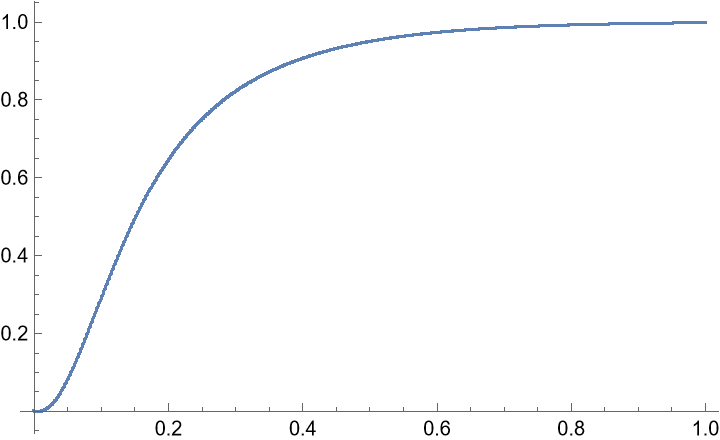}
\caption{$t\in [0,1]$}
\end{subfigure}
%
%\begin{center}
\begin{subfigure}{.33\textwidth}
\includegraphics[width=\linewidth]{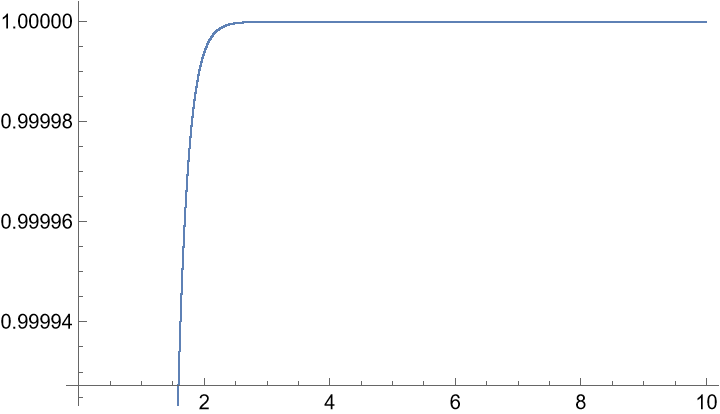}
\caption{$t\in [0,10]$}
\end{subfigure}
%\end{center}
%
\caption{The smallest eigenvalue $\sigma_3(t)$ from \Cref{thm:sigma} for $t\in [0,10]$.
All three figures were generated by \cite{mathematica}.
Originally published in {\cite[Fig.\ 1]{didio25KPosPresGen}}.\label{fig:sigma}}
\end{figure}

\begin{rem}[{\cite[Rem.\ 7.2]{didio25KPosPresGen}}]
Note that
\[T_t = e^{t (x\partial_x)^3}\]
on $\rset[x]$, not just on $\rset[x]_{\leq 4}$, fulfills the following.
For any $p\in\pos(\rset)$, there exists a $\tau_p>0$ such that
\[T_tp \in\pos(\rset)\]
for all $t\geq \tau_p$.
\exmsymbol
\end{rem}

For eventually positive semi-groups see e.g.\ \cite{daners18,denkploss21,glueck23}.
Our example
\[A = (x\partial_x)^3 \quad\text{on}\quad \rset[x]_{\leq 4}\]
in \Cref{thm:sigma} acts only on the $5$-dimensional (Hilbert) space
\[\rset[x]_{\leq 4}\]
and is therefore a matrix example of an eventually positive semi-group, see \cite{daners16b} and literature therein for the matrix cases.
Note, that in the matrix case the invariant cone is (usually) $[0,\infty)^n$.
In our example it is $\pos(\rset)_{\leq 4}$.
So there are slight differences.

\section{A Second Example}

Eventually positive semi-groups are an active field with the need for examples on low-dimensional (Hilbert) spaces which can be calculated and investigated \cite{glueck23}.
Therefore, we provide the smallest possible case of an eventually positive semi-group in our framework: A degree two operator on
$\rset[x]_{\leq 2}$

\begin{prop}[{\cite[Prop.\ 7.3]{didio25KPosPresGen}}]\label{thm:1}
Let $a\in\rset$ be real with $|a|>\frac{1}{\sqrt{5}}$ and let
\[A = a\cdot\partial + \frac{1}{2}(x^2-1)\cdot\partial^2\]
be an operator on the $3$-dimensional Hilbert space $\cH = \rset[x]_{\leq 2}$.
Then
\[\big(e^{tA} \big)_{t\geq 0}\]
is an eventually positive semi-group on $\cH$, i.e., there exists a constant $\tau_a >0$ such that,
\begin{enumerate}[(i)]
\item for all $t\in (0,\tau_a)$,
\[e^{tA}\pos(\rset)_{\leq 2}\quad\not\subseteq\quad\pos(\rset)_{\leq 2}\]
and,

\item for all $t = 0$ and $t\geq \tau_a$,
\[e^{tA}\pos(\rset)_{\leq 2}\quad\subseteq\quad\pos(\rset)_{\leq 2}.\]
\end{enumerate}
For $a\in\rset$ with $|a|\leq \frac{1}{\sqrt{5}}$, we have
\[e^{tA}\pos(\rset)_{\leq 2}\quad\not\subseteq\quad\pos(\rset)_{\leq 2}\]
for all $t\in (0,\infty)$.
\end{prop}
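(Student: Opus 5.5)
Since $A1=0$, $Ax=a$ and $Ax^2=2ax+x^2-1$, the space $\cH=\rset[x]_{\leq 2}$ is $A$-invariant and $e^{tA}$ is an honest $3\times 3$ matrix exponential (\Cref{cor:taylor}). The plan is to compute $e^{tA}$ explicitly, turn positivity preservation into one scalar inequality in $t$, and then study that inequality.

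\textbf{Step 1: the semigroup.} Solving $\partial_t p=Ap$ coefficientwise gives
\[e^{tA}1=1,\qquad e^{tA}x=x+at,\qquad e^{tA}x^2=e^t x^2+2a(e^t-1)\,x+c_0(t),\]
\[c_0(t)=2a^2(e^t-1-t)-(e^t-1).\]

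\textbf{Step 2: reduction to a scalar inequality.} Every element of $\pos(\rset)_{\leq 2}$ has the form $c(x-b)^2+d$ with $c,d\geq 0$ and $b\in\rset$, and constants are preserved by $e^{tA}$. Hence $e^{tA}$ preserves $\pos(\rset)_{\leq 2}$ if and only if $e^{tA}(x-b)^2\geq 0$ on $\rset$ for every $b\in\rset$.

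Write $u=e^t$. Then $e^{tA}(x-b)^2$ is a quadratic in $x$ with leading coefficient $u>0$. Minimising over $x$, the condition becomes
\[Q_t(b):=c_0-2abt+b^2-\frac{\big(a(u-1)-b\big)^2}{u}\geq 0\quad\text{for all } b\in\rset.\]
$Q_t$ is a quadratic in $b$ with leading coefficient $1-u^{-1}$, which is positive for $t>0$. So the condition is equivalent to a discriminant inequality $F_a(t)\geq 0$, where
\[F_a(t):=\big(1-u^{-1}\big)\Big(c_0-\frac{a^2(u-1)^2}{u}\Big)-a^2\Big(t-\frac{u-1}{u}\Big)^2.\]
This is an explicit exponential-polynomial expression in $t$.

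\textbf{Step 3: small $t$.} Expand $F_a$ at $t=0$. Since $c_0(t)=-t+O(t^2)$, one gets $F_a(t)\sim -t^2<0$ for small $t>0$ and every $a$. This gives non-positivity on some interval $(0,\varepsilon)$. Concretely, for $b=0$ and $x=0$ one has $e^{tA}x^2$ evaluated at $0$ equal to $c_0(t)<0$.

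\textbf{Step 4: large $t$ and the threshold.} Expand $F_a$ for large $t$ in powers of $u$ and $t$. The leading terms should have the form $(\text{const}(a))\,u$ plus lower order terms. The sign of that constant, or of the next correction if the leading one cancels, is where the condition $|a|>\tfrac1{\sqrt5}$ should appear. From this I would conclude:
\begin{itemize}
\item for $|a|>\tfrac1{\sqrt5}$, $F_a(t)>0$ for all large $t$;
\item for $|a|\leq \tfrac1{\sqrt5}$, the expansion (or a direct bound) gives $F_a<0$ for every $t>0$.
\end{itemize}

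\textbf{Step 5: exactly one sign change (main obstacle).} The hardest part is showing that, when $|a|>\tfrac1{\sqrt5}$, $F_a$ changes sign exactly once on $(0,\infty)$. Only then is the positivity set equal to $\{0\}\cup[\tau_a,\infty)$ with $\tau_a$ the unique zero.

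I would first multiply by a suitable power of $u$ so that $F_a$ becomes $G_a(t)=\sum_j p_j(t)e^{jt}$ with polynomial coefficients $p_j$. Then I would show that a suitable derivative or quotient, for example $(G_a e^{-t})'$, has a single sign change. Descartes' rule for exponential polynomials, or successive differentiation as in Rolle's argument, would then bound the number of zeros of $G_a$ by one.

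If that route fails, I would instead show that $F_a(t_0)\geq 0$ implies $F_a'(t_0)>0$. This gives the same conclusion and would yield (i) and (ii) of the statement.
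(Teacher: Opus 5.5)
Your Steps 1--3 are correct and follow the paper's route. Your reduction via $c(x-b)^2+d$ is equivalent to the paper's Hankel criterion; in fact $F_a(t)=(1-e^{-t})\,m(a,t)$, where $m(a,t)$ is the true minimum over $x$ of the paper's $h$.

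The gap is Step 4, where you expect the threshold $\frac{1}{\sqrt5}$ to come out of the large-$t$ expansion. It does not. Insert $c_0=2a^2(w-t)-w$ with $w=e^t-1$ and $u=e^t$ into your $F_a$; the mixed terms cancel and
\[
F_a(t)=\frac{(a^2-1)(e^t-1)^2}{e^t}-a^2t^2=4(a^2-1)\sinh^2(t/2)-a^2t^2 .
\]
The leading coefficient is $a^2-1$. So the threshold your method produces is $|a|=1$, and for $\frac{1}{\sqrt5}<|a|\le 1$ you get $F_a(t)<0$ for every $t>0$. No other argument can rescue this range, because the statement is false there. For $|a|\le\frac{1}{\sqrt2}$ (e.g.\ $a=0.45$, used in Examples~\ref{exm:1}), your Step~1 already gives
\[
(e^{tA}x^2)(0)=c_0(t)=(2a^2-1)(e^t-1)-2a^2t<0 \quad\text{for all } t>0,
\]
although $x^2\in\pos(\rset)_{\le 2}$. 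This is just the $(1,3)$ entry of the paper's own matrix $\tilde B$.

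The paper reaches $\frac{1}{\sqrt5}$ through an algebra slip in (\ref{eq:min}). That expression is $h$ evaluated at $x=+\frac{a(e^t-1-t)}{e^t-1}$, not at the minimizer $x=-\frac{a(e^t-1-t)}{e^t-1}$. This turns the correction $-\frac{a^2(e^t-1-t)^2}{e^t-1}$ into $+\frac{3a^2(e^t-1-t)^2}{e^t-1}$ and creates the spurious $5a^2e^{2t}$. The correct minimum is
\[
m(a,t)=\frac{(a^2-1)(e^t-1)^2-a^2t^2e^t}{e^t-1}.
\]
For example $F_1(t)=-t^2$, so the value $\tau_{\pm1}\approx 1.1675$ in Examples~\ref{exm:1} is also spurious.

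On the positive side, the simplified form makes your Step~5 trivial, with no Descartes or Rolle argument needed. For $|a|>1$,
\[
F_a(t)\ge 0\iff 1-a^{-2}\ \ge\ \Big(\frac{t/2}{\sinh(t/2)}\Big)^2 ,
\]
and $s\mapsto s/\sinh s$ decreases strictly from $1$ to $0$ on $(0,\infty)$. Hence there is a unique $\tau_a$: positivity fails on $(0,\tau_a)$ and holds on $\{0\}\cup[\tau_a,\infty)$. For $|a|\le1$ it fails for every $t>0$. So what your plan actually proves is the proposition with $\frac{1}{\sqrt5}$ replaced by $1$.
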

\begin{proof}
Let $\{1,x,x^2\}$ be the monomial basis of $\cH$.
From
\[A1 = 0,\quad Ax = a,\quad \text{and}\quad Ax^2 = -1 + 2ax + x^2\]
we get the matrix representation
\[\tilde{A} = \begin{pmatrix} 0 & a & -1\\ 0 & 0 & 2a\\ 0 & 0 & 1 \end{pmatrix}\]
of $A$ on $\cH$ in the monomial basis.
Therefore,
\[\tilde{B} = \exp(t\tilde{A}) = \begin{pmatrix}
1 & at & (2a^2-1)\cdot (e^t - 1) - 2a^2 t\\
0 & 1 & 2a\cdot (e^t - 1)\\
0 & 0 & e^t
\end{pmatrix} = \begin{pmatrix}
1 & at & f(a,t)\\
0 & 1 & g(a,t)\\
0 & 0 & e^t
\end{pmatrix}\]
for all $t\in\rset$, e.g.\ by using \cite{mathematica}.
$\tilde{B}$ is the matrix representation of
\[B = b_0 + b_1\cdot\partial + \frac{b_2}{2}\cdot\partial^2\]
with
\begin{align*}
B1 = 1 = b_0 \quad &\Rightarrow\quad b_0 = 1,\\
Bx = at + x = b_0 x + b_1 \quad &\Rightarrow\quad b_1 = at,
\end{align*}
and finally
\[Bx^2 = f(a,t) + g(a,t)\cdot x + e^t\cdot x^2 = b_0 x^2 + 2b_1 x + b_2\]
gives
\begin{align*}
b_2 &= f(a,t) + g(a,t)\cdot x + e^t\cdot x^2 - b_0 x^2 - 2b_1 x\\
&= (2a^2 - 1)\cdot (e^t-1) - 2a^2 t + 2a\cdot (e^t-1 - t)\cdot x + (e^t-1)\cdot x^2.
\end{align*}
By \Cref{thm:kPosPresCara},
\[B\pos(\rset)_{\leq 2}\subseteq\pos(\rset)_{\leq 2}\]
is equivalent to
\begin{equation}\label{eq:3}
\begin{pmatrix}
b_0(x) & b_1(x)\\ b_1(x) & b_2(x)
\end{pmatrix}\succeq 0\quad \text{for all}\ x\in\rset.
\end{equation}
Since $b_0 = 1>0$, (\ref{eq:3}) is equivalent to
\begin{equation}\label{eq:4}
h(x) := \det\begin{pmatrix}
b_0(x) & b_1(x)\\ b_1(x) & b_2(x)
\end{pmatrix} = b_0(x)\cdot b_2(x) - b_1(x)^2 \geq 0\quad \text{for all}\ x\in\rset
\end{equation}
with
\[h(x) = (2a^2 -1)\cdot (e^t - 1) - 2a^2 t - a^2 t^2 + 2a\cdot (e^t-1-t)\cdot x + (e^t-1)\cdot x^2.\]
For $t>0$, we have that $h$ is a quadratic polynomial in $x$ and hence (\ref{eq:4}) is equivalent to
\[m(a,t) := \min_{x\in\rset} h(x) \geq 0\]
with
\begin{equation}\label{eq:min}
m(a,t) = 1-e^t + a^2\cdot\frac{5 + 8t + 4t^2 - (10+8t+t^2)\cdot e^t + 5\cdot e^{2t}}{e^t-1}.
\end{equation}
Hence, for each $a\in\rset$ with $|a|> \frac{1}{\sqrt{5}}$ there exists a $\tau_a>0$ such that
\[m(a,t)<0\]
for all $t\in (0,\tau_a)$ and
\[m(a,t) > 0\]
for all $t\in (\tau_a,\infty)$, i.e., (i) and (ii) are proved.

Let $a\in\rset$ be with $|a|<\frac{1}{\sqrt{5}}$.
We see that
\[m(a,t)<0\]
for all $t>0$, and for the boundary case $a=\pm\frac{1}{\sqrt{5}}$ we have
\[m(\pm 5^{-1/2},t) = \frac{4 e^{-t}\cdot t\cdot (t+2) - t\cdot (t+8)}{5 - 5e^{-t}} < 0\]
also for all $t>0$ which proves
\[e^{tA}\pos(\rset)_{\leq 2}\not\subseteq\pos(\rset)_{\leq 2}\]
for all $t>0$.
\end{proof}

It is clear from (\ref{eq:min}) that we have
\[\tau_a = \tau_{-a}\]
for all $a\in\rset$ with
\[|a|>\frac{1}{\sqrt{5}}.\]
In \Cref{fig:2} we give the minima $m(a,t)$ from (\ref{eq:min}) for two values $a$ close to $\frac{1}{\sqrt{5}}$.
\begin{figure}[htb!]
\begin{subfigure}{.5\textwidth}
\includegraphics[width=0.95\linewidth]{min1.png}
\caption{$a=0.44721359$.}
\end{subfigure}\quad
\begin{subfigure}{.5\textwidth}
\includegraphics[width=0.95\linewidth]{min2.png}
\caption{$a=0.44721360$.\label{fig:1b}}
\end{subfigure}
\caption{The minima $m(a,t)$ from (\ref{eq:min}) for two values $a$ close to $\frac{1}{\sqrt{5}} \approx 0.4472135955$ in the range $t\in [0,25]$.
Both figures were generated by \cite{mathematica}. Originally published in {\cite[Fig.\ 2]{didio25KPosPresGen}}.\label{fig:2}}
\end{figure}
For
\[a = 0.44721359 < \frac{1}{\sqrt{5}},\]
the minima $m(a,t)$ is always negative, i.e.,
\[(e^{tA})_{t\geq 0}\]
is not eventually positive.
For
\[a = 0.44721360 > \frac{1}{\sqrt{5}},\]
the minima $m(a,t)$ changes sign and is in particular $>0$ for
\[t\geq \tau_a \approx 22.66,\]
i.e.,
\[(e^{tA})_{t\geq 0}\]
is eventually positive.

\begin{exms}[{\cite[Exm.\ 7.4]{didio25KPosPresGen}}]\label{exm:1}
In the following we give the approximate values of $\tau_{\pm a}$ for several $a\in\rset$ with $|a| > \frac{1}{\sqrt{5}}$:
\begin{enumerate}[\bfseries\; (a)]
\item $a=0.44721360$ (\Cref{fig:1b}):
\[22.655 \quad < \quad \tau_{\pm a} \quad < \quad 22.656\]

\item $a=0.45$:
\[7.5504 \quad < \quad \tau_{\pm a} \quad < \quad 7.5505\]

\item $a=1$:
\[1.1675 \quad < \quad \tau_{\pm a} \quad < \quad 1.1676\]

\item $a=10$:
\[9.7541\cdot 10^{-2} \quad < \quad \tau_{\pm a} \quad < \quad 9.7542\cdot 10^{-2}\]

\item $a=100$:
\[9.6219\cdot 10^{-3} \quad < \quad \tau_{\pm a} \quad < \quad 9.6220\cdot 10^{-3} \tag*{$\circ$}\]
\end{enumerate}
\end{exms}

\section*{Problems}%%%%%%%%%%%%%%%%%%%%%
\addcontentsline{toc}{section}{Problems}

\begin{prob}
In \Cref{thm:1} with the operator
\[A = a\cdot\partial + \frac{1}{2}(x^2-1)\cdot\partial^2\]
\begin{enumerate}[\bfseries\qquad a)]
\item find a heuristic explanation, why this could be a generator of an eventually positive semi-group.

\item What is the effect of the parameter $a\in\rset$ resp.\ the effect of the operator part $a\cdot\partial$?
\end{enumerate}
\end{prob}

\begin{prob}
In \Cref{exm:1}, determine upper and lower bounds for $\tau_{\pm a}$ for
\begin{enumerate}[\bfseries\qquad a)]
\item $a= 5$,
\item $a= 25$, and
\item $a=50$.
\end{enumerate}
\textit{Hint:} Use $m(a,t)$ in (\ref{eq:min}) with the bisection method.
\end{prob}

\appendix
\part*{Appendices}
\addcontentsline{toc}{part}{Appendices}
%\motto{All's well that ends well}

\backmatter%%%%%%%%%%%%%%%%%%%%%%%%%%%%%%%%%%%%%%%%%%%%%%%%%%%%%%%

%\Extrachap{Solutions}
%\chapter*{Solutions}
%\addcontentsline{toc}{chapter}{Solutions}
%\setcounter{chapter}{19}
\renewcommand{\thechapter}{\Alph{chapter}}

%\section*{Problems of \Cref{ch:mom}}
%%%%%%%%%%%%%%%%%%%%%%%%%%%%%%%%%%%%

%\begin{sol}{prob:}

%\end{sol}

\newpage
%\bibliographystyle{amsalpha}
%\bibliography{../../../bibdata}

\providecommand{\bysame}{\leavevmode\hbox to3em{\hrulefill}\thinspace}
\providecommand{\MR}{\relax\ifhmode\unskip\space\fi MR }
% \MRhref is called by the amsart/book/proc definition of \MR.
\providecommand{\MRhref}[2]{%
  \href{http://www.ams.org/mathscinet-getitem?mr=#1}{#2}
}
\providecommand{\href}[2]{#2}

%\Extrachap{List of Symbols}
\chapter*{List of Symbols}
\addcontentsline{toc}{chapter}{List of Symbols}

%\section*{\dots}
%%%%%%%%%%%%%%%%%%%
%\addcontentsline{toc}{section}{\dots}

\noindent
$[\,\cdot\,,\,\cdot\,]$ \dotfill \pageref{bracket}

\noindent
$\fB(\rset^n)$: Example \ref{exm:moments} (b) \dotfill \pageref{fBX}

\noindent
$\fd$: \Cref{dfn:fd} \dotfill \pageref{dfn:fd}

\noindent
$\fd_c$: \Cref{dfn:fDcfdc} \dotfill \pageref{dfn:fDcfdc}

\noindent
$\fd_\cC$: \Cref{dfn:fDC} \dotfill \pageref{dfn:fDC}

\noindent
$\fD_c$: \Cref{dfn:fDcfdc} \dotfill \pageref{dfn:fDcfdc}

\noindent
$\fD_\cC$: \Cref{dfn:fDC} \dotfill \pageref{dfn:fDC}

\noindent
$\fd_{c,+}$: \Cref{dfn:fd+} \dotfill \pageref{fdc+}

\noindent
$\fD_{c,+}$: \Cref{dfn:fd+} \dotfill \pageref{fDc+}

\noindent
$\fd_\cV$: \Cref{cor:fdV} \dotfill \pageref{fdV}

\noindent
$\fd_+$: \Cref{dfn:fDC} \dotfill \pageref{dfn:fDC}

\noindent
$\fD_+$: \Cref{dfn:fDC} \dotfill \pageref{dfn:fDC}

\noindent
$\fd_y$: \Cref{exm:nonSubspaceLieAlgebra} \dotfill \pageref{fdy}

\noindent
$\delta_y$: Example \ref{exm:moments} (a) \dotfill \pageref{deltay}

\noindent
$\gl(n,\rset)$ \dotfill \pageref{glnr}

\noindent
$\Gl(n,\rset)$ \dotfill \pageref{GlnR}

\noindent
$\cL(\cV)$: \Cref{dfn:momCone} \dotfill \pageref{dfn:momCone}

\noindent
$L_s$: \Cref{dfn:RieszFunctional} \dotfill \pageref{dfn:RieszFunctional}

\noindent
$l_y$: \Cref{exm:momFunctionals} (a) \dotfill \pageref{dfn:ly}

\noindent
$\fo(n,\rset)$ \dotfill \pageref{onR}

\noindent
$\mathrm{O}(n,\rset)$ \dotfill \pageref{OnR}

\noindent
$\pos(K)$: Eq.\ (\ref{eq:posKdfn}) \dotfill \pageref{posK}, \pageref{eq:posKdfn}

\noindent
$\cS(\cV,v)$: \Cref{dfn:RieszFunctional} \dotfill \pageref{dfn:RieszFunctional}

\noindent
$\fsl(n,\rset)$ \dotfill \pageref{slnR}

\noindent
$\Sl(n,\rset)$ \dotfill \pageref{SlnR}

\noindent
$\so(n,\rset)$ \dotfill \pageref{sonR}

\noindent
$\mathrm{SO}(n,\rset)$ \dotfill \pageref{SOnR}

\noindent
$T_eG$ \dotfill \pageref{tangentspace}

\noindent
$T_y$: \Cref{dfn:Ty}, Equation (\ref{eq:Ty}) \dotfill \pageref{dfn:Ty}

\noindent
$\cV_+$: \Cref{dfn:cV+} \dotfill \pageref{dfn:cV+}

\noindent
$\cZ(f)$: Equation (\ref{eq:zeroSet}) \dotfill \pageref{eq:zeroSet}

%%%%%%%%%%%%%%
%%% Index %%%%
\printindex%%%
%%%%%%%%%%%%%%

%%%%%%%%%%%%%
%%% E N D %%%
%%%%%%%%%%%%%
\end{document}